\newtheorem{theorem}{Theorem}[section]
\newtheorem{lemma}[theorem]{Lemma}
\date{\today}
\newtheorem{corollary}[theorem]{Corollary}
\newtheorem{remark}[theorem]{Remark}
\newtheorem{assumption}{Assumption}
\newtheorem{example}[theorem]{Example}
\newcommand{\EE}{\ensuremath{\mathbb{E}}}
\newcommand{\ul}{\ensuremath{\lfloor t/\Delta_n\rfloor}}
\newcommand{\ulT}{\ensuremath{\lfloor T/\Delta_n\rfloor}}
\newcommand{\indicator}{\ensuremath{\mathbb{I}}}
\newcommand{\dennis}[1]{\textcolor{red}{#1}}
\title[Functional Data Analysis for SPDE]{A feasible central limit theorem for realised covariation of SPDEs in the context of functional data
}
\author{Fred Espen Benth}
\address{Department of Mathematics, University of Oslo, P.O. Box 1053, Blindern, 0316, OSLO, Norway}
\author{Dennis Schroers}
\address{Department of Mathematics, University of Oslo, P.O. Box 1053, Blindern, 0316, OSLO, Norway}
\author{Almut E.~D.~Veraart}
\address{Department of Mathematics, Imperial College London, 180 Queen’s Gate, London, SW7 2AZ, UK}
\begin{document}

\maketitle

\begin{abstract}
   This article establishes an asymptotic theory for volatility estimation in an infinite-dimensional setting.
   We consider  mild solutions of semilinear stochastic partial differential equations and derive a stable central limit theorem for the \textit{semigroup adjusted realised covariation} ($SARCV$), which is a consistent estimator of the integrated volatility and a generalisation of the realised quadratic covariation to Hilbert spaces.
   Moreover, we 
    introduce \textit{semigroup adjusted multipower variations} ($SAMPV$) and establish their weak law of large numbers; using SAMPV, we construct a consistent estimator of the 
   asymptotic covariance of the  mixed-Gaussian limiting process appearing in the central limit theorem for the SARCV, resulting in  a feasible asymptotic theory.
    Finally, we outline how our results can be applied even if observations are only available on a discrete space-time grid. 
\end{abstract}

\keywords{Keywords: central limit theorem, high-frequency estimation,  functional data, SPDE, power variations, volatility, $C_0$-semigroups, Hilbert-Schmidt operators}

\newpage
\tableofcontents

\section{Introduction}
Estimation of volatility is of great importance for capturing the second-order structure of a random dynamical system. 
In this work, we develop a feasible asymptotic distribution theory for the estimation of the integrated volatility operator 
$\int_0^t\Sigma_s ds:=\int_0^t\sigma_s\sigma_s^* ds$ corresponding to a stochastic partial differential equation (SPDE) in a separable Hilbert space $H$ of the form
\begin{equation}\label{SPDE}
    dY_t= (\mathcal A Y_t +\alpha_t)dt+ \sigma_t dW_t,\quad t\in [0,T],
\end{equation}
based on discrete observations of its mild solution within a finite time-interval $[0,T]$ for $T>0$.
Here $\mathcal A$ is the generator of a strongly continuous semigroup $\mathcal S:=(\mathcal S(t))_{t\geq 0}$ on $H$, $W$ is a cylindrical Wiener process, $\alpha$ and $\sigma$ are the drift- and volatility processes, respectively (see Section \ref{sec: Limit Theorems for the SARCV} below for a detailed specification). Such SPDEs constitute a well-established framework for describing spatio-temporal dynamics with applications in,  e.g., finance, physics, biology, meteorology and mechanics (cf. the textbooks \cite{DPZ2014}, \cite{PZ2007}, \cite{WeiRockner2015} or \cite{GM2011}).
In the context of infill-asymptotics and in the presence of time-discrete observations $$Y_0,Y_{\Delta_n},...,Y_{\ulT},\quad \Delta_n:= \frac 1n$$ of a realisation of a solution to \eqref{SPDE}, the role of integrated volatility is similar to the one of the covariance operator in the analysis of i.i.d. functional data.
 This becomes particularly evident if $\sigma$ is independent of $W$. In this case
integrated volatility is the conditional covariance of the driving noise, that is,
$$\int_0^t \sigma_s dW_s\big|\sigma\sim\mathcal N\left(0,\int_0^t \Sigma_s ds\right),\quad t\geq 0.$$
Hence, a feasible estimation theory for integrated volatility in this setting could allow standard functional data analysis methods to be applied to the analysis of observations of solutions to SPDEs.


Our theory is based on the \textit{semigroup-adjusted realised covariation} ($SARCV$), given  
for $n\in\mathbb N$ by
\begin{equation}\label{SARCV}
    SARCV_t^n:=\sum_{i=1}^{\ul}\tilde{\Delta}_i^n Y^{\otimes 2}:=\sum_{i=1}^{\ul} \left(Y_{i\Delta_n}-\mathcal S(\Delta)Y_{(i-1)\Delta_n}\right)^{\otimes 2},
\end{equation}
which was shown to be a consistent estimator of the integrated volatility $\int_0^t\Sigma_s ds$ in \cite{Benth2022}. Here $h^{\otimes 2}=\langle h,\cdot\rangle h$ denotes the usual tensor product. In this paper, we consider the more involved task of proving, under suitable regularity conditions, the functional central limit theorem
\begin{align*}
    \Delta_n^{-\frac 12}\left( SARCV_t^n-\int_0^t \Sigma_s ds\right)\stackrel{\mathcal L-s}{\Longrightarrow} \mathcal N(0,\Gamma_t),
\end{align*}
where $\stackrel{\mathcal L-s}{\Longrightarrow}$ 
stands for the stable convergence in law as a process in the Skorokhod space $\mathcal D([0,T],\mathcal H)$. $\mathcal N(0,\Gamma_t)$ is an infinite-dimensional continuous mixed Gaussian process\footnote{Recall that a centred Hilbert space-valued random variable  $X$ is mixed Gaussian with random covariance $C:H\to H$ if conditional on $C$ the random variable $\langle X, h\rangle$ a one-dimensional centred Gaussian distributed  random variable with variance $\langle C h,h\rangle$ for all $h\in H$.
} with values in $\mathcal H$, the space of Hilbert-Schmidt operators on $H$, and with a conditional covariance operator $\Gamma_t$, called the \textit{asymptotic variance}. The above central limit theorem is not feasible, as the asymptotic variance is a priori unknown, so we also derive a consistent estimator for $\Gamma$. As this can be done conveniently by appealing to laws of large numbers for certain adjusted power and bipower variations, we also provide consistency results for general \textit{semigroup-adjusted realised multipower variations} ($SAMPV$) given by
\begin{align}\label{SAMPV}
   SAMPV^n_t(m_1,...,m_k):= \sum_{i=1}^{\ul-k+1} \bigotimes_{j=1}^k\tilde{\Delta}_{i+j-1}^n Y^{\otimes m_j}. 
\end{align}
We refer to the preliminaries below for the  general tensor power notation.

 Compared with the finite-dimensional theory, the semigroup adjustment in the realised covariation and the multipower variations might seem unusual. Nevertheless, the results presented here should be understood as a direct generalisation of the theory for multivariate semimartingales to the setting of semilinear SPDEs as in \eqref{SPDE}.
This is because the semigroup adjustment just becomes relevant if $(Y_t)_{t\in [0,T]}$ is not a semimartingale, which is a purely infinite-dimensional issue.
In fact, if $H$ is finite-dimensional, $(Y_t)_{t\in [0,T]}$ is automatically a semimartingale and dropping the semigroup adjustment in \eqref{SARCV} still yields a consistent estimator, namely, the quadratic covariation
\begin{equation}\label{realised covariation (nota adjusted)}
    RV_t^n=\sum_{i=1}^{\ul} (Y_{i\Delta_n}-Y_{(i-1)\Delta_n})^{\otimes 2}.
\end{equation} One can equivalently think of choosing the semigroup to equal the identity operator on $H$ (i.e. $\mathcal S\equiv I$) for the sake of the limit theorems and move the part of the (in this case) strong solution belonging to the original generator $\mathcal A$ in equation \eqref{SPDE} into the drift $\alpha$.



For over two decades, there have been many contributions to the asymptotic theory for stochastic volatility estimation in a finite-dimensional set-up. 
These include the  articles \cite{BNS2002, BNS2003, BNS2004, ABDL2003} and \cite{Jacod2008}, amongst many others, and  the textbooks  \cite{JacodProtter2012} and  \cite{Ait-SahaliaJacod2014}, focusing on the semimartingale set-up. Moreover, recently, attention has also turned towards finite-dimensional volatility estimation in the context when the observed process is not necessarily a semimartingale, see e.g.~\cite{CNW2006}, \cite{BNCP11}, \cite{BNCP10b}, \cite{CorcueraHPP2013},  \cite{CorcueraNualartPodolskij2015}, \cite{ChongMies2020}, \cite{ChongDelerue2022}, \cite{Szymanski2022} and \cite{VG2019, PV2019}.

There are two recent strands of research that are related to the infinite-dimensional case:
during the last decade, some effort went into the
 generalisation of ARCH and GARCH models for functional data, appearing at a possibly high frequent rate in \cite{hoermann2013}, \cite{aue2017}, \cite{cerovecki2019}, \cite{sun2020} and \cite{kuhnert2020}. At the same time, a lot of recent research has been devoted to the intricate problem of estimating volatility based on observations of finite-dimensional realisations of second-order stochastic partial differential equations (cf. \cite{Bibinger2020}, \cite{Chong2020}, \cite{ChongDalang2020}, \cite{Bibinger2019}, \cite{Hildebrandt2021}, \cite{altmeyer2021}, \cite{cialenco2022}, \cite{Cialenco2020}, \cite{Altmeyer2022}, \cite{Pasemann2021} to mention some). We refer to \cite{Cialenco2018} for a survey. 
 In that sense, volatility estimation has been approached either discretely in time or discretely in space. So
in contrast to the high research activity in both of these areas, to the best of the authors' knowledge, there appear to be no results at the intersection that allow  making inference on a coherent and potentially smooth spatio-temporal volatility structure as we do here.
Such results, however, may be desirable in many situations. We discuss some applications and relevant types of data in the following subsection.

The presentation of our results is divided into six sections, where after a short consideration of data and some brief preliminaries following this introduction, we outline the setting for the guiding example of term structure models in Section \ref{sec: Term Structure Setting} which makes the otherwise rather abstract operator-theoretic notation more concrete.
We present a detailed discussion on limit theorems and applications of the $SARCV$ in Section \ref{sec: Limit Theorems for the SARCV}, where we also include a short section on the estimation of conditional covariances in Subsection \ref{sec: Conditional Covariance} and establish the  corresponding feasible limit theory  (accounting for the unknown random covariance structure in the basic central limit theorem for this estimator) in Subsection \ref{sec: Feasible central limit theorems for the (SARCV)}. 
A discussion about the convergence behaviour of the na{\"i}ve quadratic variation is added in Subsection \ref{sec: Necessity of the Adjustment}. Afterwards, we outline, how the limit theory can be applied in the case of discrete observations in time and space in Section \ref{sec: discrete data in time and space}.
Section \ref{sec: Limit theorems for multipower variations} addresses the laws of large numbers for the general semigroup-adjusted multipower variations $SAMPV(m_1,...,m_k)$. Section \ref{sec: Outline of Proofs} outlines the proofs of the limit theorems, which are given in full length in an appendix. We summarise and further discuss the results in the concluding   Section \ref{sec: Conclusion}.

\subsection{Considerations on data}

As the $SARCV$ and the $SAMPV$ take into account the Hilbert space-valued data $(Y_{i\Delta_n},i=1,...,\ul)$, the theory presented here is part of the realm of \textit{functional data analysis}. Functional data, which are usually sampled discretely, are often smoothed in order to obtain an element in some suitable function space. In our case, this means that practically every datum $Y_{i\Delta_n}$ should be considered as a smoothed version of discretely sampled data. Assuming that data are of high resolution in the spatial dimension as well, one can obtain fully feasible consistency results and central limit theorems for the integrated volatility operators from our results (see Section \ref{sec: discrete data in time and space} for how this can be done for a regular sampling grid). This means, however, that (at least locally when estimating functionals of the integrated volatility) we need to have dense samples in both space and time.

Taking into account the effort that went into the development of volatility estimation in the case of sampling the solution of an SPDE at a fixed finite number of points in space and a high frequent rate in time, it might be worth underlining the following: the wording ``high frequent'' can be misleading, as this is primarily a matter of scale. 

For instance, in financial forward and futures markets, where one wants to capture price variations for contracts with times-to-maturity of more than a year, intra-daily patterns of variation might, for some purposes, not be as insightful as e.g., intra-monthly ones.
Another example is meteorological data, where in several regions we find a considerable number of weather stations measuring for instance wind, temperature or rainfall at fixed time intervals such as every hour. This leads to a reasonable volume of spatio-temporal data for a week or a month rather than a day. Moreover, reducing volatility estimation on techniques that allow making inference based on fixed multivariate samples of the SPDE might make it hard to capture spatial features like slope and curvature induced by the dynamics of neighbouring stations via  the asymptotic analysis. Dynamics that are dependent on this kind of \textit{derivative information} are of course not just relevant to meteorological applications but are for instance considered important to describe the dynamics of term structure models in finance (c.f.~\cite{Cont2005}). Smooth features of the volatility operator can be conveniently accessed in the functional data framework we elaborate on here and derivative information are inherent in the estimator itself  (due to the adjustment).

On the other hand, in contrast to possibly prevalent perception, there are intraday high-frequency financial data that should eventually be considered functional. One example can be found in the modern structure of intraday energy markets. 
In the European intraday energy markets, participants can continuously trade contracts for energy delivery each day (from late afternoon til midnight) for all 96 quarter-hours of the day ahead. Interpreting this as a discretisation of the curve of all potential forward contracts of the next day, this can, due to no-arbitrage arguments, be considered as a semimartingale in a Hilbert space of functions.
We underline, that our results are new also in the semimartingale case $\mathcal S=I$, leading to an infinite-dimensional theory for realised covariation of $H$-valued semimartingales. Arguably, in that way, it becomes possible to estimate components of the recently treated infinite-dimensional stochastic volatility models (c.f.~\cite{Cox2020, Cox2021}, \cite{BenthSimonsen2018}, \cite{BenthHarang2020}, \cite{BenthSgarra2021}).

\subsection*{Preliminaries and notation}
Throughout this work, $H$, is a separable Hilbert space. The corresponding inner product and norm are denoted by $\langle \cdot,\cdot\rangle_H$ and $\|\cdot\|_H$ and the identity operator on $H$ by $I_H$, where we will drop the $H$-dependence most of the time and simply write $\langle \cdot,\cdot\rangle$, $\|\cdot\|$ and $I$.
If $G$ is another separable Hilbert space,  $h\in H$ and $g\in G$,
 we write $L(G,H)$ for the space of bounded linear operators from $G$ to $H$ and $L(H):=L(H,H)$. We write $\|\cdot\|_{\text{op}}$ for the operator norm on these spaces. $L_{\text{HS}}(G,H)$ denotes the Hilbert space of Hilbert-Schmidt operators from $G$ into $H$, that is $B\in L(G,H)$ such that
$$
\Vert B\Vert_{L_{\text{HS}}(U,H)}^2:=\sum_{n=1}^{\infty}\| B e_n\|^2<\infty,
$$ 
for an orthonormal basis $(e_n)_{n\in\mathbb N}$ of $G$. If $G=H$, we write $\mathcal H:=L_{\text{HS}}(H,H)$. 
The operator $h\otimes g:= \langle h,\cdot\rangle g$ is a Hilbert-Schmidt and even nuclear operator from $H$ to $G$. Recall that $B$ is nuclear, if  $\sum_{n=1}^{\infty}\| B e_n\|<\infty$ for some orthonormal basis $(e_n)_{n\in\mathbb N}$ of $G$.
Moreover, we shortly write $h^{\otimes p}=h\otimes(h\otimes(\cdots\otimes(h\otimes h))$ and $\bigotimes_{j=1}^k h_j:=h_1\otimes ...\otimes h_k:=h_1\otimes( ...\otimes (h_{k-1}\otimes  h_k))$.
We write recursively $\mathcal H^2=\mathcal H=L_{\text{HS}}(H,H)$ 
and $\mathcal H^m=L_{\text{HS}}(H,\mathcal H^{m-1})$,  
for $m>2$. Thus, $\mathcal H^m$ is the space of operators spanned by the orthonormal basis $(e_{j_1}\otimes\cdots\otimes e_{j_m})_{j_1,...,j_m\in\mathbb N}$, for an orthonormal basis $(e_j)_{j\in\mathbb N}$ of $H$ with respect to the Hilbert-Schmidt norm.
As $\mathcal H^m$ 
is isometrically isomorphic to the space $L_{\text{HS}}(\mathcal H^p,\mathcal H^q)$ 
if $p+q=m$ and $p,q\geq 2$ (and $L_{\text{HS}}( H,\mathcal H^q)$ or $L_{\text{HS}}(\mathcal H^p,H)$ 
if $p$ or $q$ is equal to $1$), we will alternate between the notations throughout the paper. 
For instance, if $m$ is even, $\mathcal H^m$ can be identified with the space $L_{\text{HS}}(\mathcal H^{\frac m2},\mathcal H^{\frac m2})$, which is why we can speak without loss of generality of symmetric operators on these spaces. 
Recall moreover that 
\begin{equation}
    \Sigma_t := \sigma_t\sigma_t^*\quad \forall t\in [0,T],
\end{equation}
where $\sigma$ is the stochastically integrable Hilbert-Schmidt operator-valued volatility process (c.f. Section \ref{sec: Limit Theorems for the SARCV}).
We will also need the notation $\Sigma_s^{\mathcal S_n}:=\mathcal S(i\Delta_n-s)\Sigma_s\mathcal S(i\Delta_n-s)^*$ for $s\in ((i-1)\Delta_n,i\Delta_n]$. 
We also need different concepts of convergence of stochastic processes. Recall that a sequence of random variables $(X_n)_{n\in\mathbb N}$ defined on a probability space $(\Omega, \mathcal F,\mathbb P)$ and with values in a Polish space $E$ converges stably in law to a random variable $X$ defined on an extension $(\tilde{\Omega}, \tilde{\mathcal F},\tilde{\mathbb P})$ of $(\Omega, \mathcal F,\mathbb P)$ with values in $E$, if for all bounded continuous $f:E\to \mathbb R$ and all bounded random variables $Y$ on $(\Omega,\mathcal F)$ we have
$\mathbb E[Y f(X_n)]\to\tilde {\mathbb E}[Y f(X)]$ as $n\to\infty$,
where $\tilde {\mathbb E}$ denotes the expectation with respect to $\tilde{\mathbb P}$. If, for a Hilbert space-valued process $X^n$, we have that it converges stably in law as a process in the Skorokhod space $\mathcal D([0,T]; H)$, we write $X^n\stackrel{\mathcal L-s}{\Longrightarrow}X$. Here and throughout we always assume the space $\mathcal D([0,T]; H)$ to be endowed with the classical Skorokhod topology, making it a Polish space (c.f. for instance chapter VI in \cite{Billingsley}).
Moreover, by $X^n\stackrel{u.c.p.}{\longrightarrow}{X}$ we mean convergence uniformly on compacts in probability, i.e. for all $\epsilon>0$ it is $\mathbb P[\sup_{t\in [0,T]} \|X^n(t)-X(t)\|>\epsilon]\to 0$ for $T>0$.

\section{A motivating example: term structure models}\label{sec: Term Structure Setting}
In this section, we discuss the example of term structure models from mathematical finance arising in bond and energy markets. Term structure models, which can conveniently be expressed in form of stochastic partial differential equations, relate the time to maturity of financial contracts to their empirical and theoretical characteristics. 
For an introduction to the SPDE approach to modelling forward curve evolutions we refer to \cite{Filipovic2000} in the case of instantaneous forward rates in bond markets and to \cite{BenthKruhner2014} in the case of instantaneous forward prices in energy and commodity markets.

 Forward curves, respectively forward prices, are usually considered to take their values in some suitable Hilbert space of functions. Besides the space of square-integrable functions $L^2(0,1)$, reproducing kernel Hilbert spaces (RKHS) and in particular Sobolev spaces such as 
 $$H^1(0,1):=\{ h:[0,1]\to\mathbb R: h\text{ is absolutely continuous and } h'\in L^2(0,1)\}$$
equipped with the norm $\|h\|:=h(0)^2+\int_0^1 (h'(x))^2dx$ 
are a reasonable choice for a state space of instantaneous forward curves. 
The compact interval $[0,1]$ contains all observable times to maturity (normalised by the maximal time to maturity observable). The arbitrage-free dynamics of forward curves can then be expressed in terms of the Heath-Jarrow-Morton-Musiela equation 
\begin{align*}
    df_t= \left(\partial_x f_t+\alpha(\sigma_s)\right)ds + \sigma_s dW_s,
\end{align*}
where $\sigma$ is a general Hilbert-Schmidt operator valued process from a noise space $U$ into $H=H^1(0,1)$ and $\alpha:L_{\text{HS}}(U,H)\to H$ is a continuous mapping (c.f. \cite[Section 4.3]{Filipovic2000}) for forward rates and vanishes entirely for commodity and energy price curves (c.f. e.g. \cite{BenthKruhner2018}).
In the space $L^2(0,1)$ of square-integrable functions, $\partial_x$ is defined on its domain $D(\partial_x)= \{h\in H^1(0,1): h(1)=0\}$ and according to \cite[Section 2.11]{Engel1999} generates the nilpotent semigroup of left shifts in $L^2(0,1)$ given by 
\begin{equation}\label{Nilpotent Shift}
    \mathcal S(t)h(x):=\begin{cases} h(x+t), & x+t\leq 1,\\
0, & x+t>1.
\end{cases}
\end{equation}
In the Sobolev space, the differential operator $\partial_x$ can be defined on its domain $D(\partial_x)= \{h\in H^1(0,1): h'\in H^1(0,1)\}$ and combining Corollary 5.1.1 in \cite{Filipovic2000} and \cite[Section 2.3]{Engel1999} it is then the generator of the strongly continuous semigroup of left shifts on $H^1(0,1)$ given by 
\begin{equation}\label{Sobolev Shift}
    \mathcal S(t)h(x):=\begin{cases} h(x+t), & x+t\leq 1,\\
h(1), & x+t>1.
\end{cases}
\end{equation}
We may choose the noise space to be $U=L^2(0,1)$, such that we can interpret $\sigma_s$ as a Hilbert-Schmidt operator from $L^2(0,1)$ into itself or that it maps into $H^1(0,1)\hookrightarrow L^2(0,1)$ and is Hilbert-Schmidt with respect to the norm on $H^1(0,1)$ if $H=H^1(0,1)$. As such, it is given as a kernel operator $$ \sigma_s f(x)= \int_0^1 q_s(x,y) f(y) dy,\quad \forall s\geq 0, x\in [0,1]. $$
 In the case that $H=H^1(0,1)$ we alternatively could have chosen $U=H^1(0,1)$, as by Theorem 9 in \cite{Berlinet2011} we have that in an RKHS on $[0,1]$ with kernel $k$, every continuous linear operator $L$ is given by a kernel operator with kernel $l(x,y)= \langle k(x,\cdot),L^*k(\cdot,y) \rangle$ in the sense that
$$L f(x)= \langle f, l(\cdot,x)\rangle,\quad \forall x\in [0,1].$$
 We will come back to the estimation of integrated volatility in this setting for $H=H^1(0,1)$ in Section \ref{sec: discrete data in time and space}.

\section{Limit theorems for the $SARCV$}\label{sec: Limit Theorems for the SARCV}

Throughout this work we fix $(Y_t)_{t\in [0,T]}$ for $T>0$ to be the mild solution of the SPDE \eqref{SPDE}, i.e.~$Y$ is a continuous adapted stochastic process defined on a filtered probability space $(\Omega,\mathcal F,(\mathcal F_t)_{t\in[0,T]},\mathbb P)$ with right-continuous filtration $(\mathcal F_t)_{t\in [0,T]}$ taking values in the separable Hilbert space $H$ and is given by the stochastic Volterra process
\begin{align}\label{mild Ito process}
    Y_t=\mathcal S(t)Y_0+\int_0^t\mathcal S(t-s)\alpha_s ds+\int_0^t \mathcal S(t-s)\sigma_s dW_s,\quad t\in [0,T].
\end{align}
Here, $\mathcal S:=(\mathcal S(t))_{t\geq 0}$ is a strongly continuous semigroup on $H$ generated by $\mathcal A$ and $W$ is a cylindrical Wiener process potentially on another separable Hilbert space $U$ (with covariance operator $I_U$). Moreover, $\alpha$ is an almost surely Bochner integrable adapted stochastic process with values in $H$ and $\sigma$ is a Hilbert-Schmidt operator-valued process that is stochastically integrable   with respect to $W$, i.e. for $\Omega_T:=[0,T]\times \Omega$,
$$\sigma\in \left\{\Phi:\Omega_T\to L_{\text{HS}}(U,H): \Phi\text{ predictable and }\mathbb P\left[\int_0^T\|\Phi(s)\|_{L_{\text{HS}}(U,H)}^2ds<\infty\right]=1\right\}$$
(c.f. for instance Chapter 2.5 in \cite{WeiRockner2015} for the definition of the stochastic integral in this context). Both coefficients $\alpha$ and $\sigma$ can in principle be state (or even path) dependent, provided that there is a mild solution of the form \eqref{mild Ito process} to the equation. We refer to $(Y_t)_{t\in[0,T]}$ as a mild It\^o process.

We present first our result on the asymptotic behaviour of the semigroup-adjusted realised covariation ($SARCV$), 
as it is the most important example of the (semigroup-adjusted) power variations. The law of large numbers for general multipower variations is postponed to the next section.

\subsection{Infeasible central limit theorems for the $SARCV$}
 As it was shown in \cite{Benth2022}, the law of large numbers needs no further assumption on $Y$
\footnote{There are two minor differences with respect to the limit theory established in \cite{Benth2022}:
 First, the driver $W$ was assumed to have a covariance that is of trace class. However,
    considering the stochastic integral of a Hilbert-Schmidt operator-valued process with respect to a cylindrical Wiener noise or the stochastic integral of a process with values in $L_{HS}(Q^{ 1/2}U,H)$ with respect to the corresponding trace class ($Q$-)Wiener process in $U$, does not make a difference. The stochastic integral can (on an extension of the probability space) in both cases be translated into one or the other, due to the martingale representation theorems (c.f. Section 2.2.5 in \cite{GM2011}).
   Second, the drift was assumed to be almost surely square-integrable. 
   Here, in this paper, we do not aim to derive a rate of convergence via the laws of large numbers and are in that regard able to drop these conditions.} 
:
\begin{theorem}\label{T: LLN for the SARCV}
For a mild It{\^o} process $Y$ of the form \eqref{mild Ito process}, we have
\begin{align*}
    SARCV^n\stackrel{u.c.p.}{\longrightarrow}\left(\int_0^t \Sigma_s ds\right)_{t\in[0,T]}.
\end{align*}
\end{theorem}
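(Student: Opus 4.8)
The plan is to reduce the assertion to a law of large numbers for a triangular array of Hilbert-space-valued martingale differences, exactly as in the finite-dimensional theory of realised covariation, with the $C_0$-semigroup entering only through a single approximation step. The starting point is the algebraic identity that motivates the semigroup adjustment: substituting \eqref{mild Ito process} into $\tilde{\Delta}_i^n Y = Y_{i\Delta_n}-\mathcal S(\Delta_n)Y_{(i-1)\Delta_n}$ and using the semigroup property $\mathcal S(\Delta_n)\mathcal S(r)=\mathcal S(\Delta_n+r)$, all contributions from $Y_0$ and from the integrals over $[0,(i-1)\Delta_n]$ cancel, leaving the clean one-step representation
\begin{equation*}
\tilde{\Delta}_i^n Y = \int_{(i-1)\Delta_n}^{i\Delta_n}\mathcal S(i\Delta_n-s)\alpha_s\,ds + \int_{(i-1)\Delta_n}^{i\Delta_n}\mathcal S(i\Delta_n-s)\sigma_s\,dW_s =: A_i^n + M_i^n .
\end{equation*}
Expanding the tensor square gives $\tilde{\Delta}_i^n Y^{\otimes 2}=(A_i^n)^{\otimes 2}+A_i^n\otimes M_i^n+M_i^n\otimes A_i^n+(M_i^n)^{\otimes 2}$, and I would treat the four resulting sums separately.

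Before estimating, I would localise: replacing $\sigma$ and $\alpha$ by their versions stopped at $\tau_k\uparrow T$, which is licit because u.c.p.\ convergence is a local property, I may assume $\sigma$ and $\alpha$ uniformly bounded, so that all moments below exist. Writing $C:=\sup_{r\in[0,T]}\|\mathcal S(r)\|_{\text{op}}<\infty$ (finite by strong continuity and uniform boundedness on compacts) and $I_i^n:=((i-1)\Delta_n,i\Delta_n]$, the drift sum obeys $\big\|\sum_i (A_i^n)^{\otimes 2}\big\|_{\mathcal H}\le\sum_i\|A_i^n\|^2\le C^2\big(\max_i\int_{I_i^n}\|\alpha_s\|\,ds\big)\int_0^T\|\alpha_s\|\,ds\to 0$ by absolute continuity of the integral, uniformly in $t$ since this bounds every partial sum. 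The cross terms vanish by Cauchy--Schwarz, $\big\|\sum_i A_i^n\otimes M_i^n\big\|_{\mathcal H}\le\big(\sum_i\|A_i^n\|^2\big)^{1/2}\big(\sum_i\|M_i^n\|^2\big)^{1/2}$, since the first factor tends to $0$ and the second is bounded in probability (its expectation is controlled by $C^2\,\EE\int_0^T\|\sigma_s\|_{L_{\HS}(U,H)}^2\,ds$).

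The core is the martingale sum. I would write $(M_i^n)^{\otimes 2}=\zeta_i^n+\EE[(M_i^n)^{\otimes 2}\mid\Fi]$, splitting off the martingale difference $\zeta_i^n$ from its compensator, which by the conditional It\^o isometry equals $\EE[\int_{I_i^n}\Sigma_s^{\mathcal S_n}\,ds\mid\Fi]$ since $(\mathcal S(i\Delta_n-s)\sigma_s)(\mathcal S(i\Delta_n-s)\sigma_s)^*=\Sigma_s^{\mathcal S_n}$. By orthogonality of martingale differences and Doob's inequality, $\EE\sup_t\big\|\sum_{i\le\ul}\zeta_i^n\big\|_{\mathcal H}^2\le 4\sum_i\EE\|\zeta_i^n\|_{\mathcal H}^2\le 4\sum_i\EE\|M_i^n\|^4\lesssim\ulT\Delta_n^2\lesssim\Delta_n\to 0$, using the Hilbert-space Burkholder--Davis--Gundy bound $\EE\|M_i^n\|^4\le C\Delta_n^2$ made available by the localisation; the identical martingale-difference argument removes the conditional expectation from the compensator. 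It then remains to show $\sum_i\int_{I_i^n}\Sigma_s^{\mathcal S_n}\,ds\to\int_0^t\Sigma_s\,ds$ uniformly in $t$.

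This last step is where the only genuinely infinite-dimensional difficulty lies, and I expect it to be the main obstacle. One must prove $\Sigma_s^{\mathcal S_n}=\mathcal S(i\Delta_n-s)\Sigma_s\mathcal S(i\Delta_n-s)^*\to\Sigma_s$ in Hilbert--Schmidt norm as the mesh shrinks. Strong continuity only yields $\mathcal S(r)\to I$ strongly, which does not in general upgrade to operator-norm or $\|\cdot\|_{\mathcal H}$-convergence; here it does precisely because $\Sigma_s\in\mathcal H$ is compact. Writing $\mathcal S(r)\Sigma_s\mathcal S(r)^*-\Sigma_s=(\mathcal S(r)\Sigma_s-\Sigma_s)\mathcal S(r)^*+\Sigma_s(\mathcal S(r)^*-I)$ and approximating $\Sigma_s$ by finite-rank operators, strong convergence transfers to convergence in $\|\cdot\|_{\mathcal H}$, with the bound $C$ controlling the tails uniformly. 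Dominated convergence with the integrable majorant $(1+C)^2\|\Sigma_s\|_{\mathcal H}$ then gives $\int_0^T\|\Sigma_s^{\mathcal S_n}-\Sigma_s\|_{\mathcal H}\,ds\to 0$, while the endpoint defect $\int_{\ul\Delta_n}^t\|\Sigma_s\|_{\mathcal H}\,ds\le\max_i\int_{I_i^n}\|\Sigma_s\|_{\mathcal H}\,ds$ is uniformly small by absolute continuity. Combining the four contributions and undoing the localisation completes the proof.
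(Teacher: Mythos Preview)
Your decomposition and the direct martingale-difference treatment are sound, and in fact closer to the self-contained argument in \cite{Benth2022} (to which this paper defers for Theorem~\ref{T: LLN for the SARCV}) than to the paper's own route via Theorem~\ref{T: Law of large numbers for tensor power variations} with $m=2$. The paper splits through finite-dimensional projections $(I-P_N^m)$ and $P_N^m$: on the former it reduces, via Lemma~\ref{L: Reduction to martingales is possible}, to the realised covariation of the genuine semimartingale $S_t=\int_0^t\alpha_s\,ds+\int_0^t\sigma_s\,dW_s$ and cites the finite-dimensional Jacod--Protter law of large numbers; on the latter it proves uniform-in-$n$ negligibility as $N\to\infty$ via the Dini-type Lemma~\ref{L: Projection convergese uniformly on the range of volatility}. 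Your direct approach avoids both the semimartingale reduction and the external citation, and your compactness argument for $\|\Sigma_s^{\mathcal S_n}-\Sigma_s\|_{\mathcal H}\to 0$ is exactly what underlies Proposition~5.1 of \cite{Benth2022}. What the projection-and-tail architecture buys is scalability: the same template covers all $SAMPV(m_1,\dots,m_k)$, where no direct martingale-difference bound on the full increment is available.

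There is one slip to repair. Under the bare hypotheses of a mild It\^o process, stopping at $\tau_k$ can only bound the path integrals $\int_0^T\|\alpha_s\|\,ds$ and $\int_0^T\|\sigma_s\|_{L_{\HS}(U,H)}^2\,ds$, not the integrands themselves (take $\sigma_s$ deterministic, square-integrable but unbounded near $s=0$). Hence your pointwise estimate $\EE\|M_i^n\|^4\le C\Delta_n^2$ is unjustified. The fix is routine: BDG gives $\EE\|M_i^n\|^4\le C_4\,\EE\bigl(\int_{I_i^n}\|\sigma_s^{\mathcal S_n}\|_{L_{\HS}(U,H)}^2\,ds\bigr)^2$, and summing over $i$,
\[
\sum_{i}\EE\|M_i^n\|^4\;\le\; C\,\EE\Bigl[\Bigl(\max_i\int_{I_i^n}\|\sigma_s\|_{L_{\HS}(U,H)}^2\,ds\Bigr)\int_0^T\|\sigma_s\|_{L_{\HS}(U,H)}^2\,ds\Bigr]\longrightarrow 0
\]
by absolute continuity of the Lebesgue integral and dominated convergence under the localised bound $\int_0^T\|\sigma_s\|_{L_{\HS}(U,H)}^2\,ds\le k$. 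The same replacement handles $\sum_i\EE\|\eta_i^n\|_{\mathcal H}^2$ in your compensator-removal step, and the rest of your argument goes through unchanged.
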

The derivation of a corresponding central limit theorem, that is, the asymptotic normality of 
\begin{equation}
    \tilde X_t^n:=SARCV_t^n- \int_0^t\Sigma_s ds:= \sum_{i=1}^{\ul}(\tilde{\Delta}_i^nY)^{\otimes 2} - \int_0^t\Sigma_sds,
\end{equation}
is more involved. First of all, already in finite dimensions some further conditions have to be imposed, which is why we give an analogue of the fairly mild Assumption 5.4.1(i) from \cite{JacodProtter2012}:
\begin{assumption}\label{As: Integrability Assumption on Drift and Volatility}
The coefficients $\alpha$ and $\sigma$ satisfy the following local integrability condition:
$$\mathbb P\left(\int_0^T \|\alpha_s\|^2+\|\sigma_s\|_{L_{\text{HS}}(U,H)}^{4} ds <\infty \right)=1.$$
\end{assumption}
The law of large numbers, Theorem \ref{T: LLN for the SARCV}, is very general, as there are no additional assumptions imposed on $Y$. However, the subtle difference to the convergence of realised variation in the finite-dimensional case is hidden in the rate of convergence. Even if Assumption \ref{As: Integrability Assumption on Drift and Volatility} holds, the speed of convergence may become arbitrarily slow and might not be of magnitude $\mathcal O_p(\sqrt{\Delta_n})$ anymore, where $\mathcal O_p$ denotes {\it boundedness in probability} (c.f. Example \ref{Ex: counterexample for the Spatial regularity assumption} below). The latter is however an important condition to obtain a general infinite-dimensional central limit theorem with respect to some uniform operator topology such as the one induced by the Hilbert-Schmidt  norm. In order to overcome this issue, we  impose further assumptions which increase the regularity of the sample paths of the process or consider limit theorems for the mild solution process evaluated at functionals $h$ that induce some regularity of the respective finite-dimensional process $\langle Y_t,h\rangle$\footnote{One might hope to find a uniform rate $c_n$ such that $c_n^{-1}(SARCV_t^n-\int_0^t \Sigma_s ds)$ converges in distribution to a nontrivial law with respect some 
operator-topology. This is not possible in the general context we are examining:
Example \ref{Ex: counterexample for the Spatial regularity assumption} describes a case, in which for certain irregular functionals $\sqrt n\langle (SARCV_t^n-\int_0^t \Sigma_s ds) h,g\rangle$ diverges. On the other hand, for another choice of functionals ($h,g\in D(\mathcal A)$ for instance) we obtain convergence in distribution to a centered Gaussian law. 
}.

To this purpose, we introduce the notion of Favard spaces. Here, for $\gamma \in (0,1)$ the $\gamma$-Favard space $F_{\gamma}^{\mathcal S}$ is defined by
$$F_{\gamma}^{\mathcal S}=F_{\gamma}^{\mathcal S}(H):=\left\{h\in H:\|h\|_{F_{\gamma}^{\mathcal S}(N)}:=\sup_{t\in [0,N]}\left\|t^{-\gamma}\left(I-\mathcal S(t)\right)h\right\|<\infty, \forall N>0\right\}. $$
As $D(\mathcal A)\subset F^{\mathcal S}_{\gamma}$, these spaces always form dense subsets of $H$ and become Banach spaces when equipped with the norm $\sup_{N\geq 0}\|\cdot\|_{F_{\gamma}^{\mathcal S}(N)}$ as long as the semigroup has a negative growth bound (c.f. \cite{Engel1999}, Chapter II.5). An example of practical importance for a subset of a $1/2$-Favard space are the evaluation functionals in a Sobolev space (this is outlined further in Section \ref{sec: discrete data in time and space}).

For functionals in the $\frac 12$-Favard space, we have the following central limit theorem in the weak operator topology:
\begin{theorem}\label{T: Central limit theorem for functionals of the quadratic covariation}
Define the covariance operator process $\Gamma_t$ for $t\in [0,T]$ on $\mathcal H$ by
$$\Gamma_t B:= \int_0^t\Sigma_s(B+B^*)\Sigma_sds,\quad B\in\mathcal H.$$
Let $B\in \mathcal H$ be an operator with a finite-dimensional range of the form
$B=\sum_{l=1}^K\mu_l h_l\otimes g_l$
for $h_l,g_l\in F_{\frac 12}^{\mathcal S^*}, \mu_l\in\mathbb R$ for $l=1,...,K$, $K\in\mathbb N$
and let Assumption \ref{As: Integrability Assumption on Drift and Volatility} hold. Then 
$$\left(\Delta_n^{-\frac 12}\langle \tilde{X}_t^n ,B\rangle_{\mathcal H}\right)_{t\in[0,T]} \stackrel{\mathcal L-s}{\Longrightarrow}\left(\mathcal N(0, \langle \Gamma_t B,B\rangle)\right)_{t\in[0,T]},$$
where the limiting process on the right is, conditionally on $\mathcal F$, a continuous centered Gaussian process with independent increments defined on a very good filtered extension $(\tilde{\Omega},\tilde{\mathcal F},\tilde{\mathcal F}_t,\tilde{\mathbb P})$ of $(\Omega,\mathcal F,\mathcal F_t,\mathbb P)$.
\end{theorem}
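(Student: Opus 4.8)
The plan is to transfer the statement to a classical finite-dimensional central limit theorem for realised covariations, the semigroup adjustment and the Favard regularity of $h_l,g_l$ entering only through error terms. First I would use the mild representation \eqref{mild Ito process}: since $Y_{i\Delta_n}=\mathcal S(\Delta_n)Y_{(i-1)\Delta_n}+\int_{(i-1)\Delta_n}^{i\Delta_n}\mathcal S(i\Delta_n-s)\alpha_s\,ds+\int_{(i-1)\Delta_n}^{i\Delta_n}\mathcal S(i\Delta_n-s)\sigma_s\,dW_s$, the semigroup part telescopes out of the adjusted increment, so that
\[
\tilde\Delta_i^nY=A_i^n+M_i^n,\qquad A_i^n:=\int_{(i-1)\Delta_n}^{i\Delta_n}\mathcal S(i\Delta_n-s)\alpha_s\,ds,\quad M_i^n:=\int_{(i-1)\Delta_n}^{i\Delta_n}\mathcal S(i\Delta_n-s)\sigma_s\,dW_s .
\]
This is exactly where the adjustment earns its keep. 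After a standard localisation turning Assumption \ref{As: Integrability Assumption on Drift and Volatility} into boundedness of $\alpha$ and $\sigma$, I would pair $(\tilde\Delta_i^nY)^{\otimes2}$ with $B$ using $\langle h\otimes g,B\rangle_{\mathcal H}=\langle Bh,g\rangle$, which gives $\langle (M_i^n)^{\otimes2},B\rangle=\sum_{l=1}^K\mu_l\langle h_l,M_i^n\rangle\langle g_l,M_i^n\rangle$.

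The second step compares $M_i^n$ with the increments of a genuine finite-dimensional continuous martingale. With $N^h_t:=\langle h,\int_0^t\sigma_s\,dW_s\rangle$, the $2K$-dimensional martingale $Z:=(N^{h_1},\dots,N^{h_K},N^{g_1},\dots,N^{g_K})$ satisfies $\langle N^a,N^b\rangle_t=\int_0^t\langle\Sigma_s a,b\rangle\,ds$, and the classical stable CLT for realised covariations (Theorem~5.4.2 in \cite{JacodProtter2012}) applies to it. Mapping this convergence through the continuous linear functional $x\mapsto\sum_l\mu_l x$ shows that $\Delta_n^{-1/2}\sum_l\mu_l\big(\sum_i\Delta_i^nN^{h_l}\Delta_i^nN^{g_l}-\int_0^t\langle\Sigma_sh_l,g_l\rangle\,ds\big)$ converges stably to a conditionally centred Gaussian process with independent increments. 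Its conditional variance is, by the standard fourth-moment formula, $\sum_{l,l'}\mu_l\mu_{l'}\int_0^t\big(\langle\Sigma_sh_l,h_{l'}\rangle\langle\Sigma_sg_l,g_{l'}\rangle+\langle\Sigma_sh_l,g_{l'}\rangle\langle\Sigma_sg_l,h_{l'}\rangle\big)\,ds$, which, using $\Sigma_s=\Sigma_s^*$ and $B=\sum_l\mu_lh_l\otimes g_l$, equals $\int_0^t\operatorname{tr}\!\big(\Sigma_s(B+B^*)\Sigma_sB\big)\,ds=\langle\Gamma_tB,B\rangle$. This is how the precise form of $\Gamma_t$ is recovered.

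It then remains to prove that $\Delta_n^{-1/2}$ times the difference between $\langle (M_i^n)^{\otimes2},B\rangle$ and the frozen summand $\sum_l\mu_l\Delta_i^nN^{h_l}\Delta_i^nN^{g_l}$, as well as all drift contributions, vanishes uniformly on compacts in probability. The discrepancy is driven by $D_i^h:=\langle h,M_i^n\rangle-\Delta_i^nN^h=\int_{(i-1)\Delta_n}^{i\Delta_n}\langle\sigma_s^*(\mathcal S(i\Delta_n-s)^*-I)h,dW_s\rangle$, for which the $\tfrac12$-Favard property yields the conditional bound $\EE[(D_i^h)^2\mid\Fi]\le C\int_{(i-1)\Delta_n}^{i\Delta_n}(i\Delta_n-s)\,ds=O(\Delta_n^2)$. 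Writing each product $D_i^{h_l}\Delta_i^nN^{g_l}$ (and $D_i^{h_l}D_i^{g_l}$, and the drift terms) as a $\Fi$-martingale difference plus its compensator, the martingale-difference parts are removed because the predictable quadratic variations of their normalisations are $O(\Delta_n)$; the same device, together with $\|A_i^n\|\le C\Delta_n$, disposes of the drift.

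The genuinely delicate object, and the expected main obstacle, is the surviving compensator, i.e.\ the bias
\[
\Delta_n^{-\frac12}\sum_{l}\mu_l\int_0^{\ul\Delta_n}\Big(\langle\Sigma_s(\mathcal S^*(r)-I)h_l,\mathcal S^*(r)g_l\rangle+\langle\Sigma_sh_l,(\mathcal S^*(r)-I)g_l\rangle\Big)\,ds,\qquad r:=\lceil s/\Delta_n\rceil\Delta_n-s,
\]
which must tend to zero. The naive estimate $\|(\mathcal S^*(r)-I)h_l\|\le Cr^{1/2}$ only controls the integrand by $O(r^{1/2})$ and leaves this expression at order $O_p(1)$, so it is not sufficient. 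The decay has to be extracted from the fact that $(\mathcal S^*(r)-I)g_l$ carries its mass on the semigroup frequencies of order $\Delta_n^{-1}$, where membership of $h_l$ and $g_l$ in $F_{\frac12}^{\mathcal S^*}$ forces the corresponding tails to vanish, while the uniform trace-class bound on $\Sigma_s=\sigma_s\sigma_s^*$ (coming from boundedness of $\|\sigma_s\|_{L_{\text{HS}}(U,H)}$) controls the remaining low-frequency part; a dominated-convergence argument of this type yields a bias of order $o(\Delta_n^{1/2})$ after normalisation. Once the remainder is shown to be $o_p(1)$ uniformly in $t$, stability of the limit in the second step is preserved under addition of a term vanishing in probability, and the stated stable convergence of $\Delta_n^{-1/2}\langle\tilde X_t^n,B\rangle_{\mathcal H}$ follows.
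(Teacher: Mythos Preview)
Your overall strategy is sound and in some ways more direct than the paper's. The paper first establishes a full $\mathcal H$-valued stable CLT for the auxiliary process $\tilde Z^{n,2}_t=\Delta_n^{-1/2}\big(SARCV^n_t-\sum_i\int_{(i-1)\Delta_n}^{i\Delta_n}\Sigma_s^{\mathcal S_n}ds\big)$ (this requires tightness in $\mathcal D([0,T],\mathcal H)$ and convergence of finite-dimensional distributions via a basis in $D(\mathcal A^*)$), then pairs with $B$ and is left with the same bias term $\Delta_n^{-1/2}\int\langle\Sigma_s^{\mathcal S_n}-\Sigma_s,B\rangle_{\mathcal H}\,ds$ that you isolate. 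You instead bypass the infinite-dimensional tightness by passing at once to the $2K$-dimensional continuous semimartingale $(N^{h_l},N^{g_l})_l$ and invoking Theorem~5.4.2 of \cite{JacodProtter2012}; your identification of the asymptotic variance with $\langle\Gamma_tB,B\rangle$, your Favard estimate $\EE[(D_i^h)^2\mid\Fi]=O(\Delta_n^2)$, and your disposal of the drift and of the martingale-difference parts of the cross terms are all correct.

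The genuine gap is in the compensator (bias) term, which you rightly flag as the crux but whose proposed treatment does not work. Your heuristic---``$(\mathcal S^*(r)-I)g_l$ carries its mass on semigroup frequencies of order $\Delta_n^{-1}$, Favard forces the tails to vanish, the trace-class bound on $\Sigma_s$ controls the low frequencies''---misassigns the roles of the two ingredients and cannot be turned into a proof: the Favard bound $\|(\mathcal S^*(r)-I)g\|\le Cr^{1/2}$ is \emph{all} that Favard yields, so any dominated-convergence argument based on it alone is stuck at the $O_p(1)$ barrier you yourself note. The paper's device is a concrete projection-and-split. Fix an orthonormal basis $(e_j)\subset D(\mathcal A)$ and insert $I=(I-p_N)+p_N$ between $(\mathcal S(r)-I)$ and $\Sigma_s\mathcal S^*(r)h$. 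On the \emph{finite} block one uses $(\mathcal S(r)-I)e_j=\int_0^r\mathcal S(u)\mathcal A e_j\,du=O(r)$, so this piece is $O(\Delta_n^{1/2})$ after normalisation and vanishes as $n\to\infty$ for each fixed $N$. On the \emph{tail} $p_N$ one keeps only the Favard bound on $g$ (giving mere boundedness after normalisation) but multiplies it by $\big(\int_0^T\EE\|p_N\sigma_s\|_{\text{op}}^2\,ds\big)^{1/2}$, which tends to $0$ as $N\to\infty$ uniformly in $n$ because $\sigma_s$ is Hilbert--Schmidt; this is where compactness of $\Sigma_s$ enters, and it acts on the \emph{high}-frequency tail, not the low frequencies. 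Letting $n\to\infty$ and then $N\to\infty$ closes the argument. Without this two-scale splitting (or an equivalent mechanism), the bias estimate does not go through.
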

For the notion of a very good filtered extension we refer to  \cite[Sec.~2.4.1]{JacodProtter2012}.
Let us now give two examples of operators $B$ that can be chosen in Theorem \ref{T: Central limit theorem for functionals of the quadratic covariation} to make inference on term structure models.
\begin{example}\label{Ex: Point evaluations and local averages}
We consider examples of practical importance: local averages and evaluation functionals.
\begin{itemize}
    \item[(a)](Local averages) Consider the case that $H=L^2(0,1)$ and $\mathcal S$ is the nilpotent shift semigroup defined in \eqref{Nilpotent Shift}. We have for $t\in [0,1]$ that $\mathcal S^*(t)f(x)= \indicator_{[t,1]}(x) f(x-t)$. Then it holds, for $0< b\leq 1$ and $t<b$, that 
    \begin{align*}
         \|(\mathcal S(t)^*-I)\indicator_{[0,b]}\|_{L^2(0,1)}^2
        =  (\min(b+t,1)-b)+t,
    \end{align*}
            which shows that $\indicator_{[0,b]}\in F_{\frac 12}^{\mathcal S^*}$ but  $\indicator_{[0,b]}\notin  F_{ \gamma}^{\mathcal S^*}$ for any $\gamma> 1/2$. Since Favard-spaces are vector spaces, this yields in particular, that by virtue of Theorem \ref{T: Central limit theorem for functionals of the quadratic covariation} we can analyse one-dimensional (or multivariate) stochastic processes that arise as local averages over certain areas of a mild solution. That is, we can readily analyse time series $\bar y_{i\Delta_n}^{a,b}, i=0,...,\ulT$ where
            $$ \bar y_{i\Delta_n}^{a,b}:=\frac 1{b-a}\int_a^b Y_{i\Delta_n}(x)dx = \frac 1{b-a}\langle Y_{i\Delta_n},\indicator_{a,b}\rangle_{ L^2(0,1)}.$$
             For forward curves in term structure models this kind of sampling structure appears naturally as differences of yield curve values or (log-)bond prices which can be observed in the market, since for a zero coupon bond price at time $t$ with time to maturity $x+t$ we have
            $$P_t(x)=e^{-\int_0^x f_t(y)dy}.$$ In energy markets we also observe prices as weighted averages of instantaneous forward prices in the form of energy-swap contracts guaranteeing delivery of energy over a certain time (c.f. \cite{BSBK}).
            A practically relevant class of operators are, hence, weighted sums of indicator functionals of the form
            $$\sum_{i,j=1}^d w_{i,j} \indicator_{[a_i,b_i]}\otimes\indicator_{[a_j,b_j]},$$
            for some intervals $[a_i,b_i]\subset [0,1]$ and $w_{i,j}\in \mathbb R$ for $i,j=1,...,d$.
            \item[(b)](Evaluation functionals) For $H=H^1(0,1)$ we can define evaluation functionals $\delta_x$ by $\delta_x f=f(x)$  for all $x\in [0,1]$. These functionals satisfy $\delta_x\in F_{\frac 12}^{\mathcal S^*}$, while  $\delta_x\notin  F_{ \gamma}^{\mathcal S^*}$ for any $\gamma> 1/2$ if $x\in [0,1)$. This is shown in Lemma \ref{L: Evaluation functionals in Sobolev spaces have 1/2 regularity} below where statistical estimation within this framework is elaborated in a fully discrete setting. We can, hence, analyse one-dimensional (or multivariate) stochastic processes that arise as evaluations of mild solutions of first-order stochastic partial differential equations at a finite number of points.
            A practically relevant class of operators are, thus, weighted sums of evaluation functionals of the form
            $$B= \sum_{i,j=1}^d w_{i,j}\delta_{x_i}\otimes \delta_{x_j},$$
             for some elements $x_i\subset [0,1]$ and $w_{i,j}\in \mathbb R$ for $i,j=1,...,d$.
\end{itemize}
\end{example}

In order to derive a stable central limit theorem for the $SARCV$ with respect to the Hilbert-Schmidt norm, we need to impose regularity assumptions on the volatility process itself, namely:
\begin{assumption}\label{As: Spatial regularity}
One of the two following conditions holds:
\begin{itemize}
    \item[(i)] 
    $$\int_0^T\sup_{t\in[0,T]}\mathbb E [\| t^{-\frac 12}(I-\mathcal S(t))\sigma_s\|_{\text{op}}^2]ds<\infty;$$
\item[(ii)]
$$\mathbb P\left[\int_0^T\sup_{t\in[0,T]}\| t^{-\frac 12}(I-\mathcal S(t))\sigma_s\|_{\text{op}}^2ds<\infty\right]=1.$$
\end{itemize}
\end{assumption}
\begin{remark}
Observe that if the semigroup has negative growth bound and, thus, $F_{\frac 12}^{\mathcal S}$ is a Banach space, Assumption \ref{As: Spatial regularity}(i) and (ii) can be rewritten as 
\begin{itemize}
    \item[(i)] $\sigma\in L^2\left([0,T],F_{\frac 12}^{\mathcal S}\left(L^2(\Omega,L\left(U,H)\right)\right)\right)$
    \item[(ii)]$\mathbb P\left[\sigma\in L^2\left([0,T],L\left(U,F_{\frac 12}^{\mathcal S}(H)\right)\right)\right]=1$.
\end{itemize}
\end{remark}
Now we state the associated central limit theorem.
\begin{theorem}\label{T:Infeasible Central Limit Theorem}
Let $\Gamma$ be as in Theorem \ref{T: Central limit theorem for functionals of the quadratic covariation}. Under Assumptions \ref{As: Integrability Assumption on Drift and Volatility} and \ref{As: Spatial regularity} we have that 
\begin{equation}
    (\Delta_n^{-\frac 12}\tilde{X}_t^n)_{t\in[0,T]}\stackrel{\mathcal L-s}{\Longrightarrow} (\mathcal N(0,\Gamma_t))_{t\in[0,T]},
\end{equation}
where the limiting process on the right is, conditionally on $\mathcal F$, a continuous centered $\mathcal H$-valued Gaussian process with independent increments defined on a very good filtered extension $(\tilde{\Omega},\tilde{\mathcal F},\tilde{\mathcal F}_t,\tilde{\mathbb P})$ of $(\Omega,\mathcal F,\mathcal F_t,\mathbb P)$.
\end{theorem}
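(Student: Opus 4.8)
My plan is to adapt the classical scheme for realised-variation central limit theorems (in the spirit of \cite{JacodProtter2012}) to the Hilbert-space setting, isolating a martingale part to which an infinite-dimensional stable martingale CLT applies and showing that every other contribution is asymptotically negligible. \textbf{Step 1 (representation and localisation).} Using the mild-solution form \eqref{mild Ito process} together with the semigroup property, the adjusted increment splits as $\tilde\Delta_i^nY = A_i^n + M_i^n$, where $A_i^n=\int_{(i-1)\Delta_n}^{i\Delta_n}\mathcal S(i\Delta_n-s)\alpha_s\,ds$ and $M_i^n=\int_{(i-1)\Delta_n}^{i\Delta_n}\mathcal S(i\Delta_n-s)\sigma_s\,dW_s$; the adjustment is precisely what removes the propagated history $\mathcal S(\Delta_n)Y_{(i-1)\Delta_n}$, leaving a local stochastic integral over the $i$-th block. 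A standard localisation argument lets me assume, without loss of generality for a stable limit, that $\alpha$, $\sigma$ and the quantities appearing in Assumption \ref{As: Spatial regularity} are uniformly bounded. Expanding $(\tilde\Delta_i^nY)^{\otimes2}=(M_i^n)^{\otimes2}+A_i^n\otimes M_i^n+M_i^n\otimes A_i^n+(A_i^n)^{\otimes2}$ and using the order estimates $\|A_i^n\|=O(\Delta_n)$ and $\EE[\|M_i^n\|^2\mid\Fi]=O(\Delta_n)$, I would show that all terms containing $A_i^n$ contribute $o_p(\Delta_n^{1/2})$ in Hilbert--Schmidt norm, reducing the problem to the asymptotics of $\Delta_n^{-1/2}\big(\sum_{i=1}^{\ul}(M_i^n)^{\otimes2}-\int_0^t\Sigma_s\,ds\big)$.

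\textbf{Step 2 (stable martingale CLT).} I would center the quadratic terms by their conditional means, setting $\zeta_i^n:=\Delta_n^{-1/2}\big((M_i^n)^{\otimes2}-\EE[(M_i^n)^{\otimes2}\mid\Fi]\big)$, a martingale-difference array in $\mathcal H$ with $\EE[(M_i^n)^{\otimes2}\mid\Fi]=\EE[\int_{(i-1)\Delta_n}^{i\Delta_n}\Sigma_s^{\mathcal S_n}\,ds\mid\Fi]$. The core is then a Hilbert-space stable martingale CLT, for which I would verify: (i) convergence of the predictable quadratic variation, $\sum_i\EE[\langle\zeta_i^n,B\rangle_{\mathcal H}\langle\zeta_i^n,B'\rangle_{\mathcal H}\mid\Fi]\to\langle\Gamma_tB,B'\rangle_{\mathcal H}$ for all $B,B'\in\mathcal H$; (ii) a conditional Lindeberg/negligibility condition $\sum_i\EE[\|\zeta_i^n\|^2\indicator_{\{\|\zeta_i^n\|>\epsilon\}}\mid\Fi]\to0$, which follows from the uniform bounds together with the fourth-order integrability of $\sigma$ in Assumption \ref{As: Integrability Assumption on Drift and Volatility}; and (iii) asymptotic orthogonality, $\sum_i\EE[\zeta_i^n\,(W_{i\Delta_n}-W_{(i-1)\Delta_n})\mid\Fi]\to0$ and the analogous condition against any bounded martingale orthogonal to $W$, which upgrades the convergence to stable convergence and identifies the limit as conditionally Gaussian and independent of the driving noise.

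\textbf{Step 3 (identification of $\Gamma$).} For (i) I would freeze the volatility over each block, approximating $M_i^n$ by a centred Gaussian vector with covariance $\int_{(i-1)\Delta_n}^{i\Delta_n}\Sigma_s^{\mathcal S_n}\,ds$; the fourth-moment (Wick/Isserlis) structure of Gaussian tensor squares then produces, for $B\in\mathcal H$, the block contribution $\Sigma_s(B+B^*)\Sigma_s$ up to errors vanishing after summation, yielding exactly $\Gamma_tB=\int_0^t\Sigma_s(B+B^*)\Sigma_s\,ds$. Controlling the error from replacing $\Sigma_s^{\mathcal S_n}$ by $\Sigma_s$ and from the within-block volatility fluctuation again relies on Assumption \ref{As: Spatial regularity} and the square-integrability of $\sigma$.

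\textbf{Step 4 (bias and tightness -- the main obstacle).} The genuinely infinite-dimensional difficulty is the semigroup-induced bias $\Delta_n^{-1/2}\big(\sum_i\EE[\int_{(i-1)\Delta_n}^{i\Delta_n}\Sigma_s^{\mathcal S_n}\,ds\mid\Fi]-\int_0^t\Sigma_s\,ds\big)$. Writing $\Sigma_s^{\mathcal S_n}-\Sigma_s=(\mathcal S(i\Delta_n-s)-I)\Sigma_s\,\mathcal S(i\Delta_n-s)^*+\Sigma_s(\mathcal S(i\Delta_n-s)^*-I)$, the $1/2$-Favard regularity only controls each factor at order $(i\Delta_n-s)^{1/2}$, so a crude bound leaves this term at order $\Delta_n^{1/2}$ after summation, i.e.\ $O(1)$ after the $\Delta_n^{-1/2}$ scaling; the delicate point is that the bias concentrates, converging weakly but not in norm, so it must be shown negligible precisely in the topology in which convergence is claimed. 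I expect this -- together with establishing tightness of $(\Delta_n^{-1/2}\tilde X^n)$ in $\mathcal D([0,T],\mathcal H)$ via a moment criterion for the centred martingale part -- to be the hardest and most technical part, and exactly the place where the weak-operator statement of Theorem \ref{T: Central limit theorem for functionals of the quadratic covariation} must be strengthened to the Hilbert--Schmidt norm using Assumption \ref{As: Spatial regularity}. Combining the stable convergence of the finite-dimensional distributions from Steps 2--3 with tightness then yields $(\Delta_n^{-1/2}\tilde X_t^n)_{t\in[0,T]}\stackrel{\mathcal L-s}{\Longrightarrow}(\mathcal N(0,\Gamma_t))_{t\in[0,T]}$.
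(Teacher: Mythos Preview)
Your overall architecture --- localise, eliminate the drift, center $(\tilde\Delta_i^nY)^{\otimes2}$ by $\int_{(i-1)\Delta_n}^{i\Delta_n}\Sigma_s^{\mathcal S_n}\,ds$ to obtain an $\mathcal H$-valued martingale-difference array $\tilde Z^{n,2}$, establish a stable CLT for that array, and finally control the bias $\Delta_n^{-1/2}\sum_i\int_{(i-1)\Delta_n}^{i\Delta_n}(\Sigma_s^{\mathcal S_n}-\Sigma_s)\,ds$ --- matches the paper. The execution of the CLT step differs. In Step~2 you appeal to a direct Hilbert-space stable martingale CLT (bracket convergence, Lindeberg, orthogonality to $W$); the paper instead proves (a) tightness of $(\tilde Z^{n,2})$ in $\mathcal D([0,T],\mathcal H)$ via Aldous' condition together with an equi-small-tails bound $\sup_n\sum_{k,l\ge N}\EE[\langle\tilde Z^{n,2}_t,e_k\otimes e_l\rangle_{\mathcal H}^2]\to0$, and (b) stable convergence of each finite-dimensional projection $\langle\tilde Z^{n,2},e_k\otimes e_l\rangle_{\mathcal H}$ by reducing it, using $e_k\in D(\mathcal A^*)$, to the realised-covariation CLT for the \emph{finite-dimensional} semimartingale $(\langle S_t,e_1\rangle,\ldots,\langle S_t,e_N\rangle)$ from \cite{JacodProtter2012}. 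This sidesteps any off-the-shelf infinite-dimensional stable martingale CLT; your conditions (i)--(iii) alone do not deliver $\mathcal H$-valued convergence without a separate tightness input, and your closing sentence about combining finite-dimensional convergence with tightness suggests you are in effect drifting toward the paper's scheme anyway.

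Step~4 has a genuine gap. You correctly observe that a crude bound $\|(\mathcal S(r)-I)\sigma_s\|_{\text{op}}=O(r^{1/2})$ leaves the bias at $O(1)$ after scaling, but you do not supply the mechanism that kills it in Hilbert--Schmidt norm (and the phrase ``converging weakly but not in norm'' is misleading: under Assumption~\ref{As: Spatial regularity} it \emph{does} vanish in norm). The paper's device is again a projection split with an orthonormal basis $(e_j)\subset D(\mathcal A^*)$. On the range of $I-P_N$ one has $(\mathcal S(r)^*-I)e_k=\int_0^r\mathcal S(u)^*\mathcal A^*e_k\,du=O(r)$, so each coordinate $\langle(\Sigma_s^{\mathcal S_n}-\Sigma_s)e_k,e_l\rangle$ is $O(\Delta_n)$ and the projected bias is $O(\Delta_n^{1/2})$ for every fixed $N$. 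On the range of $P_N$ one estimates $\|P_N(\Sigma_s^{\mathcal S_n}-\Sigma_s)\|_{\mathcal H}$ by a product whose first factor $\Delta_n^{-1/2}\|(\mathcal S(\cdot)-I)\sigma_s\|_{\text{op}}$ is bounded uniformly in $n$ by Assumption~\ref{As: Spatial regularity}, and whose second factor satisfies $\sup_n\int_0^T\EE\|p_N\sigma_s^{\mathcal S_n}\|_{L_{\text{HS}}(U,H)}^2\,ds\to0$ as $N\to\infty$ by the paper's equi-small-tails lemma. Sending $n\to\infty$ first, then $N\to\infty$, gives u.c.p.\ convergence of the bias to $0$ in $\mathcal H$. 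Without this $(I-P_N)/P_N$ argument your Step~4 does not close.
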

Assumption \ref{As: Spatial regularity} is a sharp regularity criterion for the validity of the central limit theorem in the Hilbert-Schmidt norm:
\begin{example}\label{Ex: counterexample for the Spatial regularity assumption}
Assumption \ref{As: Spatial regularity} is sharp in the sense that for all $\mathfrak H<\frac 12$ we can always find a deterministic and constant volatility $\sigma$, such that \begin{equation}\label{too weak spatial regularity}
     \sup_{t\in[0,T]}\| t^{-\mathfrak H}(I-\mathcal S(t))\sigma\|_{L_{\text{HS}}(U,H)}<\infty,
 \end{equation} 
    but convergence in distribution of $\sqrt n\tilde X_t^n$ cannot take place, even with respect to the weak operator topology. Such a specification can be done for instance in the following way: Take $H=L^2[0,2]$, $(\mathcal S(t))_{t\geq 0}$ the nilpotent semigroup of left-shifts, such that for $x\in [0,2],t\geq 0$ it is $\mathcal S(t)f(x)=\indicator_{[0,2]}(x+t)f(x+t)$ and 
    $\sigma=e\otimes X,$
    where $e\in H$ such that $\|e\|=1$ and $X$ is an appropriately chosen path of a rough fractional Brownian motion. That is,  $X(x)=B^{\mathfrak H}_x(\omega)$ for a fractional Brownian motion $(B^{\mathfrak H}_x)_{x\geq 0}$ with Hurst parameter $\mathfrak H<\frac 12$, defined on another probability space $(\bar{\Omega},\bar{\mathcal F},\bar{\mathbb P})$ and $\omega\in \bar{\Omega}$ is such that $B^{\mathfrak H}_x$ is $\mathfrak H$-Hölder continuous and guarantees divergence of $\sqrt n\tilde X_t^n$. Clearly, $B^{\mathfrak H}(\omega)$ is globally $\mathfrak H$-Hölder continuous on $[0,2]$ and we can find a $C> 0$ such that
    $$\left\| \frac{(I-\mathcal S(t))}{t^{\mathfrak H}}\sigma\right\|_{L_{\text{HS}}(U,H)}^2=\int_0^{2-t} \left(\frac{B^{\mathfrak H}_x(\omega)-B^{\mathfrak H}_{x+t}(\omega)}{t^{\mathfrak H}}\right)^2dx+\int_{2-t}^2 \left(\frac{B^{\mathfrak H}_x(\omega)}{t^{\mathfrak H}}\right)^2dx\leq C
    $$
    Hence, we have that \eqref{too weak spatial regularity} holds. However, it is intuitively clear, that the lower $\mathfrak H$ is chosen, the worse the impact on the regularity of $Y$ is, which eventually leads to divergence of $\sqrt n\tilde X_t^n$ for the rough case $\mathfrak H<\frac 12$.
    We give a detailed verification of this counterexample as well as how to choose the appropriate $\omega$ in Appendix \ref{sec: Remaining proofs}.
    
     In order to account for such irregularities, one often scales the increments in a particular way and still obtains a feasible limit theory, such as was done for second-order stochastic partial differential equations in \cite{Bibinger2020} or \cite{Chong2020} and for Brownian semistationary processes in  \cite{CNW2006}, \cite{BNCP11}, \cite{BNCP10b}, \cite{CorcueraHPP2013},  \cite{CorcueraNualartPodolskij2015} and \cite{VG2019, PV2019}. However,  by the law of large numbers, Theorem \ref{T: LLN for the SARCV} we deduce that these rescaling arguments would lead to inconsistent estimators.
    \end{example}
To get an intuition about the regularity that is induced by Assumption \ref{As: Spatial regularity}, observe the following
\begin{remark}
Assumption \ref{As: Spatial regularity}(i) (and \ref{As: Spatial regularity}(ii)) increases the regularity of $Y$ in space and time: In fact, suppose that the volatility has bounded second moment, that is, 
$\sup_{s\in[0,T]}\mathbb E\left[\left\|\sigma_s\right\|_{L_{\text{HS}}(U,H)}^2\right]<\infty.$
The assumption then says that the stochastic convolution is weakly mean-square $\frac 12$-regular in time, as for each $h\in H$ and $0\leq u<t\leq T$
\begin{align}\label{Induced weak time regularity}
   & \mathbb E\left[ \left(\langle\int_0^t \mathcal S(t-s)\sigma_s dW_s-\int_0^u\mathcal S(u-s)\sigma_s dW_s,h\rangle\right)^2\right]^{\frac 12}\notag\\
     &\qquad\leq 
     \left(\int_0^u\mathbb E\left[ \left\|\left((\mathcal S(t-u)-I) \mathcal S(u-s)\sigma_s \right)^*h\right\|^2\right]ds\right)^{\frac 12}\notag\\
     &\qquad\qquad+\left( \int_u^t\mathbb E\left[\left\|\left(\mathcal S(t-s)\sigma_s \right)^*h\right\|^2\right]ds\right)^{\frac 12}\notag\\
     &\qquad= \mathcal O\left((t-u)^{\frac 12}\right).
\end{align} 
If we are in a reproducing kernel Hilbert space (i.e.~a Hilbert space of functions, say over an interval in $\mathbb R$ such that the evaluation functionals $\delta_x$ are continuous) and the semigroup is the shift semigroup, it is easy to see that the assumption also gives mean-square $\frac 12$-regularity in space: To see this, we write $\delta_x$ for the evaluation functionals in $H$ and observe that
\begin{align}\label{Induced weak space regularity}
   & \mathbb E\left[\left|\int_0^t \mathcal S(t-s)\sigma_s dW_s (x)-\int_0^t \mathcal S(t-s)\sigma_s dW_s (y)\right|^2\right]^{\frac 12}\notag\\
    &\qquad=\mathbb E\left[\left|\delta_0\left(\int_0^t \mathcal S(t-s)\mathcal S(y)(\mathcal S(x-y)-I)\sigma_s dW_s\right)\right|^2\right]^{\frac 12}\notag\\
    &\qquad\leq  \|\delta_0\| \sup_{t\in[0,T]}\|\mathcal S (t)\|_{\text{op}}\left(\int_0^t \mathbb E\left[\|\mathcal S(x-y)-I)\sigma_s \|_{\text{op}}^2\right]ds\right)^{\frac 12}\notag\\
    &\qquad=\mathcal O(|x-y|^{\frac 12}),
\end{align}
by It{\^o}'s formula for $x>y$.
Combining \eqref{Induced weak time regularity} and \eqref{Induced weak space regularity} we find that the random field $(t,x)\mapsto\int_0^t\mathcal S(t-s)\sigma_s dW_s(x)$ has mean-square regularity $\frac 12$ in space and in time.
\end{remark}
\begin{remark}[What if the semigroup adjustment is infeasible?]\label{Rem: Strong solutions and RV vs SARCV}
The semigroup adjustment can readily be implemented in situations in which the semigroup is known and has a simple form (e.g. a simple left-shift as in term structure models).
However, it should be noted that the adjustment might be hard or even impossible to implement in some cases. For instance, a commonly encountered situation is $\mathcal A=\kappa \mathcal A'$ for some known generator $\mathcal A'$ of a strongly continuous semigroup $(\mathcal T(t))_{t\geq 0}$ in $H$ and an unknown parameter $\kappa$. In this case, we have $\mathcal S(t)=\mathcal T(\kappa t)$ and without further knowledge of the parameter $\kappa$, SARCV is an infeasible estimator.

It is, hence, important to characterise situations, in which the semigroup adjustment is superfluous and we can use the simpler infinite-dimensional realised variation \eqref{realised covariation (nota adjusted)}. We give weak regularity conditions on the volatility guaranteeing consistency and asymptotic normality of $RV_t^n$ in section \ref{sec: Necessity of the Adjustment}. A simple, yet very relevant situation is when the volatility has a finite second moment and is contained in the domain of the generator $\mathcal A$ of the semigroup. Assuming the drift to be zero for convenience,  it is well known that in this case the stochastic convolution \eqref{mild Ito process} is a strong solution to the SPDE
$$dY_t=\mathcal AY_tdt+\sigma_tdW_t,\quad Y_0=0,\,t\in[0,T],$$
(which is especially fulfilled if $\mathcal A$ is continuous), c.f. \cite[Theorem 3.2]{GM2011}.
This yields that $Y$ is of the form
$$Y_t=\int_0^t \mathcal A Y_s ds+\int_0^t \sigma_s dW_s,$$
such that we can reinterpret $Y$ to be a mild It{\^o} process of the form \eqref{mild Ito process} with the semigroup to be the identity and $\alpha_t = \mathcal A Y_t$ for the sake of the limit theory. In that way, Assumption \ref{As: Spatial regularity} is trivially fulfilled and the realised covariation $RV^n_t$ (c.f. \eqref{realised covariation (nota adjusted)}) 
is consistent and asymptotically mixed normal.

At the same time, the adjustment with the initial semigroup (generated by $\mathcal A$) also leads to a consistent estimator, since the semigroup is Lipschitz-continuous on the range of the volatility due to the mean value theorem. Thus, $SARCV^n$ converges in probability to the same limit and has the same asymptotic normal distribution as $RV^n$.
However, the assumption that the volatility is in the domain of the generator $\mathcal A$ or the existence of a strong solution is oftentimes too strong and we give some weaker regularity conditions in Section \ref{sec: Necessity of the Adjustment} enabling us to use $RV_t^n$ even in some situations in which $Y$ does not have the pleasant semimartingale structure of a strong solution. Yet, in some important cases also these conditions might be too strong and
the asymptotic equality of the semigroup-adjusted and the nonadjusted variation is not in general fulfilled (c.f. Section \ref{sec: Necessity of the Adjustment}).
\end{remark}
\begin{remark}[Which CLT to use in practise?]
Both results Theorem \ref{T: Central limit theorem for functionals of the quadratic covariation} and \ref{T:Infeasible Central Limit Theorem} are central limit theorems for the same process. While Theorem \ref{T:Infeasible Central Limit Theorem} yields a more general convergence, it comes along with the additional regularity Assumption \ref{As: Spatial regularity}, while Theorem \ref{T: Central limit theorem for functionals of the quadratic covariation} does not impose further assumptions on the mild It{\^o} process $Y$ itself, but rather on the functionals under which we observe it. 

Hence, we might use Theorem \ref{T: Central limit theorem for functionals of the quadratic covariation} in situations in which regularity assumptions on the volatility are not reasonable or cannot be guaranteed to hold and we are interested in testing hypotheses or finding confidence intervals of sufficiently regular functionals of the integrated volatility (in terms of the assumption of the theorem). Two important classes of such functionals (or even linear combinations of these) are presented in Example \ref{Ex: Point evaluations and local averages}.
In term structure models, for instance, we might want to quantify the estimation error of the volatility corresponding to a particular economic parameter. For instance, it is usually important to consider the spread between two forward contracts with maturities far from each other. We are then interested in confidence intervals for the volatility of the process $\langle \delta_x-\delta_y, f_t\rangle)_{t\in [0,T]}$ for the long maturity $x$ and the short term maturity $y$ where $\delta_x$ and $\delta_y$ are evaluation functionals $\delta_x f=f(x)$ in the Sobolev space $H^1(0,1)$ which is defined in Section \ref{sec: Term Structure Setting}.
  In this case, we have to characterise the asymptotic distribution of $\int_0^T \langle \Sigma_s (\delta_x-\delta_y), (\delta_x-\delta_y) \rangle ds$. It turns out, that the evaluation functionals $\delta_x$ and $\delta_y$ are sharply in the space $F_{\frac 12}^{\mathcal S^*}$ for the shift semigroup $\mathcal S$ defined in Section \ref{sec: Term Structure Setting}, such that we can use Theorem \ref{T: Central limit theorem for functionals of the quadratic covariation} with the choice $B=(\delta_x-\delta_y)^{\otimes 2}$ (c.f. Lemma \ref{L: Evaluation functionals in Sobolev spaces have 1/2 regularity} below).

On the other hand, if regularity Assumption \ref{As: Spatial regularity} is reasonable to assume, Theorem \ref{T:Infeasible Central Limit Theorem} makes Theorem \ref{T: Central limit theorem for functionals of the quadratic covariation} obsolete. Infinite-dimensional central limit theorems as Theorem \ref{T:Infeasible Central Limit Theorem} can be used to design hypothesis tests based on nonlinear functionals of integrated volatility via an infinite-dimensional Delta method (c.f. \cite[Section 3.9]{vanderVaart1996}), or to make inference on the eigencomponents of integrated volatility in the same way infinite-dimensional limit theorems guarantee the asymptotic normality of empirical eigenfunctions for covariance operators (c.f. \cite{Kokoszka2013}) and we could also test for functionals that are not in the $1/2$ Favard-space of the dual of the semigroup. The latter is for instance the case for indicator functionals (hence, local averages) in $L^2(0,1)$ and the heat semigroup (c.f. Section \ref{sec: Heat Equation} below),  for which the Favard spaces are sharply embedded into Hölder spaces of continuous functions (c.f. \cite[Proposition 5.33]{Engel1999}).
\end{remark}

\subsection{Estimation of conditional covariance}\label{sec: Conditional Covariance}
As argued in the introduction, estimating integrated volatility corresponds to the estimation of the conditional covariance of the noise process if we assume that the volatility and the Wiener process are independent. As opposed to the semimartingale case, however, it is not the conditional covariance of the increments or adjusted increments of a mild solution of an SPDE. The latter can, nevertheless be estimated within our framework as well and might be used for inference on the dynamics.

As a motivation, we show in the next example how we can build time-series models from HJMM-term structure dynamics.
\begin{example}[HJMM-time series model]
Let us come back to the term structure model described in Section \ref{sec: Term Structure Setting}. 
Assume that the drift and volatility processes are independent of the cylindrical Wiener process and stationary.
We want to build a functional quarterly time-series $(F_i)_{i\in \mathbb N}$ for the forward curve process, that describes the dynamics of the arbitrage-free HJMM-dynamics well and might for instance be used in forecasting. 
Measuring time in years, it is then
\begin{align*}
    F_i:=Y_{\frac i4}= & \mathcal S\left(\frac 14\right) Y_{\frac {i-1}4}+\int_{\frac {i-1}4}^{\frac i4} \mathcal S\left(\frac i4-s\right)\alpha_s ds +\int_{\frac {i-1}4}^{\frac i4} \mathcal S\left(\frac i4-s\right)\sigma_s dW_s\\
    =& \mathcal S\left(\frac 14\right) Y_{\frac {i-1}4}+\mu_i + \epsilon_i,
\end{align*}
where
$$\mu_i:=\int_{\frac {i-1}4}^{\frac i4} \mathcal S\left(\frac i4-s\right)\alpha_s ds,\quad \epsilon_i:= \int_{\frac {i-1}4}^{\frac i4} \mathcal S\left(\frac i4-s\right)\sigma_s dW_s.$$
Defining
$\Sigma_i^*:= \int_0^{\frac 14}\mathcal S(\frac 14-s)\Sigma_{s+\frac {(i-1)}4}\mathcal S(\frac 14-s)^*ds$, we obtain a stationary time-series of covariance operators, such that 
$$\epsilon_i|\sigma\sim  N(0,\Sigma_i^*),\quad i\in\mathbb N,$$
forms a weak white noise sequence.

Assuming the time-series $\mu_i$ to be deterministic and constant and potentially violating the no-arbitrage setting, we can proceed in a straightforward manner: If $\mu$ is deterministic and constant, estimation of mean $\mu$ and covariance $C=\mathbb E[\Sigma_i^*]$ can be based on their empirical counterparts via the adjusted increments $(Y_{\frac i4}-\mathcal S(\frac 14)Y_{\frac {i-1}4})$. 
We might then conduct a dimension reduction of the model by functional principal component analysis.

The conditional heteroscedasticity of the $F_i$ would necessitate a sharper analysis of the time series of conditional covariances $(\Sigma^*_i)_{i\in \mathbb N}$.
We might assume that it follows a particular functional time-series model and treat it as observed rather than latent in the spirit of \cite{ABDL2003}. In the latter case, this is justified by the observation that in the case of continuous semimartingales integrated volatility is the same as the conditional covariance of the increments of the process and is observable under continuous observations. In our case integrated volatility is observable as well by virtue of Theorem \ref{T: LLN for the SARCV} but does not correspond to the conditional covariance of adjusted increments anymore. Fortunately, adjusting our estimator appropriately makes observation of the conditional covariance possible as well. Even better, we can estimate it without imposing the regularity Assumption \ref{As: Spatial regularity}. This result can be found in Corollary \ref{C: Limit theorems for the conditional Covariance} below.

\end{example}
Let us come back to the general setting. For $0\leq U\leq T$,  define 
\begin{equation}
    \int_U^T\Sigma_s^Tds:=\int_U^T \mathcal S(T-s)\Sigma_s\mathcal S(T-s)^* ds.
\end{equation}
In the case that the drift and the volatility are independent of the driving Wiener process this is the conditional covariance of the adjusted increments. I.e.~we have 
$$(Y_T-\mathcal S(T-U)Y_U)|\alpha,\sigma\sim \mathcal N\left(\int_U^T \mathcal S(T-s)\alpha_s ds ,\int_U^T \Sigma_s^T ds\right).$$
In that regard, it is helpful to exploit that the process
\begin{align*}
 Y^T_t:=&\mathcal S(T)Y_0+\int_0^t \mathcal S(T-s)\alpha_sds+\int_0^t \mathcal S(T-s)\sigma_sdW_s\\
 = &\tilde Y_0+\int_0^t \tilde{\alpha}_sds+\int_0^t \tilde{\sigma}_sdW_s,\quad t\in [0,T],
\end{align*}
is a semimartingale on $H$, where $\tilde Y_0:= \mathcal S(T) Y_0$, $\tilde{\alpha}_t=\mathcal S(T-t)\alpha_t$ and $\tilde{\sigma}_t=\mathcal S(T-t)\sigma_t$. Hence, the associated nonadjusted realised covariation is a consistent and asymptotically normal estimator of $\int_0^T\Sigma_s^Tds$. 
Luckily, in the presence of the functional data $(Y_{i\Delta_n},i=1,...,\ulT)$, we can reconstruct the quadratic variation corresponding to $Y^T$ 
by
$$Y_{i\Delta_n}^T-Y_{(i-1)\Delta_n}^T= \mathcal S(T-i\Delta_n) Y_{i\Delta_n}-\mathcal S(T-(i-1)\Delta_n)Y_{(i-1)\Delta_n}.$$
This yields the following limit theorems as a corollary of Theorem \ref{T:Infeasible Central Limit Theorem} and Remark \ref{Rem: Strong solutions and RV vs SARCV}, which do not need Assumption \ref{As: Spatial regularity}:
\begin{corollary}\label{C: Limit theorems for the conditional Covariance}
We have 
\begin{align*}
&\sum_{i=1}^{\ulT} \left(\mathcal S(T-i\Delta_n) Y_{i\Delta_n}-\mathcal S(T-(i-1)\Delta_n)Y_{(i-1)\Delta_n}\right)^{\otimes 2}
\stackrel{ u.c.p.}{\longrightarrow} \int_0^T\Sigma_s^Tds,
\end{align*}
and, if Assumption \ref{As: Integrability Assumption on Drift and Volatility} holds, we also have
\begin{align*}
&\Delta_n^{-\frac 12}\left(\sum_{i=1}^{\ulT} \left(\mathcal S(T-i\Delta_n) Y_{i\Delta_n}-\mathcal S(T-(i-1)\Delta_n)Y_{(i-1)\Delta_n}\right)^{\otimes 2}-\int_0^T\Sigma_s^Tds\right)\\
&\qquad \stackrel{\mathcal L-s}{\Longrightarrow} \mathcal N(0,\int_0^T\mathcal S(T-s)\Sigma_s\mathcal S(T-s)^*(\cdot+\cdot^*)\mathcal S(T-s)\Sigma_s\mathcal S(T-s)^*ds).
\end{align*}
In particular, we obtain that 
\begin{align*}
&\sum_{i=\lfloor U/\Delta_n\rfloor+1}^{\ulT} \left(\mathcal S(T-i\Delta_n) Y_{i\Delta_n}-\mathcal S(T-(i-1)\Delta_n)Y_{(i-1)\Delta_n}\right)^{\otimes 2}
\stackrel{ u.c.p.}{\longrightarrow} \int_U^T\Sigma_s^Tds,
\end{align*}
and under 
Assumption \ref{As: Integrability Assumption on Drift and Volatility} that
\begin{align*}
&\Delta_n^{-\frac 12}\left(\sum_{i=\lfloor U/\Delta_n\rfloor+1}^{\ulT} \left(\mathcal S(T-i\Delta_n) Y_{i\Delta_n}-\mathcal S(T-(i-1)\Delta_n)Y_{(i-1)\Delta_n}\right)^{\otimes 2}-\int_U^T\Sigma_s^Tds\right)\\
&\qquad \stackrel{\mathcal L-s}{\Longrightarrow} \mathcal N(0,\int_U^T\mathcal S(T-s)\Sigma_s\mathcal S(T-s)^*(\cdot+\cdot^*)\mathcal S(T-s)\Sigma_s\mathcal S(T-s)^*ds).
\end{align*}
\end{corollary}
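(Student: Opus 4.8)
The plan is to reduce both assertions entirely to the limit theorems already established for the $SARCV$, by exploiting the fact that the semigroup-rescaled process $Y^T$ is a genuine $H$-valued semimartingale. The starting point is the identity $Y^T_t=\mathcal S(T-t)Y_t$, which follows from the semigroup property $\mathcal S(T-t)\mathcal S(t-s)=\mathcal S(T-s)$ together with the fact that the fixed bounded operator $\mathcal S(T-t)$ may be pulled inside both the Bochner integral and the stochastic integral in \eqref{mild Ito process}. Consequently the increments occurring in the corollary are exactly the nonadjusted increments of $Y^T$,
$$Y^T_{i\Delta_n}-Y^T_{(i-1)\Delta_n}=\mathcal S(T-i\Delta_n)Y_{i\Delta_n}-\mathcal S(T-(i-1)\Delta_n)Y_{(i-1)\Delta_n},$$
so that the sums in the statement coincide with the nonadjusted realised covariation $RV^n$ of $Y^T$ (c.f.~\eqref{realised covariation (nota adjusted)}).

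Next I would invoke the reinterpretation used in the display preceding the corollary and in Remark \ref{Rem: Strong solutions and RV vs SARCV}: the process $Y^T$ is itself a mild It\^o process of the form \eqref{mild Ito process} with semigroup chosen to be the identity $\mathcal S\equiv I$, initial value $\tilde Y_0=\mathcal S(T)Y_0$, drift $\tilde\alpha_t=\mathcal S(T-t)\alpha_t$ and volatility $\tilde\sigma_t=\mathcal S(T-t)\sigma_t$. For this reinterpretation the nonadjusted variation $RV^n$ \emph{is} the semigroup-adjusted variation $SARCV^n$ (since $\mathcal S(\Delta_n)=I$), and the associated squared volatility equals $\tilde\Sigma_s=\tilde\sigma_s\tilde\sigma_s^*=\mathcal S(T-s)\Sigma_s\mathcal S(T-s)^*=\Sigma_s^T$. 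I would then check that the transformed coefficients inherit the required regularity: since $\mathcal S$ is strongly continuous, $c:=\sup_{t\in[0,T]}\|\mathcal S(t)\|_{\text{op}}<\infty$, whence $\|\tilde\alpha_s\|^2\le c^2\|\alpha_s\|^2$ and $\|\tilde\sigma_s\|_{L_{\text{HS}}(U,H)}^4\le c^4\|\sigma_s\|_{L_{\text{HS}}(U,H)}^4$, so that Assumption \ref{As: Integrability Assumption on Drift and Volatility} is preserved. Crucially, with $\mathcal S\equiv I$ the integrand $t^{-1/2}(I-\mathcal S(t))\tilde\sigma_s$ vanishes identically, so Assumption \ref{As: Spatial regularity} holds trivially for $Y^T$; this is exactly why no extra regularity on $\sigma$ is needed. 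Theorem \ref{T: LLN for the SARCV} then gives the $u.c.p.$ convergence of the variation process of $Y^T$ to $(\int_0^t\tilde\Sigma_sds)_{t\in[0,T]}$, and Theorem \ref{T:Infeasible Central Limit Theorem} gives the stable central limit theorem with asymptotic covariance operator $B\mapsto\int_0^T\tilde\Sigma_s(B+B^*)\tilde\Sigma_sds$; substituting $\tilde\Sigma_s=\mathcal S(T-s)\Sigma_s\mathcal S(T-s)^*$ reproduces exactly the operator in the statement, and evaluating the $u.c.p.$ limit at $t=T$ yields $\int_0^T\Sigma_s^Tds$.

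Finally, for the partial sums over $i\in\{\lfloor U/\Delta_n\rfloor+1,\dots,\ulT\}$ I would argue at the process level. Such a sum is precisely the increment of the identity-adjusted variation process of $Y^T$ between the fixed times $U$ and $T$. Because the convergences in Theorems \ref{T: LLN for the SARCV} and \ref{T:Infeasible Central Limit Theorem} hold for the whole process on $[0,T]$ in the $u.c.p.$ and Skorokhod-stable senses, and because the limiting Gaussian process is continuous with independent increments, I can pass to this increment: the $u.c.p.$ limit is $\int_0^T\Sigma_s^Tds-\int_0^U\Sigma_s^Tds=\int_U^T\Sigma_s^Tds$, and by the independent-increments property the stable limit of the rescaled increment is conditionally Gaussian with covariance $\int_U^T\tilde\Sigma_s(\cdot+\cdot^*)\tilde\Sigma_sds$. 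I do not expect any deep estimate to be required here: the entire argument is a transformation plus an appeal to the prior theorems, and the only point demanding mild care is the handling of the floor boundary term and the justification that evaluating the Skorokhod-convergent sequence at the fixed times $U$ and $T$ is legitimate — which is immediate since the limit process is continuous, so every time is a continuity point. That bookkeeping, rather than any substantial new difficulty, is the main obstacle.
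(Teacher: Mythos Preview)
Your proposal is correct and follows precisely the approach the paper intends: the discussion immediately preceding the corollary already records the identity $Y^T_{i\Delta_n}-Y^T_{(i-1)\Delta_n}=\mathcal S(T-i\Delta_n)Y_{i\Delta_n}-\mathcal S(T-(i-1)\Delta_n)Y_{(i-1)\Delta_n}$ and the reinterpretation of $Y^T$ as a mild It\^o process with $\mathcal S\equiv I$, so that the corollary is declared a direct consequence of Theorem~\ref{T:Infeasible Central Limit Theorem} (together with Theorem~\ref{T: LLN for the SARCV}) and Remark~\ref{Rem: Strong solutions and RV vs SARCV}. Your additional bookkeeping --- verifying that $\tilde\alpha,\tilde\sigma$ inherit Assumption~\ref{As: Integrability Assumption on Drift and Volatility}, that Assumption~\ref{As: Spatial regularity} is vacuous for the identity semigroup, and that passing to the increment over $[U,T]$ is licit by continuity of the limit --- simply makes explicit what the paper leaves implicit.
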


\begin{remark}[Inadequacy of the conditional covariance for dimension reduction]
It should be noted that (conditional) covariances may not be a suitable tool for dimension reduction in situations where the stochastic dynamics imposed by the SPDE should be conserved, unlike in the case of i.i.d. functional data.
 This can be of great importance, as SPDE dynamics often encode important physical or economic principles (such as the absence of arbitrage opportunities in term structure models).

In the energy market, for instance, there is evidence that energy spot prices are not following semimartingale-dynamics (c.f. \cite{BENNEDSEN2017}). Energy spot prices as observed in the market are averages of the lower end of the forward price curve  (see e.g. \cite{BSBK}) and are, thus, bounded linear functionals of these in the Hilbert-space $L^2([0,1])$. This implies in particular, that energy forward curves cannot follow a strong solution to the Heath-Jarrow-Morton-Musiela equation in $L^2(0,1)$ (c.f. section \ref{sec: Term Structure Setting}). Corollary 1 in \cite{filipovic2000b} shows that this excludes the existence of a finite-dimensional submanifold of $L^2(0,1)$ on which the solution to the Heath-Jarrow-Morton-Musiela equation is viable. Hence, given that observed energy spot prices do indeed not follow semimartingale-dynamics, the projection onto a finite-dimensional linear subspace, which is usually done via a functional principal component technique based on the covariance, violates the principle of the absence of arbitrage in the market.

In contrast, the stochastic noise process and, hence, integrated volatility can conveniently be replaced by an approximated and potentially low-dimensional version without harming the stochastic dynamics imposed by the SPDE.
\end{remark}

We next outline how to transform Theorems \ref{T: Central limit theorem for functionals of the quadratic covariation} and \ref{T:Infeasible Central Limit Theorem} (as well as Corollary \ref{C: Limit theorems for the conditional Covariance}) into feasible results. 

\subsection{Feasible central limit theorems for the $SARCV$}\label{sec: Feasible central limit theorems for the (SARCV)}
The central limit Theorems \ref{T: Central limit theorem for functionals of the quadratic covariation} and \ref{T:Infeasible Central Limit Theorem} (and Corollary \ref{C: Limit theorems for the conditional Covariance}) are infeasible in practice, as we do not know the asymptotic variance operator $\Gamma$ a priori. A consistent estimator of this random operator is given by the difference of the corresponding (semigroup-adjusted) fourth power- and the second bipower variation, and therefore it will be possible to derive feasible versions of Theorems \ref{T: Central limit theorem for functionals of the quadratic covariation} and \ref{T:Infeasible Central Limit Theorem}.
For that, we introduce $\hat{\Gamma}^n$ given by
\begin{equation}
    \hat{\Gamma}_t^n:=\Delta_n^{-1}\bigl(SAMPV_t^n(4)-SAMPV_t^n(2,2)\bigr).
\end{equation}
It can be seen by the following laws of large numbers in Theorems \ref{T: Law of large numbers for tensor power variations} and \ref{T: Law of large numbers for Multitensorpower variations} that this defines a consistent estimator of $\Gamma$. I.e.,
 we have in $\mathcal H^4$
\begin{equation}\label{Consistent estimator for the asymptotic variance}
    \hat{\Gamma}^n\stackrel{u.c.p.}{\longrightarrow}\Gamma\quad \text{ as }n\to\infty
\end{equation}
under the following Assumption:
\begin{assumption}\label{As: locally bounded coefficients}
$\alpha$ is locally bounded and $\sigma$ is a c{\`a}dl{\`a}g process w.r.t. $\|\cdot\|_{L_{\text{HS}}(U,H)}$.
\end{assumption}
This assumption corresponds to Assumption $(H)$ in \cite[p.238]{JacodProtter2012}. 
Due to the next result, the estimator $\hat{\Gamma}^n$ behaves well in the sense that it remains in the space of covariance operators:
\begin{lemma}
$\hat{\Gamma}_t^n$ is a symmetric and positive semidefinite nuclear (and therefore Hilbert-Schmidt) operator. 
\end{lemma}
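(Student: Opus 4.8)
The plan is to make the operator structure of $\hat{\Gamma}^n_t$ fully explicit and then exhibit it as a non-negatively weighted sum of tensor squares. Set $v_i:=(\tilde{\Delta}_i^n Y)^{\otimes 2}\in\mathcal H$, which is a self-adjoint positive rank-one operator on $H$. Under the identification $\mathcal H^4\cong L_{\HS}(\mathcal H,\mathcal H)$, in which an elementary tensor $a\otimes b$ (for $a,b\in\mathcal H$) acts as $B\mapsto\langle a,B\rangle_{\mathcal H}\,b$, the two variations read
\[
SAMPV_t^n(4)=\sum_{i=1}^{\ul} v_i\otimes v_i,\qquad SAMPV_t^n(2,2)=\sum_{i=1}^{\ul-1} v_i\otimes v_{i+1},
\]
so that $\hat{\Gamma}_t^n=\Delta_n^{-1}\bigl(\sum_{i=1}^{\ul} v_i\otimes v_i-\sum_{i=1}^{\ul-1} v_i\otimes v_{i+1}\bigr)$ as an operator on $\mathcal H$. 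First I would dispose of the topological claim, which is insensitive to sign or symmetry: each $v_i\otimes v_j$ is rank one on $\mathcal H$, hence nuclear with nuclear norm $\|v_i\|_{\mathcal H}\|v_j\|_{\mathcal H}$, and a finite linear combination of nuclear operators is nuclear; since nuclear operators are a fortiori Hilbert--Schmidt, $\hat{\Gamma}_t^n$ is nuclear.

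The core of the argument is a sum-of-squares rearrangement. Following the convention induced by the identification $\mathcal H^{2m}\cong L_{\HS}(\mathcal H^m,\mathcal H^m)$, under which these operators are regarded as symmetric, I would replace $v_i\otimes v_{i+1}$ by its symmetric part and verify the identity
\[
\hat{\Gamma}_t^n=\Delta_n^{-1}\left(\tfrac12\, v_1\otimes v_1+\tfrac12\, v_{\ul}\otimes v_{\ul}+\tfrac12\sum_{i=1}^{\ul-1}(v_i-v_{i+1})\otimes(v_i-v_{i+1})\right),
\]
which follows by expanding each $(v_i-v_{i+1})\otimes(v_i-v_{i+1})$ and collecting the diagonal terms $v_i\otimes v_i$ against the two boundary contributions. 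Every summand on the right is a tensor square $w\otimes w$ with $w\in\mathcal H$, i.e.\ a self-adjoint, positive semidefinite, rank-one operator on $\mathcal H$; a non-negatively weighted finite sum of such operators is again self-adjoint and positive semidefinite. Equivalently, one can test against $B\in\mathcal H$: writing $a_i:=\langle v_i,B\rangle_{\mathcal H}=\langle B\,\tilde{\Delta}_i^n Y,\tilde{\Delta}_i^n Y\rangle$, the same rearrangement gives
\[
\langle\hat{\Gamma}_t^n B,B\rangle_{\mathcal H}=\Delta_n^{-1}\left(\sum_{i=1}^{\ul}a_i^2-\sum_{i=1}^{\ul-1}a_ia_{i+1}\right)=\Delta_n^{-1}\left(\tfrac12 a_1^2+\tfrac12 a_{\ul}^2+\tfrac12\sum_{i=1}^{\ul-1}(a_i-a_{i+1})^2\right)\ge 0,
\]
so the associated tridiagonal quadratic form is non-negative.

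The only genuine subtlety is that the raw term $SAMPV_t^n(2,2)=\sum_i v_i\otimes v_{i+1}$ is not itself self-adjoint on $\mathcal H$, so symmetry of $\hat{\Gamma}_t^n$ must be read through the convention of identifying $\mathcal H^4$ with symmetric operators on $\mathcal H$ (equivalently, only the symmetric part enters the covariance functional $B\mapsto\langle\hat{\Gamma}_t^n B,B\rangle_{\mathcal H}$, which is all that matters for its role as an asymptotic variance). The sum-of-squares identity above is precisely this symmetric part and renders symmetry and positive semidefiniteness simultaneously manifest. I expect the main obstacle to be identifying the symmetric part correctly and recognising the telescoping sum-of-squares structure of the tridiagonal form; once that is in place, symmetry, positive semidefiniteness, and nuclearity are all immediate from the displayed decomposition.
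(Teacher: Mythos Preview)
Your proof is correct and follows essentially the paper's approach: both test against $B\in\mathcal H$, set $a_i=\langle(\tilde\Delta_i^nY)^{\otimes 2},B\rangle_{\mathcal H}$, and show the tridiagonal form $\sum_i a_i^2-\sum_i a_ia_{i+1}$ is nonnegative by a sum-of-squares rearrangement (the paper uses the inequality $\sum a_i^2-\sum a_ia_{i+1}\ge\tfrac12\sum(a_{i+1}-a_i)^2$, you give the exact identity with the boundary terms $\tfrac12 a_1^2+\tfrac12 a_{\ul}^2$). You are in fact more careful than the paper on one point: the paper asserts that $SAMPV_t^n(2,2)$ is itself symmetric, whereas you correctly observe that $\sum_i v_i\otimes v_{i+1}$ is not self-adjoint on $\mathcal H$ and that symmetry must be read through the symmetric part (equivalently, through the quadratic form)---this has no effect on the positive-semidefiniteness argument, which only involves $\langle\hat\Gamma_t^nB,B\rangle_{\mathcal H}$.
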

\begin{proof}
That it is a symmetric nuclear operator follows immediately, since it is the difference of two symmetric nuclear operators. Notice that for any real vector $(x_1,...,x_N)$ for some $N\in\mathbb N$ we have
\begin{align*}
    0\leq \sum_{i=1}^{N-1}(x_{i+1}-x_i)^2=  \sum_{i=1}^{N-1}x_{i+1}^2+\sum_{i=1}^{N-1}x_i^2-2 \sum_{i=1}^{N-1}x_{i+1}x_i
    \leq  2\left[\sum_{i=1}^{N}x_i^2- \sum_{i=1}^{N-1}x_{i+1}x_i\right].
\end{align*}
Using this elementary inequality we obtain positive semidefiniteness, since for each $B\in\mathcal H$ 
\begin{align*}
  &\left\langle \Delta_n\hat{\Gamma}_t^n B,B\right\rangle_{\mathcal H}\\
    &= \sum_{i=1}^{\ul} \left\langle(\tilde{\Delta}_i^n Y)^{\otimes 2},B\right\rangle_{\mathcal H}^2 -\sum_{i=1}^{\ul-1} \left\langle(\tilde{\Delta}_i^n Y)^{\otimes 2},B\right\rangle_{\mathcal H}\left\langle(\tilde{\Delta}_{i+1}^n Y)^{\otimes 2},B\right\rangle_{\mathcal H}.
\end{align*}
Hence, $\hat{\Gamma}^n$ is positive semidefinite. 
\end{proof}
The following two results are direct corollaries of the central limit theorems \ref{T: Central limit theorem for functionals of the quadratic covariation} and \ref{T:Infeasible Central Limit Theorem} and the fact that two sequences of random variables defined on the same probability space with values in a Polish space, where one converges stably in law and the other converges in probability, converge jointly stably in law (c.f. \cite[Thm. 3.18 (b)]{Hausler2015}).
We now give the feasible version of the central limit theorem \ref{T: Central limit theorem for functionals of the quadratic covariation}, which can be used to find confidence intervals (e.g. for evaluations in a reproducing kernel Hilbert space setting as in Subsection \ref{sec: discrete data in time and space}):
\begin{corollary}\label{C: feasible CLT for functionals}
 Let Assumption \ref{As: locally bounded coefficients} hold and $B\in \mathcal H$ be an operator with a finite-dimensional range of the form
$B=\sum_{l=1}^K\mu_l h_l\otimes g_l$
for $h_l,g_l\in F_{ 1/2}^{\mathcal S^*}$, $l=1,...,K$, $K\in\mathbb N$. Then 
 \begin{equation*}
    \frac{\Delta_n^{-\frac 12}\langle \tilde X_t^n ,B\rangle_{\mathcal H}}{ \sqrt{\langle\hat{\Gamma}_t B,B\rangle_{\mathcal H}}} \stackrel{d}{\longrightarrow} \mathcal N(0,1),
 \end{equation*}
 conditional on the set $\{\langle\Gamma_t B,B\rangle_{\mathcal H}>0\}\subseteq \Omega$.
\end{corollary}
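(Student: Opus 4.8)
The plan is to combine the stable central limit theorem in Theorem~\ref{T: Central limit theorem for functionals of the quadratic covariation} with the consistency of $\hat\Gamma^n$ through a Slutsky-type argument adapted to stable convergence. First I would observe that Assumption~\ref{As: locally bounded coefficients} implies Assumption~\ref{As: Integrability Assumption on Drift and Volatility}: a c\`adl\`ag volatility is bounded on the compact interval $[0,T]$ and a locally bounded drift is, after the usual localisation, square-integrable there, so that $\int_0^T\|\alpha_s\|^2+\|\sigma_s\|_{L_{\text{HS}}(U,H)}^4\,ds<\infty$ almost surely. Hence Theorem~\ref{T: Central limit theorem for functionals of the quadratic covariation} applies and delivers
$$Z_t^n:=\Delta_n^{-\frac 12}\langle\tilde X_t^n,B\rangle_{\mathcal H}\stackrel{\mathcal L-s}{\Longrightarrow}\mathcal N(0,\langle\Gamma_t B,B\rangle)=:Z_t.$$
Simultaneously, the consistency \eqref{Consistent estimator for the asymptotic variance} gives $\hat\Gamma^n\stackrel{u.c.p.}{\longrightarrow}\Gamma$ in $\mathcal H^4$; since $C\mapsto\langle CB,B\rangle_{\mathcal H}$ is a bounded linear functional on $\mathcal H^4$ (the Hilbert-Schmidt norm dominating the operator norm), this transfers, at the fixed time $t$, to $V_t^n:=\langle\hat\Gamma_t^n B,B\rangle_{\mathcal H}\stackrel{P}{\longrightarrow}\langle\Gamma_t B,B\rangle=:V_t$.

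Next I would upgrade these two modes of convergence to a joint one. By \cite[Thm.~3.18(b)]{Hausler2015} a stably convergent sequence and a sequence converging in probability converge jointly stably, so $(Z_t^n,V_t^n)\stackrel{\mathcal L-s}{\Longrightarrow}(Z_t,V_t)$. The limit $Z_t$ is mixed Gaussian with $\mathcal F$-measurable conditional variance $V_t=\langle\Gamma_t B,B\rangle$, so that conditionally on $\mathcal F$ and on the event $A:=\{V_t>0\}$ the variable $Z_t/\sqrt{V_t}$ is exactly $\mathcal N(0,1)$; as this holds for every realised value of $V_t$, the $A$-restricted law of $Z_t/\sqrt{V_t}$ is standard normal. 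The set $A$ lies in the base $\sigma$-field $\mathcal F$ because $\Gamma_t$ depends only on the volatility path, and this is exactly the situation in which stable convergence permits conditioning: taking the bounded random variable $Y=\indicator_A$ in the definition of stable convergence yields $\mathbb{E}[\indicator_A f(Z_t^n,V_t^n)]\to\tilde{\mathbb{E}}[\indicator_A f(Z_t,V_t)]$ for every bounded continuous $f$, i.e.\ conditional-on-$A$ convergence in distribution of the pair. I would then apply the continuous mapping theorem to $g(z,v)=z/\sqrt v$, whose discontinuity set $\{v\le 0\}$ is assigned probability zero by the $A$-restricted limit (since $V_t>0$ on $A$), to conclude that conditional on $A$
$$\frac{Z_t^n}{\sqrt{V_t^n}}\stackrel{d}{\longrightarrow}\frac{Z_t}{\sqrt{V_t}}\sim\mathcal N(0,1).$$

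The main obstacle is the interaction between the conditioning on the random set $A$ and the degeneracy of the denominator: because $\hat\Gamma_t^n$ is only positive semidefinite, $V_t^n$ may vanish, so $g$ is not globally continuous and the ratio is a priori undefined on $\{V_t^n=0\}$. I would handle this by noting that $V_t^n\stackrel{P}{\to}V_t>0$ on $A$ forces $\mathbb{P}(V_t^n=0\mid A)\to 0$, so the offending event is asymptotically negligible and the ratio (defined arbitrarily there) does not affect the limiting law; equivalently one can fix $\delta>0$, run the continuous-mapping argument on $A_\delta:=\{V_t>\delta\}$, where $g$ is uniformly continuous on the relevant region and $V_t^n>\delta/2$ with probability tending to one, and then let $\delta\downarrow 0$ using $\mathbb{P}(A\setminus A_\delta)\to 0$. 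The two structural facts that make the whole scheme work are that $A\in\mathcal F$ and that dividing a mixed-Gaussian variable by the square root of its own $\mathcal F$-measurable conditional variance produces a genuine $\mathcal N(0,1)$ limit independent of $\mathcal F$, which is precisely what removes the randomness of $\Gamma_t$ from the final statement.
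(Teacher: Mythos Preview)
Your proposal is correct and follows exactly the route the paper takes: the paper states that the corollary is a direct consequence of Theorem~\ref{T: Central limit theorem for functionals of the quadratic covariation}, the consistency \eqref{Consistent estimator for the asymptotic variance}, and the joint stable convergence fact \cite[Thm.~3.18(b)]{Hausler2015}. You have simply (and correctly) made explicit the details the paper leaves implicit, including the verification that Assumption~\ref{As: locally bounded coefficients} implies Assumption~\ref{As: Integrability Assumption on Drift and Volatility}, the continuous mapping step, and the handling of the conditioning on $\{V_t>0\}$ together with the asymptotic negligibility of $\{V_t^n=0\}$.
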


We also obtain a "feasible" version of Theorem \ref{T:Infeasible Central Limit Theorem}:
\begin{corollary}\label{Feasible central limit theorem for the SARCV}
Under Assumptions \ref{As: Spatial regularity} and \ref{As: locally bounded coefficients}, we obtain
\begin{equation}\label{Joint weak Convergence}
  \left(\Delta_n^{-\frac 12}\tilde{X}_t^n,\hat{\Gamma}_t^n\right)_{t\in[0,T]}\stackrel{\mathcal L-s}{\Longrightarrow} \left(\mathcal N\left(0,\Gamma_t\right), \Gamma_t\right)_{t\in[0,T]},
\end{equation}
where we consider the processes in the space $\mathcal H\times \mathcal H^4$, 
equipped with the metric $$d((B_1,\Psi_1),(B_2,\Psi_2)):= \|B_1-B_2\|_{\mathcal H}+\|\Psi_1-\Psi_2\|_{\mathcal H^4}.$$
\end{corollary}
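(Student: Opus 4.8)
The plan is to obtain the joint statement by combining the infeasible central limit theorem (Theorem \ref{T:Infeasible Central Limit Theorem}) for the first coordinate with the consistency of the variance estimator, \eqref{Consistent estimator for the asymptotic variance}, for the second, and then to glue the two convergences together using the general principle that stable convergence of one sequence together with convergence in probability of another yields joint stable convergence (\cite[Thm.~3.18\,(b)]{Hausler2015}).

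First I would check that the hypotheses of the corollary already subsume those needed for Theorem \ref{T:Infeasible Central Limit Theorem}. Assumption \ref{As: locally bounded coefficients} implies Assumption \ref{As: Integrability Assumption on Drift and Volatility}: local boundedness of $\alpha$ gives $\int_0^T\|\alpha_s\|^2\,ds<\infty$ almost surely, and since a c\`adl\`ag path is bounded on the compact interval $[0,T]$, the map $s\mapsto\|\sigma_s\|_{L_{\text{HS}}(U,H)}$ is bounded, so that $\int_0^T\|\sigma_s\|_{L_{\text{HS}}(U,H)}^4\,ds<\infty$ almost surely. Hence, under Assumptions \ref{As: Spatial regularity} and \ref{As: locally bounded coefficients}, Theorem \ref{T:Infeasible Central Limit Theorem} applies and yields
$$\Delta_n^{-\frac12}\tilde X^n\stackrel{\mathcal L-s}{\Longrightarrow}\mathcal N(0,\Gamma)\quad\text{in }\mathcal D([0,T],\mathcal H),$$
while \eqref{Consistent estimator for the asymptotic variance} gives $\hat\Gamma^n\stackrel{u.c.p.}{\longrightarrow}\Gamma$ in $\mathcal H^4$.

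The gluing step is then straightforward in the product of the two Skorokhod spaces: since u.c.p.\ convergence is convergence in probability for the uniform topology and the limit $\Gamma$ is continuous, $\hat\Gamma^n\to\Gamma$ in probability in $\mathcal D([0,T],\mathcal H^4)$ as well, and \cite[Thm.~3.18\,(b)]{Hausler2015} applied with the stably convergent sequence $\Delta_n^{-1/2}\tilde X^n$ and the sequence $\hat\Gamma^n$ converging in probability delivers joint stable convergence of the pair to $(\mathcal N(0,\Gamma),\Gamma)$.

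I expect the only delicate point --- and hence the main obstacle --- to be the topological upgrade from convergence in the product $\mathcal D([0,T],\mathcal H)\times\mathcal D([0,T],\mathcal H^4)$ to convergence in the product Skorokhod space $\mathcal D([0,T],\mathcal H\times\mathcal H^4)$ featuring in the statement, the latter being a priori strictly finer. Here I would exploit that both limits are continuous. Passing to a subsequence along which $\hat\Gamma^n\to\Gamma$ uniformly almost surely, any time change $\lambda_n$ realising the Skorokhod convergence of the first coordinate distorts the second, uniformly convergent, coordinate by a negligible amount, so that the same $\lambda_n$ simultaneously realises the convergence of both coordinates in $\mathcal D([0,T],\mathcal H\times\mathcal H^4)$; a standard subsequence argument then lifts this back to convergence in probability for the full sequence. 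Since the attached limit $\Gamma$ is continuous and the whole construction respects the very good filtered extension, the resulting convergence remains stable, which establishes the claim with the stated product metric.
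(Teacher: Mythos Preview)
Your proposal is correct and follows the same approach as the paper: the paper states just before Corollaries \ref{C: feasible CLT for functionals} and \ref{Feasible central limit theorem for the SARCV} that both are ``direct corollaries of the central limit theorems \ref{T: Central limit theorem for functionals of the quadratic covariation} and \ref{T:Infeasible Central Limit Theorem} and the fact that two sequences of random variables defined on the same probability space with values in a Polish space, where one converges stably in law and the other converges in probability, converge jointly stably in law (c.f.\ \cite[Thm.~3.18\,(b)]{Hausler2015}).'' You reproduce exactly this argument, and you are more explicit than the paper in two respects: you verify that Assumption \ref{As: locally bounded coefficients} implies Assumption \ref{As: Integrability Assumption on Drift and Volatility}, and you address the passage from the product $\mathcal D([0,T],\mathcal H)\times\mathcal D([0,T],\mathcal H^4)$ to $\mathcal D([0,T],\mathcal H\times\mathcal H^4)$, which the paper leaves implicit but is indeed justified by the continuity of both limiting processes.
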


\subsection{Is the semigroup adjustment necessary?}\label{sec: Necessity of the Adjustment}
Certainly, in many situations, it would be convenient to use the realised quadratic variation instead of the semigroup-adjusted variation.
We shall show below when this is possible but start here with an example where the realised covariation diverges.
\begin{example}\label{Counterexample for RV}
Assume that for an element $e\in H$ such that $\|e\|=1$ and an $H$-valued random variable $X$ the volatility takes the simple form
$$ \sigma_s =  e \otimes \mathcal S(s) X.$$
Moreover, we assume that there is no drift and $Y(0)=0$ and let $X$ (and hence $\sigma_s$) be independent of the driving cylindrical Wiener process $W$ (i.e., no so-called leverage effect). The process $\beta_t:=\langle e, W_t\rangle$ is well-defined and a one-dimensional standard Brownian motion. We obtain
$$Y_t:=\int_0^t \mathcal S(t-s)\sigma_s dW_s= \beta_t \mathcal S(t)X\quad\forall t\in [0,T].$$
 This simple form can be exploited in order to derive counterexamples for the validity of the law of large numbers and the central limit theorem for the quadratic variation. For that, we introduce two cases:
\begin{itemize}
    \item[(i)] (Counterexample for the law of large numbers) $H=L^2[0,2]$, $X(x):=B_x^{\mathfrak H}$, where $B^{\mathfrak H}$ is a fractional Brownian motion with Hurst parameter $\mathfrak H=\frac 14$ and $(\mathcal S(t))_{t\geq 0}$ is the (nilpotent) left-shift semigroup given by
    $$\mathcal S(t) f (x):= f(x+t)\indicator_{[0,2]}(x+t)\quad t\geq 0, x \in [0,2].$$
     \item[(ii)] (Counterexample for the central limit theorem) $H=L^2(\mathbb R)$, $X(x):=\indicator_{[0,1]}(x)$, and $(\mathcal S(t))_{t\geq 0}$ is the left-shift semigroup given by
    $$\mathcal S(t) f (x):= f(x+t)\quad x,t\geq 0.$$
    Observe that in this case Assumptions \ref{As: Integrability Assumption on Drift and Volatility} and \ref{As: Spatial regularity} are satisfied, such that the central limit theorem \ref{T:Infeasible Central Limit Theorem} holds. 
\end{itemize}
We start with the first case and make the following technical observation:
\begin{align*}
\left\|\sum_{i=1}^{n}((\mathcal S(\Delta_n)-I)Y_{(i-1)\Delta_n})^{\otimes 2} \right\|_{\mathcal H}^2
    = &\sum_{i,j=1}^{n}\langle(\mathcal S(\Delta_n)-I)Y_{(i-1)\Delta_n})^{\otimes 2},(\mathcal S(\Delta_n)-I)Y_{(j-1)\Delta_n})^{\otimes 2}\rangle_{\mathcal H}\\
    = &\sum_{i,j=1}^{n}\langle((\mathcal S(\Delta_n)-I)Y_{(i-1)\Delta_n}),((\mathcal S(\Delta_n)-I)Y_{(j-1)\Delta_n})\rangle^2\\
    \geq & \sum_{i=1}^{n}\|(\mathcal S(\Delta_n)-I)Y_{(i-1)\Delta_n})\|^4.
    \end{align*}
Assume now that the realised variation $RV_t^n$ converges in probability to the integrated volatility.    One can show, that  $(RV_t^n-\int_0^t\Sigma_sds-\sum_{i=1}^n ((\mathcal S(\Delta_n)-I)Y_{(i-1)\Delta_n})^{\otimes 2})$ and therefore $\sum_{i=1}^{n}\|(\mathcal S(\Delta_n)-I)Y_{(i-1)\Delta_n})\|^4$ converges in probability to $0$ and that $\sum_{i=1}^{n}\|(\mathcal S(\Delta_n)-I)Y_{(i-1)\Delta_n})\|^4$ is uniformly integrable. This is a technical exercise, which can be found in Appendix \ref{sec: Remaining proofs}. Thus, in the first case, we must necessarily have by Jensen's inequality \begin{align*}
      0= \lim_{n\to\infty}\  \sum_{i=1}^{n}\mathbb E\left[\|(\mathcal S(\Delta_n)-I)Y_{(i-1)\Delta_n})\|^4\right]
      \geq & \lim_{n\to\infty}\  \sum_{i=1}^{n}\mathbb E\left[\|(\mathcal S(\Delta_n)-I)Y_{(i-1)\Delta_n})\|^2\right]^2\\
      = & \lim_{n\to\infty}\Delta_n^{2+4\mathfrak H} \sum_{i=1}^n(i-1)^2>0,
    \end{align*}
    which is a contradiction.
    
Assume now that the realised variation $\sqrt{n}(RV_t^n-\int_0^t\Sigma_sds)$ converges in distribution to a normal distribution.  We now turn to the second example (ii).
In this case, both $\sqrt{n}(RV_t^n-\int_0^t\Sigma_sds)$ and $\sqrt{n}(SARCV_t^n-\int_0^t\Sigma_sds)$ are uniformly integrable, such that their convergence in distribution implies convergence of their means. This is again a technical exercise and the details can be found in Appendix \ref{sec: Remaining proofs}. 
We observe that
\begin{align*}
       \mathbb E\left[RV_t^n-\int_0^t\Sigma_s ds\right] 
    =&  \mathbb E\left[SARCV_t^n
    -\int_0^t\Sigma_s ds\right]+\sum_{i=1}^{\ul}\mathbb E\left[[(\mathcal S(\Delta_n)-I)Y_{(i-1)\Delta_n}]^{\otimes 2}\right].
\end{align*}
Normalising by $\sqrt n$ we find that the first summand converges to $0$, due to the uniform integrability and the central limit theorem \ref{T:Infeasible Central Limit Theorem} (i.e. convergence in distribution to a centred random variable). With the notation $\Delta_i\mathcal S=\mathcal S(i\Delta_n)-\mathcal S((i-1)\Delta_n)$ we find, since $\mathbb E\left[[(\mathcal S(\Delta_n)-I)Y_{(i-1)\Delta_n}]^{\otimes 2}\right]=\int_{0}^{(i-1)\Delta_n} (\Delta_i\mathcal S \indicator_{[0,1]})^{\otimes 2} ds$ that
\begin{align*}
\left\|\mathbb E\left[\sum_{i=1}^{n}[(\mathcal S(\Delta_n)-I)Y_{(i-1)\Delta_n}]^{\otimes 2}\right]\right\|_{\mathcal H}^2
    = &\sum_{i,j=1}^{n}(i-1)(j-1)\Delta_n^2\langle\Delta_i\mathcal S\indicator_{[0,1]},\Delta_j\mathcal S\indicator_{[0,1]}\rangle^2\\
    \geq & \Delta_n^2\sum_{i=1}^{n}(i-1)^2\|\Delta_i\mathcal S\indicator_{[0,1]}\|^4\\
    = &\Delta_n^2\sum_{i=1}^{n}(i-1)^2 2\Delta_n^2.
    \end{align*}
   After normalisation by $n=(\sqrt n)^2$ the expression converges to a positive constant, which verifies that the second case (ii) provides a counterexample for the central limit theorem.
    \end{example}

We can, however, impose assumptions on the regularity of the semigroup on the range of the volatility, such that we again obtain a law of large numbers and a central limit theorem for the realised variations. The assumption for the law of large numbers is 
\begin{assumption}\label{as: strong Spatial regularity}
Let almost surely
$$\lim_{t\to 0}\int_0^T\| t^{-\frac 12}(I-\mathcal S(t))\sigma_s\|_{L_{\text{HS}}(U,H)}^2ds=0.$$
\end{assumption}
\begin{remark}
Assumption \ref{as: strong Spatial regularity} looks similar to Assumption \ref{As: Spatial regularity}. However, in contrast to the weaker Assumption \ref{As: Spatial regularity}, Assumption \ref{as: strong Spatial regularity} excludes some elementary shapes for the volatility such as the one of Example \ref{Counterexample for RV}, for which it is simple to see that $\|(I-\mathcal S(t))\sigma\|_{ L_{\text{HS}}(U,H)}= 2t$.
\end{remark}
Analogously, we obtain a central limit theorem under the following assumption. 
\begin{assumption}\label{as: strong Spatial regularity for the CLT for RV}
Let almost surely
$$\lim_{t\to 0}\int_0^T\| t^{-\frac 34}(I-\mathcal S(t))\sigma_s\|_{L_{\text{HS}}(U,H)}^2ds=0.$$
\end{assumption}
We have the following results.
\begin{theorem}\label{T: Limit Theorems for nonadjusted RV}
\begin{itemize}
    \item[(i)](Law of large numbers) If Assumption \ref{as: strong Spatial regularity} is valid, we have 
\begin{equation}
    RV_t^n\stackrel{u.c.p.}{\longrightarrow}\int_0^t \Sigma_s ds.
\end{equation}
\item[(ii)](Central limit theorem) If Assumptions \ref{As: Integrability Assumption on Drift and Volatility} and \ref{as: strong Spatial regularity for the CLT for RV} are valid, we have 
\begin{equation}
     \Delta_n^{-\frac 12}\left(RV_t^n-\int_0^t \Sigma_s ds\right)\stackrel{\mathcal L-s}{\Longrightarrow}\mathcal N(0,\Gamma_t).
\end{equation}
\end{itemize}
\end{theorem}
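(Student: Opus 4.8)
The plan is to reduce both parts to the corresponding statements for the $SARCV$, Theorem \ref{T: LLN for the SARCV} and Theorem \ref{T:Infeasible Central Limit Theorem}, by controlling the discrepancy between the non-adjusted and the adjusted increments. Setting $D_i^n:=(\mathcal S(\Delta_n)-I)Y_{(i-1)\Delta_n}$, the non-adjusted increment decomposes as $\Delta_i^n Y:=Y_{i\Delta_n}-Y_{(i-1)\Delta_n}=\tilde{\Delta}_i^n Y+D_i^n$, whence by bilinearity of the tensor product
\begin{equation*}
    RV_t^n=SARCV_t^n+\sum_{i=1}^{\ul}\bigl(\tilde{\Delta}_i^n Y\otimes D_i^n+D_i^n\otimes\tilde{\Delta}_i^n Y\bigr)+\sum_{i=1}^{\ul}(D_i^n)^{\otimes 2}.
\end{equation*}
Writing $C_t^n$ for the cross term and $B_t^n:=\sum_i(D_i^n)^{\otimes 2}$ for the bias term, it suffices to show that $C_t^n,B_t^n\stackrel{u.c.p.}{\longrightarrow}0$ for (i), and that $\Delta_n^{-1/2}C_t^n,\Delta_n^{-1/2}B_t^n\stackrel{u.c.p.}{\longrightarrow}0$ for (ii); the conclusions then follow from the $SARCV$ theorems, since a $u.c.p.$-null perturbation alters neither $u.c.p.$-convergence nor stable convergence in law.

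The workhorse is a bound on $\sum_i\|D_i^n\|^2$, which dominates $\|B_t^n\|_{\mathcal H}\leq\sum_i\|D_i^n\|^2$. After a routine localisation reducing $\sigma$ to a bounded process, I would split $D_i^n$ into its initial-value, drift and stochastic-convolution parts; the first two are of lower order (controlled by strong continuity and Bochner integrability of $\alpha$), while the dominant stochastic part equals $\int_0^{(i-1)\Delta_n}\mathcal S((i-1)\Delta_n-s)(\mathcal S(\Delta_n)-I)\sigma_s\,dW_s$ after using that the semigroup commutes with itself. By the It\^o isometry, $\|AB\|_{\HS}\leq\|A\|_{\text{op}}\|B\|_{\HS}$, and $M:=\sup_{t\in[0,T]}\|\mathcal S(t)\|_{\text{op}}<\infty$, counting the $\approx(T-s)/\Delta_n$ summands whose integral passes $s$ gives
\begin{equation*}
    \sum_{i=1}^{\ulT}\EE\bigl[\|D_i^n\|^2\bigr]\;\lesssim\;\frac{M^2}{\Delta_n}\int_0^T(T-s)\,\EE\bigl[\|(\mathcal S(\Delta_n)-I)\sigma_s\|_{\HS}^2\bigr]\,ds.
\end{equation*}
Factoring $\|(\mathcal S(\Delta_n)-I)\sigma_s\|^2=\Delta_n\|\Delta_n^{-1/2}(I-\mathcal S(\Delta_n))\sigma_s\|^2$ turns the right-hand side into $M^2\int_0^T(T-s)\EE[\|\Delta_n^{-1/2}(I-\mathcal S(\Delta_n))\sigma_s\|^2]\,ds$, which vanishes by Assumption \ref{as: strong Spatial regularity}; hence $B_t^n\stackrel{u.c.p.}{\longrightarrow}0$.

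For the central limit theorem the power counting is sharper and fixes the exponent $3/4$: rescaling the bias term and factoring instead $\|(\mathcal S(\Delta_n)-I)\sigma_s\|^2=\Delta_n^{3/2}\|\Delta_n^{-3/4}(I-\mathcal S(\Delta_n))\sigma_s\|^2$ yields $\Delta_n^{-1/2}\sum_i\EE[\|D_i^n\|^2]\lesssim M^2\int_0^T(T-s)\EE[\|\Delta_n^{-3/4}(I-\mathcal S(\Delta_n))\sigma_s\|^2]\,ds\to 0$ \emph{precisely} under Assumption \ref{as: strong Spatial regularity for the CLT for RV}, so that $\Delta_n^{-1/2}B_t^n\stackrel{u.c.p.}{\longrightarrow}0$. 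I would then invoke Theorem \ref{T:Infeasible Central Limit Theorem} for $\Delta_n^{-1/2}(SARCV_t^n-\int_0^t\Sigma_sds)$, noting that Assumption \ref{as: strong Spatial regularity for the CLT for RV} implies Assumption \ref{as: strong Spatial regularity} (since $t^{1/2}\leq 1$ near $0$), which the remark following Assumption \ref{as: strong Spatial regularity} identifies as stronger than the hypothesis Assumption \ref{As: Spatial regularity} of that theorem; the stable limit $\mathcal N(0,\Gamma_t)$ then transfers to $\Delta_n^{-1/2}(RV_t^n-\int_0^t\Sigma_sds)$ once the cross term is also shown to be negligible.

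The \emph{main obstacle} is the cross term $C_t^n$: a crude Cauchy--Schwarz bound only yields $\sum_i\|D_i^n\|^2=o_p(\Delta_n^{1/2})$, one power of $\Delta_n^{1/2}$ short of what the $\Delta_n^{-1/2}$-rescaling demands. The resolution is to exploit that $D_i^n$ is $\Fi$-measurable while the leading part of $\tilde{\Delta}_i^n Y$ is a conditional martingale increment, so that $\tilde{\Delta}_i^n Y\otimes D_i^n+D_i^n\otimes\tilde{\Delta}_i^n Y$ is (up to a negligible drift contribution) a martingale difference in $\mathcal H$. Orthogonality of the increments then gives $\EE[\|\Delta_n^{-1/2}C_t^n\|_{\mathcal H}^2]\lesssim\Delta_n^{-1}\sum_i\EE[\|\tilde{\Delta}_i^n Y\|^2\|D_i^n\|^2]$, and pulling the $\Fi$-measurable factor $\|D_i^n\|^2$ out of the conditional expectation $\EE[\|\tilde{\Delta}_i^n Y\|^2\mid\Fi]=O(\Delta_n)$ recovers the missing power: the bound collapses to $\lesssim\sum_i\EE[\|D_i^n\|^2]\to 0$, with uniformity in $t$ supplied by Doob's maximal inequality. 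The same martingale estimate, now without the extra $\Delta_n^{-1/2}$, disposes of $C_t^n$ in part (i).
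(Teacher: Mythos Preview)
Your decomposition $RV^n=SARCV^n+C^n+B^n$ and reduction to the $SARCV$ limit theorems is exactly the paper's route, and your bound on the bias term matches (your factor $(T-s)$ in place of $T$ is a harmless sharpening). For the cross term the paper applies a single Cauchy--Schwarz across $i$,
\[
\EE\|C_t^n\|_{\mathcal H}\le 2\Bigl(\sum_i\EE\|\tilde\Delta_i^nY\|^2\Bigr)^{\!1/2}\Bigl(\sum_i\EE\|D_i^n\|^2\Bigr)^{\!1/2},
\]
and then asserts that the CLT ``follows by analogous reasoning, after normalising by $\Delta_n^{-1/2}$''. Your diagnosis that this bound is one power $\Delta_n^{1/4}$ short is correct---it yields $\Delta_n^{-1/2}\cdot O(1)\cdot o(\Delta_n^{1/4})$---and the martingale-difference argument you propose is the natural repair the paper glosses over. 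So for part (i) your approach is identical and for part (ii) it is a refinement.

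Two loose ends in your write-up. First, the conditioning step $\EE[\|\tilde\Delta_i^nY\|^2\mid\Fi]=O(\Delta_n)$ needs $\sigma$ pointwise bounded, but the localisation the paper actually supplies (Theorem~\ref{L: localisation}(d), producing Assumption~\ref{As: Weakened Assumption for the CLT for the SARCV}) only bounds $\int_0^T\|\sigma_s\|_{L_{\text{HS}}(U,H)}^4\,ds$, and Theorem~\ref{T: Limit Theorems for nonadjusted RV} does not assume $\sigma$ c\`adl\`ag; so ``routine localisation reducing $\sigma$ to a bounded process'' needs an extra argument that is not automatic here. Second, the chain Assumption~\ref{as: strong Spatial regularity for the CLT for RV}$\Rightarrow$Assumption~\ref{as: strong Spatial regularity}$\Rightarrow$Assumption~\ref{As: Spatial regularity} is not what the cited remark establishes: the remark exhibits a volatility satisfying Assumption~\ref{As: Spatial regularity} but failing Assumption~\ref{as: strong Spatial regularity}, which is the opposite direction, and the $\sup_t$ in Assumption~\ref{As: Spatial regularity} sits \emph{inside} the $ds$-integral, so a $\lim_{t\to0}$ of the outer integral does not obviously control it.
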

We also have a central limit theorem in the weak operator topology as well as a law of large numbers with mild conditions on the functionals:
\begin{theorem}\label{T:Finite dimensional limit theorems for functionals of nonadjusted covariation}\begin{itemize}
    \item[(i)](Law of large numbers) If  $B\in \mathcal H$ is of the form
$B=\sum_{l=1}^K\mu_l h_l\otimes g_l$
for $h_l,g_l\in F_{ 1/2}^{\mathcal S^*}$ for $l=1,...,K$, $K\in\mathbb N$, we have 
      \begin{equation}
    \langle RV_t^n ,B\rangle_{\mathcal H}\stackrel{u.c.p.}{\longrightarrow}\int_0^t \langle\Sigma_s ,B\rangle_{\mathcal H} ds.
\end{equation}
    \item[(ii)](Central limit theorem) If  $B\in \mathcal H$ is of the form
$B=\sum_{l=1}^K\mu_l h_l\otimes g_l$
for $h_l,g_l\in F_{ 3/4}^{\mathcal S^*}$ for $l=1,...,K$, $K\in\mathbb N$ and Assumption \ref{As: Integrability Assumption on Drift and Volatility} holds, we have
\begin{equation}
    \langle\Delta_n^{-\frac 12}\left(RV_t^n-\int_0^t\Sigma_s ds\right),B\rangle_{\mathcal H}\stackrel{\mathcal L-s}{\Longrightarrow} \mathcal N(0,\langle\Gamma_t B,B\rangle_{\mathcal H}).
\end{equation}
\end{itemize}
\end{theorem}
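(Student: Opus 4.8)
The plan is to deduce both statements from the corresponding limit theorems for the $SARCV$ by carefully controlling the discrepancy between the non-adjusted increment $\Delta_i^nY:=Y_{i\Delta_n}-Y_{(i-1)\Delta_n}$ and the adjusted increment $\tilde\Delta_i^nY$. By bilinearity and the finite-rank form of $B$ it suffices to take $B=h\otimes g$ with $h,g\in F_\gamma^{\mathcal S^*}$, where $\gamma=1/2$ in (i) and $\gamma=3/4$ in (ii); the identity $\langle a^{\otimes2},h\otimes g\rangle_{\mathcal H}=\langle a,h\rangle\langle a,g\rangle$ then rewrites $\langle RV_t^n,B\rangle_{\mathcal H}$ as the realised covariation $\sum_{i=1}^{\ul}\langle\Delta_i^nY,h\rangle\langle\Delta_i^nY,g\rangle$ of the scalar processes $\langle Y,h\rangle,\langle Y,g\rangle$. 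Using $\Delta_i^nY=\tilde\Delta_i^nY+D_i$ with $D_i:=(\mathcal S(\Delta_n)-I)Y_{(i-1)\Delta_n}$, I would expand to obtain
\begin{equation*}
\langle RV_t^n,B\rangle_{\mathcal H}=\langle SARCV_t^n,B\rangle_{\mathcal H}+C_t^n+P_t^n,
\end{equation*}
with cross term $C_t^n:=\sum_{i=1}^{\ul}\bigl(\langle D_i,h\rangle\langle\tilde\Delta_i^nY,g\rangle+\langle\tilde\Delta_i^nY,h\rangle\langle D_i,g\rangle\bigr)$ and memory term $P_t^n:=\sum_{i=1}^{\ul}\langle D_i,h\rangle\langle D_i,g\rangle$. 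The leading term is already understood: Theorem \ref{T: LLN for the SARCV} yields $\langle SARCV^n,B\rangle_{\mathcal H}\stackrel{u.c.p.}{\longrightarrow}\int_0^{\cdot}\langle\Sigma_s,B\rangle_{\mathcal H}\,ds$, and since $F_{3/4}^{\mathcal S^*}\subseteq F_{1/2}^{\mathcal S^*}$, Theorem \ref{T: Central limit theorem for functionals of the quadratic covariation} provides the stable Gaussian limit for $\Delta_n^{-1/2}\langle\tilde X_t^n,B\rangle_{\mathcal H}$ with variance $\langle\Gamma_tB,B\rangle$. It thus remains to show $C^n+P^n\stackrel{u.c.p.}{\longrightarrow}0$ in case (i) and $\Delta_n^{-1/2}(C^n+P^n)\stackrel{u.c.p.}{\longrightarrow}0$ in case (ii); the claims then follow by adding u.c.p.\ limits in (i) and, in (ii), by combining the stably convergent $SARCV$-part with the vanishing remainder via the joint stable convergence of a stably convergent and an in-probability convergent sequence, exactly as in \cite[Thm.~3.18(b)]{Hausler2015}.

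The cross term is controlled through its martingale structure. Since $\langle D_i,h\rangle=\langle Y_{(i-1)\Delta_n},(\mathcal S^*(\Delta_n)-I)h\rangle$ is $\mathcal F_{(i-1)\Delta_n}$-measurable with $\|(\mathcal S^*(\Delta_n)-I)h\|\le c_h\Delta_n^{\gamma}$ by definition of $F_\gamma^{\mathcal S^*}$, while $\langle\tilde\Delta_i^nY,g\rangle$ decomposes into a martingale increment of conditional $L^2$-size $O_p(\Delta_n^{1/2})$ and a drift of size $O_p(\Delta_n)$, a Doob inequality applied to the sum of martingale differences gives $C_t^n=O_p(\Delta_n^{\gamma})$ uniformly on $[0,T]$. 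This is $o_p(1)$ for $\gamma=1/2$ and $o_p(\Delta_n^{1/2})$ for $\gamma=3/4$, as needed; in case (i) a localisation argument using only the continuity of the paths replaces the $L^2$-bounds, so that no integrability assumption is required, whereas in case (ii) the rates rely on Assumption \ref{As: Integrability Assumption on Drift and Volatility}.

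The memory term $P^n$ is the crux and the main obstacle. The naive termwise bound $|\langle D_i,h\rangle\langle D_i,g\rangle|\le c_hc_g\Delta_n^{2\gamma}\|Y_{(i-1)\Delta_n}\|^2$, summed over the $\sim\Delta_n^{-1}$ indices, only delivers $P_t^n=O_p(\Delta_n^{2\gamma-1})$, which is precisely borderline in both regimes (it is $O_p(1)$ for $\gamma=1/2$, and $\Delta_n^{-1/2}P_t^n=O_p(1)$ for $\gamma=3/4$) and does not by itself give the required strict smallness; this borderline behaviour is exactly what allows the non-adjusted estimator to diverge once $h,g$ are too rough, as in Example \ref{Counterexample for RV}. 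The additional decay must come from the near-orthogonality of the semigroup-increment directions $v_i:=\mathcal S^*((i-1)\Delta_n)(\mathcal S^*(\Delta_n)-I)h$ across $i$: substituting the mild-solution representation into $\langle D_i,h\rangle=\langle Y_{(i-1)\Delta_n},(\mathcal S^*(\Delta_n)-I)h\rangle$ and applying It{\^o}'s isometry exhibits $P_t^n$ as a bilinear form governed by the overlaps $\langle v_i,v_j\rangle$, and the Favard regularity forces these overlaps to decay (for the shift semigroup the $v_i$ have essentially disjoint supports), improving the estimate to $o_p(\Delta_n^{2\gamma-1})$ and closing both cases. Carrying this out uniformly in $t\in[0,T]$, and — for the central limit theorem — under only Assumption \ref{As: Integrability Assumption on Drift and Volatility} rather than the global regularity Assumption \ref{As: Spatial regularity}, is where the technical weight of the argument resides.
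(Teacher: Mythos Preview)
Your decomposition is the same as the paper's: reduce to $B=h\otimes g$, write $\Delta_i^nY=\tilde\Delta_i^nY+D_i$ with $D_i=(\mathcal S(\Delta_n)-I)Y_{(i-1)\Delta_n}$, and split $\langle RV_t^n,B\rangle_{\mathcal H}$ into the $SARCV$-part, a cross term $C^n$, and the memory term $P^n$. Your martingale-difference treatment of $C^n$ is valid and slightly more direct than the paper's, which instead uses Cauchy--Schwarz to bound $|C^n|$ by $\bigl(\sum_i\langle D_i,h\rangle^2\bigr)^{1/2}\bigl(\sum_i\langle\tilde\Delta_i^nY,g\rangle^2\bigr)^{1/2}$ and thereby subordinates the cross term to the memory term.

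The gap is in your treatment of $P^n$. The assertion that ``the Favard regularity forces these overlaps to decay'' is unsupported: the Favard condition controls only the single norm $\|(\mathcal S^*(\Delta_n)-I)h\|$ and says nothing about how the vectors $v_i=\mathcal S^*((i-1)\Delta_n)(\mathcal S^*(\Delta_n)-I)h$ interact across different $i$. The disjoint-support picture you invoke is particular to the shift semigroup; for a general $C_0$-semigroup there is no structural reason for the $v_i$ to be near-orthogonal, and the argument does not close. Note also that $P^n=\sum_i\langle D_i,h\rangle\langle D_i,g\rangle$ is a single sum: applying It\^o's isometry to $\mathbb E[P^n]$ yields $\sum_i\int_0^{(i-1)\Delta_n}\langle\Sigma_s w_i(s),w_i(s)\rangle\,ds$ with $w_i(s)=\mathcal S^*((i-1)\Delta_n-s)(\mathcal S^*(\Delta_n)-I)h$, and no cross-$i,j$ overlaps appear; what must be shown is that for fixed $s$ the sum $\sum_i\|\sigma_s^*w_i(s)\|^2$ is $o(\Delta_n^{2\gamma-1})$, which the naive bound does not deliver.

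The paper's mechanism is different and does not use orthogonality of the $v_i$. After bounding $\mathbb E[P^n]$ by $T\Delta_n^{-1}\int_0^T\sup_t\mathbb E\bigl[\langle(\mathcal S(t+\Delta_n)-\mathcal S(t))\Sigma_s(\mathcal S(t+\Delta_n)-\mathcal S(t))^*h,h\rangle\bigr]\,ds$, one fixes an orthonormal basis $(e_j)\subset D(\mathcal A)$ and splits via the projection $p_N$ onto $\overline{\mathrm{span}}\{e_j:j\ge N\}$. On the finite-dimensional part $(I-p_N)$ one gains an extra power of $\Delta_n$ from $(\mathcal S(t+\Delta_n)-\mathcal S(t))e_j=\int_t^{t+\Delta_n}\mathcal S(u)\mathcal Ae_j\,du$, which combined with the Favard bound on $h$ gives a contribution of order $\Delta_n^{1-\gamma}\to 0$. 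On the tail $p_N$ the Favard bound on $h$ is paired with $\sup_n\int_0^T\sup_t\mathbb E[\|p_N\mathcal S(t)\sigma_s\|_{\text{op}}^2]\,ds\to 0$ as $N\to\infty$, which is where the Hilbert--Schmidt property of $\sigma_s$ enters (via the mechanism of Lemma~\ref{L: Projection convergese uniformly on the range of volatility}). This $(I-p_N)/p_N$ splitting is what upgrades the borderline $O(\Delta_n^{2\gamma-1})$ to the required $o(\Delta_n^{2\gamma-1})$, and it works for an arbitrary strongly continuous semigroup.
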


\subsection{Discrete samples in space and time}\label{sec: discrete data in time and space}
We discuss in this subsection the case when we have observations which are discrete in space and time.
Discretisation in space yields many nontrivial challenges (e.g.~owing to asynchronicity or noise).  
Here we want to outline how our results can be used immediately for estimation of the second-order structure of a continuous mild It{\^o} process and therefore we assume throughout this subsection that we have observations of $Y$ on a discrete regular space-time grid. That is, we observe
\begin{equation}\label{fully discrete sampling}
    Y_{i\Delta_n}(j\Delta_n):=Y_{t_i}(x_j), i,j=1,...,n,
\end{equation}
where for notational reasons we fix $T=1$. 
We assume that $H$ is the Sobolev space 
$$H^1(0,1):=\{ h:[0,1]\to\mathbb R: h\text{ is absolutely continuous and } h'\in L^2([0,1])\},$$
 equipped with the norm $\|h\|:=h(0)^2+\int_0^1 (h'(x))^2dx$. This is a reproducing kernel Hilbert space in which the corresponding reproducing kernel is $k(x,y):=1+\min(x,y)$, c.f.~\cite{Berlinet2011}. We  write $\delta_x= k(x,\cdot)$ for both the representer of the evaluation functionals and the evaluation functionals $\delta_x f=f(x)$ in $H$.
 
 Define the operator $\Pi_n:H\to H$ as the orthogonal projection onto $$H_n:=span(\delta_{j\Delta_n}, j=1,...,n).$$ 
 Then, for any $h\in H$, $\Pi_n h$ can readily be recovered from the finite number of evaluations $h(j\Delta_n),\,j=1,...,n$. Indeed, as $\langle \delta_{j\Delta_n},\Pi_n h\rangle=\langle \delta_{j\Delta_n}, h\rangle= h(j\Delta_n),$ $\Pi_n h$ is the unique element in $span(\delta_{j\Delta_n}, j=1,...,n)$ that interpolates the points $h(j\Delta_n),j=1,...,n$. 
 Thus, it is of the form 
 \begin{align}\label{RKHS Projection}
   \Pi_n Y_{i\Delta_n} = \sum_{j=1}^n \alpha_{j,i} k(j\Delta_n,\cdot),
 \end{align}
 where $(\alpha_{1,i},...,\alpha_{n,i})^{\perp}= (\mathbb K_n)^{-1} (Y_{i\Delta_n}(\Delta_n),...,Y_{i\Delta_n}(1))^{\perp}$
 and $\mathbb K_n$ denotes the positive definite matrix $\mathbb K_n =  (k(j_1\Delta_n,j_2\Delta_n))_{j_1,j_2=1,...,n}$. Observe that in this particular case, the kernel matrix has a very simple form as $k(j_1\Delta_n,j_2\Delta_n)=1+\Delta_n\min(j_1,j_2)$ and its inverse is given by the symmetric tridiagonal matrix $\mathbb K_n^{-1}$ which has entries $$(\mathbb K_n^{-1})_{i,j}= \begin{cases}
     -n & |i-j|=1\\
     2n& i=j\notin \{1,n\}\\
     n & i=j=n\\
     2+\frac {n^2-2}{n+1} & i=j=1\\
     0 & |i-j|>1.
 \end{cases}$$
 This method yields the interpolating element in $H$ that is minimal with respect to the norm in $H$ (c.f. \cite[Theorem 58]{Berlinet2011}) and is a very natural choice of reconstructing a curve from discrete data.
 The projections are also suitable for asymptotic theory due to the subsequent lemma.
 \begin{lemma}\label{RKHS projections converge strongly}
 The projections $\Pi_n$ converge strongly to the identity on $H=H^1(0,1)$.
 \end{lemma}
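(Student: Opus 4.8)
The plan is to combine the uniform boundedness of orthogonal projections with the best-approximation property of $\Pi_n$ and an explicit approximating sequence. Since each $\Pi_n$ is an orthogonal projection we have $\|\Pi_n\|_{\mathrm{op}}\le 1$ for all $n$, so by the standard $\varepsilon/3$-argument it suffices to prove $\Pi_n h\to h$ for $h$ in a dense subset of $H=H^1(0,1)$: for general $h$ and any $\tilde h$ in the dense set one has $\|\Pi_n h-h\|\le 2\|h-\tilde h\|+\|\Pi_n\tilde h-\tilde h\|$, and letting $n\to\infty$ and then $\tilde h\to h$ gives the claim. I would take the dense subset to be $C^1([0,1])$, which is dense in $H^1(0,1)$. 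Moreover, because $\Pi_n h$ is the best approximation of $h$ in $H_n$, we have $\|h-\Pi_n h\|=\operatorname{dist}(h,H_n)$, so it is enough to construct, for each $h\in C^1([0,1])$, some $g_n\in H_n$ with $\|h-g_n\|\to 0$.

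The second step is to describe $H_n$ explicitly. Writing $\delta_x=k(x,\cdot)=1+\min(x,\cdot)$, one computes $\delta_x'=\indicator_{[0,x)}$, so every $g=\sum_{j=1}^n\alpha_j\delta_{j\Delta_n}\in H_n$ has derivative $g'=\sum_{j=1}^n\alpha_j\indicator_{[0,j\Delta_n)}$, which is constant on each grid interval $((k-1)\Delta_n,k\Delta_n)$ with value $c_k:=\sum_{j\ge k}\alpha_j$; thus $g$ is piecewise linear with nodes at the grid points, and the map $(\alpha_j)\mapsto(c_k)$ is a bijection, so the slopes $c_1,\dots,c_n$ may be prescribed freely. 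The crucial structural constraint, which I expect to be the main obstacle, is that the boundary value is tied to the first slope: evaluating at $0$ gives $g(0)=\sum_j\alpha_j=c_1$. Hence one cannot match $g(0)=h(0)$ and simultaneously make $g'$ close to $h'$ on the first interval unless $h(0)=h'(0)$, and the non-standard norm $\|h\|^2=h(0)^2+\int_0^1(h'(x))^2\,dx$ forces us to control the boundary term and the slope jointly.

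The final step resolves this by sacrificing accuracy of the slope on the (vanishingly short) first interval in favour of the boundary term. I would set $c_1:=h(0)$ and, for $k=2,\dots,n$, $c_k:=n\int_{(k-1)\Delta_n}^{k\Delta_n}h'(x)\,dx$, the average of $h'$ over the $k$-th interval, and let $g_n\in H_n$ be the corresponding element. Then $g_n(0)=c_1=h(0)$, so the boundary term in $\|h-g_n\|^2=(h(0)-g_n(0))^2+\int_0^1(h'-g_n')^2$ vanishes. On the first interval the slope error contributes at most $\Delta_n(\|h'\|_\infty+|h(0)|)^2=O(1/n)$, while on the remaining intervals $c_k$ is the mean of $h'$, so uniform continuity of $h'$ on $[0,1]$ bounds each contribution by $\Delta_n\,\omega(\Delta_n)^2$ and the total interior error by $\omega(\Delta_n)^2\to 0$, where $\omega$ is the modulus of continuity of $h'$. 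Thus $\|h-g_n\|\to 0$, giving $\|h-\Pi_n h\|\to 0$ on $C^1([0,1])$ and, by the reduction in the first step, strong convergence $\Pi_n\to I$ on all of $H$.
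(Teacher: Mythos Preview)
Your proof is correct. It differs from the paper's in the choice of dense subset and approximating element: the paper works directly with the RKHS structure, taking $\mathrm{span}\{\delta_x:x\in[0,1]\}$ as the dense set (by a general RKHS result) and approximating each $\delta_{x_i}$ by $\delta_{\hat x_i^n}$ for the nearest grid point $\hat x_i^n$, so the approximation error is simply bounded by $\|\delta_x-\delta_z\|^2=|x-z|$ and no analysis of the boundary constraint is needed. Your route, via $C^1([0,1])$ and an explicit piecewise-linear interpolant, is more elementary in that it avoids the RKHS density citation, and your observation that $g(0)$ is forced to equal the first slope $c_1$---which you resolve by setting $c_1=h(0)$ and accepting an $O(\Delta_n)$ error on the first subinterval---is exactly the extra work this choice of dense set requires; the paper's RKHS approach sidesteps this because $\delta_x(0)=1$ for every $x$, so differences $\delta_{x_i}-\delta_{\hat x_i^n}$ automatically vanish at $0$. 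Both arguments are short; the paper's is a couple of lines shorter, while yours is self-contained.
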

 \begin{proof}
 According to \cite[Theorem 3]{Berlinet2011}  $K_0:=span(\delta_{x}, x\in [0,1])$ is dense in $H^1(0,1)$. 
 For an arbitrary element $h= \sum_{i=1}^{d} \lambda_i \delta_{x_i}\in K_0$ let $\hat h_n= \sum_{i=1}^{d} \lambda_i \delta_{\hat x_i^n}$, where $\hat x_i^n\in \{j\Delta_n, j=1,...,n\}$ which is closest to $x_i$. We then have $\|\delta_{x_i}-\delta_{\hat x_i}\|\leq|x_i-\hat x_i|\leq \Delta_n$ for all $i=1,...,d$ and, thus,
 $\|h-\hat h_n\|\leq \Delta_n \sum_{j=1}^d |\lambda_j|.$
 Now let $h\in H$ and $\epsilon >0$. We can choose a $g\in span(\delta_{x}, x\in [0,1])$ such that $\|h-g\|\leq \frac {\epsilon}2$ and for $g$ we can find an $n_0\in \mathbb N$ such that for each $n\geq n_0$ there is an $h_n\in  span(\delta_{j\Delta_n}, j=1,...,n)$ such that
 $\|g-h_n\|\leq \epsilon/2$. Thus, since $\Pi_n$ is an orthogonal projection, for all $n\geq n_0$ we have
 $$\|(I-P_n) h\|\leq \|h-h_n\|\leq \|h-g\|+\|g-h_n\|\leq \epsilon. $$
 \end{proof}
 
  Let us now derive asymptotic results in the fully discrete setting \eqref{fully discrete sampling}.
 We outline the situation here in two cases, which are of practical importance and well-suited for these observations. In the first case, we have a continuous It{\^o} semimartingale in $H$. This covers suitable frameworks for intraday energy markets, as mentioned in the introductory section.  In the second case, $\mathcal S$ is the semigroup of left shifts, which for instance corresponds to the framework of Heath-Jarrow-Morton term structure models, c.f.~\cite{Filipovic2000}, for interest rates and for energy forward markets, c.f.~\cite{BenthKruhner2014}. For a different sampling scheme we will also include a short discussion on the stochastic heat equation in a separate subsection afterwards.
\begin{itemize}
    \item[(a)](Semimartingale case) The semigroup is equal to the identity (or can be interpreted as such in the case of a strong solution as in Remark \ref{Rem: Strong solutions and RV vs SARCV}). That is, we observe a continuous It{\^o} semimartingale 
$$Y_t=Y_0+\int_0^t \alpha _s ds+\int_0^t \sigma_s dW_s.$$
In that case, we define the operator
$$\hat{\Sigma}^n_t= \Pi_n RV_t^n \Pi_n= \sum_{i=1}^{\ul} (\Pi_n \Delta_i^n Y)^{\otimes 2}.$$
The latter is feasible, as we can derive the values $\Delta_i^n Y(j\Delta_n)=Y_{i\Delta_n}(j\Delta_n)-Y_{(i-1)\Delta_n}(j\Delta_n)$ from data and, hence, can derive $\Pi_n \Delta_i^n Y$ by \eqref{RKHS Projection}.
\item[(b)](Shift case) $\mathcal S$ is the semigroup of left shifts, given by $$\mathcal S(t)h(x):=\begin{cases} h(x+t), & x+t\leq 1,\\
h(1), & x+t>1,
\end{cases}$$ which forms a the strongly continuous semigroup on $H^1(0,1)$. 
In that case, we define the operator
$$\hat{\Sigma}^n_t= \Pi_n SARCV_t^n \Pi_n= \sum_{i=1}^{\ul} (\Pi_n \tilde{\Delta}_i^n Y)^{\otimes 2}.$$
The latter is feasible, as we can derive the values $\tilde{\Delta}_i^n Y(j\Delta_n)=Y_{i\Delta_n}(j\Delta_n)-Y_{(i-1)\Delta_n}((j+1)\Delta_n)$ for $j=1,...,n-1$ and $\Delta_i^n Y(1)=0$ (by the definition of the semigroup) from data and, hence, can derive $\Pi_n \tilde{\Delta}_i^n Y$ by \eqref{RKHS Projection} also in this case.
\end{itemize}
The proof of the next result makes  use 
of Theorem \ref{T: LLN for the SARCV}. 
\begin{lemma}\label{L: Consistency of discretised truncated SARCV}
In both cases (a) and (b), we have
$$\hat{\Sigma}_t^n\overset{u.c.p.}{\longrightarrow} \int_0^t \Sigma_s ds,$$ with respect to the Hilbert-Schmidt norm on $\mathcal H=L_{\text{HS}}(H^1(0,1))$.
\end{lemma}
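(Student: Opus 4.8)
The plan is to reduce the claim to the law of large numbers already established for the (adjusted) realised covariation, Theorem \ref{T: LLN for the SARCV}, combined with the strong convergence of the projections from Lemma \ref{RKHS projections converge strongly}. Write $A_t^n$ for $RV_t^n$ in case (a) and for $SARCV_t^n$ in case (b), and set $\Theta_t:=\int_0^t\Sigma_s\,ds$. In case (b) Theorem \ref{T: LLN for the SARCV} applies directly; in case (a) the semimartingale is a mild It\^o process with $\mathcal S=I$ (cf.\ Remark \ref{Rem: Strong solutions and RV vs SARCV}), so the same theorem gives $\sup_{t\in[0,T]}\|A_t^n-\Theta_t\|_{\mathcal H}\overset{\mathbb P}{\longrightarrow}0$ in both cases. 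Since $\hat\Sigma_t^n=\Pi_n A_t^n\Pi_n$ with $\Pi_n$ a self-adjoint orthogonal projection, I would start from the decomposition
$$\hat\Sigma_t^n-\Theta_t=\Pi_n\bigl(A_t^n-\Theta_t\bigr)\Pi_n+\bigl(\Pi_n\Theta_t\Pi_n-\Theta_t\bigr).$$

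For the first term, $\|\Pi_n\|_{\text{op}}\le 1$ together with the elementary bound $\|CBD\|_{\mathcal H}\le\|C\|_{\text{op}}\|B\|_{\mathcal H}\|D\|_{\text{op}}$ yields $\sup_{t}\|\Pi_n(A_t^n-\Theta_t)\Pi_n\|_{\mathcal H}\le\sup_t\|A_t^n-\Theta_t\|_{\mathcal H}$, which tends to $0$ in probability by the law of large numbers. This part is immediate.

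The analytical heart is the second, purely pathwise term. Fix $\omega$. Because $\sigma\in L^2([0,T],L_{\text{HS}}(U,H))$ almost surely, each $\Sigma_s=\sigma_s\sigma_s^*$ is nuclear with $\int_0^T\|\Sigma_s\|_{\text{nuc}}\,ds=\int_0^T\|\sigma_s\|^2_{L_{\text{HS}}(U,H)}\,ds<\infty$, so $\Theta_t$ is nuclear, in particular Hilbert--Schmidt and compact, for every $t$. For a fixed $B\in\mathcal H$ with singular decomposition $B=\sum_k\lambda_k\,u_k\otimes v_k$, $\sum_k\lambda_k^2<\infty$, orthonormality of $(u_k)$ gives $\|\Pi_n B-B\|_{\mathcal H}^2=\sum_k\lambda_k^2\|\Pi_n v_k-v_k\|^2$; each summand vanishes as $n\to\infty$ by Lemma \ref{RKHS projections converge strongly} and is dominated by $4\lambda_k^2$, so dominated convergence over $k$ gives $\|\Pi_n B-B\|_{\mathcal H}\to0$, and symmetrically $\|B\Pi_n-B\|_{\mathcal H}\to0$. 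Combining these with $\|\Pi_n\|_{\text{op}}\le1$ shows $\Pi_n B\Pi_n\to B$ in $\mathcal H$; i.e.\ the linear maps $T_n\colon B\mapsto\Pi_n B\Pi_n$ satisfy $\|T_n\|_{\text{op}}\le1$ and converge pointwise to the identity on $\mathcal H$.

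To upgrade this to uniformity in $t$ I would use that $t\mapsto\Theta_t$ is continuous (in fact Lipschitz along the path) in $\mathcal H$, so its range $K:=\{\Theta_t:t\in[0,T]\}$ is a compact subset of $\mathcal H$; equibounded linear maps converging pointwise converge uniformly on compacta, hence $\sup_{t\in[0,T]}\|\Pi_n\Theta_t\Pi_n-\Theta_t\|_{\mathcal H}=\sup_{B\in K}\|T_nB-B\|_{\mathcal H}\to0$ for almost every $\omega$. Almost sure convergence implies convergence in probability, so the second term is uniformly $o_{\mathbb P}(1)$, and adding the two bounds gives the claimed u.c.p.\ convergence. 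The main obstacle is precisely this second term: strong convergence $\Pi_n\to I$ does not pass to the operator norm, so one must exploit the nuclearity (hence compactness) of $\Theta_t$ to recover Hilbert--Schmidt-norm convergence, and compactness of the whole path $\{\Theta_t\}$ to make that convergence uniform in time.
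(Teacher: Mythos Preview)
Your proof is correct and follows the same overall strategy as the paper: the same two-term decomposition, the same handling of the first term via $\|\Pi_n\|_{\text{op}}\le 1$ and Theorem~\ref{T: LLN for the SARCV}. The only difference is in the second term $\Pi_n\Theta_t\Pi_n-\Theta_t$. The paper pushes the projections inside the integral, invokes an external reference (Proposition~4 and Lemma~5 in \cite{Panaretos2019}) for the pointwise-in-$s$ convergence $\Pi_n\Sigma_s\Pi_n\to\Sigma_s$ in nuclear norm, and then uses dominated convergence over~$s$ with the bound $\sup_t\|\cdot\|\le\int_0^T\|\Pi_n\Sigma_s\Pi_n-\Sigma_s\|_{\mathcal H}\,ds$. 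You instead prove $\Pi_nB\Pi_n\to B$ in $\mathcal H$ directly via the singular value decomposition and then use continuity of $t\mapsto\Theta_t$ to argue uniform convergence on the compact range of the path. Your route is self-contained and avoids the external citation; the paper's route is marginally shorter once one accepts the reference and does not require checking continuity of the path. Both are natural and yield the claim.
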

\begin{proof}
Let $A_n$ denote either $ RV_t^n$ in case (a) or $ SARCV_t^n$ in case (b). Then it is
$$\|\Pi_n A_n \Pi_n- \Pi_n \int_0^t \Sigma_s ds\Pi_n\|_{\mathcal H}\leq \|A_n-\int_0^t \Sigma_s ds\|_{\mathcal H},$$
which converges to $0$ uniformly on compacts in probability in both cases by Theorem \ref{T: LLN for the SARCV}. 
Moreover, $\Pi_n  \Sigma_s  \Pi_n$ converges to $ \Sigma_s$ with respect to the nuclear (and hence the Hilbert-Schmidt) norm for all $s\in [0,1]$, which follows by Lemma \ref{RKHS projections converge strongly} and combining Proposition 4 and Lemma 5 in \cite{Panaretos2019}. The u.c.p. convergence follows by dominated convergence as
$$\sup_{t\in [0,1]}\left\|\int_0^t\Pi_n  \Sigma_s  \Pi_n-\Sigma_s ds\right\|_{\mathcal H}\leq \int_0^1\left\|\Pi_n  \Sigma_s  \Pi_n-\Sigma_s\right\|_{\mathcal H}ds. $$
\end{proof}
Due to the semimartingale property of the processes $(Y_t(x))_{t\in [0,T]}$ in case (a), both by the finite-dimensional limit theory outlined in \cite{JacodProtter2012} or by appealing to Theorem \ref{T: Central limit theorem for functionals of the quadratic covariation} we have the following result.
\begin{corollary}
 In case (a), for $x\in [0,1]$, we have
\begin{align*}
  \sqrt n \left(\sum_{i=1}^{\ul} \left(Y_{i\Delta_n}(x)-Y_{(i-1)\Delta_n}(x)\right)^2-\int_0^t \langle \sigma_s,\delta_x\rangle^2ds\right)\stackrel{\mathcal L-s}{\Longrightarrow} \mathcal N(0,\langle\Gamma_t \delta_{x}^{\otimes 2},\delta_{x}^{\otimes 2}\rangle_{\mathcal H}).
\end{align*}
A feasible version,  conditional on the set $\{\langle\Gamma_t \delta_{x}^{\otimes 2},\delta_{x}^{\otimes 2}\rangle_{\mathcal H}>0\}\subseteq \Omega$,  is given by
\begin{align*}
   &\left(\sum_{i=1}^{n}\left(Y_{i\Delta_n}(x)-Y_{(i-1)\Delta_n}(x)\right)^4\right.\\
   &\left.-\sum_{i=1}^{n-1}\left(Y_{i\Delta_n}(x)-Y_{(i-1)\Delta_n}(x)\right)^2\left(Y_{(i+1)\Delta_n}(x)-Y_{(i)\Delta_n}(x)\right)^2\right)^{-\frac 12}\\
   &\times\left(\sum_{i=1}^{n} \left(Y_{i\Delta_n}(x)-Y_{(i-1)\Delta_n}(x)\right)^2-\int_0^t \langle \sigma_s, \delta_x\rangle^2ds\right)\\
   &\stackrel{d}{\longrightarrow} \mathcal N(0,1).
\end{align*}
\end{corollary}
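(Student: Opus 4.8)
The plan is to recognise both displays as the specialisations to $B=\delta_x^{\otimes 2}$ of the functional limit theorems already proved, after rewriting the pointwise quantities as Hilbert--Schmidt pairings. Throughout write $\chi_i:=Y_{i\Delta_n}(x)-Y_{(i-1)\Delta_n}(x)$. First I would set up the algebraic dictionary. In case (a) the semigroup is the identity, so $\tilde\Delta_i^nY=Y_{i\Delta_n}-Y_{(i-1)\Delta_n}$, and for the representer $\delta_x\in H=H^1(0,1)$ of the evaluation functional one has $\chi_i=\langle\tilde\Delta_i^nY,\delta_x\rangle$. Using the identity $\langle a^{\otimes 2},b^{\otimes 2}\rangle_{\mathcal H}=\langle a,b\rangle^2$ this yields $\sum_{i=1}^{\ul}\chi_i^2=\langle SARCV_t^n,\delta_x^{\otimes 2}\rangle_{\mathcal H}$. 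On the limiting side, $\langle\sigma_s,\delta_x\rangle^2=\|\sigma_s^*\delta_x\|_U^2=\langle\Sigma_s\delta_x,\delta_x\rangle=\langle\Sigma_s,\delta_x^{\otimes 2}\rangle_{\mathcal H}$, so that $\int_0^t\langle\sigma_s,\delta_x\rangle^2ds=\langle\int_0^t\Sigma_sds,\delta_x^{\otimes 2}\rangle_{\mathcal H}$. Consequently the centred sum $\sum_{i=1}^{\ul}\chi_i^2-\int_0^t\langle\sigma_s,\delta_x\rangle^2ds$ equals $\langle\tilde X_t^n,\delta_x^{\otimes 2}\rangle_{\mathcal H}$.

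For the infeasible central limit theorem I would then invoke Theorem \ref{T: Central limit theorem for functionals of the quadratic covariation} with $B=\delta_x^{\otimes 2}$, the integrability Assumption \ref{As: Integrability Assumption on Drift and Volatility} being in force. The sole hypothesis to verify is $\delta_x\in F^{\mathcal S^*}_{1/2}$, which is automatic here: since $\mathcal S=\mathcal S^*=I$ we have $I-\mathcal S^*(t)=0$, and hence $F^{\mathcal S^*}_{\gamma}=H$ for every $\gamma$. The theorem delivers the stated stable convergence, with asymptotic variance $\langle\Gamma_t\delta_x^{\otimes 2},\delta_x^{\otimes 2}\rangle_{\mathcal H}$. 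As a consistency check (and the alternative finite-dimensional route referenced in the statement), since $\delta_x^{\otimes 2}$ is symmetric one computes $\Gamma_t\delta_x^{\otimes 2}=2\int_0^t(\Sigma_s\delta_x)^{\otimes 2}ds$, so that $\langle\Gamma_t\delta_x^{\otimes 2},\delta_x^{\otimes 2}\rangle_{\mathcal H}=2\int_0^t\langle\Sigma_s\delta_x,\delta_x\rangle^2ds$; this is exactly the classical asymptotic variance $2\int_0^t c_s^2\,ds$ of realised variance for the one-dimensional continuous It\^o semimartingale $t\mapsto Y_t(x)$, whose spot variance is $c_s=\langle\sigma_s,\delta_x\rangle^2$.

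Finally, for the feasible statement I would match the random normaliser with the projected variance estimator. Specialising the identity from the lemma establishing positivity of $\hat\Gamma^n$ to $B=\delta_x^{\otimes 2}$ gives
\begin{equation*}
  \sum_{i=1}^{\ul}\chi_i^4-\sum_{i=1}^{\ul-1}\chi_i^2\chi_{i+1}^2=\Delta_n\bigl\langle\hat\Gamma_t^n\delta_x^{\otimes 2},\delta_x^{\otimes 2}\bigr\rangle_{\mathcal H}.
\end{equation*}
Therefore the studentised quantity in the statement is precisely $\Delta_n^{-1/2}\langle\tilde X_t^n,\delta_x^{\otimes 2}\rangle_{\mathcal H}\big/\sqrt{\langle\hat\Gamma_t^n\delta_x^{\otimes 2},\delta_x^{\otimes 2}\rangle_{\mathcal H}}$, and the convergence to $\mathcal N(0,1)$ on $\{\langle\Gamma_t\delta_x^{\otimes 2},\delta_x^{\otimes 2}\rangle_{\mathcal H}>0\}$ is Corollary \ref{C: feasible CLT for functionals} applied to $B=\delta_x^{\otimes 2}$, under Assumption \ref{As: locally bounded coefficients}.

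I expect no genuine analytic obstacle, since all the probabilistic work is carried by Theorem \ref{T: Central limit theorem for functionals of the quadratic covariation} and Corollary \ref{C: feasible CLT for functionals}. The points requiring care are purely bookkeeping: tracking the powers of $\Delta_n$ so that the factor $\Delta_n$ hidden inside the quartic-minus-bipower normaliser cancels the $\sqrt n=\Delta_n^{-1/2}$ multiplying the centred sum, and noting that the Favard-space hypothesis degenerates to a triviality when $\mathcal S=I$.
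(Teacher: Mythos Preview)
Your proposal is correct and follows exactly the route the paper indicates: the statement is offered as an immediate specialisation of Theorem \ref{T: Central limit theorem for functionals of the quadratic covariation} (for the infeasible display) and of Corollary \ref{C: feasible CLT for functionals} (for the studentised display) with $B=\delta_x^{\otimes 2}$, the Favard-space hypothesis being vacuous when $\mathcal S=I$. Your algebraic bookkeeping---matching $\chi_i^2$, $\chi_i^4$, $\chi_i^2\chi_{i+1}^2$ with the pairings $\langle(\tilde\Delta_i^nY)^{\otimes 2},\delta_x^{\otimes 2}\rangle_{\mathcal H}$ and the identity for $\Delta_n\langle\hat\Gamma_t^n\delta_x^{\otimes 2},\delta_x^{\otimes 2}\rangle_{\mathcal H}$---is exactly what is needed, and your consistency check recovering $2\int_0^t\langle\Sigma_s\delta_x,\delta_x\rangle^2ds$ mirrors the paper's remark that one could equally invoke the classical one-dimensional semimartingale theory from \cite{JacodProtter2012}.
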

It's notable that the central limit theorem can be recovered in case (b) as well, due to the following observation: In the case that $H=H^1(0,1)$, the representations $\delta_x$ of evaluation functionals are in the $\frac 12$-Favard spaces of the shift semigroup and its dual. Namely, we have 
\begin{lemma}\label{L: Evaluation functionals in Sobolev spaces have 1/2 regularity}
Let $H=H^1(0,1)$ and $\mathcal S$ be the left shift semigroup. Then 
the representations $\delta_x$, for any $0\leq x \leq 1$, of the evaluation functionals  are elements in the Favard class $F_{1/2}^{\mathcal S}$ and $F_{1/2}^{\mathcal S^*}$, but for $x\in (0,1]$ not in  the $\gamma$-Favard spaces  $F_{\gamma}^{\mathcal S}$ and for $x\in [0,1)$ $F_{\gamma}^{\mathcal S^*}$ with respect to the shift semigroup for $\gamma >\frac 12$. 
\end{lemma}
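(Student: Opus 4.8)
The plan is to exploit the reproducing kernel structure of $H = H^1(0,1)$, for which the representer is $\delta_x = k(x,\cdot) = 1 + \min(x,\cdot)$ and $\langle \delta_x, \delta_{x'}\rangle = k(x,x') = 1 + \min(x,x')$. Since the Favard condition only concerns the behaviour of $\|t^{-\gamma}(I - \mathcal S(t))\delta_x\|$ as $t \to 0$ (for $t$ bounded away from $0$ the strongly continuous semigroup is uniformly bounded and $t^{-\gamma}$ is harmless), it suffices to compute these norms for small $t$, separately for the primal and the adjoint semigroup.

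For the primal semigroup I would first record $\delta_x(0) = 1$ and $\delta_x' = \indicator_{[0,x)}$ a.e. Writing $g_t := (I - \mathcal S(t))\delta_x$ and evaluating the shift formula directly, for $0 < x < 1$ and $t < \min(x, 1-x)$ one finds $g_t(0) = -t$ and $g_t' = \indicator_{(x-t,x)}$, whence, from the explicit $H^1$-norm, $\|g_t\|^2 = g_t(0)^2 + \int_0^1 (g_t')^2 = t^2 + t$. Consequently $\|t^{-\gamma}(I-\mathcal S(t))\delta_x\|^2 = t^{2-2\gamma} + t^{1-2\gamma}$, which stays bounded as $t \to 0$ exactly when $\gamma \le \tfrac12$ and diverges for $\gamma > \tfrac12$; this yields membership in $F_{1/2}^{\mathcal S}$ together with the claimed sharpness. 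The endpoints are treated directly: for $x = 1$ the same computation gives $\|g_t\|^2 = t^2 + t$, while for $x = 0$ one has $\delta_0 \equiv 1$, a fixed point of the shift semigroup, so $(I - \mathcal S(t))\delta_0 = 0$ and $\delta_0$ lies in every Favard class, which is precisely why $x=0$ is excluded from the sharpness statement for $\mathcal S$.

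For the adjoint, the decisive step, and the one I expect to be the crux, is to identify $\mathcal S^*(t)\delta_x$ in closed form. Using the reproducing property and the definition of the adjoint, for every $h \in H$ we have $\langle \mathcal S^*(t)\delta_x, h\rangle = \langle \delta_x, \mathcal S(t) h\rangle = (\mathcal S(t)h)(x) = h(\min(x+t,1)) = \langle \delta_{\min(x+t,1)}, h\rangle$, so that $\mathcal S^*(t)\delta_x = \delta_{\min(x+t,1)}$. Combined with the elementary kernel identity $\|\delta_x - \delta_{x'}\|^2 = k(x,x) - 2k(x,x') + k(x',x') = |x-x'|$, this gives, for $t \le 1-x$, $\|(I-\mathcal S^*(t))\delta_x\|^2 = \|\delta_x - \delta_{x+t}\|^2 = t$, hence $\|t^{-\gamma}(I-\mathcal S^*(t))\delta_x\|^2 = t^{1-2\gamma}$, again bounded as $t \to 0$ if and only if $\gamma \le \tfrac12$. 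This establishes $\delta_x \in F_{1/2}^{\mathcal S^*}$ and sharpness for $x \in [0,1)$; for $x = 1$ one has $\mathcal S^*(t)\delta_1 = \delta_1$, so $(I - \mathcal S^*(t))\delta_1 = 0$ and $\delta_1$ lies in all adjoint Favard classes, which explains excluding $x=1$ in the $\mathcal S^*$ part of the statement.

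The only genuine subtlety is pinning down the adjoint action $\mathcal S^*(t)\delta_x = \delta_{\min(x+t,1)}$ correctly, including the capping at the right endpoint, and keeping careful track of the two boundary points $x=0$ and $x=1$, where the relevant increment vanishes identically. Once these are settled, every norm computation reduces to the kernel identity $\|\delta_x-\delta_{x'}\|^2 = |x-x'|$ and the explicit form of the $H^1$-norm, so no delicate estimates remain.
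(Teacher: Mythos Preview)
Your proof is correct. For the primal semigroup both you and the paper compute $\|(I-\mathcal S(t))\delta_x\|^2$ directly in the $H^1$ norm; you obtain $t^2+t$, correctly including the $g_t(0)^2=t^2$ contribution. For the adjoint, your route is genuinely different and cleaner: the paper first derives an explicit formula for $\mathcal S(t)^*g$ for arbitrary $g\in H^1(0,1)$ by an integration argument and then specialises to $g=\delta_x$, whereas you exploit the reproducing property $\langle \mathcal S(t)^*\delta_x,h\rangle=(\mathcal S(t)h)(x)=h((x+t)\wedge 1)$ to identify $\mathcal S(t)^*\delta_x=\delta_{(x+t)\wedge 1}$ in one line, after which the norm reduces to the kernel identity $\|\delta_x-\delta_{x'}\|^2=|x-x'|$. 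This bypasses all explicit integration; the paper's longer route has the incidental benefit of exhibiting the full adjoint action on $H^1$, but that is not needed for the lemma.
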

Let us assume for the moment we are in case (b) for the process
\begin{align*}
    Y_t(x)=&Y_0(x+t)+\int_0^t \alpha_s(x+t-s) ds \int_0^t \langle\sigma_s, \delta_{x+t-s}\rangle dW_s.
\end{align*}
This leads to the following useful limit theorem, which enables us to find confidence bounds for the process $(\int_0^t  \langle\sigma_s, \delta_{x}\rangle^2ds)_{t\in [0,T]}$ based on observations $(Y_{i\Delta_n}(x),Y_{i\Delta_n}(x+\Delta_n)),i=1,...,n$ in case (b):
\begin{corollary} In case (b), we have, for  $x\in [0,1]$, due to the central limit Theorem \ref{T: Central limit theorem for functionals of the quadratic covariation} (respectively, Theorem \ref{C: feasible CLT for functionals})
\begin{align*}
  \sqrt n \left( \sum_{i=1}^{\ul} \left(Y_{i\Delta_n}(x)-Y_{(i-1)\Delta_n}(x+\Delta_n)\right)^2-\int_0^t \langle\sigma_s, \delta_{x}\rangle^2ds\right)
   \stackrel{\mathcal L-s}{\Longrightarrow} \mathcal N(0,\langle\Gamma_t \delta_{x}^{\otimes 2},\delta_{x}^{\otimes 2}\rangle).
\end{align*}
A feasible version,  conditional on the set $\{\langle\Gamma_t \delta_{x}^{\otimes 2},\delta_{x}^{\otimes 2}\rangle_{\mathcal H}>0\}\subseteq \Omega$, is given by 
\begin{align*}
   &\left(\sum_{i=1}^{n}\left(Y_{i\Delta_n}(x)-Y_{(i-1)\Delta_n}(x+\Delta_n)\right)^4\right.\\
   &\left.-\sum_{i=1}^{n-1}\left(Y_{i\Delta_n}(x)-Y_{(i-1)\Delta_n}(x+\Delta_n)\right)^2\left(Y_{(i+1)\Delta_n}(x)-Y_{i\Delta_n}(x+\Delta_n)\right)^2\right)^{-\frac 12}\\
   &\times\left(\sum_{i=1}^{n} \left(Y_{i\Delta_n}(x)-Y_{(i-1)\Delta_n}(x+\Delta_n)\right)^2-\int_0^t  \langle\sigma_s, \delta_{x}\rangle^2ds\right)\\
   &\stackrel{d}{\longrightarrow} \mathcal N(0,1).
\end{align*}
\end{corollary}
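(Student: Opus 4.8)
The plan is to read both displays as the single-functional specialisations, at $B=\delta_x^{\otimes 2}$, of Theorem~\ref{T: Central limit theorem for functionals of the quadratic covariation} and Corollary~\ref{C: feasible CLT for functionals}, so that the entire task reduces to rewriting the abstract Hilbert--Schmidt pairings in scalar form. First I would test the adjusted increment against the evaluation functional. Since $\mathcal S$ is the left-shift semigroup on $H^1(0,1)$ and $\delta_x$ is the reproducing kernel, for any fixed $x\in[0,1)$ and $n$ large enough that $x+\Delta_n\le 1$ one has $\zeta_i:=\langle\tilde{\Delta}_i^nY,\delta_x\rangle=\tilde{\Delta}_i^nY(x)=Y_{i\Delta_n}(x)-(\mathcal S(\Delta_n)Y_{(i-1)\Delta_n})(x)=Y_{i\Delta_n}(x)-Y_{(i-1)\Delta_n}(x+\Delta_n)$; the boundary prescription $\mathcal S(\Delta_n)h(\cdot)=h(1)$ only affects the irrelevant case $x+\Delta_n>1$. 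Using the elementary identity $\langle h^{\otimes2},g^{\otimes2}\rangle_{\mathcal H}=\langle h,g\rangle^2$ I would then get $\langle SARCV_t^n,\delta_x^{\otimes2}\rangle_{\mathcal H}=\sum_{i=1}^{\ul}\zeta_i^2$, and from $\langle\Sigma_s,\delta_x^{\otimes2}\rangle_{\mathcal H}=\langle\Sigma_s\delta_x,\delta_x\rangle=\|\sigma_s^*\delta_x\|^2=\langle\sigma_s,\delta_x\rangle^2$ (reading $\langle\sigma_s,\delta_x\rangle:=\sigma_s^*\delta_x\in U$ as in the process representation) the identity $\langle\int_0^t\Sigma_sds,\delta_x^{\otimes2}\rangle_{\mathcal H}=\int_0^t\langle\sigma_s,\delta_x\rangle^2ds$. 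Thus $\Delta_n^{-\frac12}\langle\tilde X_t^n,\delta_x^{\otimes2}\rangle_{\mathcal H}$ is precisely $\sqrt n$ times the centred statistic in the first display, recalling $\Delta_n=1/n$.

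It remains only to check admissibility of the test operator. By Lemma~\ref{L: Evaluation functionals in Sobolev spaces have 1/2 regularity}, $\delta_x\in F_{1/2}^{\mathcal S^*}$ for every $x\in[0,1]$, so $B=\delta_x^{\otimes2}$ is exactly of the finite-dimensional-range form required by Theorem~\ref{T: Central limit theorem for functionals of the quadratic covariation} (with $K=1$, $\mu_1=1$, $h_1=g_1=\delta_x$). Applying that theorem then yields the stable convergence of $\sqrt n\bigl(\sum_{i=1}^{\ul}\zeta_i^2-\int_0^t\langle\sigma_s,\delta_x\rangle^2ds\bigr)$ to $\mathcal N(0,\langle\Gamma_t\delta_x^{\otimes2},\delta_x^{\otimes2}\rangle)$, which is the first claim.

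For the feasible version I would specialise Corollary~\ref{C: feasible CLT for functionals} to the same $B$. Evaluating the quadratic form of the estimator via the identity established in the proof of the lemma asserting positive semidefiniteness of $\hat\Gamma^n_t$, and inserting $\langle(\tilde{\Delta}_i^nY)^{\otimes2},\delta_x^{\otimes2}\rangle_{\mathcal H}=\zeta_i^2$, gives
\[
\Delta_n\,\langle\hat\Gamma^n_t\,\delta_x^{\otimes2},\delta_x^{\otimes2}\rangle_{\mathcal H}=\sum_{i=1}^{\ul}\zeta_i^4-\sum_{i=1}^{\ul-1}\zeta_i^2\zeta_{i+1}^2,
\]
which at $t=1$ is exactly the difference of fourth-power and bipower sums appearing in the stated feasible statistic (note $\zeta_{i+1}=Y_{(i+1)\Delta_n}(x)-Y_{i\Delta_n}(x+\Delta_n)$). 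Substituting the numerator and denominator into the studentised ratio of Corollary~\ref{C: feasible CLT for functionals}, the common factor $\Delta_n^{-\frac12}=\sqrt n$ cancels between $\Delta_n^{-\frac12}\langle\tilde X_t^n,\delta_x^{\otimes2}\rangle_{\mathcal H}$ and $\sqrt{\langle\hat\Gamma^n_t\delta_x^{\otimes2},\delta_x^{\otimes2}\rangle_{\mathcal H}}=\Delta_n^{-\frac12}\bigl(\sum_i\zeta_i^4-\sum_i\zeta_i^2\zeta_{i+1}^2\bigr)^{1/2}$, producing precisely the displayed feasible ratio converging to $\mathcal N(0,1)$ conditional on $\{\langle\Gamma_t\delta_x^{\otimes2},\delta_x^{\otimes2}\rangle_{\mathcal H}>0\}$.

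The result is therefore essentially a transcription of the two cited limit theorems, and the only genuinely delicate points are bookkeeping: correctly reading the shift action near the right boundary $x+\Delta_n>1$ (harmless in the limit for $x\in[0,1)$), and the scalar reduction of the Hilbert--Schmidt inner products for both $SARCV^n$ and $\hat\Gamma^n$. Once these identifications are in place, nothing beyond invoking Lemma~\ref{L: Evaluation functionals in Sobolev spaces have 1/2 regularity}, Theorem~\ref{T: Central limit theorem for functionals of the quadratic covariation} and Corollary~\ref{C: feasible CLT for functionals} is required.
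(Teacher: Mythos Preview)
Your proposal is correct and matches the paper's approach exactly: the paper does not give a separate proof for this corollary, stating it as an immediate consequence of Theorem~\ref{T: Central limit theorem for functionals of the quadratic covariation}, Corollary~\ref{C: feasible CLT for functionals} and Lemma~\ref{L: Evaluation functionals in Sobolev spaces have 1/2 regularity}. You have filled in precisely the scalar bookkeeping (the identification $\langle\tilde\Delta_i^nY,\delta_x\rangle=Y_{i\Delta_n}(x)-Y_{(i-1)\Delta_n}(x+\Delta_n)$, the reduction of the Hilbert--Schmidt pairings, and the cancellation of the $\Delta_n^{-1/2}$ factors in the studentised ratio) that the paper leaves implicit.
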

 We remark, that even for case (b), Lemma \ref{L: Evaluation functionals in Sobolev spaces have 1/2 regularity} also guarantees that Theorem \ref{T:Finite dimensional limit theorems for functionals of nonadjusted covariation}(i) applies. Hence, it holds that
$$
\sum_{i=1}^n\left(Y_{i\Delta_n}(x)-Y_{(i-1)\Delta_n}(x)\right)^2\stackrel{u.c.p.}{\longrightarrow} \int_0^t  \langle\sigma_s, \delta_{x}\rangle^2 ds.
$$
Therefore, we just need observations $Y_{i\Delta_n}(x),{i=1,...,n}$ to estimate the quadratic variation of the one-dimensional processes $(Y_t(x))_{t\in [0,T]}$ consistently.


\subsubsection{A note on the stochastic heat equation}\label{sec: Heat Equation}
As already mentioned in Remark \ref{Rem: Strong solutions and RV vs SARCV}, the semigroup adjustment can be easily implemented in cases in which we know the semigroup and it has a simple form, which is not always the case. A prototypical example is the stochastic heat equation with an unknown diffusivity $\kappa>0$ taking the form
\begin{align*}
    dY_t=\kappa \partial_{xx} Y_t dt+ Q^{\frac 12}dW_t.
\end{align*}
Here we assume that $\int_0^t Q^{\frac 12}dW_s$ is formally a $Q$-Wiener process taking values in  $H=L^2[0,1]$ with an unknown nuclear covariance operator $Q$. The differential operator $\partial_{xx}$ on the domain $D(\partial_{xx})= \{h\in L^2[0,1]: \|f'\|+\|f''\|<\infty, f(0)=f(1)=0\}$ generates an analytic semigroup on $H$ given by
\begin{align*}
    \mathcal S(t) f= \sum_{j=1}^{\infty} e^{t\lambda_j} \langle e_j, f\rangle e_j,
\end{align*}
where $\lambda_j=\pi^2 j^2 \kappa$ and $e_j(x):=\sqrt 2 \sin (\pi j x)$ (see for instance Example B.12 in \cite{PZ2007}). 
 In this situation, the regularity of the dynamics is very often expressed in terms of Sobolev spaces, which can be formally defined as
\begin{align}
    \dot H^r := D\left(\partial_{xx}^{\frac r2}\right)= \left\{h\in H: \|h\|_{\dot H^r}^2:=\sum_{j=1}^{\infty} \lambda_j^{ r} \langle e_j, h\rangle^2 <\infty\right\}.
\end{align}
Equipped with the norm $\|\cdot \|_{\dot H^r}=\|(-\mathcal A)^{\frac r2} \cdot \|$, these are separable Hilbert spaces. Now, if $W$ is a cylindrical Wiener process on $L^2(0,1)$ and \begin{equation}\label{Heat-Favard condition}
Q^{\frac 12}\in L_{\text{HS}}\left(L^2(0,1),\dot H^r\right),
\end{equation}
it follows by Theorem 6.13 in Section 2.6 of \cite{pazy1983}
\begin{align}\label{Favard-Property for Sobolev Volatility}
     \sup_{t\in [0,T]}t^{-\frac r2}\|(\mathcal S(t)-I)Q^{\frac 12}\|_{L_{\text{HS}}(U,H)} =& \sup_{t\in [0,T]}t^{-\frac r2}\|A^{-\frac r2}(\mathcal S(t)-I)A^{\frac r2}Q^{\frac 12}\|_{L_{\text{HS}}(U,H)} \notag\\
    \leq & C \|A^{\frac r2}Q^{\frac 12}\|_{L_{\text{HS}}(U,H)}\notag \\
    = & C \|Q^{\frac r2}\|_{L_{\text{HS}}(U,\dot H^r)}<\infty.
\end{align}
This yields
\begin{lemma}\label{lem: Structure Lemma for Heat equation}
If in \eqref{Heat-Favard condition} we have
\begin{itemize}
    \item[(a)] $r=1$, then Assumption \ref{As: Spatial regularity} holds and the semigroup-adjusted realised covariation satisfies the infinite-dimensional central limit theorem \ref{T:Infeasible Central Limit Theorem};
    \item[(b) ]$r>1$, then Assumption \ref{as: strong Spatial regularity} holds and the realised variation satisfies the infinite-dimensional law of large numbers Theorem \ref{T: Limit Theorems for nonadjusted RV}(i);
    \item[(b) ]$r>\frac 32$, then Assumption \ref{as: strong Spatial regularity for the CLT for RV} holds and the realised variation satisfies the infinite-dimensional central limit theorem \ref{T: Limit Theorems for nonadjusted RV}(ii).
\end{itemize}
\end{lemma}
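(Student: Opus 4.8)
The plan is to reduce all three claims to the single smoothing estimate \eqref{Favard-Property for Sobolev Volatility}, exploiting the fact that for the stochastic heat equation the volatility $\sigma_s\equiv Q^{\frac 12}$ is deterministic and constant in time. Consequently each of the three assumptions collapses to a statement about the scalar quantity $t^{-\gamma}\|(\mathcal S(t)-I)Q^{\frac 12}\|$, with $\gamma\in\{\tfrac12,\tfrac34\}$. First I would record that, whenever $Q^{\frac 12}\in L_{\text{HS}}(L^2(0,1),\dot H^r)$, the chain of inequalities \eqref{Favard-Property for Sobolev Volatility} furnishes a constant $C=C(r,T)$ with
\begin{equation*}
\|(\mathcal S(t)-I)Q^{\frac 12}\|_{L_{\text{HS}}(U,H)}\leq C\,t^{\frac r2},\qquad t\in[0,T],
\end{equation*}
and that, since $\|\cdot\|_{\text{op}}\leq\|\cdot\|_{L_{\text{HS}}(U,H)}$, the same bound holds with the operator norm on the left.

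For part (a), $r=1$, the displayed bound gives $\sup_{t\in[0,T]}t^{-\frac12}\|(\mathcal S(t)-I)Q^{\frac 12}\|_{\text{op}}\leq C$. Because $\sigma_s\equiv Q^{\frac12}$ is deterministic, the integrand in Assumption \ref{As: Spatial regularity}(i) is constant in $s$ and bounded by $C^2$, whence $\int_0^T\sup_{t}\mathbb E[\,\cdot\,]\,ds\leq TC^2<\infty$; thus Assumption \ref{As: Spatial regularity} holds and Theorem \ref{T:Infeasible Central Limit Theorem} applies. For the two remaining items I would exploit the surplus power of $t$. In the $r>1$ case one has $t^{-\frac12}\|(\mathcal S(t)-I)Q^{\frac 12}\|_{L_{\text{HS}}(U,H)}\leq C\,t^{\frac{r-1}2}\to 0$ as $t\to0$, so the constant-in-$s$ integrand of Assumption \ref{as: strong Spatial regularity} is $O(t^{r-1})$ and its integral over $[0,T]$ vanishes in the limit, yielding Theorem \ref{T: Limit Theorems for nonadjusted RV}(i). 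In the $r>\frac32$ case the same reasoning with exponent $\frac34$ gives $t^{-\frac34}\|(\mathcal S(t)-I)Q^{\frac 12}\|_{L_{\text{HS}}(U,H)}\leq C\,t^{\frac r2-\frac34}\to0$ since $\frac r2>\frac34$, so Assumption \ref{as: strong Spatial regularity for the CLT for RV} holds and Theorem \ref{T: Limit Theorems for nonadjusted RV}(ii) applies.

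Since the genuinely analytic work — the fractional-power factorisation $(\mathcal S(t)-I)Q^{\frac 12}=A^{-\frac r2}(\mathcal S(t)-I)A^{\frac r2}Q^{\frac 12}$ together with Pazy's estimate $\|A^{-\frac r2}(\mathcal S(t)-I)\|_{\text{op}}\leq Ct^{\frac r2}$ — is already carried out in \eqref{Favard-Property for Sobolev Volatility}, the only point requiring care is the distinction between mere boundedness (which suffices for the Favard-type Assumption \ref{As: Spatial regularity} in (a)) and genuine decay to zero (which is needed for Assumptions \ref{as: strong Spatial regularity} and \ref{as: strong Spatial regularity for the CLT for RV}); the strict inequalities $r>1$ and $r>\frac32$ are precisely what deliver the latter. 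I would also flag that Pazy's smoothing bound is stated for exponents in $(0,1]$, so for $r>2$ one instead invokes the Lipschitz bound $\|A^{-1}(\mathcal S(t)-I)\|_{\text{op}}\leq Ct$, giving the effective exponent $\min(\tfrac r2,1)$; this still exceeds $\tfrac12$ and $\tfrac34$ respectively, leaving the conclusions of both the $r>1$ and the $r>\frac32$ cases intact.
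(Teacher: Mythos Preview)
Your proposal is correct and follows the same route as the paper, which treats the lemma as an immediate consequence of the smoothing estimate \eqref{Favard-Property for Sobolev Volatility}; you simply spell out how that estimate, combined with the fact that $\sigma_s\equiv Q^{1/2}$ is deterministic and constant, verifies each of Assumptions \ref{As: Spatial regularity}, \ref{as: strong Spatial regularity} and \ref{as: strong Spatial regularity for the CLT for RV}. Your additional remark about the exponent range of Pazy's bound (capping the effective rate at $\min(r/2,1)$ when $r>2$) is a useful refinement that the paper does not make explicit.
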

As we do not necessarily know $\kappa$, it might not be possible to implement the semigroup adjustment. Even if we knew  $\kappa,$ on the basis of discrete observations we would need to approximate the semigroup appropriately to implement the adjustment such as it is done in \cite{Hildebrandt2021b}.
In this regard, cases (b) and (c) of the previous theorem are particularly appealing, as they hold for the realised variation, which does not take into account an adjustment by the semigroup. Still, also the latter has to be approximated by discrete data. Here we assume that we sample data from the mild solution to the stochastic heat equation as local averages, that is,
we have
$$\bar{y}_{i,j}^{n,m}:=\frac 1{\Delta_m}\int_{(j-1)\Delta_m}^{j\Delta_m} Y_{i\Delta_n}(x)dx,\quad i=0,...,n,j=1,...,m.$$
Observe that in this case, we can have a different spatial and temporal resolution.
Let $\Pi_m$ denote the projection onto the subspace of $L^2[0,1]$ spanned by the orthonormal vectors $\Delta_m\indicator_{[(j-1)\Delta_m,j\Delta_m]}$. 
Then we can recover $\Pi_m \Delta_i^m Y$ from data as this is simply corresponding to the piecewise constant function given by
$$\Pi_m \Delta_i^m Y = \sum_{i=1}^m (\bar{y}_{i,j}^{n,m}-\bar{y}_{i-1,j}^{n,m})\indicator_{[(j-1)\Delta_m,j\Delta_m]}.$$
We can, thus, readily  derive the estimator
$$\hat{\Sigma}_t^{n,m}:=\Pi_m RV_t^n \Pi_m = \sum_{i=1}^{\ul}(\Pi_m \Delta_i^n Y)^{\otimes 2},$$
from data as well. For a sufficiently regular $Q$, we then obtain an infinite-dimensional law of large numbers:
\begin{lemma}\label{lem: Convergence of RV for Heat equation under Favard condition}
Assume \eqref{Heat-Favard condition} holds with $r>1$. Then $\hat{\Sigma}_t^{n,m}$ is a consistent estimator, that is, with respect to the Hilbert-Schmidt norm it is as $n,m\to\infty$
\begin{align*}
    \hat{\Sigma}_t^{n,m}\overset{u.c.p.}{\longrightarrow} tQ.
\end{align*}
\end{lemma}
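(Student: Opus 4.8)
The plan is to reduce the statement to the law of large numbers for the non-adjusted realised covariation, Theorem~\ref{T: Limit Theorems for nonadjusted RV}(i), combined with the spatial-projection argument already employed in the proof of Lemma~\ref{L: Consistency of discretised truncated SARCV}. First I would record the structural features of the stochastic heat equation: the volatility is the constant, deterministic, self-adjoint operator $\sigma_s\equiv Q^{1/2}$, so that $\Sigma_s=Q^{1/2}(Q^{1/2})^*=Q$ for all $s$, whence $\int_0^t\Sigma_s\,ds=tQ$, with $Q$ nuclear by assumption. Since \eqref{Heat-Favard condition} holds with $r>1$, Lemma~\ref{lem: Structure Lemma for Heat equation}(b) guarantees that Assumption~\ref{as: strong Spatial regularity} is satisfied, and therefore Theorem~\ref{T: Limit Theorems for nonadjusted RV}(i) yields $RV_t^n\overset{u.c.p.}{\longrightarrow}tQ$ in the Hilbert--Schmidt norm as $n\to\infty$.

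Next I would set up the key decomposition. Using self-adjointness of the orthogonal projection $\Pi_m$ one checks that $\Pi_m h^{\otimes 2}\Pi_m=(\Pi_m h)^{\otimes 2}$, so that indeed $\hat{\Sigma}_t^{n,m}=\Pi_m RV_t^n\Pi_m$, and by the triangle inequality
\begin{equation*}
\|\hat{\Sigma}_t^{n,m}-tQ\|_{\mathcal H}\leq \|\Pi_m(RV_t^n-tQ)\Pi_m\|_{\mathcal H}+\|\Pi_m(tQ)\Pi_m-tQ\|_{\mathcal H}.
\end{equation*}
For the first term, since $\|\Pi_m\|_{\text{op}}\leq 1$, the ideal property of the Hilbert--Schmidt norm gives $\|\Pi_m(RV_t^n-tQ)\Pi_m\|_{\mathcal H}\leq\|RV_t^n-tQ\|_{\mathcal H}$, which tends to $0$ uniformly on $[0,1]$ in probability as $n\to\infty$ by the first step, and this bound is uniform in $m$.

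For the second term, since $T=1$ we have $t\le 1$, so $\sup_{t\in[0,1]}\|\Pi_m(tQ)\Pi_m-tQ\|_{\mathcal H}=\|\Pi_m Q\Pi_m-Q\|_{\mathcal H}$. The piecewise-constant projections $\Pi_m$ converge strongly to the identity on $L^2[0,1]$ (simple functions being dense), so, exactly as in the proof of Lemma~\ref{L: Consistency of discretised truncated SARCV}, combining Proposition~4 and Lemma~5 in \cite{Panaretos2019} with the nuclearity of $Q$ yields $\Pi_m Q\Pi_m\to Q$ in nuclear, and hence in Hilbert--Schmidt, norm. Taking the supremum over $t\in[0,1]$ in the displayed inequality and letting $n,m\to\infty$ establishes the claimed u.c.p.\ convergence.

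The point that requires care is the joint limit in $n$ and $m$, but this is not in fact an obstacle, because the two error contributions decouple cleanly: the first is controlled in probability through $n$, uniformly in $m$ (thanks to $\|\Pi_m\|_{\text{op}}\le 1$), while the second is a purely deterministic spatial error controlled through $m$, uniformly in $t$. The only genuinely non-trivial ingredient is that strong convergence $\Pi_m\to I$ on $L^2[0,1]$ lifts to nuclear-norm convergence of the sandwiched operator $\Pi_m Q\Pi_m$, and this is borrowed directly from the argument underlying Lemma~\ref{L: Consistency of discretised truncated SARCV}; everything else is routine once the constant-volatility structure $\Sigma_s\equiv Q$ of the heat equation is exploited.
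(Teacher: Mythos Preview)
Your proof is correct and follows essentially the same approach as the paper: both use the decomposition $\hat\Sigma_t^{n,m}-tQ=\Pi_m(RV_t^n-tQ)\Pi_m+(\Pi_m tQ\Pi_m-tQ)$, bound the first term by $\|RV_t^n-tQ\|_{\mathcal H}$ via Lemma~\ref{lem: Structure Lemma for Heat equation}(b) and Theorem~\ref{T: Limit Theorems for nonadjusted RV}(i), and handle the second via strong convergence $\Pi_m\to I$ together with Proposition~4 and Lemma~5 of \cite{Panaretos2019}. Your write-up is in fact more explicit than the paper's in spelling out $\Sigma_s\equiv Q$, the identity $\Pi_m h^{\otimes 2}\Pi_m=(\Pi_m h)^{\otimes 2}$, and the decoupling of the joint limit in $n$ and $m$.
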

\begin{proof}
We have
\begin{align*}
 \| \hat{\Sigma}_t^{n,m}-\Pi_m tQ\Pi_m\|\leq  & \|RV_t^n -t Q\|,
\end{align*}
which converges to $0$ by Lemma \ref{lem: Structure Lemma for Heat equation} (b) as $n\to\infty$. As $\Pi_m\to I$ strongly in $L^2(0,1)$ as $m\to\infty$ we also have that
$\|\Pi_m (tQ)\Pi_m- tQ\|_{L_{\text{HS}}(L^1(0,1))}$ converges to $0$ as $m\to\infty$ by combining Proposition 4 and Lemma 5 in \cite{Panaretos2019}.
\end{proof}
We  may also derive a central limit theorem for the one-dimensional observations.
\begin{lemma}
Assume that \eqref{Heat-Favard condition} holds with $r>3/2$ and that $m=m_n$ with $\lim_{n\to\infty} n\Delta_{m_n}=0$. Then for all $h\in H$ it is
\begin{align*}
    \sqrt n \langle\left( \hat{\Sigma}_t^{n,m} -tQ\right)h,h\rangle\overset{\mathcal L-s}{\Longrightarrow} \mathcal N\left(0, 2t \langle Qh,h\rangle^2\right).
\end{align*}
\end{lemma}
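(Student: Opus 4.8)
The plan is to reduce the statement to the Hilbert--Schmidt central limit theorem for the non-adjusted realised variation, Theorem \ref{T: Limit Theorems for nonadjusted RV}(ii), and then to treat the spatial projection $\Pi_m$ as an asymptotically negligible perturbation. For the stochastic heat equation the coefficients are $\alpha\equiv0$ and $\sigma\equiv Q^{\frac12}$, so $\Sigma_s\equiv Q$ and $\int_0^t\Sigma_s\,ds=tQ$; Assumption \ref{As: Integrability Assumption on Drift and Volatility} holds trivially, and $r>\frac32$ secures Assumption \ref{as: strong Spatial regularity for the CLT for RV} via Lemma \ref{lem: Structure Lemma for Heat equation}. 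Writing $h_m:=\Pi_m h$ and using self-adjointness of $\Pi_m$ together with the identity $\langle B,g^{\otimes2}\rangle_{\mathcal H}=\langle Bg,g\rangle$, one has $\langle\hat{\Sigma}_t^{n,m}h,h\rangle=\langle RV_t^n,(h_m)^{\otimes2}\rangle_{\mathcal H}$, and I would decompose
\begin{align*}
\Delta_n^{-\frac12}\big(\langle\hat{\Sigma}_t^{n,m}h,h\rangle-t\langle Qh,h\rangle\big)
&=\Delta_n^{-\frac12}\langle RV_t^n-tQ,h^{\otimes2}\rangle_{\mathcal H}\\
&\quad+\Delta_n^{-\frac12}\langle RV_t^n-tQ,(h_m)^{\otimes2}-h^{\otimes2}\rangle_{\mathcal H}\\
&\quad+t\,\Delta_n^{-\frac12}\big(\langle Qh_m,h_m\rangle-\langle Qh,h\rangle\big)=:\mathrm{(I)}+\mathrm{(II)}+\mathrm{(III)}.
\end{align*}

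For the leading term $\mathrm{(I)}$, Theorem \ref{T: Limit Theorems for nonadjusted RV}(ii) gives $\Delta_n^{-1/2}(RV_t^n-tQ)\stackrel{\mathcal L-s}{\Longrightarrow}\mathcal N(0,\Gamma_t)$ in $\mathcal H$, and applying the continuous linear functional $\langle\,\cdot\,,h^{\otimes2}\rangle_{\mathcal H}$, which preserves stable convergence, yields $\mathrm{(I)}\stackrel{\mathcal L-s}{\Longrightarrow}\mathcal N(0,\langle\Gamma_t h^{\otimes2},h^{\otimes2}\rangle)$. A short computation identifies the variance with the claimed one: since $\Sigma_s\equiv Q$ and $h^{\otimes2}$ is symmetric, $\Gamma_t(h^{\otimes2})=2t\,Qh^{\otimes2}Q=2t\,(Qh)^{\otimes2}$, whence $\langle\Gamma_t h^{\otimes2},h^{\otimes2}\rangle=2t\langle(Qh)^{\otimes2}h,h\rangle=2t\langle Qh,h\rangle^2$.

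The remainder $\mathrm{(II)}$ is handled by tightness. Cauchy--Schwarz together with $\|(h_m)^{\otimes2}-h^{\otimes2}\|_{\mathcal H}\le2\|h\|\,\|(\Pi_m-I)h\|$ gives
$$|\mathrm{(II)}|\le2\|h\|\,\|(\Pi_{m_n}-I)h\|\cdot\Delta_n^{-\frac12}\|RV_t^n-tQ\|_{\mathcal H}.$$
By the central limit theorem the factor $\Delta_n^{-1/2}\|RV_t^n-tQ\|_{\mathcal H}$ is $\mathcal O_p(1)$, while $\|(\Pi_{m_n}-I)h\|\to0$ because the piecewise-constant projections $\Pi_m$ converge strongly to the identity on $L^2(0,1)$; hence $\mathrm{(II)}$ tends to zero in probability, using only $m_n\to\infty$.

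The decisive step, and the main obstacle, is the deterministic bias $\mathrm{(III)}$, where the hypothesis $n\Delta_{m_n}\to0$ enters and where one must exploit the smoothing of $Q$. Writing $\langle Qh_m,h_m\rangle-\langle Qh,h\rangle=\langle Q(h_m-h),h_m+h\rangle=\langle h,(\Pi_m-I)Q(h_m+h)\rangle$ and using that $Q^{1/2}\in L_{\text{HS}}(U,\dot H^r)$ with $r>\frac32$ forces $Q\colon H\to\dot H^r\hookrightarrow\dot H^1$ to be bounded, the elementary approximation estimate $\|(\Pi_m-I)f\|_{L^2}\le C\Delta_m\|f\|_{\dot H^1}$ for piecewise-constant projections yields
$$\big|\langle Qh_m,h_m\rangle-\langle Qh,h\rangle\big|\le C\,\Delta_{m_n}\,\|h\|^2,$$
so that $|\mathrm{(III)}|\le Ct\,\Delta_n^{-1/2}\Delta_{m_n}=Ct\sqrt n\,\Delta_{m_n}\le Ct\,n\Delta_{m_n}\to0$. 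Combining the stably convergent term $\mathrm{(I)}$ with the two terms $\mathrm{(II)}$ and $\mathrm{(III)}$ that vanish in probability, the fact that adding a sequence converging to zero in probability preserves stable convergence (cf.~\cite[Thm.~3.18(b)]{Hausler2015}) gives $\Delta_n^{-1/2}(\langle\hat{\Sigma}_t^{n,m}h,h\rangle-t\langle Qh,h\rangle)\stackrel{\mathcal L-s}{\Longrightarrow}\mathcal N(0,2t\langle Qh,h\rangle^2)$, as asserted.
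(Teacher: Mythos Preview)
Your proof is correct and follows essentially the same three-term decomposition as the paper: a leading term handled by Theorem \ref{T: Limit Theorems for nonadjusted RV}(ii), a cross term controlled by tightness of $\Delta_n^{-1/2}(RV_t^n-tQ)$ together with $\|(\Pi_{m_n}-I)h\|\to0$, and a deterministic bias term $\langle(\Pi_mQ\Pi_m-Q)h,h\rangle$ bounded via the smoothing property of $Q$. Your write-up is in fact slightly cleaner than the paper's: you phrase the decomposition directly at the scalar level, and for the bias you invoke the Poincar\'e-type estimate $\|(\Pi_m-I)f\|_{L^2}\le C\Delta_m\|f'\|_{L^2}$, which gives the rate $\sqrt n\,\Delta_{m_n}$, whereas the paper uses a cruder endpoint-sampling comparison yielding only $\sqrt{n\Delta_{m_n}}$; both vanish under the hypothesis $n\Delta_{m_n}\to0$.
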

\begin{proof}
We decompose
\begin{align*}
   & \sqrt n \left( \hat{\Sigma}_t^{n,m} -tQ\right)\\
    = & \sqrt n \left( RV_t^n -tQ\right)+\left(  \sqrt n \left( \hat{\Sigma}_t^{n,m} -\Pi_mtQ\Pi_m\right)-\sqrt n \left( RV_t^n -tQ\right)\right)+\sqrt n \left( \Pi_m tQ\Pi_m-tQ\right).
\end{align*}
The first term converges stably in law to the limiting Gaussian process as specified in the assertion as $n\to\infty$.
It, thus, remains to show that the other two terms converge to $0$ uniformly on compacts in probability.

For the second summand we denote $h_m = \Pi_m h$ and find that
\begin{align*}
 & \sqrt n \left|\langle \left( \hat{\Sigma}_t^{n,m} -\Pi_m tQ \Pi_m\right) h,h\rangle-\sqrt n \langle \left( RV_t^n - tQ \right) h,h\rangle\right|\\
  \leq & \sqrt n  \left\| RV_t^n - tQ \right\| \|h_m-h\| (\|h_m\|+\|h\|). 
\end{align*}
As the first factor is bounded in probability and $h_m\to h$ as $m\to\infty$, this converges to $0$. For the second summand we have that $\sqrt n(\Pi_m Q\Pi_m -Q)$
we find
\begin{align*}
  \langle\Pi_m Q\Pi_m h -Qh,h\rangle\leq \|(I-\Pi_m)Qh\|+  \|(I-\Pi_m)Qh_m\|= (1)_m+(2)_m. 
\end{align*}
For the first summand we can argue that as $Q$ maps into $$\dot{H}^{\frac 32}\subset\dot{H}^{1}\subset H^1(0,1)$$
by Lemma 3.1 in \cite{thomee2007}, we have that for any $h\in H$ that $\partial_x Qh=(Qh)'\in L^2(0,1)$ and  $(Qh_m)'\in L^2(0,1)$ as well. Hence, for
$Qh^*_m(\cdot):=\sum_{i=1}^m Qh(i\Delta_m) \indicator_{[(i-1)\Delta_m,i\Delta_m]}(\cdot)\in span(\indicator_{[(i-1)\Delta_m,i\Delta_m]}:i=1,...,m)$ we have 
\begin{align*}
    (1)_m^2\leq\|Qh-Qh^*_m\|^2= \left\|\sum_{i=1}^m\left(\int_x^{i\Delta_m}  (Qh)'(y)dy\right) \indicator_{[(i-1)\Delta_m,i\Delta_m]}(x)\right\|^2\leq & \Delta_m \|(Qh)'\|^2
\end{align*}
and in the same way and using Lemma 3.1 in \cite{thomee2007}
$$ (2)_m^2\leq  \Delta_m \|(Qh_m)'\|^2= \Delta_m \|\partial_{xx}^{\frac 12}Qh_m\|^2\leq  \Delta_m \|Q\|_{L_{\text{HS}}(L^2(0,1),\dot H^1)}^2\|h_m\|^2 .$$
Summing up, we get
\begin{align*}
  \sqrt n\langle\Pi_m Q\Pi_m h -Qh,h\rangle\leq & \|(I-\Pi_m)Qh\|+  \|(I-\Pi_m)Qh_m\|\\
  = &\sqrt n \sqrt{\Delta_m}\left(\|(Qh)'\|+\|Q\|_{L_{\text{HS}}(L^2(0,1),\dot H^1)}\|h_m\|\right). 
\end{align*}
This converges to $0$ as $\sqrt {n\Delta_m}\to 0$ as $n\to \infty$ by assumption.
\end{proof}
Analytic semigroups such as the heat semigroups can impose regularity on the sample paths of $Y$ and potentially allow to weaken the conditions of Lemma \ref{lem: Structure Lemma for Heat equation}, which may not be sharp in this setting. 
We postpone a thorough analysis of these conditions in case of analytic semigroups to future research.

\section{A law of large numbers for multipower variations}\label{sec: Limit theorems for multipower variations}
We still have to verify the consistency \eqref{Consistent estimator for the asymptotic variance} of the estimator for the asymptotic variance $\Gamma_t$. Rather than proving only this specific result, we provide general laws of large numbers for power and multipower variations in this section.

For a positive symmetric trace-class operator $\Sigma$, we define the operator $\rho_{\Sigma}(m)$, as the  $m$th \textit{tensor moment} of an $H$-valued random variable $U\sim \mathcal N(0,\Sigma)$, i.e.,  
\begin{equation}
    \rho_{\Sigma}(m)=\mathbb E[U^{\otimes m}].
\end{equation}
 This operator can be characterised by the identity
\begin{equation}\label{Power variation limit operator}
    \langle \rho_{\Sigma_s}(m),h_1\otimes...\otimes h_m\rangle_{\mathcal H^m} = \sum_{p\in\mathcal P(m)}\prod_{(x,y)\in p} \langle \Sigma h_x,h_y\rangle,
\end{equation}
for any collection $h_1,...,h_m\in H$, where the sum is taken over all pairings over $(1,...,m)$, i.e., all ways to disjointly decompose $(1,...,m)$ into pairs. We denote the set of all these pairings by $\mathcal P(m)$, which is then given as
\begin{align*}
\mathcal P(m)
=&\left\{p\subset \lbrace 1,...,m\rbrace^2: \# p=\frac m2 \text{ and if }(x,y),(x',y')\in p,\right.\\
&\qquad\left.\text{ then $x,y,x',y'$ are pairwise unequal and }x< y,x'<y' \right\}.
\end{align*}
In particular, $\rho_{\Sigma}(m)=0$, if $m$ is odd. In the case of power variations, we need 
\begin{assumption}[m]\label{As: for LLN of Power Variations} For a natural number $m\in \mathbb N$
we have 
\begin{equation}
    \mathbb P\left[\int_0^T\|\alpha_s\|^{\frac {2m}{2+m}}ds+\int_0^T \|\sigma_s\|_{L_{\text{HS}}(U,H)}^m ds <\infty\right]=1.
\end{equation}
\end{assumption}
Observe that the assumption above corresponds to Condition 3.4.6 in the finite-dimensional law of large numbers Theorem 3.4.1 in \cite{JacodProtter2012}. 
We now state a law of large numbers for semigroup-adjusted power variations:
\begin{theorem}\label{T: Law of large numbers for tensor power variations}
Let $m\geq2$ be a natural number and Assumption \ref{As: for LLN of Power Variations}(m) be valid.
Then 
\begin{align*}
 \Delta_n^{1-\frac m2}   SAMPV^n(m)\stackrel{u.c.p.}{\longrightarrow} \left(\int_0^t \rho_{\Sigma_s}(m)ds\right)_{t\in [0,T]},
\end{align*}
with respect to the Hilbert-Schmidt norm on $\mathcal H^m$.
\end{theorem}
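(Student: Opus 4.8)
The plan is to follow the classical recipe for laws of large numbers for power variations (as in \cite[Sec.~3.4]{JacodProtter2012}), but with the semigroup handled through its strong continuity alone. First I would exploit the adjustment to obtain a clean local representation of the increments: using the semigroup property $\mathcal S(\Delta_n)\mathcal S(t)=\mathcal S(\Delta_n+t)$ in \eqref{mild Ito process}, one computes
\begin{equation*}
\tilde\Delta_i^n Y = A_i^n + M_i^n, \qquad A_i^n:=\int_{(i-1)\Delta_n}^{i\Delta_n}\mathcal S(i\Delta_n-s)\alpha_s\,ds, \quad M_i^n:=\int_{(i-1)\Delta_n}^{i\Delta_n}\mathcal S(i\Delta_n-s)\sigma_s\,dW_s,
\end{equation*}
so that the entire history before $(i-1)\Delta_n$ cancels; this is precisely where the adjustment pays off and is the feature absent in the naïve variation. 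By the standard localisation procedure I may then assume $\alpha$ and $\sigma$ bounded, so that all moments below are finite; u.c.p.\ convergence being a local property, this entails no loss of generality.

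Next I would discard the drift. Expanding $\tilde\Delta_i^n Y^{\otimes m}$ multilinearly and using $\|h_1\otimes\cdots\otimes h_m\|_{\mathcal H^m}=\prod_j\|h_j\|$, every summand other than $(M_i^n)^{\otimes m}$ carries at least one factor $\|A_i^n\|\lesssim\int_{(i-1)\Delta_n}^{i\Delta_n}\|\alpha_s\|\,ds$. Combining this with $\|M_i^n\|=\mathcal O_p(\Delta_n^{1/2})$ and Assumption~\ref{As: for LLN of Power Variations}(m) --- whose exponent $\tfrac{2m}{2+m}$ is calibrated exactly for this, matching Condition~3.4.6 of \cite{JacodProtter2012} --- shows $\Delta_n^{1-m/2}\sum_i(\tilde\Delta_i^n Y^{\otimes m}-(M_i^n)^{\otimes m})\stackrel{u.c.p.}{\longrightarrow}0$. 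It then remains to treat $\zeta_i^n:=\Delta_n^{1-m/2}(M_i^n)^{\otimes m}$, for which I would invoke the martingale law-of-large-numbers criterion: it suffices that $\sum_i\EE[\zeta_i^n\mid\Fi]\stackrel{u.c.p.}{\longrightarrow}\int_0^\cdot\rho_{\Sigma_s}(m)\,ds$ and that $\sum_i\EE[\|\zeta_i^n\|_{\mathcal H^m}^2\mid\Fi]\stackrel{\Prob}{\longrightarrow}0$. The second condition follows from the Burkholder--Davis--Gundy inequality together with Jensen, giving $\EE[\|M_i^n\|^{2m}\mid\Fi]\lesssim\Delta_n^{m-1}\int_{(i-1)\Delta_n}^{i\Delta_n}\EE[\|\sigma_s\|_{L_{\HS}(U,H)}^{2m}]\,ds\lesssim\Delta_n^m$ under boundedness, so the sum is $\mathcal O(\Delta_n)$.

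The heart of the argument is the convergence of the conditional expectations. Here I would freeze the volatility: approximating $\sigma$ in $L^m([0,T],L_{\HS}(U,H))$ by processes that are piecewise constant in time and using that $\sigma\mapsto\Delta_n^{1-m/2}\sum_i(M_i^n)^{\otimes m}$ depends continuously on $\sigma$ uniformly in $n$ (again via BDG), it is enough to treat a piecewise-constant $\sigma$. On each block $\bar M_i^n:=\int_{(i-1)\Delta_n}^{i\Delta_n}\mathcal S(i\Delta_n-s)\sigma_{(i-1)\Delta_n}\,dW_s$ is, conditionally on $\Fi$, centred Gaussian with covariance $C_i^n=\int_{(i-1)\Delta_n}^{i\Delta_n}\mathcal S(i\Delta_n-s)\Sigma_{(i-1)\Delta_n}\mathcal S(i\Delta_n-s)^*\,ds$, whence $\EE[(\bar M_i^n)^{\otimes m}\mid\Fi]=\rho_{C_i^n}(m)$ by definition of the tensor moment. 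Using the homogeneity $\rho_{\lambda\Sigma}(m)=\lambda^{m/2}\rho_\Sigma(m)$ inherent in Wick's formula \eqref{Power variation limit operator}, together with $C_i^n-\Delta_n\Sigma_{(i-1)\Delta_n}=\int_{(i-1)\Delta_n}^{i\Delta_n}(\mathcal S(i\Delta_n-s)\Sigma_{(i-1)\Delta_n}\mathcal S(i\Delta_n-s)^*-\Sigma_{(i-1)\Delta_n})\,ds\to 0$, I obtain $\Delta_n^{1-m/2}\sum_i\rho_{C_i^n}(m)=\Delta_n\sum_i\rho_{\Sigma_{(i-1)\Delta_n}}(m)+o(1)$, a Riemann sum converging to $\int_0^\cdot\rho_{\Sigma_s}(m)\,ds$.

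The main obstacle I anticipate is twofold and genuinely infinite-dimensional. First, the factors $\mathcal S(i\Delta_n-s)$ converge to the identity only in the strong operator topology, not in operator norm, so the vanishing of $C_i^n-\Delta_n\Sigma_{(i-1)\Delta_n}$ must be extracted from the Hilbert--Schmidt (indeed nuclear) nature of $\Sigma_{(i-1)\Delta_n}$ by dominated convergence, rather than from any uniform operator estimate. Second, justifying the freezing of a general predictable, merely $L^m$-integrable volatility --- so that the conditionally Gaussian computation applies --- requires the density-and-continuity argument above and careful uniform-in-$t$ (u.c.p.) control; this is the step where Assumption~\ref{As: for LLN of Power Variations}(m) and the boundedness from localisation are used most delicately. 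The remaining Gaussian combinatorics underlying \eqref{Power variation limit operator} are routine once the scaling is in place.
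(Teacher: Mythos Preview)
Your strategy is genuinely different from the paper's. The paper does not work directly in $\mathcal H^m$ via freezing and a martingale law of large numbers; instead it fixes an orthonormal basis $(e_j)_{j\in\mathbb N}\subset D(\mathcal A^*)$ and shows (Lemma~\ref{L: Reduction to martingales is possible}) that for each finite collection $j_1,\dots,j_m$ the scalar $\langle SAMPV^n_t(m),e_{j_1}\otimes\cdots\otimes e_{j_m}\rangle_{\mathcal H^m}$ agrees, up to $\mathcal O_p(\Delta_n^{m/2})$, with the corresponding power variation of the \emph{semimartingale} $S_t=\int_0^t\alpha_s\,ds+\int_0^t\sigma_s\,dW_s$ --- the semigroup disappears on these functionals precisely because $e_j\in D(\mathcal A^*)$. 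It then invokes Jacod--Protter's finite-dimensional Theorem 3.4.1 as a black box for each fixed $N$, and controls the infinite-dimensional tail $P_N^m SAMPV^n_t(m)$ uniformly in $n$ via the equi-small-tails Lemma~\ref{L: Projection convergese uniformly on the range of volatility}, using only $m$-th moments of $\sigma$ (see \eqref{whole increment estimate}). Thus the paper outsources all the delicate integrability issues to \cite{JacodProtter2012} and handles the infinite-dimensional part with first-moment estimates.

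There is a genuine gap in your argument at the localisation step. Under Assumption~\ref{As: for LLN of Power Variations}(m) the standard localisation (Theorem~\ref{L: localisation}(b)) does \emph{not} yield bounded $\alpha$ and $\sigma$; it only yields the bounded-integral condition $\int_0^T\|\alpha_s\|^{2m/(2+m)}+\|\sigma_s\|_{L_{\HS}(U,H)}^m\,ds\le A$ (Assumption~\ref{As: Decomposition of Q}(m)). Your variance bound $\EE[\|M_i^n\|^{2m}\mid\Fi]\lesssim\Delta_n^{m-1}\int_{(i-1)\Delta_n}^{i\Delta_n}\EE[\|\sigma_s\|_{L_{\HS}(U,H)}^{2m}]\,ds\lesssim\Delta_n^m$ therefore fails: you need $2m$-th moments of $\sigma$, which are simply not available. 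This is exactly why the paper's route avoids any second-moment martingale criterion in $\mathcal H^m$. Your approach can be repaired by inserting one more approximation layer: first replace $\sigma$ by $\sigma\indicator_{\{\|\sigma\|\le K\}}$, for which your boundedness-based computation is valid, and then let $K\to\infty$ using the uniform-in-$n$ $L^m$-continuity of $\sigma\mapsto\Delta_n^{1-m/2}\sum_i(M_i^n)^{\otimes m}$ that you already established via BDG. Without this extra truncation, the second condition of your martingale LLN criterion is unverified.
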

Let us study some examples:
\begin{example}\label{Ex: LLN for SARCV as a corollary of the LLN for power variations}
If $m=2$, there is just one way to decompose $\{1,2\}$ into pairs, i.e., $\mathcal P(2)$ consists of the pair $\{(1,2)\}$ only. Therefore $\rho_{\Sigma_s}(2)= \Sigma_s$, and
in particular, the law of large numbers reads in this case
\begin{align*}
    SARCV_t^n(2)\stackrel{u.c.p.}{\longrightarrow} \int_0^t \Sigma_s ds,
\end{align*}
which corresponds to the law of large numbers Theorem \ref{T: LLN for the SARCV}.
\end{example}
\begin{example}
If $m=4$, then we find that $\mathcal P(4)$ consists of the pairs $\{(1,2),(3,4)\}$, $\{(1,3),(2,4)\}$ and $\{(1,4),(2,3)\}$. Hence, it follows,
 \begin{align*}
      &\langle \rho_{\Sigma_s}(4),h_1\otimes...\otimes h_4\rangle_{\mathcal H^4}\\
      &\qquad=\langle \Sigma_s h_1,h_2\rangle  \langle \Sigma_s h_3,h_4\rangle +\langle \Sigma_s h_1,h_3\rangle  \langle \Sigma_s h_2,h_4\rangle+\langle \Sigma_s h_1,h_4\rangle  \langle \Sigma_s h_2,h_3\rangle\\
      &\qquad=\langle \Sigma_s^{\otimes 2}+\Sigma_s (\cdot+\cdot^*)\Sigma_s,h_1\otimes h_2\otimes h_3\otimes h_4\rangle.
 \end{align*}
 This yields
 $\rho_{\Sigma_s}(4)=\Sigma_s (\cdot+\cdot^*)\Sigma_s+\Sigma_s^{\otimes 2}.$
\end{example}
For a positive symmetric trace class operator $\Sigma:H\to H$, define for $m,m_1,...,m_k\in\mathbb N$ such that $m=m_1+...+m_k$
$$\rho_{\Sigma}^{\otimes k}(m_1,...,m_k):= \bigotimes_{j=1}^k \rho_{\Sigma}(m_j),$$
which is an operator in $\mathcal H^m$, such that for any collection $(h_{j,l})\subset H$, $j=1,...,k$ and $l=1,...,m_j$
we have 
\begin{align*}
    \langle \rho_{\Sigma}^{\otimes k}(m_1,...,m_k),\bigotimes_{l=1}^{m_1}h_{1,l} \otimes...\otimes \bigotimes_{l=1}^{m_k}h_{k,l}\rangle_{\mathcal H^m}
    = & \prod_{j=1}^{k}\sum_{p\in\mathcal P(m_l)}\prod_{(x,y)\in p} \langle \Sigma_s h_{x,j},h_{y,j}\rangle.
\end{align*}
We have the following law of large numbers for multipower variations:
\begin{theorem}\label{T: Law of large numbers for Multitensorpower variations}
Let Assumption \ref{As: locally bounded coefficients} hold and $m,m_1,m_2,\ldots,m_k$ be natural numbers such that $m_1+...+m_k=m\geq 2$. Then 
\begin{align}\label{ucp convergence of multipower variations}
 \Delta_n^{1-\frac m2}   SAMPV^n(m_1,...,m_k)\stackrel{u.c.p.}{\longrightarrow} \left(\int_0^t \rho_{\Sigma_s}^{\otimes k}(m_1,...,m_k)ds\right)_{t\in [0,T]}.
\end{align}
\end{theorem}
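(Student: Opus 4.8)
The plan is to generalise the single-block argument behind Theorem \ref{T: Law of large numbers for tensor power variations} by inserting a block-independence (factorisation) step that produces the tensor product $\rho^{\otimes k}$ in the limit. First I would record the algebraic identity that the semigroup adjustment is designed to produce: substituting \eqref{mild Ito process} into the definition of $\tilde{\Delta}_i^n Y$ makes the initial condition and all integrals over $[0,(i-1)\Delta_n]$ cancel, leaving
$$\tilde{\Delta}_i^n Y = \int_{(i-1)\Delta_n}^{i\Delta_n}\mathcal S(i\Delta_n - s)\alpha_s\,ds + \int_{(i-1)\Delta_n}^{i\Delta_n}\mathcal S(i\Delta_n - s)\sigma_s\,dW_s =: \tilde A_i^n + \tilde M_i^n.$$
By the standard localisation procedure (Assumption \ref{As: locally bounded coefficients} permits replacing $\alpha$ and $\sigma$ by processes that are bounded and càdlàg in $\|\cdot\|_{L_{\HS}(U,H)}$ up to a stopping time that can be sent to infinity without affecting $u.c.p.$ limits), I would assume $\sup_s\|\alpha_s\|$ and $\sup_s\|\sigma_s\|_{L_{\HS}(U,H)}$ bounded. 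Expanding $\bigotimes_{j}(\tilde A_{i+j-1}^n + \tilde M_{i+j-1}^n)^{\otimes m_j}$ multilinearly, every term containing at least one drift factor carries an extra power of $\Delta_n$ relative to the order $\Delta_n^{1/2}$ of a martingale factor; infinite-dimensional Burkholder--Davis--Gundy bounds together with the boundedness then show that, after multiplication by $\Delta_n^{1-m/2}$ and summation, all such terms are $u.c.p.$-negligible, so it suffices to treat the purely martingale summand $\zeta_i^n := \bigotimes_{j=1}^k(\tilde M_{i+j-1}^n)^{\otimes m_j}$.

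Second, I would freeze the volatility across each length-$k$ window, replacing $\sigma_s$ by $\sigma_{(i-1)\Delta_n}$ for $s\in((i-1)\Delta_n,(i+k-1)\Delta_n]$ to obtain $\bar M_{i+j-1}^n$; since $k$ is fixed, the window width $k\Delta_n\to0$, and the càdlàg property of $\sigma$ together with dominated convergence controls the replacement error uniformly in $t$ in probability. With the volatility frozen, the increments $\bar M_i^n,\dots,\bar M_{i+k-1}^n$ are, conditionally on $\Fi$, independent centred Gaussians in $H$ (stochastic integrals of integrands that are deterministic given $\Fi$, over disjoint time intervals). Hence the conditional tensor moment factorises: writing \eqref{Power variation limit operator} for the joint vector, every pairing linking two distinct blocks carries a factor $\Cov(\bar M_{i+j-1}^n,\bar M_{i+j'-1}^n)=0$ for $j\ne j'$ and drops out, leaving
$$\EE\Bigl[\bigotimes_{j=1}^k(\bar M_{i+j-1}^n)^{\otimes m_j}\,\Big|\,\Fi\Bigr] = \bigotimes_{j=1}^k \rho_{C_{i,j}^n}(m_j),$$
where $C_{i,j}^n$ is the $\Fi$-conditional covariance of $\bar M_{i+j-1}^n$, which satisfies $\Delta_n^{-1}C_{i,j}^n\to\Sigma_{(i-1)\Delta_n}$ in $\mathcal H$ by the strong continuity $\mathcal S(u)\to I$ as $u\downarrow0$ (exactly as in the proof of Theorem \ref{T: LLN for the SARCV}). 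Using the Gaussian scaling $\rho_{c\Sigma}(m_j)=c^{m_j/2}\rho_\Sigma(m_j)$, the right-hand side equals $\Delta_n^{m/2}\rho_{\Sigma_{(i-1)\Delta_n}}^{\otimes k}(m_1,\dots,m_k)$ up to negligible error, so $\Delta_n^{1-m/2}\sum_i\EE[\zeta_i^n\mid\Fi]$ reduces to the Riemann sum $\Delta_n\sum_i\rho_{\Sigma_{(i-1)\Delta_n}}^{\otimes k}(m_1,\dots,m_k)$, which converges $u.c.p.$ to $\int_0^t\rho_{\Sigma_s}^{\otimes k}(m_1,\dots,m_k)\,ds$ since $s\mapsto\rho_{\Sigma_s}^{\otimes k}(\cdot)$ is càdlàg.

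Third, it remains to show that the fluctuation term $\Delta_n^{1-m/2}\sum_i\bigl(\zeta_i^n-\EE[\zeta_i^n\mid\Fi]\bigr)$ vanishes $u.c.p.$ The obstacle here is that the windows overlap, so these centred summands do \emph{not} form a martingale difference sequence. I would resolve this by splitting the index set into the $k$ residue classes modulo $k$: within each class consecutive windows are disjoint, hence the corresponding summands form a genuine martingale difference array with respect to $(\mathcal F_{(i+k-1)\Delta_n})$, to which Doob's maximal inequality applies. A second-moment computation, using the Burkholder--Davis--Gundy bound $\EE\|\tilde M^n\|^{2m_j}=O(\Delta_n^{m_j})$ under boundedness, gives $\EE\|\Delta_n^{1-m/2}(\zeta_i^n-\EE[\zeta_i^n\mid\Fi])\|^2=O(\Delta_n^2)$ per term; summing the $O(n)$ terms in each of the finitely many classes yields $O(\Delta_n)\to0$. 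The genuinely delicate points that I expect to demand the most care are the infinite-dimensional moment bounds for products of Hilbert--Schmidt norms of stochastic convolutions (needed both for the drift removal and for the variance estimate), and the verification that the cross-block pairings truly vanish in $\mathcal H^m$, i.e. that the tensor-algebra bookkeeping of \eqref{Power variation limit operator} localises to within-block pairs; everything else is a Riemann-sum and localisation routine inherited from Theorem \ref{T: Law of large numbers for tensor power variations}.
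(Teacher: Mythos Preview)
Your argument is correct but takes a genuinely different route from the paper. The paper does \emph{not} work directly in $\mathcal H^m$ via a compensator--fluctuation decomposition; instead it fixes an orthonormal basis $(e_j)\subset D(\mathcal A^*)$, uses Lemma~\ref{L: Reduction to martingales is possible} to show that on each fixed functional $e_{j_1}\otimes\cdots\otimes e_{j_m}$ the semigroup adjustment is asymptotically negligible, and thereby reduces $(I-P_N^m)SAMPV^n$ to the multipower variation of the $N$-dimensional continuous semimartingale $(\langle S_t,e_1\rangle,\dots,\langle S_t,e_N\rangle)$, for which it simply \emph{cites} Theorem~8.4.1 of Jacod--Protter; the remaining tail $P_N^m SAMPV^n$ is then shown to vanish uniformly in $n$ as $N\to\infty$ via the Hilbert--Schmidt structure of $\sigma_s$ (Lemma~\ref{L: Projection convergese uniformly on the range of volatility} and estimate \eqref{whole increment estimate}). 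Your approach is more self-contained---the volatility-freezing and conditional-Gaussian factorisation make explicit where $\rho^{\otimes k}_{\Sigma_s}$ comes from, and the residue-class device for overlapping windows is essentially what Jacod--Protter do internally---whereas the paper's approach is more modular, outsourcing all block-level work to a finite-dimensional black box and isolating the genuinely infinite-dimensional content as a single equi-small-tails estimate. One small point to tidy: after freezing you should carry $\bar\zeta_i^n$ consistently through both the compensator identification and the fluctuation step, since for the unfrozen $\zeta_i^n$ the conditional expectation $\EE[\zeta_i^n\mid\Fi]$ is \emph{not} the tensor product you compute (the volatility on $((i-1)\Delta_n,(i+k-1)\Delta_n]$ is not $\Fi$-measurable).
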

Let us consider the important example of bipower variation:
\begin{example}[Bipower variation]
Let $m_1=m_2=k=2$, i.e., $m=4$, and define the bipower variation
\begin{equation}
   SAMPV_t^n(2,2)= \sum_{i=1}^{\ul-1} \tilde{\Delta}_i^n Y^{\otimes 2}\otimes \tilde{\Delta}_{i+1}^n Y^{\otimes 2}.
\end{equation}
Observe that $\rho_{\Sigma_s}^{\otimes 2}(2,2)=\rho_{\Sigma_s}\otimes\rho_{\Sigma_s}=\Sigma_s^{\otimes 2}$ by Example \ref{Ex: LLN for SARCV as a corollary of the LLN for power variations}.
\end{example}

\section{Outline of the Proofs}\label{sec: Outline of Proofs}
We will now provide an outline of the proofs of the main results (i.e. Theorems \ref{T: LLN for the SARCV}, \ref{T: Central limit theorem for functionals of the quadratic covariation}, \ref{T:Infeasible Central Limit Theorem}, \ref{T: Law of large numbers for tensor power variations} and \ref{T: Law of large numbers for Multitensorpower variations}). The remaining results Theorem \ref{T: Limit Theorems for nonadjusted RV}, Theorem \ref{T:Finite dimensional limit theorems for functionals of nonadjusted covariation}, Lemma \ref{L: Consistency of discretised truncated SARCV} and Lemma \ref{L: Evaluation functionals in Sobolev spaces have 1/2 regularity} as well as Examples \ref{Ex: counterexample for the Spatial regularity assumption} and \ref{Counterexample for RV} are consequences of these limit theorems. The detailed proofs are relegated to the Appendices \ref{sec: technical tools}-\ref{sec: Proofs of CLTs}.  

Throughout this section, we let $p_N$ be the projection onto $v^N:=\overline{lin(\{e_j:j\geq N\})}$, for some orthonormal basis $(e_j)_{j\in\mathbb N}$ that is contained in $D(\mathcal A^*)$, and $P_N^m$ denote the projection onto $\overline{lin(\{ \bigotimes_{l=1}^{m} e_{k_l}: k_l\geq N\})}$ (where $m$ is variable, corresponding to the particular case). In the special case $m=2$ we write $P_N^2=:P_N$.

First, it is important to note that we can appeal to localised versions of the assumptions of Theorems \ref{T: LLN for the SARCV}, \ref{T: Central limit theorem for functionals of the quadratic covariation}, \ref{T:Infeasible Central Limit Theorem}, \ref{T: Law of large numbers for tensor power variations} and \ref{T: Law of large numbers for Multitensorpower variations}. This is a common procedure that  follows the arguments presented in Section 4.4.1 in \cite{JacodProtter2012}, which enables us to prove all theorems stated in this work under such localised assumptions. The localised assumptions essentially impose boundedness instead of almost sure finiteness, in order to ensure the existence of all necessary moments.

 The first important observation is the following: By the localisation procedure, we can assume  there is a constant $ A$, such that 
\begin{equation}\label{Very very Weak localised integrability Assumption on the moments}
   \int_0^T \|\alpha_s\|^{\frac m2}+\|\sigma_s \|_{L_{\text{HS}}(U,H)}^m ds < A.
\end{equation} 
In this case, the $SAMPV$, when projected onto functionals of the form $\bigotimes_{l=1}^m e_{j_l}$, for an orthonormal basis $(e_j)_{j\in \mathbb N}$ which is contained in $D(\mathcal A^*)$, $m\in\mathbb N$ and $j_1,...,j_m\in \mathbb N$, corresponds asymptotically to the tensor multipower variations of the semimartingale 
$$S_t:= \int_0^t \alpha_s ds+ \int_0^t \sigma_s dW_s.$$
We find that
\begin{align}\label{FINITE DIMENSIONAL DISTRIBUTIONS ARE ESSENTIALLY SEMIMARTINGALE POWER VARIATIONS}
 & \langle SAMPV_t^n(m_1,...,m_k),\bigotimes_{l=1}^m e_{j_l}\rangle_{\mathcal H^m}\notag\\
 &\qquad=\sum_{i=1}^{\ul}  \langle\bigotimes_{j=1}^k\Delta_{i+j-1}^n S^{\otimes m_j}, \bigotimes_{l=1}^m e_{j_l}\rangle_{\mathcal H^m}+ \mathcal O_p( \Delta_n^{\frac m2}).
\end{align}
As the left-hand side of \eqref{FINITE DIMENSIONAL DISTRIBUTIONS ARE ESSENTIALLY SEMIMARTINGALE POWER VARIATIONS} corresponds to a multivariate continuous semimartingale, the limit theorems from \cite{JacodProtter2012} are readily available.

Now we come to the second important observation: For that, define the two sequences
\begin{equation}\label{outline: volatility and drift have equismall tails}
    a_N(z):=\sup_{n\in\mathbb N}\mathbb E\left[\int_0^T\|p_N\alpha_s^{\mathcal S_n}\|_{\mathcal H}^zds\right],\quad b_N(z):=\sup_{n\in\mathbb N}\mathbb E\left[\int_0^T\|p_N\sigma_s^{\mathcal S_n}\|_{L_{\text{HS}}(U,H)}^zds\right],
\end{equation}
for $z\leq m$,
$\sigma_s^{\mathcal S_n}=\mathcal S(i\Delta_n-s)\sigma_s$
and
$\alpha_s^{\mathcal S_n}=\mathcal S(i\Delta_n-s)\alpha_s$
with $s\in((i-1)\Delta_n,i\Delta_n]$. Observe that
\begin{equation*}
  \Sigma_s^{\mathcal S_n}= \sigma_s^{\mathcal S_n}(\sigma_s^{\mathcal S_n})^*.
\end{equation*}
Under \eqref{Very very Weak localised integrability Assumption on the moments}
both $a_N(z)$ for $z\leq m/2$ and $b_N(z)$ for $z\leq m$ converge to $0$ as $N\to\infty$ for $z\leq m$, respectively $z\leq \frac m2$. Moreover, we can find for all $m\in\mathbb N$ a universal constant $C>0$ possibly depending on $m$, such that
\begin{equation}\label{outline: whole increment estimate}
 \sum_{i=1}^{\ul}   \mathbb E\left[\|p_N\tilde{\Delta}_i^n Y\|^m\right]\leq C\Delta_n^{\frac m2-1}(a_N(\frac m2)+ b_N(m))=o(\Delta_n^{\frac m2-1}).
\end{equation}
We notice that the Hilbert-Schmidt structure of the volatility is crucial to establish that $b_N(z)$ converges to $0$.

The proofs for limit theorems in this work follow a similar pattern.
For the laws of large numbers:
\begin{itemize}
        \item[(LLNa)] Show that $(\Delta_n^{1-\frac m2}(I-P_N^m)(SAMPV_t^n-\int_0^t \rho^{\otimes k}(m_1,...,m_k)ds))_{t\in [0,T]}$ converges for all $N\in\mathbb N$ to 
        $0$ as $n\to \infty$, due to the available limit theory for finite-dimensional semimartingales.
        \item[(LLNb)] Show that $(\Delta_n^{1-\frac m2}P_N^m SAMPV_t^n)_{t\in [0,T]}$ converges to $0$ uniformly in $n$ and $t$ as $N\to \infty$. Standard arguments then imply that  $$\left(\Delta_n^{1-\frac m2}\left(SAMPV_t^n-\int_0^t \rho^{\otimes k}(m_1,...,m_k)ds\right)\right)_{t\in [0,T]}\stackrel{u.c.p.}{\longrightarrow}0\quad \text{ as }n\to\infty.$$
    \end{itemize}
    For the central limit theorems for the $SARCV$ we have
    \begin{itemize}
        \item[(CLTa)] Show that \begin{equation}
            (\tilde{Z}^{n,2}_t)_{t\in[0,T]}:= \left(\Delta_n^{-\frac 12}\left(\sum_{i=1}^{\ul}\tilde{\Delta}_i^n Y^{\otimes 2}-\int_{(i-1)\Delta_n}^{i\Delta_n} \Sigma_s^{\mathcal S_n}ds\right)\right)_{t\in[0,T]}
        \end{equation}
         for $ n\in\mathbb N$, which is a sequence of sums of martingale differences,
        is tight in $\mathcal D([0,T],\mathcal H)$ provided that the (localised) Assumption  \ref{As: Integrability Assumption on Drift and Volatility} holds.
         \item[(CLTb)] Prove that under (localised) Assumption \ref{As: Integrability Assumption on Drift and Volatility} the finite-dimensional distributions $((I-P_N)\tilde{Z}^{n,2}_t)_{t\in[0,T]}$ converge to an asymptotically conditional Gaussian process with the covariance $(I-P_N)\Gamma_t (I-P_N)$ by virtue of \eqref{Very very Weak localised integrability Assumption on the moments} and the finite-dimensional limit Theorem 5.4.2 in \cite{JacodProtter2012}.
         \item[(CLTc)] In order to prove Theorem \ref{T:Infeasible Central Limit Theorem}, we appeal to Assumption \ref{As: Spatial regularity} to show that $$\Delta_n^{-\frac 12}\sum_{i=1}^{\ul}\int_{(i-1)\Delta_n}^{i\Delta_n} \left(\Sigma_s^{\mathcal S_n}-\Sigma_s \right)ds\stackrel{u.c.p.}{\longrightarrow}0,$$ and for Theorem \ref{T: Central limit theorem for functionals of the quadratic covariation} to the fact that the operator $B$ has its finite-dimensional range in the $1/2$-Favard class of the dual semigroup in order to show that $$\Delta_n^{-\frac 12}\sum_{i=1}^{\ul}\int_{(i-1)\Delta_n}^{i\Delta_n} \langle (\Sigma_s^{\mathcal S_n}-\Sigma_s), B\rangle_{\mathcal H} ds\stackrel{u.c.p.}{\longrightarrow}0.$$
    \end{itemize}

\subsection{Comments on the proof of the laws of large numbers}
The imposed conditions on the law of large numbers Theorems \ref{T: Law of large numbers for tensor power variations} and \ref{T: Law of large numbers for Multitensorpower variations}  state that the finite-dimensional multipower variations $\sum_{i=1}^{\ul}  ((I-P_N^m)\bigotimes_{j=1}^k\Delta_{i+j-1}^n S^{\otimes m_j})$  fulfil the required conditions of the corresponding laws of large numbers. In the case of power variations, that is under the localised version of Assumption \ref{As: for LLN of Power Variations}, Theorem 3.4.1 in \cite{JacodProtter2012} is applicable. For the multipower variations with the localised version of Assumption \ref{As: locally bounded coefficients}, Theorem 8.4.1 in \cite{JacodProtter2012} applies and yields (LLNa).

Now, observe that the triangle inequality yields
\begin{align*}
 &  \left\| P_N^m\left(SAMPV_t^n(m_1,...,m_k)-\int_0^t \rho_{\Sigma_s}^{\otimes k}(m_1,...,m_k)ds\right)\right\|_{\mathcal H^m}\\
    &\qquad\leq \left\| P_N^mSAMPV_t^n(m_1,...,m_k)\right\|_{\mathcal H^m}+\left\|P_N^m\int_0^t \rho_{\Sigma_s}^{\otimes k}(m_1,...,m_k)ds\right\|_{\mathcal H^m}.
\end{align*}
For a given $\epsilon>0$, after appealing to the inequalities of Markov and H\"older together with \eqref{outline: whole increment estimate}, one finds that 
\begin{align*}
    &\sup_{n\in\mathbb N}\mathbb P\left[\sup_{t\leq T} \Delta_n^{1-\frac m2}\|P_N^m SAMPV_t^n(m_1,...,m_k)\|_{\mathcal H^m}>\epsilon\right]
 \to 0 \quad \text{ as }N\to \infty.
\end{align*}
Moreover, straightforward calculations lead to
\begin{align*}
   \left\|\rho_{p_N\Sigma_sp_N}(m)\right\|_{\mathcal H^m}^2
     \leq  |\mathcal P(m)|^2(\sum_{j\geq N}\|\Sigma_s^{\frac 12}e_{j}\|^2)^{ m},
\end{align*}
which converges to $0$ as $N\to\infty$, since $\Sigma_s^{\frac 12}$ is a Hilbert-Schmidt operator. Through Markov's inequality, one finds
\begin{align*}
    &\mathbb P\left[\sup_{t\leq T}\left\|P_N^m \int_0^t \rho_{\Sigma_s}^{\otimes k}(m_1,...,m_k)ds\right\|_{\mathcal H^m}>\epsilon\right]
    \leq \frac {|\mathcal P(m)|}{\epsilon}\int_0^T   \mathbb E\left[\left(\sum_{j\geq N}\|\Sigma_s^{\frac 12}e_{j}\|^2\right)^{ \frac m2}\right]ds.
\end{align*}
Dominated convergence implies that this converges to $0$ as $N\rightarrow\infty$, which shows (LLNb).

\subsection{Comments on the proofs of the central limit theorem}
 In order to show tightness for the sequence $\tilde Z^{n,2}$ we appeal to a criterion from \cite[p.35]{Joffe1986}:
\begin{theorem}\label{outline: Aldous Tightness Theorem} Let $H$ be a separable Hilbert space.
The family of laws $(\mathbb P_{\psi^n})_{n\in\mathbb N}$ of a family of random variables $(\psi^n)_{n\in\mathbb N}$ in $\mathcal D([0,T],H)$ is tight if the following two conditions hold:\begin{itemize}\item[(i)] $(\mathbb P_{\psi^n_t})_{n\in\mathbb N}$ is tight for each $t\in [0,T]$ and
    \item[(ii)](Aldous' condition) For all $\epsilon,\eta>0$ there is an $\delta>0$ and $n_0\in\mathbb N$ such that for all sequences of stopping times $(\tau_n)_{n\in \mathbb N}$ with $ \tau_n\leq T-\delta$ we have 
    \begin{equation}
        \sup_{n\geq n_0}\sup_{\theta\leq\delta}\mathbb P[\|\psi_{\tau_n}^n-\psi_{\tau_n+\theta}^n\|_H > \eta]\leq \epsilon.
    \end{equation}
\end{itemize}
\end{theorem}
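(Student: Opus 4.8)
The plan is to deduce tightness in $\mathcal D([0,T],H)$ from two classical ingredients: tightness of the scalar projections of $\psi^n$ in a separating family of directions, together with a compact containment condition in the state space $H$. Concretely, I would invoke Jakubowski's criterion (the functional-analytic companion of the result as stated): a sequence in $\mathcal D([0,T],H)$ is tight provided that (J1) for every $\epsilon>0$ there is a compact $K_\epsilon\subset H$ with $\inf_n\mathbb P[\psi^n_t\in K_\epsilon\ \forall t\in[0,T]]\geq 1-\epsilon$, and (J2) for every $h\in H$ the real-valued process $(\langle\psi^n_\cdot,h\rangle)_n$ is tight in $\mathcal D([0,T],\mathbb R)$. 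Since the linear functionals $\langle\cdot,h\rangle$, $h\in H$, separate the points of $H$ and are closed under addition, Jakubowski's theorem reduces the $H$-valued statement to verifying (J1) and (J2) from hypotheses (i) and (ii).

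The verification of (J2) is the routine half and uses the classical real-valued Aldous--Rebolledo criterion. Fix $h\in H$. Marginal tightness of $\langle\psi^n_t,h\rangle$ for each $t$ follows from (i) together with the continuity of $x\mapsto\langle x,h\rangle$. Aldous' stopping-time condition transfers verbatim, because for any stopping time $\tau_n\leq T-\delta$ and $\theta\leq\delta$ one has $|\langle\psi^n_{\tau_n+\theta}-\psi^n_{\tau_n},h\rangle|\leq\|h\|\,\|\psi^n_{\tau_n+\theta}-\psi^n_{\tau_n}\|$, so the scalar oscillation is dominated by the one controlled in (ii). The one-dimensional Aldous criterion then yields tightness of $(\langle\psi^n_\cdot,h\rangle)_n$ in $\mathcal D([0,T],\mathbb R)$.

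The substantial half is (J1). Fixing an orthonormal basis $(e_k)$, I would use the characterisation that a bounded set $K\subset H$ is relatively compact if and only if $\sup_{x\in K}\sum_{k>M}\langle x,e_k\rangle^2\to0$ as $M\to\infty$. Thus I need, uniformly in $n$ and with probability at least $1-\epsilon$, both a uniform bound $\sup_{t}\|\psi^n_t\|\leq R$ and a uniformly small high-mode tail $\sup_{t}\sum_{k>M}\langle\psi^n_t,e_k\rangle^2$ for $M$ large. For each fixed $t$ both properties are immediate from (i): a tight family in $H$ is norm-bounded in probability and has uniformly vanishing tails. The difficulty is to promote these pointwise-in-$t$ bounds to bounds that are uniform over $t\in[0,T]$ along càdlàg paths, and this is exactly where Aldous' condition is used. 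Following the Aldous--Rebolledo argument, (i) and (ii) together imply control of the Skorokhod modulus, $\lim_{\delta\to0}\limsup_n\mathbb P[w'(\psi^n,\delta)>\eta]=0$; covering $[0,T]$ by finitely many cells of length $\delta$ and exploiting that, outside an event of small probability, each path stays within $\eta$ of its values at the grid points (in the $w'$ sense), I transfer the grid-point tail and norm estimates to uniform-in-$t$ estimates. Choosing the grid fine enough and $R,M$ large then produces the required compact $K_\epsilon$.

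The main obstacle is precisely this promotion from pointwise to uniform control in the presence of jumps: the uniform (supremum) modulus of continuity does not vanish for limits in the Skorokhod topology, so one is forced to work with the $w'$-modulus and to prove that Aldous' stopping-time condition genuinely controls it, the Hilbert-valued analogue of the Aldous--Rebolledo lemma. Once that implication is established, combining the projection tightness of (J2) with the compact containment of (J1) and invoking Jakubowski's theorem closes the argument. In the present paper this criterion will then be applied to $\tilde Z^{n,2}_t$, for which (i) and (ii) are to be checked from its martingale-difference structure under the localised form of Assumption \ref{As: Integrability Assumption on Drift and Volatility}.
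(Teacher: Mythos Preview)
The paper does not prove this theorem at all: it is quoted verbatim as a known tightness criterion from \cite[p.~35]{Joffe1986} (Joffe--M\'etivier), both in the outline and in the appendix, and is used as a black box to establish tightness of $\tilde Z^{n,2}$. So there is no ``paper's own proof'' to compare against.

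Your proposal is a genuine attempt to prove the cited result, and the route via Jakubowski's criterion is a legitimate one. The verification of (J2) is indeed routine. For (J1) your sketch is essentially the standard argument (marginal tightness on a fixed finite grid, plus control of the $w'$-modulus from Aldous' condition, to propagate compact containment from grid points to all $t$), and you correctly flag the jump-related subtlety that forces one to work with $w'$ rather than the uniform modulus. One point worth tightening: the partition witnessing $w'(\psi^n,\delta)<\eta$ is path-dependent, so the passage from grid-point compactness to uniform-in-$t$ compactness requires a small additional argument (e.g.\ using that any c\`adl\`ag path with $w'<\eta$ is, up to $2\eta$, close to its value at the left endpoint of each partition cell, and then matching the random partition to a deterministic one at the cost of doubling $\eta$). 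This is standard (see Ethier--Kurtz or Joffe--M\'etivier themselves), but as written your sketch stops just short of it.
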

After some tedious estimations, one can verify Aldous' condition for  $\tilde Z^{n,2}$ 
under the localised versions of Assumptions \ref{As: Integrability Assumption on Drift and Volatility}. 
 Then it remains to show the spatial tightness, that is tightness of $(\tilde Z^{n,2}_t)_{n\in\mathbb N}$ as random sequences in $\mathcal H$ for each $t\in [0,T]$. In order to do this, we argue under condition \eqref{Very very Weak localised integrability Assumption on the moments} that, without loss of generality, we can assume $\alpha\equiv 0$. 
 Moreover, we will appeal to the following criterion, which is based on the equi-small tails-characterisation of compact sets in Hilbert spaces and is well known (c.f. Lemma 1.8.1 in \cite{vanderVaart1996}): 
 \begin{lemma}\label{outline: Tightness Criterion in Hilbert spaces}
Let $(Y_n)_{n\in \mathbb N}$ be a sequence of random variables on a probability space $(\Omega,\mathcal F,\mathbb P)$ with values in a separable Hilbert space $H$ and having finite second moments.  If for some orthonormal basis $(e_n)_{n\in\mathbb N}$ we have
 \begin{equation}\label{outline: Weak Spatial Tightness Criterion in Hilbert space}
  \lim_{N\to\infty}\sup_{n\in\mathbb N}  \sum_{k\geq N} \mathbb E\left[\langle Y_n, e_k\rangle^2 \right]= 0,
\end{equation}
 then the sequence $( Y_n)_{n\in\mathbb N}$ is tight.
\end{lemma}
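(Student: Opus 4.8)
The plan is to establish tightness in the standard way for Polish spaces: for every $\epsilon>0$ I will exhibit a compact set $K_\epsilon\subseteq H$ with $\sup_{n}\mathbb P[Y_n\notin K_\epsilon]\le\epsilon$. The tool that makes this feasible in a separable Hilbert space is the classical characterisation of relative compactness: a subset of $H$ is relatively compact if and only if it is norm-bounded and has equi-small tails with respect to the basis $(e_k)_{k\in\mathbb N}$, i.e.\ $\lim_{N\to\infty}\sup_{x\in K}\sum_{k\ge N}\langle x,e_k\rangle^2=0$. The displayed hypothesis \eqref{outline: Weak Spatial Tightness Criterion in Hilbert space} is precisely the averaged (mean-square) form of this tail condition, while the boundedness will be supplied by the uniform second-moment bound $M:=\sup_n\mathbb E\|Y_n\|^2<\infty$, which is how I read the finite-second-moment assumption. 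I would therefore build $K_\epsilon$ as the intersection of a closed ball with countably many ``tail cylinders'' and control the failure probabilities by Markov's inequality.

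Concretely, fix $\epsilon>0$. First I would choose $R>0$ so large that $M/R^2\le\epsilon/2$. Next, pick thresholds $\delta_j\downarrow 0$ (say $\delta_j=1/j$) and, using \eqref{outline: Weak Spatial Tightness Criterion in Hilbert space}, select a strictly increasing sequence $(N_j)_{j\in\mathbb N}$ with $\sup_n\sum_{k\ge N_j}\mathbb E[\langle Y_n,e_k\rangle^2]\le \epsilon\,2^{-j-1}\delta_j$. I then set
\[
K_\epsilon:=\Big\{x\in H:\ \|x\|\le R\ \text{ and }\ \sum_{k\ge N_j}\langle x,e_k\rangle^2\le\delta_j\ \text{ for all }j\Big\}.
\]
This set is closed (each cylinder is a sublevel set of a lower semicontinuous map) and bounded by construction, and it has equi-small tails: given $\eta>0$, choosing $j$ with $\delta_j<\eta$ gives $\sum_{k\ge N}\langle x,e_k\rangle^2\le\delta_j<\eta$ for all $x\in K_\epsilon$ and all $N\ge N_j$. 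Hence $K_\epsilon$ is relatively compact and, being closed, compact.

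It then remains to bound the exit probability. By a union bound followed by Markov's inequality,
\begin{align*}
\mathbb P[Y_n\notin K_\epsilon]
&\le \mathbb P[\|Y_n\|>R]+\sum_{j\ge1}\mathbb P\Big[\sum_{k\ge N_j}\langle Y_n,e_k\rangle^2>\delta_j\Big]\\
&\le \frac{\mathbb E\|Y_n\|^2}{R^2}+\sum_{j\ge1}\frac1{\delta_j}\,\mathbb E\Big[\sum_{k\ge N_j}\langle Y_n,e_k\rangle^2\Big].
\end{align*}
The first term is at most $M/R^2\le\epsilon/2$, and by the choice of $N_j$ the $j$-th summand is at most $\delta_j^{-1}\,\epsilon\,2^{-j-1}\delta_j=\epsilon\,2^{-j-1}$, so the series is at most $\epsilon/2$. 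Thus $\sup_n\mathbb P[Y_n\notin K_\epsilon]\le\epsilon$, which is the desired tightness.

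The Markov estimates and the telescoping choice of $N_j$ are routine; the only point requiring genuine care is the boundedness encoded in the ball of radius $R$. The tail hypothesis by itself cannot yield tightness --- for instance $Y_n=n\,e_1$ satisfies \eqref{outline: Weak Spatial Tightness Criterion in Hilbert space} trivially yet is not tight --- so it is essential that the leading modes be controlled through the uniform bound $M=\sup_n\mathbb E\|Y_n\|^2<\infty$. Correctly invoking the equi-small-tails characterisation to certify that $K_\epsilon$ is actually compact (and not merely closed and bounded) is the conceptual heart of the argument, everything else being a standard union-bound and Markov computation.
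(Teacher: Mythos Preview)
Your proof is correct and takes essentially the same route as the paper: build the compact set as a countable intersection of tail cylinders and control the complement by a union bound together with Markov's inequality. The paper's version (Theorem~\ref{T: Tightness Criterion in Hilbert spaces} and Corollary~\ref{C: Tightness Criterion in Hilbert spaces} in the appendix) obtains boundedness by taking the first cylinder index $N_1^\epsilon=1$ rather than intersecting with an explicit ball; your explicit appeal to a uniform bound $\sup_n\mathbb E\|Y_n\|^2<\infty$ and the counterexample $Y_n=n\,e_1$ are well placed.
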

 To show the spatial tightness of $\tilde Z^{n,2}$, we observe that
$$\sum_{m,k\geq N}\langle \tilde Z_t^{2,n}, e_k\otimes e_m\rangle_{\mathcal H}^2= \|\sum_{i=1}^{\ul}\tilde Z_n^N(i)\|_{\mathcal H}^2,$$
where 
\begin{align*}
   \tilde Z_n^N(i) :=
  \Delta_n^{-\frac 12}\left( (p_N\tilde{\Delta}_i^nY)^{\otimes 2}-\int_{t_{i-1}}^{t_i} p_N\mathcal S(t_i-s)\Sigma_s\mathcal S(t_i-s)^* p_N ds\right).
\end{align*}
Next note that $t\mapsto \psi_t=\int_{(i-1)\Delta_n}^tp_N\mathcal S(i\Delta_n-s)\sigma_sdW_s$ is a martingale for $t\in[(i-1)\Delta_n,i\Delta_n]$. From \cite[Theorem 8.2, p.~109]{PZ2007} we then deduce that 
the process $(\zeta_t)_{t\geq 0}$ given by $\zeta_t=  \left(\psi_t\right)^{\otimes 2}-\langle\langle \psi\rangle\rangle_t$, where $\langle\langle \psi\rangle\rangle_t= \int_{(i-1)\Delta_n}^t p_N\mathcal S(t_i-s)\Sigma_s\mathcal S(t_i-s)^*p_Nds$,
is a martingale w.r.t.~$(\mathcal{F}_t)_{t\geq 0}$. Therefore 
$\mathbb E[\tilde Z_n^N(i)|\mathcal F_{t_{i-1}}]=0$ and $ \mathbb E[\langle \tilde Z_n^N(i), \tilde Z_n^N(j)\rangle_{\mathcal H} ]=0$, which yields 
 $  \EE  [\Vert\sum_{i=1}^{\ul}\tilde Z_n^N(i) \Vert_{\mathcal H}^2 ] = \sum_{i=1}^{\lfloor T/\Delta_n\rfloor} \EE[\Vert\tilde Z_n^N(i)\Vert_{\mathcal H}^2]$.
Moreover, it holds $$
    \mathbb E[\Vert \tilde Z_n^N(i)\Vert_{\mathcal H}^2]
   \leq 4  \Delta_n\int_{(i-1)\Delta_n}^{i\Delta_n}\mathbb E[\Vert p_N\sigma_s^{\mathcal S_n} \Vert_{L_{\text{HS}}(U,H)}^4]ds,$$
   such that we ultimately obtain
\begin{align*}
\sum_{k,l\geq N} \EE  \left[\langle\tilde Z_t^{n,2}, e_k\otimes e_l\rangle^2 \right]
   \leq & 4 \sup_{n\in\mathbb N}\int_0^T\mathbb E\left[\Vert p_N\sigma_s^{\mathcal S_n} \Vert_{L_{\text{HS}}(U,H)}^4\right]ds,
\end{align*}
which converges to $0$ due to \eqref{outline: volatility and drift have equismall tails}.
Lemma \ref{outline: Tightness Criterion in Hilbert spaces} yields the claim in (CLTa), i.e., we have shown the following intermediate result:
\begin{theorem}\label{outline: Tightness for the quadratic variation}
Let Assumption \ref{As: Integrability Assumption on Drift and Volatility} hold. Then the sequence of processes 
$(\tilde{Z}^{n,2}_t)_{t\in[0,T]}$
is tight in $\mathcal D([0,T],\mathcal H).$
\end{theorem}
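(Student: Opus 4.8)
The plan is to verify the two hypotheses of the Joffe--M\'etivier criterion (Theorem \ref{outline: Aldous Tightness Theorem}) for $\psi^n=\tilde Z^{n,2}$: tightness of the marginals $(\tilde Z^{n,2}_t)_{n\in\mathbb N}$ in $\mathcal H$ for each fixed $t$, and Aldous' condition. First I would localise as in \cite[Sec.~4.4.1]{JacodProtter2012}, replacing Assumption \ref{As: Integrability Assumption on Drift and Volatility} by the bounded version \eqref{Very very Weak localised integrability Assumption on the moments} with $m=4$; combined with $L:=\sup_{u\in[0,T]}\|\mathcal S(u)\|_{\text{op}}<\infty$ this provides an $n$-independent, $\mathbb P$-integrable majorant $\psi_s:=L^4\|\sigma_s\|_{L_{\text{HS}}(U,H)}^4$ for $\|\sigma_s^{\mathcal S_n}\|_{L_{\text{HS}}(U,H)}^4$. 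Splitting $\tilde\Delta_i^nY=D_i+M_i$ into drift part $D_i=\int_{(i-1)\Delta_n}^{i\Delta_n}\mathcal S(i\Delta_n-s)\alpha_s\,ds$ and martingale part $M_i=\int_{(i-1)\Delta_n}^{i\Delta_n}\mathcal S(i\Delta_n-s)\sigma_s\,dW_s$, the terms of $\tilde Z^{n,2}$ containing a factor $D_i$ are of lower order and, by routine moment estimates under \eqref{Very very Weak localised integrability Assumption on the moments}, form an asymptotically negligible (u.c.p.) remainder. It therefore suffices to treat the case $\alpha\equiv0$, where $\xi_i:=\Delta_n^{-1/2}\bigl(M_i^{\otimes2}-\int_{(i-1)\Delta_n}^{i\Delta_n}\Sigma_s^{\mathcal S_n}\,ds\bigr)$ is, by the tensor-It\^o identity \cite[Thm.~8.2, p.~109]{PZ2007}, an $\mathcal H$-valued martingale difference with respect to $(\mathcal F_{i\Delta_n})_i$, and $\tilde Z^{n,2}_t=\sum_{i=1}^{\ul}\xi_i$.

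For the marginal tightness I would invoke the equi-small-tails criterion, Lemma \ref{outline: Tightness Criterion in Hilbert spaces}, applied in $\mathcal H$ with the product basis $(e_k\otimes e_l)_{k,l}$. Orthogonality of the martingale differences $\xi_i$ in $L^2(\Omega;\mathcal H)$ gives $\mathbb E[\|\tilde Z^{n,2}_t\|_{\mathcal H}^2]=\sum_{i=1}^{\ul}\mathbb E[\|\xi_i\|_{\mathcal H}^2]$, and the same orthogonality applied after inserting the projection $p_N$ into one or both tensor slots yields, for a constant $C>0$,
\begin{equation*}
\sup_{n\in\mathbb N}\sum_{\max(k,l)\geq N}\mathbb E\bigl[\langle\tilde Z^{n,2}_t,e_k\otimes e_l\rangle_{\mathcal H}^2\bigr]\leq C\sup_{n\in\mathbb N}\int_0^T\mathbb E\bigl[\|p_N\sigma_s^{\mathcal S_n}\|_{L_{\text{HS}}(U,H)}^2\,\|\sigma_s^{\mathcal S_n}\|_{L_{\text{HS}}(U,H)}^2\bigr]\,ds.
\end{equation*}
Since $p_N\sigma_s^{\mathcal S_n}\to0$ and the integrand is dominated by $\psi_s$, dominated convergence forces the right-hand side to $0$ as $N\to\infty$; this is precisely the Hilbert--Schmidt property of $\sigma$ at work, and it establishes condition (i).

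The main obstacle is Aldous' condition. Here I would use that $t\mapsto\tilde Z^{n,2}_t$ is a c\`adl\`ag $(\mathcal F_t)$-martingale with scalar compensator $V^n_t:=\sum_{i=1}^{\ul}\mathbb E[\|\xi_i\|_{\mathcal H}^2\mid\mathcal F_{(i-1)\Delta_n}]$. For a stopping time $\tau_n\leq T-\delta$ and $0\leq\theta\leq\delta$, optional sampling applied to the real martingale $\|\tilde Z^{n,2}_t\|_{\mathcal H}^2-V^n_t$ gives
\begin{equation*}
\mathbb E\bigl[\|\tilde Z^{n,2}_{\tau_n+\theta}-\tilde Z^{n,2}_{\tau_n}\|_{\mathcal H}^2\bigr]=\mathbb E\bigl[V^n_{\tau_n+\theta}-V^n_{\tau_n}\bigr].
\end{equation*}
The conditional Burkholder--Davis--Gundy and Cauchy--Schwarz inequalities bound $\mathbb E[\|\xi_i\|_{\mathcal H}^2\mid\mathcal F_{(i-1)\Delta_n}]$ by $C\int_{(i-1)\Delta_n}^{i\Delta_n}\mathbb E[\|\sigma_s^{\mathcal S_n}\|_{L_{\text{HS}}(U,H)}^4\mid\mathcal F_{(i-1)\Delta_n}]\,ds$, and summing over the indices of the stochastic interval (a window of length at most $\theta+\Delta_n$) dominates the right-hand side above by $C\,\mathbb E\bigl[\sup_{a}\int_a^{a+\delta+\Delta_n}\psi_s\,ds\bigr]$. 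Because $\psi$ is an $n$-independent integrable function, absolute continuity of the Lebesgue integral together with dominated convergence make this supremum tend to $0$ as $\delta\to0$, uniformly in $n$ once $\Delta_n\leq\delta$; Markov's inequality then delivers Aldous' condition. This final passage from ``integrable'' to ``uniformly, in $n$ and over stopping times, absolutely continuous'' is the crux: the martingale orthogonality is what converts the tensor-valued increment into the additive compensator, and the $n$-uniform $L^1$-majorant $\psi$, available thanks to the Hilbert--Schmidt structure and the local boundedness of $\mathcal S$, is what makes the compensator increment uniformly small. Combining (i) and (ii) via Theorem \ref{outline: Aldous Tightness Theorem} yields tightness of $(\tilde Z^{n,2}_t)_{t\in[0,T]}$ in $\mathcal D([0,T],\mathcal H)$.
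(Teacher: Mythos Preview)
Your proposal is correct and follows the same overall architecture as the paper's proof: localise to the bounded Assumption, eliminate the drift, and verify the Joffe--M\'etivier criterion by (i) equi-small tails in $\mathcal H$ via the martingale-difference structure of the $\xi_i$, and (ii) Aldous' condition. For (i) the paper projects \emph{both} tensor slots (working with $(p_N\tilde\Delta_i^nY)^{\otimes 2}$ and obtaining the bound $C\int_0^T\mathbb E[\|p_N\sigma_s^{\mathcal S_n}\|_{L_{\text{HS}}(U,H)}^4]\,ds$), whereas you project one slot and arrive at a mixed bound; both vanish as $N\to\infty$ via the same Dini/dominated-convergence argument, so this is only a cosmetic difference.

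The one substantive difference is the verification of Aldous' condition. You exploit the $\mathcal H$-valued martingale structure: optional sampling on $\|\tilde Z^{n,2}\|_{\mathcal H}^2-V^n$ converts $\mathbb E[\|\tilde Z^{n,2}_{\tau_n+\theta}-\tilde Z^{n,2}_{\tau_n}\|_{\mathcal H}^2]$ into the increment of the predictable compensator, which is dominated by an integral of the $n$-independent majorant $\psi$ over a window of length $\theta+O(\Delta_n)$; absolute continuity then delivers smallness uniformly in $n$ and $\theta\leq\delta$. The paper instead argues directly: it restricts to $\theta<\Delta_n$, so that $\tilde Z^{n,2}_{\tau_n+\theta}-\tilde Z^{n,2}_{\tau_n}$ involves at most two summands, bounds each of the resulting terms $\Delta_n^{-1/2}\mathbb E[\|\tilde\Delta_i^nY\|^2]$ separately, and lets $n\to\infty$. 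Your compensator route is the standard device for martingale arrays and treats all $\theta\leq\delta$ in one stroke; the paper's route is more elementary but leans on the piecewise-constant structure of $\tilde Z^{n,2}$ and the restriction $\theta<\Delta_n$.
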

We now outline the proof of the stable convergence in law as a process of the finite-dimensional distributions $(\langle \tilde{Z}_t^{n,2},e_k\otimes e_l\rangle)_{k,l=1,...,d}$. 
Due to \eqref{FINITE DIMENSIONAL DISTRIBUTIONS ARE ESSENTIALLY SEMIMARTINGALE POWER VARIATIONS} and after some technical calculations, these finite-dimensional distributions can be asymptotically identified with the ones of the quadratic variation of the associated multivariate semimartingale, i.e., the stable limit of $ (\langle\tilde{Z}_t^{n}e_k,e_l\rangle)_{k,l=1,...,d}$ is the same as the one of 
$$\left(\Delta_n^{-\frac 12} \sum_{i=1}^{\ul}  \left(\langle \Delta_i^n S, e_k\rangle\langle \Delta_i^n S, e_l\rangle- \int_{(i-1)\Delta_n}^{i\Delta_n}\langle \Sigma_s e_k,e_l\rangle ds\right)\right)_{k,l=1,...,d}.$$
The latter is a component of the difference between realised quadratic covariation and the quadratic covariation of the $d$-dimensional continuous local martingale $S_t^d=(\langle S_t,e_1\rangle,...,\langle S_t, e_d\rangle)$. 
Therefore, 
$(\langle \tilde Z_t^{n} e_k,e_l\rangle)_{k,l=1,...,d}$
converges by Theorem 5.4.2 from \cite{JacodProtter2012} stably as a process to  a continuous (conditional on $\mathcal F$)  mixed normal distribution which can be realised on a very good filtered extension as
\begin{align*}
  N_{k,l} = \frac 1{\sqrt 2}\sum_{c,b=1}^d\int_0^t \hat{\sigma}_{kl,bc}(s)+\hat{\sigma}_{lk,bc}(s) dB^{cb}_s.
\end{align*}
Here, $\hat{\sigma}(s)$ is $d^2\times d^2$-matrix, being the  square-root of the matrix $\hat c(s)$ with entries $\hat c_{kl,k'l'}(s)= \langle \Sigma_s e_k,e_{k'}\rangle \langle \Sigma_s e_l,e_{l'}\rangle $. Furthermore, $B$ is a matrix of independent Brownian motions. 
As now all finite-dimensional distributions converge stably and the sequence of measures is tight, we obtain by a modification of Proposition 3.9 in \cite{Hausler2015} that the convergence is indeed stable in the Skorokhod space. 
One can then show that the asymptotic normal distribution has covariance $\Gamma_t$. This gives (CLTb) and thus
an auxiliary central limit theorem, which does not rely on the spatial regularity condition in Assumption  \ref{As: Spatial regularity}:
\begin{theorem}
Let Assumption \ref{As: Integrability Assumption on Drift and Volatility} hold. We have that $ \tilde Z^{n,2} \stackrel{\mathcal L-s}{\Rightarrow} (\mathcal N(0,\Gamma_t))_{t\in[0,T]}$.
\end{theorem}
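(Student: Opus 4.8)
The plan is to establish the stable functional convergence of $\tilde Z^{n,2}$ by combining tightness in the Skorokhod space with stable convergence of all finite-dimensional distributions, and then to identify the conditional covariance of the limit with $\Gamma_t$. Tightness of $(\tilde Z^{n,2})_{n\in\mathbb N}$ in $\mathcal D([0,T],\mathcal H)$ under Assumption \ref{As: Integrability Assumption on Drift and Volatility} is already provided by Theorem \ref{outline: Tightness for the quadratic variation}, so the remaining task is the convergence of the finite-dimensional distributions together with an argument that upgrades this to the process level.

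First I would fix $d\in\mathbb N$ and study the $\mathbb R^{d^2}$-valued vector $(\langle \tilde Z_t^{n,2}, e_k\otimes e_l\rangle_{\mathcal H})_{k,l=1,\dots,d}$, where $(e_j)_{j\in\mathbb N}\subset D(\mathcal A^*)$ is the fixed orthonormal basis. Using the identity \eqref{FINITE DIMENSIONAL DISTRIBUTIONS ARE ESSENTIALLY SEMIMARTINGALE POWER VARIATIONS} with $m=2$, the adjusted increments $\tilde\Delta_i^n Y$ may be replaced by the increments $\Delta_i^n S$ of the semimartingale $S_t=\int_0^t\alpha_s ds+\int_0^t\sigma_s dW_s$ at the cost of an $\mathcal O_p(\Delta_n)$ term, which is negligible after multiplication by $\Delta_n^{-1/2}$. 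Next I would compare the centring $\int_{(i-1)\Delta_n}^{i\Delta_n}\langle \Sigma_s^{\mathcal S_n} e_k,e_l\rangle ds$ appearing in $\tilde Z^{n,2}$ with $\int_{(i-1)\Delta_n}^{i\Delta_n}\langle\Sigma_s e_k,e_l\rangle ds$: since $\langle \Sigma_s^{\mathcal S_n}e_k,e_l\rangle=\langle\Sigma_s \mathcal S(i\Delta_n-s)^* e_k,\mathcal S(i\Delta_n-s)^* e_l\rangle$ and $e_k,e_l\in D(\mathcal A^*)$, the bound $\|(\mathcal S(u)^*-I)e_k\|=\mathcal O(u)$ yields a per-interval discrepancy of order $\Delta_n^2$, hence an aggregate error of order $\Delta_n$ that again vanishes once divided by $\Delta_n^{1/2}$. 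This is precisely where the choice of a basis inside $D(\mathcal A^*)$ is used and is what allows one to avoid the spatial regularity Assumption \ref{As: Spatial regularity}. After these two reductions the finite-dimensional distributions coincide asymptotically with those of $\bigl(\Delta_n^{-1/2}\sum_{i=1}^{\ul}(\langle\Delta_i^n S,e_k\rangle\langle\Delta_i^n S,e_l\rangle-\int_{(i-1)\Delta_n}^{i\Delta_n}\langle\Sigma_s e_k,e_l\rangle ds)\bigr)_{k,l=1,\dots,d}$, i.e.\ the difference between the realised and the predictable quadratic covariation of the $d$-dimensional continuous local martingale $(\langle S_t,e_1\rangle,\dots,\langle S_t,e_d\rangle)$.

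With the problem reduced to a genuinely finite-dimensional semimartingale statement, I would invoke Theorem 5.4.2 in \cite{JacodProtter2012} to obtain stable convergence of this vector, as a process in $t$, to the mixed Gaussian limit $N=(N_{k,l})$ realised on a very good extension, whose conditional covariance is governed by $\hat c_{kl,k'l'}(s)=\langle\Sigma_s e_k,e_{k'}\rangle\langle\Sigma_s e_l,e_{l'}\rangle$. Since all finite-dimensional distributions converge stably and the sequence is tight in $\mathcal D([0,T],\mathcal H)$, a modification of Proposition 3.9 in \cite{Hausler2015} promotes this to stable convergence of the full process $\tilde Z^{n,2}$ in the Skorokhod space; the modification is needed because stable convergence must be tested against products $Y f(\cdot)$ with bounded $\mathcal F$-measurable $Y$, so one checks that the stable finite-dimensional limits are consistent and that tightness controls the joint behaviour. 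Finally I would identify the limiting conditional covariance operator as $\Gamma_t$ by a direct computation: for $B=e_{k'}\otimes e_{l'}$ one has $\Sigma_s B\Sigma_s=(\Sigma_s e_{k'})\otimes(\Sigma_s e_{l'})$ and $\Sigma_s B^*\Sigma_s=(\Sigma_s e_{l'})\otimes(\Sigma_s e_{k'})$, so that $\langle\Gamma_t(e_{k'}\otimes e_{l'}),e_k\otimes e_l\rangle_{\mathcal H}=\int_0^t(\langle\Sigma_s e_k,e_{k'}\rangle\langle\Sigma_s e_l,e_{l'}\rangle+\langle\Sigma_s e_k,e_{l'}\rangle\langle\Sigma_s e_l,e_{k'}\rangle)ds$, which matches exactly the covariance of $N$ dictated by $\hat c$.

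The main obstacle I anticipate is the twofold technical matching in the finite-dimensional step: showing that both the replacement of $\tilde\Delta_i^n Y$ by $\Delta_i^n S$ and, especially, the replacement of the semigroup-adjusted compensator $\int\Sigma_s^{\mathcal S_n}$ by $\int\Sigma_s$ when tested against $e_k\otimes e_l$ produce errors that are $o_p(\Delta_n^{1/2})$ uniformly in $t$. The second replacement is delicate precisely because no regularity of $\sigma$ is assumed, so one must lean entirely on the smoothing of the fixed test functionals $e_k,e_l\in D(\mathcal A^*)$ through the adjoint semigroup, and keep careful track of the interplay between this $\mathcal O(\Delta_n)$ gain and the $\mathcal O(\Delta_n^{-1/2})$ normalisation across the $\mathcal O(\Delta_n^{-1})$ summands. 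A secondary, more bookkeeping-type difficulty is verifying that the modified version of the Häusler–Luschgy criterion genuinely applies in this infinite-dimensional, stable setting.
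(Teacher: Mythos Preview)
Your proposal is correct and follows essentially the same route as the paper: tightness plus stable convergence of finite-dimensional distributions via the two replacements (Lemma \ref{L: Reduction to martingales is possible} for the increments and Lemma \ref{L: Adjusted Quadratic Variation is essentially Quadratic Variation after finite projection} for the compensator, both exploiting $e_k\in D(\mathcal A^*)$), followed by Theorem 5.4.2 of \cite{JacodProtter2012}, the identification of $\Gamma_t$, and the extended H\"ausler--Luschgy argument (Theorem \ref{T: stable convergence of finite-dimensional distributions yields stable convergence}). The only items the paper adds that you do not mention are the preliminary localisation to the strengthened Assumption \ref{As: Weakened Assumption for the CLT for the SARCV} and a short verification that the limiting process is continuous, both routine.
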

In order to prove Theorem   \ref{T:Infeasible Central Limit Theorem} we have to show $
     \Delta_n^{-\frac 12}\sum_{i=1}^{\ul}\int_{(i-1)\Delta_n}^{i\Delta_n} \Sigma_s^{\mathcal S_n}-\Sigma_s ds\stackrel{u.c.p.}{\longrightarrow} 0.$
As $e_k\in D(\mathcal A^*)$ and due to the fact that $\|(\mathcal S(\Delta_n)^*-I) e_k\|=\|\int_0^{\Delta_n} \mathcal S(u)^* \mathcal A^* e_k du\|=\mathcal O(\Delta_n) $, it is relatively straightforward to show
 that for all $N\in\mathbb N$
 \begin{equation}\label{Adjusted minus nonadjusted quadratic variation vanishes}
      (I-P_N)  \Delta_n^{-\frac 12}\sum_{i=1}^{\ul}\int_{(i-1)\Delta_n}^{i\Delta_n} \left(\Sigma_s^{\mathcal S_n}-\Sigma_s\right) ds\stackrel{u.c.p.}{\longrightarrow} 0.
 \end{equation}
 Further, we find by the  triangle, Bochner and Hölder inequalities 
 \begin{align*}
   &  \mathbb E\left[\sup_{t\in [0,T]}\left\|P_N  \Delta_n^{-\frac 12}\sum_{i=1}^{\ul}\int_{(i-1)\Delta_n}^{i\Delta_n} \left(\Sigma_s^{\mathcal S_n}-\Sigma_s\right) ds\right\|_{\mathcal H}\right]\\
       &\leq   \left(\int_0^T\mathbb E\left[\left\|\Delta_n^{-\frac 12}  (\mathcal S(\lfloor s/\Delta_n\rfloor\Delta_n-s)-I)\sigma_s \right\|_{\text{op}}^2 \right]ds\right)^{\frac 12}\\
       & \qquad \times\left(\int_0^T\sqrt 2\mathbb E\left[\left\|p_N\sigma_s \right\|_{L_{\text{HS}}(U,H)}^2+\left\|p_N\mathcal S(\lfloor s/\Delta_n\rfloor\Delta_n-s)\sigma_s \right\|_{L_{\text{HS}}(U,H)}^2 \right]ds\right)^{\frac 12}.
 \end{align*}
 The first factor is finite by Assumption \eqref{As: Spatial regularity}(i), whereas the second one converges to 0 as $N\to\infty$ by \eqref{outline: volatility and drift have equismall tails}. By combining this with \eqref{Adjusted minus nonadjusted quadratic variation vanishes} the claim follows and Theorem \ref{T:Infeasible Central Limit Theorem} is proved.

In order to prove Theorem \ref{T: Central limit theorem for functionals of the quadratic covariation} we can argue similarly as for Theorem \ref{T:Infeasible Central Limit Theorem} that 
we just have to show $
     \Delta_n^{-\frac 12}\sum_{i=1}^{\ul}\int_{(i-1)\Delta_n}^{i\Delta_n} \langle\Sigma_s^{\mathcal S_n}-\Sigma_s,B\rangle_{\mathcal H} ds\stackrel{u.c.p.}{\longrightarrow} 0.$
 We can argue componentwise, which is why we assume without loss of generality that $B=h\otimes g$ and split the expression into two integral terms:
\begin{align*}
    & \Delta_n^{-\frac 12}\sum_{i=1}^{\ul}\int_{(i-1)\Delta_n}^{i\Delta_n} \langle (\Sigma_s^{\mathcal S_n}-\Sigma_s) h,g\rangle ds\\
     &\qquad=\Delta_n^{-\frac 12}\sum_{i=1}^{\ul}\int_{(i-1)\Delta_n}^{i\Delta_n} \langle ((\mathcal S(i\Delta_n-s)-I)\Sigma_s\mathcal S(i\Delta_n-s)^*) h,g\rangle ds\\
     &\qquad\qquad +\Delta_n^{-\frac 12}\sum_{i=1}^{\ul}\int_{(i-1)\Delta_n}^{i\Delta_n}\langle (\Sigma_s (\mathcal S(i\Delta_n-s)-I)^*) h,g\rangle ds\\
     &\qquad=(1)_n+(2)_n.
\end{align*}
We can show the convergence for $(1)_n$ only, as the convergence for $(2)_n$ is analogous. It holds that
\begin{align*}
   (1)_n = & \Delta_n^{-\frac 12}\sum_{i=1}^{\ul}\int_{(i-1)\Delta_n}^{i\Delta_n} \langle (I-p_N)(\Sigma_s\mathcal S(i\Delta_n-s)^*) h,(\mathcal S(i\Delta_n-s)-I)^*g\rangle ds\\
   & +\Delta_n^{-\frac 12}\sum_{i=1}^{\ul}\int_{(i-1)\Delta_n}^{i\Delta_n} \langle p_N(\Sigma_s\mathcal S(i\Delta_n-s)^*) h,(\mathcal S(i\Delta_n-s)-I)^*g\rangle ds\\
   = & (1.1)_{n,N}+(1.2)_{n,N}.
\end{align*}
Using that  $(\mathcal S(i\Delta_n-s)-I)e_j=\int_s^{i\Delta_n}\mathcal S(u-s)\mathcal A e_j ds$ and that the projection $(I-P_N)$ has the form $(I-P_N)=\sum_{j=1}^{N-1}\langle \cdot,e_j\rangle e_j$, we can show that 
\begin{equation}\label{outline: (1.1) of the functional remainder is AN}
\sup_{t\in[0,T]}|(1.1)_{n,N}|
      \leq  \Delta_n^{\frac 12}\sum_{j=1}^{N-1} \int_0^T\|\Sigma_s\|_{\text{op}}ds \|h\|\|g\|\sup_{t\in[0,T]}\|\mathcal S(t)\|_{\text{op}}^2,
\end{equation}
which converges to $0$ as $n\to\infty$. In particular,
$\sup_{t\in[0,T]}|(1.1)_{n,N}|\stackrel{u.c.p.}{\to}0$ as $n\to \infty$.
From the assumption that $g\in F^{\mathcal S^*}_{1/2}$ we can derive a 
finite constant 
$$
K:=\sup_{t\in[0,T]}\|\mathcal S(t)\|_{\text{op}}\left(\int_0^T\mathbb E\left[\|\sigma_s^* \|_{\text{op}}^2\right]\|h\|^2ds\right)^{\frac 12}\sup_{t\leq T}\|t^{-\frac 12}(\mathcal S(t)-I)^* g\|<\infty
$$
such that
\begin{align*}
 \mathbb E\left[    \sup_{t\in[0,T]}|(1.2)_{n,N}|\right]
    \leq K \left(\int_0^T \mathbb E\left[\| p_N\sigma_s\|_{\text{op}}^2\right]ds\right)^{\frac 12} ,
\end{align*}
which converges to $0$ as $N\to\infty$ by \eqref{outline: volatility and drift have equismall tails}. Thus, combining this uniform  convergence result with \eqref{outline: (1.1) of the functional remainder is AN} 
and the analogous argumentation for $(2)_n$ yields the assertion and, thus, (CLTc).

\section{Conclusion}\label{sec: Conclusion}
In this article, we introduced feasible central limit theorems for the semigroup-adjusted realised covariations and, thus, provided a basis for functional data analysis of mild solutions to a large number of semilinear stochastic partial differential equations. We also addressed the issue of how this can be translated into a fully discrete setting, whereby we assumed a regular spatio-temporal sampling grid. In general, finding closed forms for the semigroup-adjusted multipower variations is a task that must be addressed for each semigroup (or equivalently each infinitesimal generator), each sampling grid and any precise application separately. Certainly, the Hilbert space approach is well suited to account for potentially any sampling grid. 

To gain an overview of the infinite-dimensional limit theory introduced for both $SARCV^n$ and $RV^n$ in this article, it might be helpful to give a systematic summary.
For the sake of presentation, it is tedious and eventually not very instructive to repeat all assumptions in full technical detail so instead we make a distinction on the basis of the magnitude of 
$p_n:=\int_0^T \|(\mathcal S(\Delta_n)-I)\sigma_s\|_{L_{\text{HS}}(U,H)}ds$ in terms of $\Delta_n$ and assume the volatility $\sigma$ of a mild It{\^o} process of the form \eqref{mild Ito process} to be deterministic. In this regard we can distinguish four cases:
\begin{align*}
   (i)& \text{ If } p_n=o(\Delta_n^{\frac 34}),\text{ then } SARCV^n \text{ and }RV^n \text{ satisfy LLN and CLT }.\\
    (ii)& \text{ If }p_n=o(\Delta_n^{\frac 12}),\text{ then } RV^n \text{ satisfies LLN, }SARCV^n\text{satisfies LLN and CLT}.\\
    (iii)&\text{ If } p_n=\mathcal O(\Delta_n^{\frac 12}),\text{ then } SARCV^n \text{ satisfies LLN and CLT}.\\
     (iv)& \text{ In general }SARCV^n \text{ satisfies LLN},
\end{align*}
where satisfying $LLN$ (law of large numbers) means convergence to the integrated volatility in probability and satisfying $CLT$ (central limit theorem) means asymptotic normality of the normalised estimator.
Observe that Example \ref{Ex: counterexample for the Spatial regularity assumption} in Section \ref{sec: Limit Theorems for the SARCV} yields that we cannot reduce the regularity in (iii), if we want to guarantee the validity of a general central limit theorem for $SARCV^n$.
Example \ref{Counterexample for RV} shows that $RV^n$ does not have to satisfy a central limit theorem if $p_n= o(\Delta_n^{ 1/2})$ is not valid and underlines the necessity of the adjustment by the semigroup. If even $p_n= o(\Delta_n^{ 1/4})$ does not hold, then $RV^n$ does not even have to satisfy the LLN.

Moreover, it is likely that in many realistic scenarios, the distribution underlying the data and the sampling itself yield some additional challenges, which can be approached in our setting. Let us comment on some of these points:

 \textit{Functional sampling}: In infinite dimensions, we witness sampling schemes that have no counterpart in finite dimensions. For instance, data could be sampled as averages (or in general smooth functionals) of the process of interest over certain time periods in the future or within a demarcated area. This is for instance  the case for energy swap prices or meteorological forecasting data. Our framework yields an ideal basis to derive inferential statistical tools in these situations.
    
     \textit{Jumps}: Many processes are not considered to be continuous in time. In fact, many financial time series show jumps and spikes on a regular basis, which is, in particular, the case in energy markets, a potential application of our theory. This suggests the inclusion of a pure-jump component to our framework, such as in the framework of \cite{FilipovicTappeTeichmann2010b}. However, as in finite dimensions, jumps will considerably complicate expressions, applications and proofs and, thus, more effort has to go into the task of making inference on non-continuous behaviour in infinite-dimensional models. Arguably, the structure of our proof, which appeals to tightness and already existing limit theorems from finite dimensions, yields a promising approach. 
    
    \textit{Asynchronous sampling}: It could very well be, that we sample at high frequency in time, but sparsely and irregularly in space. Ignoring this (for instance by na{\"i}ve rearrangement to refresh times) can have unpleasant consequences such as the Epps effect, c.f. \cite[Sec.~9.2.1]{Ait-SahaliaJacod2014}. Again, energy intraday market prices, in which all available maturities are unlikely traded at the exact same time instances, can be prone to this. Infinite-dimensionality and the potentially necessary adjustment by the semigroup might make it harder than in the finite-dimensional case to deal with this issue, as in addition to asynchronous sampling, one has to deal with the problem of smoothing the adequately aggregated data in space.
  
   \textit{Noise}: The task of accounting for noise in the samples, often called \textit{market microstructure noise} in financial applications has received much attention by the research community (c.f., for example, \cite{zhang2005}, \cite{barndorff2008} or \cite{jacod2009}), as noise lets the quadratic variation severely overshoot the integrated volatility in the presence of data sampled at very high frequency. In combination with the problem of smoothing (and asynchronous sampling) this appears to be a delicate question in infinite-dimensional applications. 
    However, both finite-dimensional high-frequency statistics and functional data analysis have several tools available to deal with noise and it is intriguing to find out how they can be exploited to overcome this problem in the future.

\subsection*{Acknowledgements} F. E. Benth and A. E. D. Veraart would like to thank the Isaac Newton Institute for Mathematical Sciences for their support and hospitality during the programme  \emph{The Mathematics of Energy Systems} when parts of the work on this paper were undertaken. 
We thank 
two anonymous referees for their careful reading
and constructive comments. We also wish to thank Sascha Gaudlitz for pointing out errors in an earlier version of this article.

This work was supported by:
EPSRC grant number EP/R014604/1.
D. Schroers and F. E. Benth gratefully acknowledge financial support from the STORM project 274410, funded by the Research Council of Norway, and the thematic research group SPATUS, funded by UiO:Energy at the University of Oslo. 

\bibliographystyle{agsm}

\newpage
\appendix

\section{Notation}
For convenience, we list some of the frequently used notation throughout this appendix:
\begin{itemize}
    \item $L_{\text{HS}}(U,H)$: the space of Hilbert-Schmidt operators from $U$ to $H$.
    \item $\mathcal H$: equals $L_{\text{HS}}(H,H)$, when $H$ is the initial Hilbert space in which the mild It{\^o} process $Y$ takes its values.
    \item $\mathcal H^m$: for $m\geq 3$ this is the space of operators spanned by the orthonormal basis $(e_{j_1}\otimes\cdots\otimes e_{j_m})_{j_1,...,j_m\in\mathbb N}$ for an orthonormal basis $(e_j)_{j\in\mathbb N}$ of $H$ with respect to the Hilbert-Schmidt norm. 
    \item $\sigma_s^{\mathcal S_n}=\mathcal S(i\Delta_n-s)\sigma_s $ for $s\in((i-1)\Delta_n,i\Delta_n]$.
\item
$\alpha_s^{\mathcal S_n}=\mathcal S(i\Delta_n-s)\alpha_s$
for $s\in((i-1)\Delta_n,i\Delta_n]$. 
\item $\Sigma_s^{\mathcal S_n}:=\sigma_s^{\mathcal S_n}(\sigma_s^{\mathcal S_n})^*$.
\item $p_N$ is the projection onto $v^N:=\overline{lin(\{e_j:j\geq N\})}$, for some orthonormal basis $(e_j)_{j\in\mathbb N}$.
\item $P_N^m$ is the projection onto $\overline{lin(\{ \bigotimes_{l=1}^{m} e_{k_l}: k_l\geq N\})}$.
\item $P_N=P_N^2$ for the special case $m=2$.
\end{itemize}

We start by giving several auxiliary results that are needed to prove the limit theorems.

\section{Technical tools}\label{sec: technical tools}
\subsection{Localisation}
 For both the laws of large numbers Theorems \ref{T: Law of large numbers for tensor power variations} and \ref{T: Law of large numbers for Multitensorpower variations} and the central limit Theorems \ref{T:Infeasible Central Limit Theorem}, \ref{T: Central limit theorem for functionals of the quadratic covariation}  we can work under the following stronger assumptions: 
\begin{assumption}\label{As: Weakened Assumption for the CLT for the SARCV} 
There is a constant $A>0$ such that almost surely
$$\int_0^T\|\alpha_s\|^2+\|\sigma_s\|_{L_{\text{HS}}(U,H)}^4 ds\leq A.$$
\end{assumption}
\begin{assumption}[m]\label{As: Decomposition of Q} 
There is a constant $A>0$ such that almost surely
$$\int_0^T\|\alpha_s\|^{\frac {2m}{2+m}}+\|\sigma_s\|_{L_{\text{HS}}(U,H)}^m ds\leq A.$$
\end{assumption}
\begin{assumption}\label{as: localised martingale assumption without volatility}
Assumption \ref{As: locally bounded coefficients} holds and there is an $A>0$ such that almost surely
$$\|\alpha_s\|+\|\sigma_s \|_{L_{\text{HS}}(U,H)}\leq A,\quad s\in [0,T].$$
\end{assumption}

We have then the following simplifying localisation result:
\begin{theorem}[Localisation]\label{L: localisation} The following relaxations can be made for the limit theorems in this work:
\begin{itemize}
    \item[(a)](localisation for CLT for functionals of $SARCV$ and $RV$) If the central limit Theorems \ref{T: Central limit theorem for functionals of the quadratic covariation} or \ref{T:Finite dimensional limit theorems for functionals of nonadjusted covariation}(ii) hold under Assumption \ref{As: Weakened Assumption for the CLT for the SARCV}, then they also hold under Assumption \ref{As: Integrability Assumption on Drift and Volatility}.
    \item[(b)](localisation for LLN for power variations) If the law of large numbers Theorem \ref{T: Law of large numbers for tensor power variations}  holds under Assumption \ref{As: Decomposition of Q}(m), then it also holds under Assumption \ref{As: for LLN of Power Variations}(m). Moreover, the laws of large numbers Theorems \ref{T: Limit Theorems for nonadjusted RV}(i) or \ref{T:Finite dimensional limit theorems for functionals of nonadjusted covariation}(i) hold, if they hold under the additional Assumption \ref{As: Decomposition of Q}(2).
     \item[(c)](localisation for LLN for multipower variations)If the law of large numbers, Theorem \ref{T: Law of large numbers for Multitensorpower variations},  holds under Assumption \ref{as: localised martingale assumption without volatility}, then it also holds under Assumption \ref{As: locally bounded coefficients}.
    \item[(d)](localisation for CLT for $SARCV$ and $RV$) If the central limit Theorem \ref{T:Infeasible Central Limit Theorem}  holds under Assumptions \ref{As: Weakened Assumption for the CLT for the SARCV} and \ref{As: Spatial regularity}(i) (respectively Theorem \ref{T: Limit Theorems for nonadjusted RV}(ii) holds under Assumptions \ref{As: Weakened Assumption for the CLT for the SARCV} and \ref{as: strong Spatial regularity for the CLT for RV}), then it also holds under Assumptions \ref{As: Integrability Assumption on Drift and Volatility} together with \ref{As: Spatial regularity}(i) or (ii) (respectively \ref{As: Integrability Assumption on Drift and Volatility} and \ref{as: strong Spatial regularity for the CLT for RV}).
\end{itemize}
\end{theorem}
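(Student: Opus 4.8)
The plan is to follow the standard localisation scheme of Section 4.4.1 in \cite{JacodProtter2012}, adapted to the present Hilbert space setting. For each of the four assertions I introduce a sequence of reducing stopping times that cut off the coefficients before the relevant integral can become large, thereby converting an almost-sure finiteness hypothesis into a uniform bound. Concretely, for part (a) I would set, for $p\in\mathbb N$,
$$\tau_p:=\inf\Big\{t\in[0,T]:\int_0^t\|\alpha_s\|^2+\|\sigma_s\|_{L_{\text{HS}}(U,H)}^4\,ds\geq p\Big\}\wedge T,$$
and define localised coefficients $\alpha^{(p)}_s:=\alpha_s\indicator_{[0,\tau_p]}(s)$, $\sigma^{(p)}_s:=\sigma_s\indicator_{[0,\tau_p]}(s)$ together with the associated mild It\^o process $Y^{(p)}$ of the form \eqref{mild Ito process}. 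By construction $Y^{(p)}$ satisfies Assumption \ref{As: Weakened Assumption for the CLT for the SARCV} with constant $A=p$, and under Assumption \ref{As: Integrability Assumption on Drift and Volatility} the events $\Omega_p:=\{\tau_p=T\}$ increase to a set of full probability. On $\Omega_p$ the paths of $Y$ and $Y^{(p)}$ coincide, hence so do the corresponding statistics $\tilde X^n$, $SARCV^n$, $RV^n$ and the limiting covariance processes $\Gamma$. The remaining parts are identical, with the integral defining $\tau_p$ replaced by $\int_0^t\|\alpha_s\|^{2m/(2+m)}+\|\sigma_s\|_{L_{\text{HS}}(U,H)}^m\,ds$ in part (b), by a standard localising sequence for the locally bounded/c\`adl\`ag coefficients of Assumption \ref{As: locally bounded coefficients} in part (c), and by $\int_0^t\|\alpha_s\|^2+\|\sigma_s\|_{L_{\text{HS}}(U,H)}^4\,ds$ in part (d).

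For the laws of large numbers (parts (b) and (c)) the transfer of u.c.p.\ convergence is routine. Writing $V^n$ for the statistic and $V$ for its limit, one has for every $\epsilon>0$
$$\mathbb P\Big[\sup_{t\leq T}\|V^n_t-V_t\|>\epsilon\Big]\leq\mathbb P\Big[\sup_{t\leq T}\|V^{n,(p)}_t-V^{(p)}_t\|>\epsilon\Big]+\mathbb P[\Omega_p^c],$$
where the first summand tends to $0$ as $n\to\infty$ by the assumed localised result and the second is made arbitrarily small by choosing $p$ large; letting $n\to\infty$ and then $p\to\infty$ yields the claim.

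The genuinely delicate point, and the main obstacle, is the preservation of stable convergence in law for the central limit theorems (parts (a) and (d)). Here I would use that stable convergence $V^{n,(p)}\stackrel{\mathcal L-s}{\Longrightarrow}V^{(p)}$ for every fixed $p$, combined with the two coincidence relations $V^{n,(p)}=V^n$ and $V^{(p)}=V$ on $\Omega_p$ together with $\mathbb P[\Omega_p]\to 1$, already forces $V^n\stackrel{\mathcal L-s}{\Longrightarrow}V$. This is the stable analogue of the standard convergence-determining argument: one tests against bounded $\mathcal F$-measurable variables supported on $\Omega_p$, on which the localised and original objects agree, and then lets $p\to\infty$ (cf.\ the localisation discussion in \cite[Sec.~4.4.1]{JacodProtter2012} and \cite{Hausler2015}). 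What makes this compatible is precisely that the $\mathcal F$-measurable limiting covariance localises consistently: $\Gamma^{(p)}_t=\int_0^t\Sigma^{(p)}_s(\cdot+\cdot^*)\Sigma^{(p)}_s\,ds$ coincides with $\Gamma_t$ on $\Omega_p$, so the mixed Gaussian limits realised on the very good filtered extension are mutually consistent across the localisation.

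Finally, part (d) carries the extra difficulty of simultaneously reducing the pathwise spatial regularity Assumption \ref{As: Spatial regularity}(ii) to its expectation form \ref{As: Spatial regularity}(i). I would handle this with a second reducing time
$$\tau'_p:=\inf\Big\{t\in[0,T]:\int_0^t\sup_{u\in[0,T]}\big\|u^{-1/2}(I-\mathcal S(u))\sigma_s\big\|_{\text{op}}^2\,ds\geq p\Big\}\wedge T,$$
and replace $\tau_p$ above by $\tau_p\wedge\tau'_p$. On the stopped interval the pathwise integral appearing in \ref{As: Spatial regularity}(ii) is dominated by $p$, so the localised volatility satisfies $\int_0^T\sup_{u}\mathbb E\big[\|u^{-1/2}(I-\mathcal S(u))\sigma^{(p)}_s\|_{\text{op}}^2\big]\,ds\leq p<\infty$, which is exactly Assumption \ref{As: Spatial regularity}(i); the identical bookkeeping applied to the stronger integral of Assumption \ref{as: strong Spatial regularity for the CLT for RV} yields the parenthetical statement for $RV^n$. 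With both reductions in force, the stable-convergence transfer of the previous paragraph applies verbatim, which completes the proof.
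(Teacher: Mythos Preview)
Your proposal is correct and follows essentially the same localisation scheme as the paper. Both arguments invoke the procedure of \cite[Sec.~4.4.1]{JacodProtter2012}: introduce reducing stopping times based on the running integrals of the relevant quantities, observe that the original and localised processes (and hence all derived statistics and limiting objects) coincide on the large event where the stopping time has not yet been reached, and then transfer the convergence by testing against bounded variables and letting the localisation parameter tend to infinity. The only cosmetic differences are that the paper stops the process directly via $Y_{t\wedge\tau_N}$ whereas you truncate the coefficients by indicators and rebuild the mild solution, and in part~(d) the paper folds the spatial-regularity bound into a single stopping time $\tau_N(4)$ while you take the minimum of two separate ones; both devices achieve the same localised bounds.
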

\begin{proof}
Without loss of generality we may exchange the volatility process $(\sigma_t)_{t\in [0,T]}$ by a left continuous version $(\sigma_{t-})_{t\in [0,T]}$. This is because we assumed the filtration to be right-continuous and the stochastic integrals $\int_a^b H_sdW_s$ and $\int_a^b H_{s-}dW_s$ coincide for any predictable c{\`a}dl{\`a}g process $(H_t)_{t\in [0,T]} $. In particular, in the case of a predictable c{\`a}dl{\`a}g volatility process, we can assume it to be locally bounded since any left-continuous process is locally bounded.

Now, the same localisation procedure as for finite-dimensional semimartingales described in Section 4.4.1 in \cite{JacodProtter2012} holds:
We define under each of the assumptions a different sequence $(\tau_N(i))_{N\in\mathbb N}$, $i=1,...,4$, of stopping times and the corresponding stopped process $Y_t(N,i):=Y_{t\wedge \tau_N(i)}$. Observe that on $\{ t<\tau_N(i)\}$ we have $Y_t(N,i)=Y_t$:
Set 
\begin{align*}
    & X_t(1):=\int_0^t\|\alpha_s\|^2+\|\sigma_s\|_{L_{\text{HS}}(U,H)}^4ds, \\ 
    & X_t(2):= \int_0^t\|\alpha_s\|^{\frac {2m}{2+m}}+\|\sigma_s\|_{L_{\text{HS}}(U,H)}^m ds,\\
    &X_t(3):=\|\alpha_t\|+\|\sigma_t \|_{L_{\text{HS}}(U,H)}, \\
    & X_t(4):= \int_0^t\|\alpha_s\|^2+\|\sigma_s\|_{L_{\text{HS}}(U,H)}^4+\sup_{r\in[0,t]}\|t^{-\frac 12}(\mathcal S(t)-I)\sigma_s\|_{\text{op}}^2 ds.
\end{align*}
We define for $i=1,...,4$
$$\tau_N(i):=\inf \left\{t\in[0,T]: X_t(i)\geq N\right\}, N\in\mathbb N.$$
Then Assumption \ref{As: Integrability Assumption on Drift and Volatility} assures that $\tau_N(1)\uparrow T$, Assumption \ref{As: for LLN of Power Variations} assures that $\tau_N(2)\uparrow T$, Assumption \ref{As: locally bounded coefficients} assures that $\tau_N(3)\uparrow T$, Assumption \ref{As: Integrability Assumption on Drift and Volatility} together with \ref{As: Spatial regularity}(i) or (ii) ensures $\tau_N(4)\uparrow T$.
Moreover, observe that $Y_t(N,1)$ satisfies Assumption \ref{As: Weakened Assumption for the CLT for the SARCV}, $Y_t(N,2)$ satisfies Assumption \ref{As: Decomposition of Q}(m), $Y_t(N,3)$ satisfies Assumption \ref{as: localised martingale assumption without volatility}, $Y_t(N,4)$ satisfies Assumption \ref{As: Weakened Assumption for the CLT for the SARCV} and \ref{As: Spatial regularity}(i).

The localisation works now for all cases analogously:
Observe, that as convergence in probability is a special case of stable convergence in law (when the underlying probability space for the limiting distribution coincides with the one on which the sequence of probability laws is defined), we just have to consider the stable convergence in law.

For any mild 
It{\^o} process $X$, i.e. a process of the same form as $Y$ as defined in \eqref{mild Ito process}, with values in $H$
 we write $U_n(X)$ either for the process of normalised multipower variations (for the central limit theorems) or the supremum over the unnormalised multipower variations (and as a special case, power variations). I.e., for proving $(a)$ it is $U_n(X)_t=\Delta_n^{-1/2}\langle (SARCV_t^n-\int_0^t\Sigma_s ds),B\rangle$ (resp. $U_n(X)_t=\Delta_n^{-1/2}\langle (RV_t^n-\int_0^t\Sigma_s ds),B\rangle$) for $B=\sum_{l=1}^K \mu_l h_l\otimes g_l\in \mathcal H$ for $h_i,g_l\in F_{3/4}^{\mathcal S^*}$ and $U(X)$ is the asymptotic distribution as in Theorem \ref{T: Central limit theorem for functionals of the quadratic covariation}, for $(d)$ it is $U_n(X)_t=\Delta_n^{-1/2} (SARCV_t^n-\int_0^t\Sigma_s ds) $ (resp. $U_n(X)_t=\Delta_n^{-1/2} (RV_t^n-\int_0^t\Sigma_s ds) $) 
 and $U(X)$ is the asymptotic distributions as described in Theorem \ref{T:Infeasible Central Limit Theorem} and $(b)$ and $(c)$ it is $U(X)=0$ and  
 \begin{align*}
 U_n(X)_t
 =  \sup_{s\in[0,t]}\|\Delta_n^{1-\frac m2}\sum_{i=1}^{\lfloor s/\Delta_n\rfloor-k+1}\bigotimes_{j=1}^k\tilde{\Delta}_{i+j-1}^n X^{\otimes m_j}-\int_0^t\rho_{\Sigma_s}^{\otimes k}(m_1,...,m_k)ds\|_{\mathcal H^m}.
 \end{align*}
 Moreover, for the non-adjusted limit theorems, we exchange $\tilde{\Delta}$ by $\Delta$ in the above expressions.
 
 In order to prove the claim, it is enough to show $U_n(Y)\to U(Y)$ stably in law as a process under the respective localised assumptions. I.e., if $\tilde{\Omega}=\Omega\times\Omega'$, $\tilde{\mathcal F}=\mathcal F\otimes \mathcal F'$ and $\tilde{\mathbb P}=\mathbb P[d\omega]\mathbb Q[\omega,d\omega']$ is the extension on which $U(X)$ can be realised, we want to show that
for all bounded, continuous functions $g:\mathcal D([0,T],\mathcal H)\to \mathbb R$ and all bounded $\mathcal F$-measurable real-valued random variables $Z$ we have
\begin{equation}\label{abstract stable convergence term}
    \lim_{n\to\infty}\mathbb E\left[Zg(U_n(Y))\right]=\tilde{\mathbb E}\left[Zg(U(Y))\right],
\end{equation}
where $\tilde{\mathbb E}$ is the expectation with respect to $\tilde{\mathbb P}$. If we write 
$$\mathbb Q_Y(g)(\omega):=\int_{\Omega'} g(U(Y))(\omega,\omega')\mathbb Q(\omega,d\omega'),$$ 
we can express \eqref{abstract stable convergence term} as 
$$\lim_{n\to\infty}\mathbb E\left[Zg(U_n(Y))\right]=\mathbb E\left[Z\mathbb Q_Y(g)\right].$$
Now assume that the convergence results hold for $Y(N,i)$, for which the localised assumptions are valid. We can therefore deduce that
\begin{align*}
\lim_{n\to\infty}\mathbb E\left[Zg(U_n(Y))\indicator_{t\leq \tau_N(i)}\right]&=\lim_{n\to\infty}\mathbb E\left[Zg(U_n(Y(N,i)))\right] \\
&=\mathbb E\left[Z\mathbb Q_{Y(N,i)}(g)\right]=\mathbb E\left[Z\mathbb Q_Y(g)\indicator_{t\leq \tau_N(i)}\right]
\end{align*}
holds for all $N\in\mathbb N$.
 This implies \eqref{abstract stable convergence term} as boundedness of $Z$ and $g$ yield that
 $$\sup_{n\in \mathbb N}\mathbb E\left[Z(g(U_n(Y))-\mathbb Q_Y(g))\indicator_{t > \tau_N}\right]\to 0\quad \text{ as }N\to\infty.$$
 This proves the claim.
\end{proof}

In many occasions, we just need to impose a localised version of the condition
\begin{equation}
   \mathbb P\left[ \int_0^T \|\alpha_s\|^{\frac m2}+\|\sigma_s \|_{L_{\text{HS}}(U,H)}^m ds < \infty\right]=1.
\end{equation} 
In that regard, we introduce the auxiliary assumption:
\begin{assumption}[(m)]\label{In proof: Very very Weak localised integrability Assumption on the moments}
There is a constant $A>0$ such that 
$$\int_0^T \|\alpha_s\|^{\frac m2}+\|\sigma_s \|_{L_{\text{HS}}(U,H)}^m ds\leq A.$$
\end{assumption}
Based upon choosing the right $m$, this is satisfied under any (!) of the assumptions above. 
In particular Assumption \ref{As: Weakened Assumption for the CLT for the SARCV} coincides with Assumption \ref{In proof: Very very Weak localised integrability Assumption on the moments}(4) while Assumption \ref{As: Decomposition of Q}(2) coincides with Assumption \ref{In proof: Very very Weak localised integrability Assumption on the moments}(2). Assumption \ref{As: Decomposition of Q}(m) is stricter than Assumption \ref{In proof: Very very Weak localised integrability Assumption on the moments}(m) if $m >2$.

\subsection{Elimination of the semigroup on finite-dimensional projections}
We will see that if we apply certain functionals, the semigroup-adjusted increments are essentially increments of finite-dimensional semimartingales.
First, we recall the Burkholder-Davis-Gundy inequality (in what follows called the BDG inequality), c.f. \cite{Marinelli2016}.
\begin{theorem}
For an $H$-valued local martingale $(M_t)_{t\in[0,T]}$ we have for all real numbers $m\geq 1$ and all $t\leq T$
\begin{equation}\label{BDG inequality}
\mathbb E\left[\sup_{s\leq t}\|M_s\|^m\right]\leq C_m \mathbb E\left[ [M,M]_t^{\frac m2}\right],
\end{equation}
where the constant $C_m>0$ is just depending on $m$ and $[M,M]$ is the scalar quadratic variation of $M$.
\end{theorem}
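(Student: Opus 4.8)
The plan is to reduce this infinite-dimensional maximal inequality to the classical scalar Burkholder--Davis--Gundy inequality by exploiting the Hilbert-space structure, after a routine localisation; only the upper bound in \eqref{BDG inequality} is needed. First I would localise: since $M$ is merely a local martingale, introduce the stopping times $\tau_k:=\inf\{t\in[0,T]:\|M_t\|\ge k \text{ or } [M,M]_t\ge k\}$ and work with the stopped processes $M^{\tau_k}$, which are $L^2$-bounded martingales. If \eqref{BDG inequality} holds for each $M^{\tau_k}$ with a constant $C_m$ not depending on $k$, then letting $k\to\infty$ recovers the general statement by monotone convergence on the right-hand side and Fatou's lemma on the left. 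Hence it suffices to treat a bounded (and, in the situation relevant to this paper, continuous) martingale $M$, for which $\mathbb{E}[\sup_{s\le t}\|M_s\|^m]<\infty$ a priori.

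The central analytic input is the Hilbert-space It\^o formula applied to $\|M_t\|^2=\langle M_t,M_t\rangle$, which gives
\begin{equation*}
\|M_t\|^2=\|M_0\|^2+2\int_0^t\langle M_s,dM_s\rangle+[M,M]_t,
\end{equation*}
so that $\|M_t\|^2-[M,M]_t$ is a local martingale and, by convexity of the norm, $\|M_t\|$ is a nonnegative submartingale. For $m\ge 2$ this already yields a short argument: Doob's $L^m$-maximal inequality bounds $\mathbb{E}[\sup_{s\le t}\|M_s\|^m]$ by $(m/(m-1))^m\,\mathbb{E}[\|M_t\|^m]$, and a further application of It\^o's formula to $u\mapsto u^{m/2}$ composed with $\|M\|^2$ produces a bounded-variation part controlled by $\int_0^t\|M_s\|^{m-2}\,d[M,M]_s$; a H\"older estimate with exponents $m/(m-2)$ and $m/2$ then closes the bound in terms of $\mathbb{E}[[M,M]_t^{m/2}]$, the case $m=2$ being exact from the displayed identity.

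The main obstacle is the sub-quadratic range $1\le m<2$, where $u\mapsto u^{m/2}$ is concave and the It\^o correction above has the wrong sign. Here I would prove a good-$\lambda$ (domination) inequality comparing the maximal function $M^*_t:=\sup_{s\le t}\|M_s\|$ with $S_t:=[M,M]_t^{1/2}$: for a fixed $\beta>1$ and all sufficiently small $\delta>0$ one establishes
\begin{equation*}
\mathbb{P}\bigl(M^*_t>\beta\lambda,\ S_t\le\delta\lambda\bigr)\le c(\beta,\delta)\,\mathbb{P}\bigl(M^*_t>\lambda\bigr),\qquad\lambda>0,
\end{equation*}
with $c(\beta,\delta)\to 0$ as $\delta\to 0$. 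The estimate is obtained by stopping at the first time $\|M\|$ exceeds $\lambda$ and applying the $L^2$-identity $\mathbb{E}[\|M_t\|^2-[M,M]_t]=0$ to the restarted martingale on the event $\{S_t\le\delta\lambda\}$; in the continuous case there are no jump contributions, which simplifies the bookkeeping. Multiplying by $m\lambda^{m-1}$, integrating in $\lambda$, and using the layer-cake identity $\mathbb{E}[(M^*_t)^m]=\int_0^\infty m\lambda^{m-1}\mathbb{P}(M^*_t>\lambda)\,d\lambda$, one absorbs the maximal term (finite by localisation) into the left-hand side and arrives at $\mathbb{E}[(M^*_t)^m]\le C_m\,\mathbb{E}[S_t^m]$ for every $m\ge 1$. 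Undoing the localisation then yields \eqref{BDG inequality}, and I would note that this good-$\lambda$ argument in fact covers the full range $m\ge 1$ uniformly, so the separate $m\ge 2$ computation is only a convenient shortcut.
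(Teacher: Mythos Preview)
The paper does not actually prove this statement; it is stated as a known result with a citation to \cite{Marinelli2016} (Marinelli and R\"ockner, ``On the maximal inequalities of Burkholder, Davis and Gundy'', \emph{Expo.~Math.}~2016), and the authors immediately move on to applying it. So there is no proof in the paper to compare against.

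Your proposal is a correct and standard route to the Hilbert-space BDG upper bound: localise, use the It\^o formula for $\|M\|^2$ to identify $[M,M]$, handle $m\ge 2$ via Doob's maximal inequality combined with the It\^o expansion of $\|M\|^m$ and a H\"older closure, and cover $1\le m<2$ (indeed all $m\ge 1$) by a good-$\lambda$ domination argument between $M^*_t$ and $[M,M]_t^{1/2}$. This is essentially the strategy carried out in the reference the paper cites. One minor caveat: when you apply It\^o's formula to $u\mapsto u^{m/2}$ composed with $\|M\|^2$ for $m>2$, the function is not $C^2$ at the origin, so one should either regularise (work with $(\varepsilon+\|M\|^2)^{m/2}$ and let $\varepsilon\downarrow 0$) or bypass this entirely by using the good-$\lambda$ argument uniformly for all $m\ge 1$, as you yourself note at the end.
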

For our purposes, the most important application is given in the following example:
\begin{example}
Let $M_t:=\int_0^t \sigma_s dW_s$ be a stochastic integral.
Then the BDG inequality \eqref{BDG inequality} reads
\begin{align*}
\mathbb E\left[\sup_{s\leq t}\|\int_0^t \sigma_s dW_s\|^m\right]&\leq C_m \mathbb E\left[\left(\int_0^t \text{Tr}(\Sigma_s) ds\right)^{\frac m2}\right] \\
&=C_m\mathbb E\left[\left(\int_0^t \|\sigma_s\|_{L_{\text{HS}}(U,H)}^2 ds\right)^{\frac m2}\right],
\end{align*}
for the constant $C_m$ just depending on $m$ 
and for all $t\leq T$.
\end{example}
Observe that there always exists an orthonormal basis $(e_i)_{i\in\mathbb N}$ of $H$ that is contained in the domain $D(\mathcal A^*)$ of the generator $\mathcal A^*$ of the adjoint semigroup $(\mathcal S(t)^*)_{t\geq 0}$. The latter is a semigroup since $H$ is a Hilbert space, see \cite[p.44]{Engel1999}.
Besides tightness, the most important argument to be able to appeal to the finite-dimensional limit theory of semimartingales is the following one:
\begin{lemma}\label{L: Reduction to martingales is possible}
Suppose that Assumption \ref{In proof: Very very Weak localised integrability Assumption on the moments}(m) holds for $m\geq 2$.
We define the $H$-valued semimartingale $S$ by
$$S_t:= \int_0^t \alpha_s ds+\int_0^t \sigma_s dW_s,$$
and fix a basis $(e_j)_{j\in\mathbb N}\subset D(\mathcal A^*)$ of $H$. 
Let furthermore $\Delta_i^n S:=S_{i\Delta_n}-S_{(i-1)\Delta_n}$ denote the non-adjusted increments of the semimartingale $S$. Then for $j_1,...,j_m\in\mathbb N$
\begin{align}\label{Finite Projections of Power variations are essentially functions of differences of Martingales}
  \langle SAMPV_t^n(m_1,...,m_k),&\bigotimes_{l=1}^m e_{j_l}\rangle_{\mathcal H^m}\notag\\
  &=\sum_{i=1}^{\ul}  \langle\bigotimes_{j=1}^k\Delta_{i+j-1}^n S^{\otimes m_j}, \bigotimes_{l=1}^m e_{j_l}\rangle_{\mathcal H^m}+ \mathcal O_p( \Delta_n^{\frac m2}).
\end{align}
\end{lemma}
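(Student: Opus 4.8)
The plan is to exploit the explicit form of the semigroup-adjusted increments. Substituting the mild solution \eqref{mild Ito process} into the definition of $\tilde\Delta_i^n Y$, pulling the bounded operator $\mathcal S(\Delta_n)$ inside the Bochner and stochastic integrals defining $Y_{(i-1)\Delta_n}$, and using the semigroup law $\mathcal S(\Delta_n)\mathcal S((i-1)\Delta_n - s) = \mathcal S(i\Delta_n - s)$, all contributions accumulated up to time $(i-1)\Delta_n$ cancel, leaving the clean localised representation
$$\tilde\Delta_i^n Y = \int_{(i-1)\Delta_n}^{i\Delta_n}\mathcal S(i\Delta_n - s)\alpha_s\,ds + \int_{(i-1)\Delta_n}^{i\Delta_n}\mathcal S(i\Delta_n - s)\sigma_s\,dW_s.$$
Comparing with $\Delta_i^n S = \int_{(i-1)\Delta_n}^{i\Delta_n}\alpha_s\,ds + \int_{(i-1)\Delta_n}^{i\Delta_n}\sigma_s\,dW_s$, I would set $R_i^n := \tilde\Delta_i^n Y - \Delta_i^n S$, which is the same integral with $\mathcal S(i\Delta_n - s)$ replaced by $\mathcal S(i\Delta_n-s) - I$. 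The difference between the two sides of \eqref{Finite Projections of Power variations are essentially functions of differences of Martingales} then comes entirely from replacing $\langle\tilde\Delta_{i+j-1}^n Y,e_{j_l}\rangle$ by $\langle\Delta_{i+j-1}^n S,e_{j_l}\rangle$ factor by factor.

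The first key ingredient uses that the basis lies in $D(\mathcal A^*)$: since $(\mathcal S(t)^* - I)e_j = \int_0^t \mathcal S(u)^*\mathcal A^* e_j\,du$, one has $\|(\mathcal S(i\Delta_n - s)^* - I)e_j\| \le \Delta_n M_j$ with $M_j := \sup_{u\le T}\|\mathcal S(u)^*\|_{\text{op}}\|\mathcal A^* e_j\|$ for $s \in ((i-1)\Delta_n, i\Delta_n]$. Projecting $R_i^n$ onto $e_j$ and passing the adjoint onto the basis vector therefore produces an extra factor $\Delta_n$ relative to the corresponding projection of $\Delta_i^n S$. Concretely, combining the Burkholder--Davis--Gundy inequality \eqref{BDG inequality} with Hölder's inequality in time and Assumption \ref{In proof: Very very Weak localised integrability Assumption on the moments}(m), I would establish
\begin{align*}
\sum_{i}\mathbb{E}\big[|\langle \Delta_i^n S, e_j\rangle|^m\big] &= \mathcal O(\Delta_n^{\frac m2 - 1}), \quad \sum_{i}\mathbb{E}\big[|\langle \tilde\Delta_i^n Y, e_j\rangle|^m\big] = \mathcal O(\Delta_n^{\frac m2 - 1}),\\
\sum_{i}\mathbb{E}\big[|\langle R_i^n, e_j\rangle|^m\big] &= \mathcal O(\Delta_n^{\frac{3m}{2} - 1}),
\end{align*}
the last gaining a full power $\Delta_n^{m}$ from the estimate above. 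In each case the martingale part dominates, the drift part being of strictly lower order for all $m\ge 2$ thanks to the exponent $m/2$ carried by $\alpha$ in the assumption.

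For the comparison itself I would expand both sides by multilinearity: pairing $\bigotimes_{j=1}^k \tilde\Delta_{i+j-1}^n Y^{\otimes m_j}$ (resp.\ its $S$-analogue) against $\bigotimes_{l=1}^m e_{j_l}$ factorises into a product of $m$ scalar projections, one factor $\langle \tilde\Delta_{i+j-1}^n Y, e_{j_l}\rangle$ (resp.\ $\langle \Delta_{i+j-1}^n S, e_{j_l}\rangle$) for each position $l$ in block $j$. Writing each projection of $\tilde\Delta^n Y$ as the corresponding projection of $\Delta^n S$ plus $\langle R^n,e\rangle$ and telescoping, the difference of the two products is a sum of $m$ terms, each a product of $m$ factors containing exactly one remainder factor. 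I would bound the expectation of each such term by first applying Hölder over the $m$ factors with common exponent $m$ (turning it into a product of $L^m(\Omega)$-norms), and then applying the discrete Hölder inequality over $i$ with the same $m$ equal exponents, which converts $\sum_i\prod_q v_{i,q}$ into $\prod_q(\sum_i v_{i,q}^m)^{1/m}$. Inserting the three moment bounds (the block index shifts being immaterial for the total sums up to negligible boundary terms) yields the exponent $\tfrac{m-1}{m}(\tfrac m2 -1) + \tfrac1m(\tfrac{3m}2 -1) = \tfrac m2$, so the whole difference is $\mathcal O(\Delta_n^{m/2})$ in $L^1$, and Markov's inequality gives the claimed $\mathcal O_p(\Delta_n^{m/2})$.

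The main obstacle is the bookkeeping in the last step: one must verify that the single extra power of $\Delta_n$ supplied by each remainder factor, once combined with the joint Hölder over factors and over increments, leaves precisely the target order $\Delta_n^{m/2}$ rather than something larger, and that the drift contributions — controlled only in $L^{m/2}$ — are genuinely subordinate to the martingale contributions throughout.
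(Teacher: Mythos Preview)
Your proof is correct and follows the same strategy as the paper: decompose $\langle \tilde\Delta_i^n Y, e_j\rangle = \langle \Delta_i^n S, e_j\rangle + (\text{remainder})$, exploit $e_j \in D(\mathcal A^*)$ to gain a factor $\Delta_n$ on the remainder, and combine BDG moment bounds with a generalised H\"older argument over the $m$ factors and over $i$ to reach the target order $\Delta_n^{m/2}$. Your execution is slightly more streamlined than the paper's---you obtain the decomposition directly from the localised mild-increment formula rather than via the paper's stochastic-Fubini identity for $\langle Y_t,e_j\rangle$, and you telescope the product (giving $m$ terms with exactly one remainder factor each) instead of fully expanding into the $2^m-1$ cross terms with at least one remainder; these are cosmetic differences and the moment estimates and final exponent count coincide.
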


\begin{proof}
First, fix $i,j\in\mathbb N$. By the stochastic Fubini theorem (and since $Y_0=0$) we have
\begin{align*}
       \int_0^t\int_0^u& \langle \alpha_s,\mathcal A^*\mathcal S(u-s)^*e_j\rangle dsdu \\
       &\qquad\qquad+\int_0^t \int_0^u \langle \sigma_s, \mathcal A^*\mathcal S(u-s)^* e_j\rangle dW_sdu+ \langle S_t,e_j\rangle \\
      &= \int_0^t\int_0^u \frac d{du}\langle \alpha_s,\mathcal S(u-s)^*e_j\rangle dsdu \\
      &\qquad\qquad+\int_0^t \int_0^u \frac d{du}\langle \sigma_s, \mathcal  S(u-s)^* e_j\rangle dW_sdu+ \langle S_t,e_j\rangle\\
      &=\int_0^t\int_s^t \frac d{du}\langle \alpha_s,\mathcal S(u-s)^*e_j\rangle duds+\int_0^t\langle \alpha_s,e_j\rangle ds\\
      &\qquad\qquad+\int_0^t \int_s^t \frac d{du}\langle \sigma_s,  \mathcal S(u-s)^* e_j\rangle dudW_s+\int_0^t \langle \sigma_s,e_j\rangle dW_s\\
      &=\int_0^t\langle \alpha_s,\mathcal S(t-s)^*e_j\rangle ds+\int_0^t \langle \sigma_s,  \mathcal S(t-s)^* e_j\rangle dW_s\\
      &= \langle Y_t,e_j\rangle.
\end{align*}
Now define 
\begin{align}
    \Delta_n^t a^j&:=\int_t^{t+\Delta_n}\int_t^u\frac d{du} \langle \alpha_s,\mathcal S(u-s)^*e_j\rangle dsdu \notag \\
    &\qquad\qquad+ \int_t^{t+\Delta_n} \int_t^u  \langle \sigma_s, \mathcal  A^*\mathcal S(u-s)^* e_j\rangle dW_sdu.
\end{align}
Again, by using the stochastic Fubini theorem we obtain
\begin{align*}
  \langle Y_{t+\Delta_n} ,e_j\rangle&-(\langle S_{t+\Delta_n},e_j\rangle- \langle S_t,e_j\rangle)+ \Delta_n^t a^j\\
= &\int_t^{t+\Delta_n}\int_0^u\frac d{du} \langle \alpha_s,\mathcal S(u-s)^*e_j\rangle dsdu\\
&\qquad-\int_t^{t+\Delta_n}\int_t^u\frac d{du} \langle \alpha_s,\mathcal S(u-s)^*e_j\rangle dsdu\\
&\qquad
+\int_t^{t+\Delta_n} \int_0^u  \frac d{du}\langle \sigma_s,  \mathcal S(u-s)^* e_j\rangle dW_sdu \\
&\qquad-\int_t^{t+\Delta_n} \int_t^u  \frac d{du}\langle \sigma_s, \mathcal  S(u-s)^* e_j\rangle dW_sdu+\langle Y_t,e_j\rangle\\
     = &\int_t^{t+\Delta_n}\int_0^t\frac d{du} \langle \alpha_s,\mathcal S(u-s)^*e_j\rangle dsdu\\
    &\qquad+\int_t^{t+\Delta_n} \int_0^t  \frac d{du}\langle \sigma_s, \mathcal  S(u-s)^* e_j\rangle dW_sdu+\langle Y_t,e_j\rangle\\
      = &\int_0^t\int_t^{t+\Delta_n}\frac d{du} \langle \alpha_s,\mathcal S(u-s)^*e_j\rangle duds\\
      &\qquad+\int_0^t \int_t^{t+\Delta_n} \frac d{du}\langle \sigma_s,  \mathcal S(u-s)^* e_j\rangle dudW_s+\langle Y_t,e_j\rangle\\
      = &\int_0^t\int_s^{t+\Delta_n}\frac d{du} \langle \alpha_s,\mathcal S(u-s)^*e_j\rangle duds\\
      &\qquad+\int_0^t \int_s^{t+\Delta_n} \frac d{du}\langle \sigma_s, \mathcal  S(u-s)^* e_j\rangle dudW_s+ \langle S_t,e_j\rangle \\
      = &\int_0^t \langle \alpha_s,\mathcal S(t+\Delta_n-s)^*e_j\rangle ds+\int_0^t \langle \sigma_s,  \mathcal S(t+\Delta_n-s)^* e_j\rangle dW_s\\
      = & \langle \mathcal S(\Delta_n)Y_t,e_j\rangle.
\end{align*}
Thus,
\begin{align*}
    \langle \tilde{\Delta}_{i}^n Y ,e_{j}\rangle=    \langle \Delta_{i}^n S,e_{j}\rangle + \Delta_n^{(i-1)\Delta_n} a^{j}.
\end{align*}
Now let $i_1,...,i_m\in\mathbb N$. We get
\begin{align*}
     \prod_{k=1}^{m} \langle \tilde{\Delta}_{i_k}^n Y,e_{j_k}\rangle
     = &\prod_{k=1}^{m}\left( \langle \Delta_{i_k}^nS,e_{j_k}\rangle + \Delta_n^{(i_k-1)\Delta_n} a^{j_k}\right)\\
     = &\prod_{k=1}^{m} \langle \Delta_{i_k}^nS,e_{j_k}\rangle+\sum_{y=1,x=2}^m b_{xy}(i_1,...,i_m,j_1,...,j_m),
\end{align*}
where each summand $b_{xy}$ is of the form
$$b_{xy}(i_1,...,i_m,j_1,...,j_m)=\prod_{r=1}^p\langle \Delta_{i_{k_r}}^nS,e_{j_{k_r}}\rangle \prod_{s=1}^q \Delta_n^{(i_{l_s}-1)\Delta_n} a^{j_{l_s}},$$
for $p\leq m-1$, $q\leq m$, $p+q=m$ and 
$$\{ i_{k_1},...,i_{k_p},i_{l_1},...,i_{l_q}\}=\{i_1,...,i_m\},\quad \{ j_{k_1},...,j_{k_p},j_{l_1},...,j_{l_q}\}=\{j_1,...,j_m\}.$$
Then by the generalised Hölder inequality
\begin{align}\label{estimate for bkl summands}
    &\mathbb E[|b_{xy}(i_1,...,i_m,j_1,...,j_m)|]\notag\\
    &\qquad\leq \mathbb E[|\langle \Delta_{i_{k_1}}^n S,e_{j_{k_1}}\rangle|^m]^{\frac 1m}\times\cdots\times\mathbb E[|\langle\Delta_{i_{k_p}}^n S,e_{j_{k_p}}\rangle|^m]^{\frac 1m}\notag \\
    &\qquad\qquad\times\mathbb E[|\Delta_n^{(i_{l_1}-1)\Delta_n} a^{j_{l_1}}|^m]^{\frac 1m}\times\cdots\times\mathbb E[|\Delta_n^{(i_{l_q}-1)\Delta_n} a^{j_{l_q}}|^m]^{\frac 1m}.
\end{align}
This means that in order to estimate $ \mathbb E[|b_{xy}(i_1,...,i_m,j_1,...,j_m)|]$ we have to find bounds on $\mathbb E[|\langle \Delta_{i}^n S,e_{j}\rangle|^m]^{\frac 1m}$ and $\mathbb E[|\Delta_n^{(i-1)\Delta_n} a^{j}|^m]^{\frac 1m}$ for $i,j\in\mathbb N$. We start with the latter term.

For any $t\leq u\leq T$, $j\in\mathbb N$, we have for the quadratic variation
\begin{align*}
\left[\int_0^{\cdot}\indicator_{[t,u]}\langle \sigma_s , \mathcal S(u-s)^*\mathcal A^* e_j\rangle dW_s\right]_T= & \int_u^t \langle \mathcal S(u-s)\Sigma_s \mathcal S(t-s)^*\mathcal A^* e_j,\mathcal A^*e_j\rangle ds \\
= &\int_t^u \| \sigma_s^* \mathcal S(u-s)^*\mathcal A^* e_j\|^2ds.
\end{align*}
Hence, by the BDG inequality \eqref{BDG inequality} we obtain, for $t\leq u\leq t+\Delta_n$,
\begin{align*}
&\mathbb E\left[\left|\int_t^u \langle \alpha_s,\mathcal A^*\mathcal S(t-s)^*e_j\rangle ds+\int_t^u \langle \sigma_s, \mathcal A^*\mathcal S(u-s)^*e_{j}\rangle dW_s\right|^m\right]^{\frac 1m}\\
\leq & \mathbb E\left[\left|\int_t^u \langle \alpha_s,\mathcal A^*\mathcal S(t-s)^*e_j\rangle ds\right|^m\right]^{\frac 1m}+ \mathbb E\left[\left|\int_t^u \langle \sigma_s, \mathcal A^*\mathcal S(u-s)^*e_{j}\rangle dW_s\right|^m\right]^{\frac 1m}\\
\leq &\mathbb E\left[\left|\int_t^u \langle \alpha_s,\mathcal A^*\mathcal S(t-s)^*e_j\rangle ds\right|^m\right]^{\frac 1m}+ C_m^{\frac 1m}\mathbb E\left[\left|\int_t^u \|\sigma_s^* \mathcal S(u-s)^*\mathcal A^* e_j\|^2 ds\right|^{\frac m2}\right]^{\frac 1m}\\
\leq &\Delta_n^{\frac {m-2}{m}}\mathbb E\left[\left(\int_t^{t+\Delta_n} \|\alpha_s\|^{\frac m2}ds\right)^2\right]^{\frac 1m}\sup_{t\in[0,T]}\|\mathcal S(t)\|_{\text{op}} \|\mathcal A^* e_j\|\\
&\qquad+ C_m^{\frac 1m} \Delta_n^{\frac {m-2}{2m}}\left(\int_t^{t+\Delta_n} \mathbb E\left[\|\sigma_s^*\|_{\text{op}}^m\right]ds\right)^{\frac 1m} \sup_{t\in[0,T]}\|\mathcal S(t)\|_{\text{op}} \|\mathcal A^* e_j\|.
\end{align*}
Therefore, as $m\geq 2$
\begin{align}\label{finite dimensional projection of a estimate}
   & \mathbb E[|\Delta_n^{t} a^{j}|^m]^{\frac 1m}\notag\\
   \leq & \Delta_n \left(\Delta_n^{\frac {m-2}{m}}\mathbb E\left[\left(\int_t^{t+\Delta_n} |\|\alpha_s\|^{\frac m2}ds\right)^2\right]^{\frac 1m}+ C_m^{\frac 1m} \Delta_n^{\frac {m-2}{2m}}\left(\int_t^{t+\Delta_n} \mathbb E\left[\|\sigma_s^*\|_{\text{op}}^m\right]ds\right)^{\frac 1m}\right)\notag\\
   &\qquad\times\sup_{t\in[0,T]}\|\mathcal S(t)\|_{\text{op}} \|\mathcal A^*e_j\|\notag\\
   \leq &\Delta_n^{\frac {3m-2}{2m}} \left(\mathbb E\left[\left(\int_t^{t+\Delta_n} |\|\alpha_s\|^{\frac m2}ds\right)^2\right]^{\frac 1m}+ C_m^{\frac 1m}\left(\int_t^{t+\Delta_n} \mathbb E\left[\|\sigma_s^*\|_{\text{op}}^m\right]ds\right)^{\frac 1m}\right)\notag\\
   &\qquad\times\sup_{t\in[0,T]}\|\mathcal S(t)\|_{\text{op}} \|\mathcal A^*e_j\|.
\end{align}
Now for the martingale differences $\mathbb E[|\langle\Delta_i^n S,e_{j}\rangle|^m]^{\frac 1m}$ we can estimate by virtue of the BDG inequality \eqref{BDG inequality}:
\begin{align}\label{finite dimensional projection of M estimate}
 & \mathbb E[|\langle\Delta_i^n S,e_{j}\rangle|^m]^{\frac 1m}\notag\\
 \leq
 &\mathbb E\left[\|\int_{(i-1)\Delta_n}^{i\Delta_n}\alpha_s ds\|^m\right]^{\frac 1m}+C_m^{\frac 1m}\mathbb E\left[(\int_{(i-1)\Delta_n}^{i\Delta_n}\langle \Sigma_s e_j,e_j\rangle ds)^{\frac m2}\right]^{\frac 1m}\notag\\
 \leq & \left(\Delta_n^{\frac {m-2}m}\mathbb E\left[\left(\int_{(i-1)\Delta_n}^{i\Delta_n}\|\alpha_s\|^{\frac m2}ds\right)^2\right]^{\frac 1m}+\Delta_n^{\frac {m-2}{2m}}C_m^{\frac 1m}  \left(\int_{(i-1)\Delta_n}^{i\Delta_n}\mathbb E[\|\sigma_s\|_{\text{op}}^m]ds\right)^{\frac 1{m}}\right)\notag\\
  \leq & \Delta_n^{\frac {m-2}{2m}}\left(\mathbb E\left[\left(\int_{(i-1)\Delta_n}^{i\Delta_n}\|\alpha_s\|^{\frac m2}ds\right)^2\right]^{\frac 1m}+C_m^{\frac 1m}  \left(\int_{(i-1)\Delta_n}^{i\Delta_n}\mathbb E[\|\sigma_s\|_{\text{op}}^m]ds\right)^{\frac 1m}\right).
\end{align}
Combining \eqref{estimate for bkl summands}, \eqref{finite dimensional projection of a estimate} and \eqref{finite dimensional projection of M estimate} yields, as $q\geq 1$
\begin{align*}
  &  \mathbb E[|b_{xy}(i_1,...,i_m,j_1,...,j_m)|]\\
    &\leq \Delta_n^{p\frac {m-2}{2m}}\prod_{l=1}^p\left(\mathbb E\left[\left(\int_{(i_{k_l}-1)\Delta_n}^{i_{k_l}\Delta_n}\|\alpha_s\|^{\frac m2}ds\right)^2\right]^{\frac 1m}+C_m^{\frac 1m}  \left(\int_{(i-1)\Delta_n}^{i\Delta_n}\mathbb E[\|\sigma_s\|_{\text{op}}^m]ds\right)^{\frac 1m}\right)^p \\
    &\quad\times\Delta_n^{q\frac {3m-2}{2m}} \prod_{l=1}^q\left(\mathbb E\left[\left(\int_{(i_{k_l}-1)\Delta_n}^{i_{k_l}\Delta_n}\|\alpha_s\|^{\frac m2}ds\right)^2\right]^{\frac 1m}\right.\\
    &\qquad\qquad\qquad\qquad\qquad\qquad\qquad\qquad\left.+ C_m^{\frac 1m}\left(\int_{(i_{k_l}-1)\Delta_n}^{i_{k_l}} \mathbb E\left[\| \sigma_s^*\|_{\text{op}}^m\right]ds\right)^{\frac 1m}\right)^q\notag\\
   &\qquad\times\sup_{t\in[0,T]}\|\mathcal S(t)\|_{\text{op}}^q \|\mathcal A^*e_j\|^q\\
   \leq & \Delta_n^{\frac m2}\prod_{l=1}^m\left(\mathbb E\left[\left(  \int_{(i_l-1)\Delta_n}^{i_l\Delta_n}\|\alpha_s\|^{\frac m2}ds\right)^2\right]^{\frac 1m}+ C_m^{\frac 1m}\left(\int_{(i_l-1)\Delta_n}^{i_l\Delta_n} \mathbb E\left[\|\sigma_s\|_{\text{op}}^m\right]ds\right)^{\frac 1m}\right)\\
   &\qquad\times\left(\sup_{t\in[0,T]q,j=1,...,m}\|\mathcal S(t)\|_{\text{op}}^q \|\mathcal A^*e_j\|^q\right).
\end{align*}
Now we use introduce the notation 
$b_{x,y}(i_1,...,i_m,j_1,...,j_m)=:b_{x,y}(i)^{m_1,...,m_k}$ 
in the case of $i_1,...,i_{m_1}=i$, $i_{m_1+1},...,i_{m_1+m_2}=i+1$ ... $i_{m_{k-2}+1},...,i_{m_{k-2}+m_{k-1}}=i+k-1$ (to help with the intuition, this is just a formal way to specify that the first $m_1$ components are equal and then the next $m_2$ are again equal and so on). 
Then
\begin{align*}
& \langle SAMPV_t^n(m_1,...,m_k),\bigotimes_{j=1}^me_j\rangle\\
&\qquad=\sum_{i=1}^{\ul} \langle \bigotimes_{j=1}^k \tilde{\Delta}_{i+j-1}S,\bigotimes_{j=1}^m e_j\rangle +\sum_{i=1}^{\ul} \sum_{x=2,y=1}^m b_{x,y}(i)^{m_1,...,m_k}.
\end{align*}
In order to prove the assertion, we just have to show that the latter summand is $\mathcal O(\Delta_n^{\frac m2})$. Therefore,
the generalised Hölder inequality and the elementary inequality $(a+b)^m\leq 2^m(a^m+b^m)$ for positive real numbers $a,b\in\mathbb R_+$ yield
\begin{align*}
   & \sum_{i=1}^{\ul-k+1} \sum_{y=1,x=2}^m\mathbb E[|b_{xy}(i)^{m_1,...,m_k}|]\\
    \leq & m(m-1)\Delta_n^{\frac m2}\left(\sup_{t\in[0,T],q,j=1,...,m}\|\mathcal S(t)\|_{\text{op}}^q \|\mathcal A^*e_j\|^q\right) \\
   &\qquad\times \sum_{i=1}^{\ul-k+1} \prod_{j=1}^k\left(\mathbb E\left[\left(\int_{(i+j-2)\Delta_n}^{(i+j-1)\Delta_n} |\|\alpha_s\|^{\frac m2}ds\right)^2\right]^{\frac 1m}\right.\\
   &\qquad\qquad\qquad\qquad\qquad\qquad+ \left.C_m^{\frac 1m}\left(\int_{(i+j-2)\Delta_n}^{(i+j-1)\Delta_n} \mathbb E\left[\|\sigma_s\|_{\text{op}}^m\right]ds\right)^{\frac 1m}\right)^{m_j}\\
   \leq & m(m-1)\Delta_n^{\frac m2}\left(\sup_{t\in[0,T],q,j=1,...,m}\|\mathcal S(t)\|_{\text{op}}^q \|\mathcal A^*e_j\|^q\right) \\
   &\quad\times \sum_{i=1}^{\ul-k+1} \prod_{j=1}^k 2^m\left(\mathbb E\left[\left(\int_{(i+j-2)\Delta_n}^{(i+j-1)\Delta_n} |\|\alpha_s\|^{\frac m2}ds\right)^2\right]\right.\\
   &\qquad\qquad\qquad\qquad\qquad\qquad\left.+ C_m\int_{(i+j-2)\Delta_n}^{(i+j-1)\Delta_n} \mathbb E\left[\|\sigma_s\|_{\text{op}}^m\right]ds\right)^{\frac {m_j}m}.
   \end{align*}
   As $\mathbb E\left[\left(\int_{(i+j-2)\Delta_n}^{(i+j-1)\Delta_n} \|\alpha_s\|^{\frac m2}ds\right)^2\right]\leq \mathbb E\left[\int_{(i+j-2)\Delta_n}^{(i+j-1)\Delta_n} \|\alpha_s\|^{\frac m2}ds\right]$ for $\Delta_n$ small enough, we obtain 
   \begin{align*}
   &\sum_{i=1}^{\ul-k+1}\sum_{y=1,x=2}^m\mathbb E\left[|b_{xy}(i)^{m_1,...,m_k}|\right]\\
   \leq & m(m-1)\Delta_n^{\frac m2}\left(\sup_{t\in[0,T],q,j=1,...,m}\|\mathcal S(t)\|_{\text{op}}^q \|\mathcal A^*e_j\|^q\right)\\
   &\quad\times \sum_{i=1}^{\ul-k+1} \prod_{j=1}^k 2^m\mathbb E\left[\int_{(i+j-2)\Delta_n}^{(i+j-1)\Delta_n} \|\alpha_s\|^{\frac m2}ds+ C_m \|\sigma_s\|_{\text{op}}^mds\right]^{\frac {m_j}m}\\
   \leq & m(m-1)\Delta_n^{\frac m2}\left(\sup_{t\in[0,T],q,j=1,...,m}\|\mathcal S(t)\|_{\text{op}}^q \|\mathcal A^*e_j\|^q\right)\\
   &\quad\times k!  2^m\mathbb E\left[\int_0^T \|\alpha_s\|^{\frac m2}ds+ C_m \|\sigma_s\|_{\text{op}}^mds\right]\\
   \leq & m(m-1)\Delta_n^{\frac m2}\left(\sup_{t\in[0,T],q,j=1,...,m}\|\mathcal S(t)\|_{\text{op}}^q \|\mathcal A^*e_j\|^q\right)k!  A.
   \end{align*}
This proves the assertion
\end{proof}

\subsubsection{Uniform convergence of the finite-dimensional projections}
We introduce the notation 
$$\sigma_s^{\mathcal S_n}=\mathcal S(i\Delta_n-s)\sigma_s,$$
and
$$\alpha_s^{\mathcal S_n}=\mathcal S(i\Delta_n-s)\alpha_s,$$
for $s\in((i-1)\Delta_n,i\Delta_n]$, such that $\sigma_s^{\mathcal S_n}(\sigma_s^{\mathcal S_n})^*=\Sigma_s^{\mathcal S_n}$. We often need the following technical lemma: 
\begin{lemma}\label{L: Projection convergese uniformly on the range of volatility}
Suppose that for $m\in\mathbb N$ we have
$$\int_0^T \mathbb E\left [\|\alpha_s\|^{\frac m2}+\|\sigma_s \|_{L_{\text{HS}}(U,H)}^m\right] ds<\infty,$$
which holds in particular under Assumption \ref{In proof: Very very Weak localised integrability Assumption on the moments}(m). Let $(e_j)_{j\in\mathbb N}$ be an orthonormal basis of $H$ and $p_N$ be the projection onto $ v^N:=\overline{span\{e_{j}:j\geq N\}}$.
 Then for all natural numbers $p\leq m$, we have
$$
\lim_{N\to\infty}\sup_{n\in\mathbb N}\mathbb E\left[\int_0^T\|p_N\sigma_s^{\mathcal S_n}\|_{L_{\text{HS}}(U,H)}^{p}ds\right]= 0.
$$
Moreover, for all $q\leq \frac m2$, we have
$$\lim_{N\to\infty}\sup_{n\in\mathbb N}\mathbb E\left[\int_0^T\|p_N\alpha_s^{\mathcal S_n}\|^{q}ds\right]= 0.$$
\end{lemma}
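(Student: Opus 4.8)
The only real obstacle is the uniformity in $n$: for each fixed $n$ the integrand $\|p_N\sigma_s^{\mathcal S_n}\|_{L_{\text{HS}}(U,H)}^p$ tends to $0$ as $N\to\infty$ by dominated convergence (since $\sigma_s$ is Hilbert--Schmidt for a.e.\ $(s,\omega)$ and $\|p_N\sigma_s^{\mathcal S_n}\|\le C_{\mathcal S}\|\sigma_s\|$), but one must control the rate uniformly over all $n$ despite the factor $\mathcal S(i\Delta_n-s)$ mixing the coordinates. My plan is to produce a bound that no longer depends on $n$ by splitting off a finite-rank piece. Write $C_{\mathcal S}:=\sup_{t\in[0,1]}\|\mathcal S(t)\|_{\text{op}}<\infty$ (finite because $\mathcal S$ is strongly continuous, hence bounded on compacts), and note that every semigroup-time occurring is $i\Delta_n-s\in[0,\Delta_n]\subseteq[0,1]$.

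For $L\in\mathbb N$ let $q_L:=I-p_L$ be the orthogonal projection of $H$ onto the span of $e_1,\dots,e_{L-1}$, and decompose $\sigma_s=q_L\sigma_s+p_L\sigma_s$. Applying $p_N\mathcal S(t)$ and the submultiplicativity $\|AB\|_{L_{\text{HS}}}\le\|A\|_{\text{op}}\|B\|_{L_{\text{HS}}}$ gives, for $t=i\Delta_n-s$,
\begin{align*}
\|p_N\sigma_s^{\mathcal S_n}\|_{L_{\text{HS}}(U,H)}\le \|p_N\mathcal S(t)q_L\|_{\text{op}}\,\|\sigma_s\|_{L_{\text{HS}}(U,H)}+C_{\mathcal S}\,\|p_L\sigma_s\|_{L_{\text{HS}}(U,H)}.
\end{align*}
The second (high-rank) term is already free of $n$ and $N$. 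For the first term set $\epsilon_{L,N}:=\sup_{t\in[0,1]}\|p_N\mathcal S(t)q_L\|_{\text{op}}$; since $q_L$ has rank $L-1$,
\begin{align*}
\epsilon_{L,N}^2\le\sum_{j=1}^{L-1}\sup_{t\in[0,1]}\|p_N\mathcal S(t)e_j\|^2.
\end{align*}
The key point is that by strong continuity each $t\mapsto\mathcal S(t)e_j$ is continuous, so $K_j:=\{\mathcal S(t)e_j:t\in[0,1]\}$ is a compact subset of $H$, and compact sets have equismall tails (the characterisation behind Lemma \ref{outline: Tightness Criterion in Hilbert spaces}, cf.\ Lemma 1.8.1 in \cite{vanderVaart1996}); hence $\sup_{t\in[0,1]}\|p_N\mathcal S(t)e_j\|=\sup_{h\in K_j}\|p_Nh\|\to0$ as $N\to\infty$. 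A finite sum then gives $\epsilon_{L,N}\to0$ as $N\to\infty$ for every fixed $L$.

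Raising the displayed bound to the $p$-th power via $(a+b)^p\le2^{p-1}(a^p+b^p)$, taking expectations and integrating, the right-hand side is independent of $n$, so
\begin{align*}
\sup_{n\in\mathbb N}\mathbb E\Bigl[\int_0^T\|p_N\sigma_s^{\mathcal S_n}\|_{L_{\text{HS}}(U,H)}^p\,ds\Bigr]\le 2^{p-1}\epsilon_{L,N}^p\int_0^T\mathbb E\bigl[\|\sigma_s\|_{L_{\text{HS}}(U,H)}^p\bigr]ds+2^{p-1}C_{\mathcal S}^p\int_0^T\mathbb E\bigl[\|p_L\sigma_s\|_{L_{\text{HS}}(U,H)}^p\bigr]ds,
\end{align*}
where $\int_0^T\mathbb E[\|\sigma_s\|^p]ds<\infty$ follows from the hypothesis by H\"older's inequality since $p\le m$. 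Letting first $N\to\infty$ (so $\epsilon_{L,N}\to0$) annihilates the first summand and leaves the second; then letting $L\to\infty$ and invoking dominated convergence ($\|p_L\sigma_s\|\downarrow0$ a.e., dominated by $\|\sigma_s\|^p$) sends the second summand to $0$. As the left-hand side does not depend on $L$, this yields $\lim_{N\to\infty}\sup_n\mathbb E[\cdots]=0$. The statement for $\alpha$ is identical after replacing the Hilbert--Schmidt norm by $\|\cdot\|_H$, the exponent $p$ by $q\le m/2$, and using $\int_0^T\mathbb E[\|\alpha_s\|^q]ds<\infty$; the very same $\epsilon_{L,N}$ governs the low-rank term because $\|p_N\mathcal S(t)q_Lh\|\le\epsilon_{L,N}\|h\|$. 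The main difficulty throughout is the uniform-in-$n$ control of the low-rank term, and it is exactly this that the compactness of the semigroup orbits $\{\mathcal S(t)e_j\}$ together with the equismall-tails property resolves.
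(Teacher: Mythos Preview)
Your proof is correct and takes a genuinely different route from the paper's. The paper first bounds the quantity by $\int_0^T\mathbb E[\sup_{r\in[0,T]}\|p_N\mathcal S(r)\sigma_s\|_{L_{\text{HS}}(U,H)}^p]\,ds$, then argues that for fixed $(s,\omega)$ the map $r\mapsto\|p_N\mathcal S(r)\sigma_s\|_{L_{\text{HS}}(U,H)}$ is continuous (citing Proposition~5.1 in \cite{Benth2022} for the Hilbert--Schmidt continuity of $r\mapsto(I-\mathcal S(r))\sigma_s$), monotone decreasing in $N$, and pointwise convergent to $0$; Dini's theorem then upgrades this to uniform convergence in $r$, and dominated convergence in $(s,\omega)$ finishes.

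Your decomposition $\sigma_s=q_L\sigma_s+p_L\sigma_s$ circumvents the need for Dini and for the external continuity result: the uniform-in-$r$ smallness of $\|p_N\mathcal S(r)q_L\|_{\text{op}}$ follows purely from the compactness of the finitely many orbits $\{\mathcal S(t)e_j:t\in[0,1]\}$, $j<L$, while the tail $p_L\sigma_s$ is handled by plain dominated convergence with a bound independent of $n$ and $N$. This is arguably more self-contained (no appeal to \cite{Benth2022}) and isolates cleanly where the Hilbert--Schmidt property of $\sigma_s$ is actually used (only in the $L\to\infty$ step). The paper's approach, in turn, is a single-limit argument once the HS-continuity of the semigroup action is available, and it treats $\sigma_s$ as a whole rather than splitting it.
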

\begin{proof}
It is enough to prove the assertion for the first limit 
as the second limit can be treated as a special case, if we replace $\alpha^{\mathcal S_n}$ by the Hilbert-Schmidt operator $e\otimes \alpha$ and using that $$\|p_N\alpha_s^{\mathcal S_n}\|=\|e\otimes p_N\alpha_s^{\mathcal S_n}\|,$$
where $e$ is an arbitrary element in $U$ with $\|e\|=1$.
No observe that the Bochner integrability of $\|p_N\sigma_s^{\mathcal S_n}\|_{L_{\text{HS}}(U,H)}^{p}$ is guaranteed by assumption and 
\begin{align}\label{Substitution estimate for the projection convergence}
    \mathbb E\left[\int_0^{\ulT\Delta_n}\|p_N\sigma_s^{\mathcal S_n}\|_{L_{\text{HS}}(U,H)}^{p}ds\right]\leq & \sum_{i=1}^{\ulT} \mathbb E\left[\int_{(i-1)\Delta_n}^{i\Delta_n}\sup_{r\in[0,T]} \left\| p_N\mathcal S(r)\sigma_{s}\right\|^{p}_{L_{\text{HS}}(U,H)} ds\right]\notag\\
    = & \int_0^T \mathbb E\left[\sup_{r\in[0,T]} \left\| p_N\mathcal S(r)\sigma_{s}\right\|^{p}_{L_{\text{HS}}(U,H)} \right] ds.
\end{align}
Now observe that, for $s,r\in[0,T]$ fixed, we have almost surely
\begin{align*}
   \|p_N\mathcal S(r)\sigma_{s}\|_{L_{\text{HS}}(U,H)}^2=\|\sigma_{s}^*\mathcal S(r)^*p_N\|_{L_{\text{HS}}(H,U)}^2= \sum_{k=N}^{\infty} \|\sigma_{s}^*\mathcal S(r)^*e_k\|^2\to 0,\quad \text{ as }N\to\infty,
\end{align*}
since $\sigma_{t}^*\mathcal S(s)^*$ is almost surely a Hilbert-Schmidt operator.
Moreover, the function $f_s(r)= \|p_N\mathcal S(r)\sigma_{s}\|_{L_{\text{HS}}(U,H)}$ is continuous in $r$, as for
\begin{align*}
    |f_s(r_1)-f_s(r_2)|\leq & \left\|p_N(\mathcal S(r_1)-\mathcal S(r_2))\sigma_s\right\|_{L_{\text{HS}}(U,H)}\\
    \leq & \sup_{r\in[0,T]}\|\mathcal S(r)\|_{\text{op}} \sup_{r\leq|r_1-r_2|}\left\|\left(I-\mathcal S(r)\right)\sigma_s\right\|_{L_{\text{HS}}(U,H)}.
\end{align*}
The latter converges to $0$, as $r_1\to r_2$, by Proposition 5.1 in \cite{Benth2022}. 
As we also have
$$\|p_N\mathcal S(s)\sigma_{t}\|_{L_{\text{HS}}(U,H)}\geq  \| p_{N+1}p_N\mathcal S(s)\sigma_{t}\|_{L_{\text{HS}}(U,H)}=\|p_{N+1}\mathcal S(s)\sigma_{t}\|_{L_{\text{HS}}(U,H)},$$
we find by virtue of Dini's theorem (c.f. Theorem 7.13 in \cite{Rudin1976}) that almost surely
$$\lim_{N\to\infty}\sup_{t\in[0,T]}\left\|p_N\mathcal S(r)\sigma_s\right\|_{L_{\text{HS}}(U,H)}=0.$$
By the dominated convergence theorem, this immediately yields
$$ \mathbb E\left[\int_0^{\ulT\Delta_n}\|p_N\sigma_s^{\mathcal S_n}\|_{L_{\text{HS}}(U,H)}^{p}ds\right]\leq\mathbb E\left[ \int_0^T\sup_{r\in[0,T]}\|p_N\mathcal S(r)\sigma_{s}\|_{L_{\text{HS}}(U,H)}^{p}ds\right]\to 0,
$$
as $N\rightarrow\infty$. This proves the claim. 
\end{proof}

\subsection{Various estimates for increments}\label{Drift elimination section}
In this subsection, we will use the notation 
\begin{align}
    \tilde{\Delta}_i^n A := \int_{(i-1)\Delta_n}^{i\Delta_n}\mathcal S(i\Delta_n-s)\alpha_s ds,\\
    \tilde{\Delta}_i^n M := \int_{(i-1)\Delta_n}^{i\Delta_n}\mathcal S(i\Delta_n-s)\sigma_s dW_s.
\end{align}
We will make use of the following Lemma: 

\begin{lemma}\label{L: Auxiliary Estimates for the increments}
Suppose that Assumption \ref{In proof: Very very Weak localised integrability Assumption on the moments}(m) holds.
Let $(e_j)_{j\in\mathbb N}$ be an orthonormal basis of $H$ and $p_N$ be the projection onto 
$ v^N:=\overline{span\{e_{j}:j\geq N\}}$.
Moreover, let 
$$a_N(z):=\sup_{n\in\mathbb N}\mathbb E\left[\int_0^T\|p_N\alpha_s^{\mathcal S_n}\|^zds\right],$$
and
$$b_N(z):=\sup_{n\in\mathbb N}\mathbb E\left[\int_0^T\|p_N\sigma_s^{\mathcal S_n}\|_{L_{\text{HS}}(U,H)}^zds\right],$$
which both converge to $0$ as $N\to\infty$ for $z\leq m$, respectively $z\leq \frac m2$ by Lemma \ref{L: Projection convergese uniformly on the range of volatility}.  Then we can find for all $m\in\mathbb N$ a universal constant $C=C(m)>0$, such that 
\begin{equation}\label{drift increment estimate}
   \sum_{i=1}^{\ul} \mathbb E\left[\|p_N \tilde{\Delta}_i^n A\|^m\right] \leq C\Delta_n^{m-1} a_N(m),
\end{equation}
\begin{equation}\label{vol increment estimate}
    \sum_{i=1}^{\ul} \mathbb E\left[\|p_N \tilde{\Delta}_i^n M\|^m\right] \leq C\Delta_n^{\frac m2-1} b_N(m),
\end{equation}
and 
\begin{equation}\label{whole increment estimate}
 \sum_{i=1}^{\ul}   \mathbb E\left[\|p_N\tilde{\Delta}_i^n Y\|^m\right]\leq C(\Delta_n^{m-1}a_N(m)+\Delta_n^{\frac m2-1} b_N(m))=o(\Delta_n^{\frac m2-1}).
\end{equation}
\end{lemma}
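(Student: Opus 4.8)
The plan is to prove the three displayed bounds in turn, treating the drift and martingale parts of the adjusted increment separately and then combining them. Throughout I would use the defining decomposition $\tilde{\Delta}_i^n Y = \tilde{\Delta}_i^n A + \tilde{\Delta}_i^n M$, so that \eqref{whole increment estimate} will follow from \eqref{drift increment estimate} and \eqref{vol increment estimate} via the elementary inequality $\|x+y\|^m \le 2^{m-1}(\|x\|^m + \|y\|^m)$ and absorption of constants into $C$.

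First I would establish \eqref{drift increment estimate}. Since $p_N$ is bounded it commutes with the Bochner integral, and for $s\in((i-1)\Delta_n,i\Delta_n]$ one has $p_N\mathcal S(i\Delta_n-s)\alpha_s = p_N\alpha_s^{\mathcal S_n}$, so $p_N\tilde{\Delta}_i^n A = \int_{(i-1)\Delta_n}^{i\Delta_n} p_N\alpha_s^{\mathcal S_n}\,ds$. The triangle inequality for Bochner integrals gives $\|p_N\tilde{\Delta}_i^n A\|\le \int_{(i-1)\Delta_n}^{i\Delta_n}\|p_N\alpha_s^{\mathcal S_n}\|\,ds$, and Jensen's (equivalently H\"older's) inequality on the interval of length $\Delta_n$ yields $\|p_N\tilde{\Delta}_i^n A\|^m \le \Delta_n^{m-1}\int_{(i-1)\Delta_n}^{i\Delta_n}\|p_N\alpha_s^{\mathcal S_n}\|^m\,ds$. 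Taking expectations, summing over $i=1,\dots,\ul$, and noting that the intervals tile $[0,\ul\Delta_n]\subseteq[0,T]$ bounds the sum by $\Delta_n^{m-1}a_N(m)$, which is \eqref{drift increment estimate} with $C=1$.

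Next I would treat the martingale part \eqref{vol increment estimate}, which is the substantive step. Again $p_N$ passes through the stochastic integral, so $p_N\tilde{\Delta}_i^n M = \int_{(i-1)\Delta_n}^{i\Delta_n} p_N\sigma_s^{\mathcal S_n}\,dW_s$, a martingale increment whose integrand is Hilbert--Schmidt. Applying the Burkholder--Davis--Gundy inequality \eqref{BDG inequality} together with the trace identity for the quadratic variation of a stochastic integral (the Example following \eqref{BDG inequality}) gives $\mathbb E[\|p_N\tilde{\Delta}_i^n M\|^m] \le C_m\,\mathbb E[(\int_{(i-1)\Delta_n}^{i\Delta_n}\|p_N\sigma_s^{\mathcal S_n}\|_{L_{\text{HS}}(U,H)}^2\,ds)^{m/2}]$. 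Since $m/2\ge 1$, Jensen's inequality on the interval of length $\Delta_n$ pulls out a factor $\Delta_n^{m/2-1}$ and leaves $\int_{(i-1)\Delta_n}^{i\Delta_n}\|p_N\sigma_s^{\mathcal S_n}\|_{L_{\text{HS}}(U,H)}^m\,ds$ inside; summing over $i$ and recognising $b_N(m)$ gives \eqref{vol increment estimate} with $C=C_m$.

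Finally, \eqref{whole increment estimate} follows by combining the two displays as above. For the order statement, dividing the bound by $\Delta_n^{m/2-1}$ leaves $\Delta_n^{m/2}a_N(m) + b_N(m)$: the drift contribution carries the extra factor $\Delta_n^{m/2}\to 0$, while $b_N(m)\to 0$ as $N\to\infty$ by Lemma \ref{L: Projection convergese uniformly on the range of volatility}, so the right-hand side is of smaller order than $\Delta_n^{m/2-1}$ in the relevant iterated sense ($n\to\infty$, then $N\to\infty$). The only genuine obstacle is the bookkeeping in the martingale step --- verifying that $p_N$ commutes with the stochastic integral, that the integrand stays Hilbert--Schmidt so the trace form of the BDG bound applies, and that Jensen may be used at the fractional exponent $m/2$; everything else is a routine triangle/Jensen estimate.
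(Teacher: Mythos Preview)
Your proofs of \eqref{drift increment estimate} and \eqref{vol increment estimate} are correct and coincide with the paper's: Bochner plus Jensen for the drift, and BDG followed by Jensen for the martingale part.

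There is, however, a genuine gap in your derivation of the $o(\Delta_n^{m/2-1})$ conclusion in \eqref{whole increment estimate}. Under Assumption~\ref{In proof: Very very Weak localised integrability Assumption on the moments}(m) one only has $\int_0^T\|\alpha_s\|^{m/2}\,ds\le A$ almost surely; nothing controls $\int_0^T\|\alpha_s\|^{m}\,ds$, so $a_N(m)$ may be infinite. Your step ``the drift contribution carries the extra factor $\Delta_n^{m/2}\to 0$'' then fails, and although the displayed inequality \eqref{whole increment estimate} remains formally true (with $\infty$ on the right), the order statement does not follow from it. Note also that the lemma itself only asserts $a_N(z)\to 0$ for $z\le m/2$, which already signals that $a_N(m)$ is not the quantity one should rely on.

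The paper does not obtain \eqref{whole increment estimate} by simply adding \eqref{drift increment estimate} and \eqref{vol increment estimate}. For the drift part it applies H\"older so as to land on $\|p_N\alpha_s^{\mathcal S_n}\|^{m/2}$ rather than $\|p_N\alpha_s^{\mathcal S_n}\|^{m}$, and then uses the almost sure bound $\int_0^T\|\alpha_s\|^{m/2}ds\le A$ to estimate $\bigl(\int_{(i-1)\Delta_n}^{i\Delta_n}\|p_N\alpha_s^{\mathcal S_n}\|^{m/2}ds\bigr)^{m/2}$ by $A^{m/2-1}\int_{(i-1)\Delta_n}^{i\Delta_n}\|p_N\alpha_s^{\mathcal S_n}\|^{m/2}ds$. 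This produces the working bound $C\Delta_n^{m/2-1}\bigl(a_N(m/2)\,A^{m/2-1}+b_N(m)\bigr)$ (cf.\ \eqref{outline: whole increment estimate}), which involves only $a_N(m/2)$ and therefore does tend to zero as $N\to\infty$, uniformly in $n$. It is this version, not the one literally stated in \eqref{whole increment estimate}, that is used downstream in the law-of-large-numbers arguments.
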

\begin{proof}
Throughout this proof, we treat $C$ as a generic constant that is chosen appropriately large in each step. The majorisation in 
\eqref{drift increment estimate} is an immediate implication of Bochner's inequality and the boundedness of $\alpha$. For \eqref{vol increment estimate}, we get by the BDG inequality \eqref{BDG inequality} that
\begin{align*}
\sum_{i=1}^{\ul}&\mathbb E\left[\|p_N\int_{(i-1)\Delta_n}^{i\Delta_n} \mathcal S(i\Delta_n-s)\sigma_s dW_s\|^m\right]\\
&\leq \sum_{i=1}^{\ul}
C_m\mathbb E\left[\left(\int_{(i-1)\Delta_n}^{i\Delta_n} \|p_N\mathcal S(i\Delta_n-s)\sigma_s\|_{L_{\text{HS}}(U,H)}^2 ds\right)^{\frac m2}\right]\\
& \leq  \sum_{i=1}^{\ul}C_m \Delta_n^{\frac m2-1} \int_{(i-1)\Delta_n}^{i\Delta_n}\mathbb E\left[ \|p_N\mathcal S(i\Delta_n-s)\sigma_s\|_{L_{\text{HS}}(U,H)}^m \right]ds\\
&= C_m \Delta_n^{\frac m2-1} b_N(m).
\end{align*}
  Moreover, inequality \eqref{whole increment estimate} holds as
\begin{align*}
  \sum_{i=1}^{\ul} \mathbb E\left[\|p_N\tilde{\Delta}_i^n Y\|^m\right]
     &\leq  2^{m-1}  \sum_{i=1}^{\ul}\mathbb E\left[\|p_N\int_{(i-1)\Delta_n}^{i\Delta_n}\mathcal S(i\Delta_n-s)\alpha_s ds\|^m\right] \\
     &\qquad+ 2^{m-1}  \sum_{i=1}^{\ul}\mathbb E\left[\|p_N\int_{(i-1)\Delta_n}^{i\Delta_n}\mathcal S(i\Delta_n-s)\sigma_s dW_s\|^m\right]\\
     &\leq  2^{m-1}  \sum_{i=1}^{\ul} \Delta_n^{\frac m2(\frac m2-1)}\mathbb E\left[\left(\int_{(i-1)\Delta_n}^{i\Delta_n}\|p_N\alpha_s^{\mathcal S_n}\|^{\frac m2} ds\right)^{\frac m2}\right] \\
     &\qquad+ 2^{m-1}  \sum_{i=1}^{\ul}\mathbb E\left[\left(\int_{(i-1)\Delta_n}^{i\Delta_n}\|p_N\sigma_s^{\mathcal S_n}\|_{L_{\text{HS}}(U,H)}^2 ds\right)^{\frac m2}\right]\\
     &\leq  2^{m-1}  \sum_{i=1}^{\ul} \Delta_n^{\frac m2(\frac m2-1)}\mathbb E\left[\int_{(i-1)\Delta_n}^{i\Delta_n}\|p_N\alpha_s^{\mathcal S_n}\|^{\frac m2} \right]ds A^{\frac m2-1}\\
     &\qquad+ 2^{m-1}  \Delta_n^{\frac m2-1}\sum_{i=1}^{\ul}\mathbb E\left[\int_{(i-1)\Delta_n}^{i\Delta_n}\|p_N\sigma_s^{\mathcal S_n}\|_{L_{\text{HS}}(U,H)}^m ds\right]\\
     \leq &  \Delta_n^{\frac m2-1}2^{m-1}\left(a_N(\frac m2)A^{\frac m2-1}+ b_N(m)\right).
\end{align*}
\end{proof}

\section{Proof of the laws of large numbers}

\begin{proof}[Proof of Theorems \ref{T: Law of large numbers for tensor power variations} and \ref{T: Law of large numbers for Multitensorpower variations}]
Let $(e_j)_{j\in\mathbb N}$ be an orthonormal basis of $H$, such that $e_j\in\mathcal D(A^*)$ for all $j\in\mathbb N$. Recall that $P_N^d$ is the projection onto $\mathcal V_N^d$, with 
$$\mathcal V_N^d:=\overline{span\{e_{j_1}\otimes\cdots\otimes e_{j_d}:j_i\geq N, i=1,\ldots,d\}}.$$ 
We write $P_N^d=p_N$ if $d=1$. Then we can identify $(I-P_N^m)SAMPV_t^n(m_1,...,m_k)$ with the ``matrix''
$$(\langle SAMPV_t^n(m_1,...,m_k),e_{j_1}\otimes...\otimes e_{j_d}\rangle_{\mathcal H^m})_{(j_1,...,j_d)\in \{1,...,N\}^d}.
$$
To obtain the asymptotic behaviour of this ``matrix'', and so the convergence 
$$(I-P_N^m)\Delta_n^{1-\frac m2}SAMPV_t^n(m_1,...,m_k)\stackrel{u.c.p.}{\longrightarrow} (I-P_N^m)\int_0^t \rho^{\otimes k}_{\Sigma_s}(m_1,...,m_k)ds,$$
it is enough to check the convergence of the $\langle SAMPV_t^n(m_1,...,m_k),e_{j_1}\otimes...\otimes e_{j_m}\rangle_{\mathcal H^m}$, for each $(j_1,...,j_m)\in \{1,...,N\}^m$ separately. 

Fix therefore some
$(e_{p_{i,j}})_{i=1,...,k,j=1,...,m_k}$ with $p_{i,j}\in\mathbb N$ and 
$e_{p_{i,j}}\in \{e_1,...,e_N\}$. 
Using Lemma \ref{L: Reduction to martingales is possible} and its notation, we find that $$ 
\Delta_n^{1-\frac m2}\langle SAMPV_t^n(m_1,...,m_k),\bigotimes_{i=1}^k\bigotimes_{j=1}^{m_j}e_{p_{i,j}}\rangle_{\mathcal H^m}
$$ 
has the same asymptotic behaviour as $\Delta_n^{1-\frac m2}\sum_{i=1}^{\ul-k+1}\prod_{l=1}^{k}\prod_{j=1}^{m_l}\langle \Delta_{i+l-1}^n S, e_{p_{l,j}}\rangle$, where
we recall that
$$\Delta_{i+l-1}^n S=\int_{(i+j-2)\Delta_n}^{(i+j-1)\Delta_n} \alpha_s ds+\int_{(i+j-2)\Delta_n}^{(i+j-1)\Delta_n} \sigma_s dW_s.$$ 
This is however a component of the multipower variation of the multivariate semimartingale $(\langle S_t,e_1\rangle,...,\langle S_t,e_N\rangle)_{t\in]0,T]}$. Thus, in the case of power variations (i.e., under Assumption \ref{As: Decomposition of Q}), Theorem 3.4.1 in \cite{JacodProtter2012} applies, while in the case of multipower variations (i.e., under Assumption \ref{as: localised martingale assumption without volatility}), Theorem 8.4.1 in \cite{JacodProtter2012} applies. Hence, this yields
\begin{align*}
\Delta_n^{1-\frac m2}\langle SAMPV_t^n(m_1,...,m_k),&\bigotimes_{i=1}^k\bigotimes_{j=1}^{m_j}e_{p_{i,j}}\rangle_{\mathcal H^m} \\
&\stackrel{u.c.p.}{\longrightarrow}\langle \int_0^t \rho_{\Sigma_s}^{\otimes k}(m_1,...,m_k)ds,\bigotimes_{i=1}^k\bigotimes_{j=1}^{m_j}e_{p_{i,j}}\rangle_{\mathcal H^m},
\end{align*}
i.e.,
$$(I-P_N^m)\Delta_n^{1-\frac m2}SAMPV_t^n(m_1,...,m_k)\stackrel{u.c.p.}{\longrightarrow} (I-P_N^m)\int_0^t \rho_{\Sigma_s}^{\otimes k}(m_1,...,m_k)ds.$$
This establishes the result for finite-dimensional projections of the multipower variation. 

The triangle inequality yields
\begin{align*}
   \| (SAMPV_t^n&(m_1,...,m_k)-\int_0^t \rho_{\Sigma_s}^{\otimes k}(m_1,...,m_k)ds)\|_{\mathcal H^m}\\
    \leq & \|(I-P_N^m) (SAMPV_t^n(m_1,...,m_k)-\int_0^t \rho_{\Sigma_s}^{\otimes k}(m_1,...,m_k)ds)\|_{\mathcal H^m}\\
    &+ \| P_N^m SAMPV_t^n(m_1,...,m_k)\|_{\mathcal H^m}+\|P_N^m\int_0^t \rho_{\Sigma_s}^{\otimes k}(m_1,...,m_k)ds\|_{\mathcal H^m}.
\end{align*}
We next show the uniform convergence to zero of the latter two terms.
The Markov and generalised Hölder inequality as well as \eqref{whole increment estimate} 
yield for a given $\epsilon>0$,
\begin{align*}
    &\mathbb P\left[\sup_{t\leq T} \Delta_n^{1-\frac m2}\|P_N^m SAMPV_t^n(m_1,...,m_k)\|_{\mathcal H^m}>\epsilon\right]\\
    &\quad\leq  
    \frac 1{\epsilon}\Delta_n^{1-\frac m2}  \mathbb E\left[ \sup_{t\leq T}\left\|\sum_{i=1}^{\ul-k+1}\bigotimes_{j=1}^k(p_N \tilde{\Delta}_{i+j-1}^n Y)^{\otimes m_j}\right\|_{\mathcal H^m}\right]\notag\\
&\quad\leq   \frac 1{\epsilon}\Delta_n^{1-\frac m2} \sum_{i=1}^{\lfloor T/\Delta_n\rfloor-k+1}\mathbb E\left[\prod_{j=1}^k\left\|p_N \tilde{\Delta}_{i+j-1}^n Y\right\|^{m_j}\right]\notag\\
&\quad\leq \frac 1{\epsilon}\Delta_n^{1-\frac m2} \sum_{i=1}^{\lfloor T/\Delta_n\rfloor-k+1}\prod_{j=1}^k\mathbb E\left[\left\|p_N \tilde{\Delta}_{i+j-1}^n Y\right\|^{m}\right]^{\frac {m_j}m}\\
 &\quad \leq\frac 1{\epsilon}\Delta_n^{1-\frac m2} \prod_{j=1}^k\left(\sum_{i=1}^{\lfloor T/\Delta_n\rfloor-k+1}\mathbb E\left[\left\|p_N \tilde{\Delta}_{i+j-1}^n Y\right\|^{m}\right]\right)^{\frac {m_j}m}\\
 &\quad \leq\frac 1{\epsilon}\Delta_n^{1-\frac m2} \prod_{j=1}^k\left( C\Delta_n^{\frac m2-1}(a_N(\frac m2)+ b_N(m))\right)^{\frac {m_j}m}.
\end{align*}
This converges to $0$ as $N\to\infty$ uniformly in $n$. Now, notice that by definition we have 
$$\langle\rho_{p_N\Sigma_sp_N},e_{j_1}\otimes...\otimes e_{j_m}\rangle =\langle\rho_{\Sigma_s},e_{j_1}\otimes...\otimes e_{j_m}\rangle \delta_{j_1,...,j_n\geq N}. $$ Therefore
\begin{align*}
   \left\|\rho_{p_N\Sigma_sp_N}(m)\right\|_{\mathcal H^m}^2
     = &\sum_{j_1,...,j_m\geq N}\langle \rho_{\Sigma_s}(m),e_{j_1}\otimes ...\otimes e_{j_m}\rangle^2\\
     = &\sum_{j_1,...,j_m\geq N}(\sum_{p\in\mathcal P(m)}\prod_{(k,l)\in p} \langle \Sigma_s e_{j_l},e_{j_k}\rangle)^2\\
     \leq &|\mathcal P(m)|\sum_{p\in\mathcal P(m)}\sum_{j_1,...,j_m\geq N}\prod_{(k,l)\in p} \langle \Sigma_s e_{j_l},e_{j_k}\rangle^2\\
     \leq & |\mathcal P(m)|\sum_{p\in\mathcal P(m)}\sum_{j_1,...,j_m\geq N}\prod_{(k,l)\in p} \| \Sigma_s^{\frac 12} e_{j_l}\|^2\|\Sigma_s^{\frac 12}e_{j_k}\|^2.\\
     = & |\mathcal P(m)|^2(\sum_{j\geq N}\|\Sigma_s^{\frac 12}e_{j}\|^2)^{ m},
\end{align*}
which converges to $0$ almost surely as $N\to\infty$, as $\Sigma_s^{\frac 12}$ is a Hilbert-Schmidt operator.
Hence, by the definition of $\rho_{\Sigma_s}^{\otimes k}(m_1,...,m_k)$, it holds,
\begin{align*}
    &\mathbb P\left[\sup_{t\leq T}\left\|P_N^m \int_0^t \rho_{\Sigma_s}^{\otimes k}(m_1,...,m_k)ds\right\|_{\mathcal H^m}>\epsilon\right]\\
    &\qquad\leq \frac 1{\epsilon}\int_0^T \mathbb E\left[\left\|P_N^m \rho_{\Sigma_s}^{\otimes k}(m_1,...,m_k)\right\|_{\mathcal H^m}\right]ds\\
    &\qquad=\frac 1{\epsilon}\int_0^T \mathbb E\left[\left(\sum_{j_1,...,j_m\in\mathbb N}\langle  \rho_{p_N\Sigma_sp_N}^{\otimes k}(m_1,...,m_k), e_1\otimes...\otimes e_m\rangle^2\right)^{\frac 12}\right]ds\\
    &\qquad=\frac 1{\epsilon}\int_0^T \mathbb E\left[\prod_{l=1}^k\left(\sum_{j_1,...,j_{m_l}\in\mathbb N}\langle  \rho_{p_N\Sigma_sp_N}(m_l), e_1\otimes...\otimes e_{m_l}\rangle^2\right)^{\frac 12}\right]ds \\
    &\qquad=\frac 1{\epsilon}\int_0^T \mathbb E\left[\prod_{l=1}^k\left\|  \rho_{p_N\Sigma_sp_N}(m_l)\right\|_{\mathcal H^m}\right]ds \\
    &\qquad\leq \frac {|\mathcal P(m)|}{\epsilon}\int_0^T  \mathbb E\left[\left(\sum_{j\geq N}\|\Sigma_s^{\frac 12}e_{j}\|^2\right)^{ \frac m2}\right]ds.
\end{align*}
 This converges to $0$ by the Dominated Convergence Theorem.
 
 Summing up, we can, for each $\delta>0$, find an $N\in\mathbb N$ independent of $n$, such that
 \begin{align*}
&\lim_{n\to\infty}  \mathbb P\left[\sup_{t\leq T}\left(\| (SAMPV_t^n(m_1,...,m_k)-\int_0^t \rho_{\Sigma_s}^{\otimes k}(m_1,...,m_k)ds)\|_{\mathcal H^m}\right)>\epsilon\right]\\
\leq & \lim_{n\to\infty} \mathbb P\left[\sup_{t\leq T}\left(\|(I-P_N^m) (SAMPV_t^n(m_1,...,m_k)-\int_0^t \rho_{\Sigma_s}^{\otimes k}(m_1,...,m_k)ds)\|_{\mathcal H^m}\right)>\epsilon\right]\\
      &+\limsup_{n\to\infty}\left(
       \mathbb P\left[\sup_{t\leq T}\left(\|P_N^m SAMPV_t^n(m_1,...,m_k)\|_{\mathcal H^m}\right)>\epsilon\right]\right.\\
       &\qquad + \left.\mathbb P\left[\sup_{t\leq T}\left(\|P_N^m \int_0^t \rho_{\Sigma_s}^{\otimes k}(m_1,...,m_k)ds\|_{\mathcal H^m}\right)>\epsilon\right]\right)
      \\
      \leq & \lim_{n\to\infty} \mathbb P\left[\sup_{t\leq T}\left(\|(I-P_N^m) (SAMPV_t^n(m)-\int_0^t \rho^{\otimes k}_{\Sigma_s}(m_1,...,m_k)ds)\|_{\mathcal H^m}\right)>\epsilon\right]+2\delta\\
      = & 2\delta.
 \end{align*}
 This holds for all $\delta>0$, and hence the assertion follows. 
\end{proof}

\section{Proof of the central limit theorems}\label{sec: Proofs of CLTs}
We prove the central limit theorems by proving the tightness of the laws of the processes in the Skorokhod space $\mathcal D([0,T],H)$ first and then make use of the available finite-dimensional asymptotic limit theory in order to show convergence of the corresponding finite-dimensional distributions.

\subsection{A short primer on tightness}

Recall that a sequence of measures $(\mu_n)_{n\in\mathbb N}$ is tight on a Polish space $B$ equipped with its Borel $\sigma$-algebra $(B,\mathcal B(B))$, if for each $\epsilon>0$ there is a compact set $K_{\epsilon}\subset B$ such that $\sup_{n\in\mathbb N}\mu_n(B\setminus K_{\epsilon})<\epsilon$.
We will say that a sequence $(X_n)_{n\in\mathbb N}$ of Borel-measurable random variables in $B$ (e.g.~stochastic processes) is tight if the underlying sequence of laws $(\mu_{X_n})_{n\in\mathbb N}$ is tight.

\subsubsection{Tightness of random elements in the Skorokhod space $\mathcal D([0,T],H)$}

For the convenience of the reader, we repeat the following tightness criterion from 
\cite[p.35]{Joffe1986}. 
\begin{theorem}\label{T: Aldous Tightness Theorem} Let $H$ be a separable Hilbert space.
The family of laws $(\mathbb P_{\psi^n})_{n\in\mathbb N}$ of a sequence of random variables $(\psi^n)_{n\in\mathbb N}$ in $\mathcal D([0,T],H)$ is tight if the following two conditions hold:
\begin{itemize}
    \item[(i)] $(\mathbb P_{\psi^n_t})_{n\in\mathbb N}$ is tight for each $t\in [0,T]$ and
    \item[(ii)](Aldous' condition) For all $\epsilon,\eta>0$ there is an $\delta>0$ and $n_0\in\mathbb N$ such that for all sequences of stopping times $(\tau_n)_{n\in \mathbb N}$ with $ \tau_n\leq T-\delta$ we have 
    \begin{equation}
        \sup_{n\geq n_0}\sup_{\theta\leq\delta}\mathbb P\left[\|\psi_{\tau_n}^n-\psi_{\tau_n+\theta}^n\|_H > \eta\right]\leq \epsilon.
    \end{equation}
\end{itemize}
\end{theorem}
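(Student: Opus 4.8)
The plan is to deduce the criterion from Prokhorov's theorem combined with the Arzel\`a--Ascoli--type characterisation of relatively compact subsets of the Skorokhod space $\mathcal D([0,T],H)$. Since $H$ is a separable Hilbert space, $\mathcal D([0,T],H)$ is Polish, so by Prokhorov's theorem it suffices to exhibit, for each $\epsilon>0$, a relatively compact set $K_\epsilon\subseteq\mathcal D([0,T],H)$ with $\sup_{n}\mathbb P[\psi^n\notin K_\epsilon]<\epsilon$. Recall that a set $A\subseteq\mathcal D([0,T],H)$ has compact closure precisely when (a) for every $t$ in a dense subset of $[0,T]$ (including the endpoints) the section $\{x(t):x\in A\}$ is relatively compact in $H$, and (b) the c\`adl\`ag modulus $w'(x,\delta):=\inf_{\{t_i\}}\max_i\sup_{s,t\in[t_{i-1},t_i)}\|x(s)-x(t)\|_H$, with the infimum over partitions of mesh larger than $\delta$, satisfies $\lim_{\delta\to0}\sup_{x\in A}w'(x,\delta)=0$.

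First I would convert condition (i) into a compact-containment statement addressing requirement (a): tightness of $(\mathbb P_{\psi^n_t})_{n}$ yields, for each fixed $t$ and each $\epsilon$, a compact $\tilde K_{t,\epsilon}\subseteq H$ with $\sup_n\mathbb P[\psi^n_t\notin\tilde K_{t,\epsilon}]<\epsilon$, which controls the sections along any countable dense set of times. The heart of the argument, and the step I expect to be the main obstacle, is translating Aldous' stopping-time condition (ii) into control of the deterministic modulus $w'$, i.e.\ requirement (b). Here I would fix $\eta,\epsilon>0$, take the $\delta$ furnished by (ii), and partition $[0,T]$ into blocks of length of order $\delta$; on each block I would iterate the stopping-time estimate---defining stopping times at the successive instants at which the path oscillates by more than $\eta$---to bound the expected number of such oscillations and hence the probability that $w'(\psi^n,\delta')>\eta$ for a suitably smaller $\delta'$, uniformly in $n\ge n_0$. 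This passage from an estimate along arbitrary stopping times to a uniform bound on the path modulus is exactly Aldous' lemma; its delicacy lies in handling the jump structure of c\`adl\`ag paths together with the $H$-norm, and in the combinatorial iteration controlling repeated oscillations.

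Finally I would assemble the pieces. Given $\epsilon>0$, choose sequences $\delta_k\downarrow0$ and $\eta_k\downarrow0$ and finitely many time points, and set $K_\epsilon$ to be the intersection of the events $\{x(t_j)\in\tilde K_{j}\}$ and $\{w'(x,\delta_k)\le\eta_k\}$, with the compact sets and tolerances chosen via the two steps above and a union bound over $k$ so that $\sup_n\mathbb P[\psi^n\notin K_\epsilon]<\epsilon$. By the compactness criterion $\overline{K_\epsilon}$ is compact, and Prokhorov's theorem then yields tightness. This reproduces the argument in \cite[p.~35]{Joffe1986}, to which one may alternatively refer directly, as the statement is quoted verbatim from there.
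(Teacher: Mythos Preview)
The paper does not prove this theorem at all; it is merely quoted verbatim as a known tightness criterion from \cite[p.~35]{Joffe1986}. Your sketch of the standard Prokhorov/Arzel\`a--Ascoli argument is a reasonable outline of how such a result is established, and you yourself correctly note at the end that one may simply refer to the cited source---which is exactly what the paper does.
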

Regarding point (i) above, to show tightness in the space $\mathcal D([0,T],H)$ it is necessary to find criteria for the tightness in the Hilbert space itself.
This can be approached by an {\it equi-small tails}-argument and is well known (c.f. Lemma 1.8.1 in \cite{vanderVaart1996}): 
\begin{theorem}\label{T: Tightness Criterion in Hilbert spaces}
Let $(Y_n)_{n\in\mathbb N}$ be a sequence of random variables on a probability space $(\Omega,\mathcal F,\mathbb P)$ with values in a separable Hilbert space $H$, such that for all $\delta>0$
\begin{equation}
  \lim_{N\to\infty}\sup_{n\in\mathbb N}  \mathbb P\left[\sum_{k\geq N} \langle Y_n, e_k\rangle^2>\delta\right] = 0,
\end{equation}
for some orthonormal basis $(e_n)_{n\in\mathbb N}$. Then the sequence $(\mathbb P_{Y_n})_{n\in\mathbb N}$ is tight in $H$.
\end{theorem}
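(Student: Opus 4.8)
The plan is to verify the definition of tightness directly: for each $\epsilon>0$ I would exhibit a relatively compact set $K_\epsilon\subset H$ with $\sup_{n}\mathbb{P}[Y_n\notin K_\epsilon]<\epsilon$. The starting point is the classical description of compact subsets of a separable Hilbert space relative to the fixed basis $(e_k)_{k\in\mathbb N}$: a set is relatively compact precisely when it is norm-bounded \emph{and} has equi-small tails, i.e. $\lim_{N\to\infty}\sup_{x\in K}\sum_{k\geq N}\langle x,e_k\rangle^2=0$. The construction of $K_\epsilon$ thus splits into two tasks that must be handled separately — controlling the high-frequency tails $\sum_{k\geq N}\langle Y_n,e_k\rangle^2$ uniformly in $n$, and controlling the norm $\|Y_n\|$ uniformly in $n$ — and only the first is what the hypothesis directly provides.

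For the tails I would use a standard union-bound (diagonal) argument. For each $j\in\mathbb N$ I apply the hypothesis with $\delta=1/j$, obtaining an index $N_j$, which I may take strictly increasing, with $\sup_n\mathbb{P}[\sum_{k\geq N_j}\langle Y_n,e_k\rangle^2>1/j]<\epsilon\,2^{-(j+1)}$. Setting $A_\epsilon:=\bigcap_{j\geq1}\{x\in H:\sum_{k\geq N_j}\langle x,e_k\rangle^2\leq 1/j\}$, the defining constraints give $\sup_{x\in A_\epsilon}\sum_{k\geq N_j}\langle x,e_k\rangle^2\leq 1/j\to0$, so $A_\epsilon$ has equi-small tails, and a union bound yields $\sup_n\mathbb{P}[Y_n\notin A_\epsilon]\leq\sum_{j\geq1}\epsilon\,2^{-(j+1)}=\epsilon/2$. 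Intersecting with a closed ball, $K_\epsilon:=A_\epsilon\cap\{\|x\|\leq R\}$, produces a closed, bounded set with equi-small tails, hence a compact set, and $\mathbb{P}[Y_n\notin K_\epsilon]\leq \mathbb{P}[\|Y_n\|>R]+\epsilon/2$.

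The main obstacle is exactly the remaining term $\sup_n\mathbb{P}[\|Y_n\|>R]$. The equi-small-tails hypothesis constrains only the coordinates $k\geq N$ for large $N$ and is silent about the finitely many low-frequency coordinates $\langle Y_n,e_1\rangle,\dots,\langle Y_n,e_{N_1-1}\rangle$; by itself it therefore does not force $\sup_n\mathbb{P}[\|Y_n\|>R]\to0$ as $R\to\infty$ (for instance $Y_n=n\,e_1$ satisfies the hypothesis yet is unbounded), so a uniform control of the bulk must be supplied to make $K_\epsilon$ genuinely bounded. For every sequence to which this criterion is applied in the present paper — the martingale-difference sums $\tilde Z^{n,2}_t$ and their multipower analogues — such control is available: the BDG estimate underlying Lemma \ref{L: Auxiliary Estimates for the increments} gives $\sup_n\mathbb{E}\|Y_n\|^2<\infty$ under the localised integrability assumption, whence $\sup_n\mathbb{P}[\|Y_n\|>R]\leq R^{-2}\sup_n\mathbb{E}\|Y_n\|^2\to0$ by Markov's inequality, so choosing $R$ large makes this term $<\epsilon/2$ and gives $\sup_n\mathbb{P}[Y_n\notin K_\epsilon]<\epsilon$. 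I would phrase the final proof so that the uniform $L^2$-bound feeding the norm control is drawn from the second-moment formulation already used in Lemma \ref{outline: Tightness Criterion in Hilbert spaces}; alternatively the statement may be cited directly as Lemma 1.8.1 in \cite{vanderVaart1996}.
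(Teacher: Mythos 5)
Your construction coincides, almost step for step, with the paper's own proof: there too one chooses tail indices $N_k^{\epsilon}$ and levels $1/l_k$ so that a union bound gives $\sup_n\mathbb P[Y_n\notin\bigcap_k A_k^{\epsilon}]\leq\epsilon$ for $A_k^{\epsilon}=\{h\in H:\sum_{l\geq N_k^{\epsilon}}\langle h,e_l\rangle^2\leq 1/l_k\}$, and compactness of the intersection is deduced from closedness, boundedness and equi-small tails (total boundedness plus the Hausdorff theorem). The only divergence is exactly the point you single out, and you are right to single it out. The paper manufactures the boundedness of $K_{\epsilon}$ by decreeing $N_1^{\epsilon}=1$, i.e.\ by invoking the hypothesis at $N=1$ to make $\sup_n\mathbb P[\|Y_n\|^2>1/l_1]$ small; but the stated assumption only controls the tail sums in the limit $N\to\infty$ and yields nothing at $N=1$, so that step is not licensed as written. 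Your example $Y_n=n\,e_1$ satisfies the hypothesis while failing to be tight, so the statement is false as literally formulated; and the norm constraint cannot be dropped, since without the $k=1$ set the intersection $\bigcap_k A_k^{\epsilon}$ contains the finite-dimensional subspace $\mathrm{span}\{e_1,\dots,e_{N_1^{\epsilon}-1}\}$ and is therefore not compact.

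Your repair is also the correct one, and it matches how the criterion is actually used downstream. The same gap formally propagates to Corollary \ref{C: Tightness Criterion in Hilbert spaces}, but in the applications the relevant estimate is verified for \emph{every} $N\geq 1$, not merely in the limit: in Theorem \ref{T: Tightnes for realised covariation} the bound
\begin{equation*}
\mathbb E\left[\Big\|\sum_{i=1}^{\ul}\tilde Z_n^N(i)\Big\|_{\mathcal H}^2\right]\leq 4\sup_{n\in\mathbb N}\int_0^T\mathbb E\left[\|p_N\sigma_s^{\mathcal S_n}\|_{L_{\text{HS}}(U,H)}^4\right]ds
\end{equation*}
holds in particular for $N=1$, where $p_1=I$, which is precisely the uniform second-moment bound $\sup_{n,t}\mathbb E\|\tilde Z^{n,2}_t\|_{\mathcal H}^2<\infty$ that your Markov/ball-intersection step needs. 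So the paper's applications are sound; what should change is the abstract statement, which needs the additional hypothesis that $(\|Y_n\|)_{n\in\mathbb N}$ is bounded in probability (equivalently, that the displayed condition also covers $N=1$ in the sense $\lim_{R\to\infty}\sup_n\mathbb P[\|Y_n\|^2>R]=0$) --- exactly the hypothesis your proof makes explicit.
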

\begin{proof}
Fix some $\epsilon>0$.
By assumption we can define two  
increasing sequences of natural numbers $(N_k^{\epsilon})_{k\in\mathbb N}$ and $(l_k)_{k\in\mathbb N}$, such that $N_1^{\epsilon}=1$ and
\begin{equation}
\sup_{n\in\mathbb N}\mathbb P\left[  \sum_{l\geq N_k^{\epsilon}} \langle Y_n, e_l\rangle^2> \frac 1{l_k}\right] \leq \epsilon\frac 1{ l_k^2\sum_{j=1}^{\infty} \frac 1{l_j^2}}.
\end{equation}
Further, we introduce 
\begin{equation}
  A_k^{\epsilon}:=  \left\lbrace h\in H: \sum_{l\geq N_k^{\epsilon}}\langle h,e_l\rangle^2\leq \frac 1{l_k}\right\rbrace. 
\end{equation}
We prove now that $K_{\epsilon}=\bigcap_{k\in\mathbb N}A_k^{\epsilon}$ is compact. It is obviously closed and bounded 
Then we have as $k\to \infty$
\begin{align*}
 \sup_{h\in K_{\epsilon}} \sum_{l\geq N_k^{\epsilon}}\langle h,e_l\rangle^2
    \leq  \frac 1{l_k}\to 0.
\end{align*}
Hence, the set $K_{\epsilon}$ is totally bounded and by the Hausdorff theorem (c.f. Theorem 3.28 in \cite{Aliprantis2006}) compact. 

It is now left to show that $ 1-\mathbb{P}_{Y_n}[K_{\epsilon}]<\epsilon$. But, by Markov's inequality and the choice of $N_k^{\epsilon}$ we have
\begin{align*}
     1-\mathbb{P}_{Y_n}[K_{\epsilon}]
    \leq  \sum_{k=1}^{\infty}\mathbb P_{Y_n}[(A_k^{\epsilon})^c]\leq \epsilon,
\end{align*}
which proves the claim. 
\end{proof}
By Markov's inequality, we have the following Corollary to Theorem \ref{T: Tightness Criterion in Hilbert spaces},
\begin{corollary}\label{C: Tightness Criterion in Hilbert spaces}
Let $(Y_n)_{n\in \mathbb N}$ be a sequence of random variables on a probability space $(\Omega,\mathcal F,\mathbb P)$ with values in a separable Hilbert space $H$ and having finite second moments.  If for some orthonormal basis $(e_n)_{n\in\mathbb N}$ we have
 \begin{equation}\label{Weak Spatial Tightness Criterion in Hilbert space}
  \lim_{N\to\infty}\sup_{n\in\mathbb N}  \sum_{k\geq N} \mathbb E\left[\langle Y_n, e_k\rangle^2 \right]= 0,
\end{equation}
 then the sequence $(\mathbb P_{Y_n})_{n\in\mathbb N}$ is tight in $H$.
\end{corollary}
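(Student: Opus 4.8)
The plan is to deduce the statement directly from the preceding Theorem~\ref{T: Tightness Criterion in Hilbert spaces} by a single application of Markov's inequality, so the entire task reduces to verifying that the hypothesis \eqref{Weak Spatial Tightness Criterion in Hilbert space} implies the hypothesis of that theorem. First I would fix an arbitrary $\delta>0$ and, for each pair $n,N\in\mathbb N$, introduce the nonnegative real-valued random variable $Z_n^N:=\sum_{k\geq N}\langle Y_n,e_k\rangle^2$, which is well defined and has finite expectation because $Y_n$ has a finite second moment (so that $\mathbb E[\|Y_n\|^2]=\sum_{k\in\mathbb N}\mathbb E[\langle Y_n,e_k\rangle^2]<\infty$).

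Next I would apply Markov's inequality to $Z_n^N$, obtaining
\begin{align*}
\mathbb P\left[\sum_{k\geq N}\langle Y_n,e_k\rangle^2>\delta\right]
\leq \frac 1\delta\,\mathbb E\left[\sum_{k\geq N}\langle Y_n,e_k\rangle^2\right]
= \frac 1\delta\sum_{k\geq N}\mathbb E\left[\langle Y_n,e_k\rangle^2\right],
\end{align*}
where the interchange of the expectation and the (infinite) sum is justified by Tonelli's theorem, since each summand $\langle Y_n,e_k\rangle^2$ is nonnegative. Taking the supremum over $n\in\mathbb N$ on both sides and then letting $N\to\infty$, the right-hand side tends to $0$ by the assumed convergence \eqref{Weak Spatial Tightness Criterion in Hilbert space}, namely $\lim_{N\to\infty}\sup_{n\in\mathbb N}\sum_{k\geq N}\mathbb E[\langle Y_n,e_k\rangle^2]=0$.

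Consequently, for every $\delta>0$ one has $\lim_{N\to\infty}\sup_{n\in\mathbb N}\mathbb P[\sum_{k\geq N}\langle Y_n,e_k\rangle^2>\delta]=0$, which is precisely the hypothesis required by Theorem~\ref{T: Tightness Criterion in Hilbert spaces}. Invoking that theorem then yields that $(\mathbb P_{Y_n})_{n\in\mathbb N}$ is tight in $H$, completing the argument. There is no genuine obstacle here: the only point requiring a moment of care is the interchange of expectation and summation, which is immediate from the nonnegativity of the terms, together with the observation that the finite second moment assumption guarantees the expectations $\mathbb E[Z_n^N]$ are finite so that Markov's inequality applies in its usual form.
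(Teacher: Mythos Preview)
Your argument is correct and matches the paper's own reasoning: the paper states the corollary as an immediate consequence of Theorem~\ref{T: Tightness Criterion in Hilbert spaces} via Markov's inequality, which is exactly what you do. Your additional remarks on Tonelli and the finiteness of $\mathbb E[Z_n^N]$ simply make explicit the routine justifications the paper leaves implicit.
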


\subsubsection{Tightness and stable convergence}
Let $E$ be a Polish space, $\mathcal E:=\mathcal B(E)$ its Borel $\sigma$-algebra and $(\Omega,\mathcal F,\mathbb P)$ be a probability space. Recall that a map $K:\Omega\times \mathcal E\to[0,1]$ is called a Markov kernel from $(\Omega,\mathcal F)$ to $(E,\mathcal E)$, if for all $\omega \in \Omega$ the map $K(\omega,\cdot)$ is a Borel probability measure on $E$ and for all $A\in \mathcal E$ the map $K(\cdot,A)$ is an $\mathcal F$-measurable random variable. 

Let $(Y_n)_{n\in\mathbb N}$ be a sequence of random variables with values in the Skorokhod space $\mathcal D([0,T],H)$ defined on a probability space $(\Omega,\mathcal F,\mathbb P)$ and $Y$ a random variable with values in $\mathcal D([0,T],H)$ defined on an extension $(\tilde{\Omega},\tilde{\mathcal F},\tilde{\mathbb P})$ of $(\Omega,\mathcal F,\mathbb P)$. 
Observe, that we can specify a Markov kernel $K$ by the conditional distribution
$$K(\omega, A)=\tilde{\mathbb P}[Y\in A|\mathcal F].$$
One can then see that stable convergence of the sequence $(Y_n)_{n\in\mathbb N}$ to $Y$ can be written as
\begin{equation}\label{stable convergence as convergence to Markov kernel}
    \mathbb E[Zf(Y_n)]\to \tilde{\mathbb E}[Zf(Y)]= \int_{\Omega} Z(\omega) \int_{\mathcal D([0,T],H)} f(x) K(\omega,dx)\mathbb P[d\omega],\quad \text{as } n\to \infty
\end{equation}
for all bounded continuous functions $f:\mathcal D([0,T],H)\to \mathbb R$ and all bounded random variables $Z$ on $(\Omega,\mathcal F)$. In that way we can identify stable convergence of a sequence of random variables as convergence towards a Markov kernel in the sense of \eqref{stable convergence as convergence to Markov kernel}. We will use this in the proof of the next theorem, which can be found in \cite[Proposition 3.9]{Hausler2015}  for continuous processes. Here we extend the proof for processes with values in the Skorokhod space.
\begin{theorem}\label{T: stable convergence of finite-dimensional distributions yields stable convergence}
Let $(Y_n)_{n\in\mathbb N}$ be a sequence of random variables with values in the Skorokhod space $\mathcal D([0,T],H)$ defined on a probability space $(\Omega,\mathcal F,\mathbb P)$ and $Y$ a random variable with values in $\mathcal D([0,T],H)$ defined on an extension $(\tilde{\Omega},\tilde{\mathcal F},\tilde{\mathbb P})$ of $(\Omega,\mathcal F,\mathbb P)$.
If $(Y_n)_{n\in\mathbb N}$ is tight and $(Y_n(t_1),...,Y_n(t_d))\to(Y(t_1),...,Y(t_d)) $ stably for each finite collection $t_1,...,t_d\in [0,T]$, $d\in\mathbb N$, then $Y_n\to Y$ stably as $n\to \infty$.
\end{theorem}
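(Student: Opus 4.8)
The plan is to combine the relative compactness furnished by the tightness hypothesis with a uniqueness argument that identifies the stable limit through its finite-dimensional distributions, following the pattern of \cite[Proposition 3.9]{Hausler2015} but carried out on $\mathcal D([0,T],H)$ rather than on the space of continuous paths. First I would reduce the assertion to a subsequence principle. By the Markov-kernel reformulation \eqref{stable convergence as convergence to Markov kernel}, stable convergence $Y_n\to Y$ is equivalent to the convergence of the real sequences $\mathbb E[Zf(Y_n)]$ to $\int_\Omega Z(\omega)\int_{\mathcal D([0,T],H)}f(x)\,K(\omega,dx)\,\mathbb P[d\omega]$ for every bounded continuous $f:\mathcal D([0,T],H)\to\mathbb R$ and every bounded $\mathcal F$-measurable $Z$. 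Since convergence of a real sequence follows once every subsequence admits a further subsequence converging to the same limit, it suffices to verify the displayed convergence along sub-subsequences.

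Second, I would extract a stable limit along an arbitrary subsequence. The tightness of $(Y_n)$ in $\mathcal D([0,T],H)$, together with the observation that the $\Omega$-marginals of the laws of the pairs $(\mathrm{id}_\Omega,Y_n)$ on $\Omega\times\mathcal D([0,T],H)$ are all equal to the fixed measure $\mathbb P$, yields relative compactness in the stable topology: every subsequence of $(Y_n)$ possesses a further subsequence $(Y_{n_k})$ converging stably to some random element $Y'$ on an extension, represented by a Markov kernel $K'(\omega,\cdot)$. This is the point where the general (possibly non-Polish) structure of $\Omega$ must be handled carefully; as in \cite{Hausler2015} one works with a countable determining family of bounded test variables $Z$ to make the extraction rigorous.

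Third, and this is the key identification step, I would pin down $K'$. For any finite collection $t_1,\dots,t_d\in[0,T]$, the evaluation map $\pi_{t_1,\dots,t_d}$ is continuous on $\mathcal D([0,T],H)$ at every trajectory that is continuous at $t_1,\dots,t_d$; restricting the $t_j$ to times lying outside the at most countable set of fixed discontinuities of the limit (so that $\pi_{t_1,\dots,t_d}$ is a continuity set $\mathbb P$-a.s.), stable convergence of $(Y_{n_k})$ to $Y'$ transfers to stable convergence of $(Y_{n_k}(t_1),\dots,Y_{n_k}(t_d))$ to $(Y'(t_1),\dots,Y'(t_d))$. By hypothesis the same vectors converge stably to $(Y(t_1),\dots,Y(t_d))$, so the conditional finite-dimensional laws of $Y'$ and $Y$ agree $\mathbb P$-a.s. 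Because the finite-dimensional projections at a countable, suitably chosen dense set of times containing $T$ form a determining class for Borel measures on the Skorokhod space (cf. \cite{Billingsley}), this forces $K'(\omega,\cdot)=K(\omega,\cdot)$ for $\mathbb P$-a.e.\ $\omega$. Hence the sub-subsequence converges stably to $Y$, and the subsequence principle of the first step yields $Y_n\to Y$ stably.

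The hard part will be the genuinely Skorokhod-specific aspects that distinguish this from the continuous-path case of \cite{Hausler2015}: one must (a) make the extraction of a stable limit over a general $\Omega$ rigorous, controlling the measures on $\Omega\times\mathcal D([0,T],H)$, and (b) handle the fixed discontinuities of the limit so that the finite-dimensional evaluation maps are genuine continuity sets and therefore convergence-determining. Both obstacles are circumvented by restricting attention to a countable set of times at which the limiting trajectories are $\mathbb P$-almost surely continuous, which is possible since the complement is at most countable.
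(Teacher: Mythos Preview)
Your proposal is correct and follows essentially the same route as the paper's proof: both arguments extract a subsequential stable limit via tightness (invoking \cite[Theorem 3.4(a)]{Hausler2015}), then identify the limiting kernel with $K$ through the finite-dimensional projections at a dense set of continuity times, appealing to \cite{Billingsley} for the determining-class property. The only cosmetic difference is that the paper phrases the extraction as a proof by contradiction and passes to the conditional measures $\mathbb P^F$ (for $F\in\mathcal F$, $\mathbb P(F)>0$) to reduce the identification step to ordinary weak convergence on $\mathcal D([0,T],H)$, which is the clean way to make your step (b) precise, since the set $T_{\mathbb P^F[L]}$ of continuity times in Billingsley's sense is attached to the single measure $\mathbb P^F[L]$ rather than to the $\omega$-wise kernel.
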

\begin{proof}
Assume $(Y_n)_{n\in\mathbb N}$ is tight and $(Y_n(t_1),...,Y_n(t_d))\to(Y(t_1),...,Y(t_d)) $ stably for each finite collection $t_1,...,t_d\in [0,T]$ and $(Y_n)_{n\in\mathbb N}$ does not converge stably to $Y$. Then we can find a subsequence $(n_k)_{k\in\mathbb N}$, $\epsilon>0$, a bounded real-valued random variable $Z$ on $(\Omega,\mathcal F,\mathbb P)$ and $h\in C_b(\mathcal D([0,T], H))$, such that 
\begin{equation}
    |\mathbb E[Z h(Y_{n_k})]-\tilde{\mathbb E}[Z h(Y) ]|\geq \epsilon.
\end{equation}
Equivalently, this means for the kernel 
$$K(\omega, A):=\tilde{\mathbb P}[Y\in A|\mathcal F](\omega)$$
that 
\begin{equation}\label{Contradiction assumption for tightness and stabe convergence argument}
    |\mathbb E[Z h(Y_{n_k})]-\int_{\Omega} Z(\omega) \int_{\mathcal D([0,T],H)} f(x) K(\omega,dx)\mathbb P[d\omega]|\geq \epsilon.
\end{equation}
By the tightness of $(Y_n)_{n\in\mathbb N}$, we can appeal to Theorem 3.4(a) in \cite{Hausler2015} and obtain a subsequence $(n_{k_l})_{l\in\mathbb N}$ of $(n_k)_{k\in\mathbb N}$, such that $ Y_{n_{k_l}}\to L$ stably for some Markov kernel $L:\Omega\times \mathcal B(\mathcal D([0,T],H))\to \mathbb [0,1]$ as $n_{k_l}\to\infty$.
Then we have for all $F\in\mathcal F$ with $\mathbb P(F)>0$ that $$\mathbb P^F\circ Y_{n_{k_l}}^{-1}\stackrel{d}{\to} \mathbb P^F[L]:=\int_{\Omega} L(\omega, \cdot) \mathbb P^F[d\omega]$$
weakly by Theorem 3.2 (iv) in \cite{Hausler2015}, where $\mathbb P^F[A]:=\frac{\mathbb P[A\cap F]}{\mathbb P[F]}$ is the conditional probability. According to \cite[p.138-139]{Billingsley} there is then for each $F\in\mathcal F$ with $\mathbb P(F)>0$ a dense set $T_{\mathbb P^F[L]}\subset [0,T]$ (depending on the limiting distribution), that contains $0$ and $T$ and
\begin{equation}\label{Eq: finite dim convergence for Skorokhod space on Tp}
    \mathbb P^F\circ Y_{n_{k_l}}^{-1}\circ(\pi_{t_1,...,t_d})\stackrel{d}{\to} \mathbb P^F[L]\circ(\pi_{t_1,...,t_d})
\end{equation}
whenever $t_1,...,t_d\in T_{\mathbb P^F[L]}$ for $d$ arbitrary. Here, $\pi_{t_1,...,t_d}(f)=(f(t_1),...,f(t_d))$ denotes the finite-dimensional projections.
 By Theorem 12.5 in \cite{Billingsley}, the sets 
$$\pi_{t_1,...,t_d}^{-1}(A),\quad A\in\mathcal B( H^d), t_1,...,t_d\in T_{\mathbb P^F[L]}$$
generate $\mathcal B(\mathcal D([0,T],H))$, where $H^d=H\times...\times H$ is equipped with the product topology, and, in particular, two measures $\mathbb Q_1$ and $\mathbb Q_2$ coincide on $\mathcal B(\mathcal D([0,T],H))$, if $\mathbb Q_1\circ\pi_{t_1,...,t_d}^{-1}=\mathbb Q_2\circ\pi_{t_1,...,t_d}^{-1} $ for all $t_1,...,t_d\in T_{\mathbb P^F[L]}$, $d\in\mathbb N$. By assumption we have that $(Y_n(t_1),...,Y_n(t_d))\to(Y(t_1),...,Y(t_d)) $ stably, which is equivalent to
$$\mathbb P^F\circ Y_n^{-1}\circ \pi_{t_1,...,t_d}\stackrel{d}{\to}\tilde{\mathbb P}^F\circ Y^{-1}\circ \pi_{t_1,...,t_d}$$
for all $t_1,...,t_d\in [0,T]$, $d\in\mathbb N$ and all $F\in\mathcal F$ with $\mathbb P(F)>0$ by \cite[Theorem 3.4(iv)]{Hausler2015}. This together with \eqref{Eq: finite dim convergence for Skorokhod space on Tp} yields 
$$\tilde{\mathbb P}^F\circ Y^{-1}\circ \pi_{t_1,...,t_d}=\mathbb P^F[L]\circ(\pi_{t_1,...,t_d})$$
 for all $t_1,...,t_d\in T_{\mathbb P^F[L]}$, $d\in\mathbb N$ and, hence,
 $$\mathbb P^F[K]=\int_{\Omega} K(\omega,\cdot)\mathbb P^F[d\omega]=\tilde{\mathbb P}^F\circ Y^{-1}=\mathbb P^F[L] $$
 for all $F\in\mathcal F$ with $\mathbb P(F)>0$. However, this shows that $K=L$, which is a contradiction, as by construction of $L$ it is
 $$
    \mathbb E[Zf(Y_{n_{k_{l}}})]\to  \int_{\Omega} Z(\omega) \int_{\mathcal D([0,T],H)} f(x) L(\omega,dx)\mathbb P[d\omega],\quad \text{as } l\to \infty.$$
\end{proof}

\subsection{Tightness results for the central limit theorems}
In this section, we are going to prove in several steps the following theorem. Recall the notation $\Sigma_s^{\mathcal S_n}:=\mathcal S(i\Delta_n-s)\Sigma_s\mathcal S(i\Delta_n-s)^*$ for $s\in [(i-1)\Delta_n,i\Delta_n)$, that we will use extensively here.
\begin{theorem}\label{T: Tightness for the quadratic variation}
Let Assumption \ref{As: Weakened Assumption for the CLT for the SARCV} hold. Then the sequence of processes 
$$(\tilde{Z}^{n,2}_t)_{t\in[0,T]}:= \left(\Delta_n^{-\frac 12}\left(\sum_{i=1}^{\ul}\tilde{\Delta}_i^n Y^{\otimes 2}-\int_{(i-1)\Delta_n}^{i\Delta_n} \Sigma_s^{\mathcal S_n}ds\right)\right)_{t\in[0,T]},\quad n\in\mathbb N$$
is tight in $\mathcal D([0,T],\mathcal H).$
\end{theorem}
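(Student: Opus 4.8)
The plan is to invoke the tightness criterion of Joffe and M\'etivier recalled in Theorem~\ref{T: Aldous Tightness Theorem}, which splits the claim into two independent tasks: (i) tightness of the marginals $(\tilde{Z}^{n,2}_t)_{n\in\mathbb N}$ in $\mathcal H$ for each fixed $t\in[0,T]$, and (ii) Aldous' condition. Before addressing either, I would reduce to the driftless case $\alpha\equiv 0$: under Assumption~\ref{As: Weakened Assumption for the CLT for the SARCV} the drift increments $\tilde{\Delta}_i^n A$ contribute only lower-order terms to $\tilde{Z}^{n,2}$ by the estimates \eqref{drift increment estimate} and \eqref{whole increment estimate} of Lemma~\ref{L: Auxiliary Estimates for the increments}, so it suffices to treat $\tilde{\Delta}_i^n Y=\tilde{\Delta}_i^n M$ as a pure stochastic integral. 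The structural fact I would exploit throughout is that $\tilde{Z}^{n,2}$ is then a sum of martingale differences: setting $\psi_t=\int_{(i-1)\Delta_n}^t p_N\mathcal S(i\Delta_n-s)\sigma_s\,dW_s$ on each block, the product rule for Hilbert-space martingales (Theorem~8.2 in \cite{PZ2007}) shows that $\psi_t^{\otimes 2}-\langle\langle\psi\rangle\rangle_t$ is a martingale, with $\langle\langle\psi\rangle\rangle_t=\int_{(i-1)\Delta_n}^t p_N\Sigma_s^{\mathcal S_n}p_N\,ds$.

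For the spatial tightness (i) I would apply the equi-small tails criterion of Corollary~\ref{C: Tightness Criterion in Hilbert spaces}. Fixing the basis $(e_k)$ of $H$ inside $D(\mathcal A^*)$ and the induced basis $(e_k\otimes e_l)$ of $\mathcal H$, the identity $\sum_{k,l\ge N}\langle\tilde{Z}^{n,2}_t,e_k\otimes e_l\rangle_{\mathcal H}^2=\|\sum_{i=1}^{\ul}\tilde{Z}^N_n(i)\|_{\mathcal H}^2$, with $\tilde{Z}^N_n(i)=\Delta_n^{-1/2}\bigl((p_N\tilde{\Delta}_i^n Y)^{\otimes 2}-\int_{(i-1)\Delta_n}^{i\Delta_n}p_N\Sigma_s^{\mathcal S_n}p_N\,ds\bigr)$, recasts the tail sum as the second moment of a martingale-difference sum. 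The martingale property gives $\mathbb E[\tilde{Z}^N_n(i)\mid\Fi]=0$ and orthogonality $\mathbb E[\langle\tilde{Z}^N_n(i),\tilde{Z}^N_n(j)\rangle_{\mathcal H}]=0$ for $i\neq j$, so $\mathbb E[\|\sum_i\tilde{Z}^N_n(i)\|_{\mathcal H}^2]=\sum_i\mathbb E[\|\tilde{Z}^N_n(i)\|_{\mathcal H}^2]$. Using that the conditional variance is dominated by the conditional fourth moment and $\|h^{\otimes 2}\|_{\mathcal H}=\|h\|^2$, the BDG inequality \eqref{BDG inequality} together with the Cauchy--Schwarz inequality in time yields $\mathbb E[\|\tilde{Z}^N_n(i)\|_{\mathcal H}^2]\le C\int_{(i-1)\Delta_n}^{i\Delta_n}\mathbb E[\|p_N\sigma_s^{\mathcal S_n}\|_{L_{\text{HS}}(U,H)}^4]\,ds$, whose sum is bounded by $C\,b_N(4)$. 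This vanishes as $N\to\infty$ by Lemma~\ref{L: Projection convergese uniformly on the range of volatility}, the Hilbert--Schmidt nature of $\sigma$ being precisely what forces $b_N(4)\to 0$.

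For Aldous' condition (ii) I would bound, for stopping times $\tau_n\le T-\delta$ and $\theta\le\delta$, the probability $\mathbb P[\|\tilde{Z}^{n,2}_{\tau_n+\theta}-\tilde{Z}^{n,2}_{\tau_n}\|_{\mathcal H}>\eta]$ by Markov's inequality applied to the second moment. The increment equals $\Delta_n^{-1/2}$ times the sum of the martingale differences $\tilde{\Delta}_i^n Y^{\otimes 2}-\int_{(i-1)\Delta_n}^{i\Delta_n}\Sigma_s^{\mathcal S_n}\,ds$ over the blocks meeting $(\tau_n,\tau_n+\theta]$; by optional sampling and conditional orthogonality the $\mathcal H$-norm second moment is controlled by $C\,\mathbb E\bigl[\int_{\tau_n}^{\tau_n+\theta+\Delta_n}\|\sigma_s\|_{L_{\text{HS}}(U,H)}^4\,ds\bigr]\le C\,\mathbb E\bigl[\sup_{t\le T-\theta}\int_t^{t+\theta+\Delta_n}\|\sigma_s\|_{L_{\text{HS}}(U,H)}^4\,ds\bigr]$, uniformly in $n$. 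The right-hand side tends to $0$ as $\theta\to 0$ by dominated convergence, exploiting the almost sure uniform continuity of $s\mapsto\int_0^s\|\sigma_r\|_{L_{\text{HS}}(U,H)}^4\,dr$ guaranteed by Assumption~\ref{As: Weakened Assumption for the CLT for the SARCV}; hence choosing $\delta$ small and $n_0$ large makes the bound less than $\eta^2\epsilon$.

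The main obstacle I anticipate is the rigorous execution of step (ii): because the contributing block indices are delimited by the random times $\tau_n$ and $\tau_n+\theta$, one must control the fractional boundary blocks and apply optional sampling to the operator-valued martingale $\sum_i(\tilde{\Delta}_i^n Y^{\otimes 2}-\int\Sigma_s^{\mathcal S_n}ds)$ while keeping every constant uniform in $n$ and in the choice of stopping times. The spatial tightness, by contrast, is essentially immediate once the martingale-difference decomposition is in place and $b_N(4)\to 0$ is invoked.
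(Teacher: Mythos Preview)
Your spatial tightness argument (i) matches the paper's exactly: the same martingale-difference decomposition, the same orthogonality giving $\mathbb E\bigl[\|\sum_i\tilde Z_n^N(i)\|_{\mathcal H}^2\bigr]=\sum_i\mathbb E[\|\tilde Z_n^N(i)\|_{\mathcal H}^2]$, the same BDG bound $\le C\,b_N(4)$, and the same appeal to Lemma~\ref{L: Projection convergese uniformly on the range of volatility}.

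The genuine difference is in Aldous' condition. The paper does \emph{not} use second moments and martingale orthogonality across blocks. Instead it applies Markov's inequality to the \emph{first} moment and takes $\theta<\delta<\Delta_n$, so that the sum $\sum_{i=\lfloor\tau_n/\Delta_n\rfloor}^{\lfloor(\tau_n+\theta)/\Delta_n\rfloor}$ contains at most two terms; each term $\Delta_n^{-1/2}\mathbb E[\|\tilde\Delta_i^n Y\|^2]$ is then shown to vanish as $n\to\infty$ via the uniform continuity of $t\mapsto\int_0^t\|\sigma_s\|_{L_{\text{HS}}(U,H)}^4\,ds$. Your second-moment route is viable and in some sense more natural, but the obstacle you flag is real and not merely cosmetic: the random index $\lfloor\tau_n/\Delta_n\rfloor$ is \emph{not} a stopping time for the discrete filtration $(\mathcal F_{k\Delta_n})_k$ (only $\lceil\tau_n/\Delta_n\rceil$ is), so ``optional sampling and conditional orthogonality'' does not apply directly. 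The standard fix is to round to the ceiling indices and bound the one or two boundary blocks separately---which is precisely the single-block estimate the paper's approach consists of. So your route ends up needing the paper's ingredient anyway.

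A smaller organisational difference: the paper verifies Aldous' condition for the full process \emph{including} the drift, and only afterwards eliminates the drift for the spatial-tightness step (Theorem~\ref{T: Drift elimination for tightness}); you eliminate the drift first for both steps. Either order is fine, but your justification (``contribute only lower-order terms'') is a bit loose---what is actually shown is that the drift part $(I)_t^n$ of the decomposition is itself tight in $\mathcal H$, via the equi-small tails criterion and the bounds of Lemma~\ref{L: Auxiliary Estimates for the increments}.
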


Despite the rather extensive notation, it is relatively straightforward to show that $(\tilde{Z}^{n,2})_{t\in [0,T]}$ 
satisfies Aldous' condition. 
\begin{theorem}(Temporal tightness)\label{T: Verfication of Aldous' condition}
 Let $(\mathbb P_n)_{n\in\mathbb N}$ be given by $\mathbb P_n=\mathbb P_{( \tilde Z_t^{
n,2})_{t\in [0,T]}}$ and Assumption \ref{As: Weakened Assumption for the CLT for the SARCV} hold. Then $(\mathbb P_n)_{n\in\mathbb N}$ satisfies Aldous' condition.
\end{theorem}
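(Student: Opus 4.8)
The plan is to verify Aldous' condition through a second-moment estimate on the increments of $\tilde Z^{n,2}$ over the random window $[\tau_n,\tau_n+\theta]$. Fix $\epsilon,\eta>0$. Since $\tilde Z^{n,2}$ is piecewise constant in $t$, for a stopping time $\tau_n\le T-\delta$ and $\theta\le\delta$ we have $\tilde Z^{n,2}_{\tau_n+\theta}-\tilde Z^{n,2}_{\tau_n}=\sum_{i=\kappa+1}^{\kappa'}\xi_i$, where $\kappa=\lfloor\tau_n/\Delta_n\rfloor$ and $\kappa'=\lfloor(\tau_n+\theta)/\Delta_n\rfloor$ are stopping times for the discrete filtration $\mathcal G_k:=\mathcal F_{k\Delta_n}$, and $\xi_i:=\Delta_n^{-1/2}\bigl(\tilde\Delta_i^nY^{\otimes2}-\int_{(i-1)\Delta_n}^{i\Delta_n}\Sigma_s^{\mathcal S_n}\,ds\bigr)$. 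By Markov's inequality it then suffices to bound $\mathbb E\bigl[\|\sum_{i=\kappa+1}^{\kappa'}\xi_i\|_{\mathcal H}^2\bigr]$ uniformly in $\tau_n$, in $\theta\le\delta$ and in $n\ge n_0$, and to show that this bound vanishes as $\delta\to0$.

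First I would decompose the mild increment $\tilde\Delta_i^nY=\tilde\Delta_i^nA+\tilde\Delta_i^nM$ and expand the tensor square, writing $\xi_i=\xi_i^M+\xi_i^{\mathrm r}$, where $\xi_i^M:=\Delta_n^{-1/2}\bigl((\tilde\Delta_i^nM)^{\otimes2}-\int_{(i-1)\Delta_n}^{i\Delta_n}\Sigma_s^{\mathcal S_n}\,ds\bigr)$ and the remainder $\xi_i^{\mathrm r}$ collects the drift square $(\tilde\Delta_i^nA)^{\otimes2}$ together with the two cross terms $\tilde\Delta_i^nA\otimes\tilde\Delta_i^nM$ and $\tilde\Delta_i^nM\otimes\tilde\Delta_i^nA$. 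Exactly as in the outline, the process $\zeta_t=\psi_t^{\otimes2}-\langle\langle\psi\rangle\rangle_t$ with $\psi_t=\int_{(i-1)\Delta_n}^t\mathcal S(i\Delta_n-s)\sigma_s\,dW_s$ is a martingale (Theorem 8.2 in \cite{PZ2007}), so $\xi_i^M=\Delta_n^{-1/2}\zeta_{i\Delta_n}$ is a martingale difference relative to $(\mathcal G_k)$ and $N_k:=\sum_{i=1}^k\xi_i^M$ is an $\mathcal H$-valued $L^2$-martingale.

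For the martingale part I would use optional stopping: because $\kappa\le\kappa'$ are $(\mathcal G_k)$-stopping times, $N_\kappa=\mathbb E[N_{\kappa'}\mid\mathcal G_\kappa]$, which gives the orthogonal decomposition $\mathbb E[\|N_{\kappa'}-N_\kappa\|_{\mathcal H}^2]=\mathbb E\bigl[\sum_{i=\kappa+1}^{\kappa'}\mathbb E[\|\xi_i^M\|_{\mathcal H}^2\mid\Fi]\bigr]$. Bounding each conditional second moment by a conditional version of the BDG inequality \eqref{BDG inequality} (together with Cauchy--Schwarz in the time integral, which makes the factor $\Delta_n^{-1}$ cancel) yields $\mathbb E[\|\xi_i^M\|_{\mathcal H}^2\mid\Fi]\le C\int_{(i-1)\Delta_n}^{i\Delta_n}\mathbb E[\|\sigma_s^{\mathcal S_n}\|_{L_{\text{HS}}(U,H)}^4\mid\Fi]\,ds$. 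Summing over the window and using $[\kappa\Delta_n,\kappa'\Delta_n]\subseteq[\tau_n-\Delta_n,\tau_n+\delta]$ leaves me with $\mathbb E[\|N_{\kappa'}-N_\kappa\|_{\mathcal H}^2]\le C\,\mathbb E\bigl[\int_{\tau_n-\Delta_n}^{\tau_n+\delta}\|\sigma_s^{\mathcal S_n}\|_{L_{\text{HS}}(U,H)}^4\,ds\bigr]$.

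The hard part will be to make this last expectation small uniformly in the arbitrary stopping time $\tau_n$ and in $n$, since Assumption \ref{As: Weakened Assumption for the CLT for the SARCV} controls only the integral $\int_0^T\|\sigma_s\|_{L_{\text{HS}}(U,H)}^4\,ds\le A$ and not $\|\sigma_s\|$ pointwise, so the window integral need not be of order $\delta$. I would resolve this by the uniform absolute continuity of the integral: for almost every $\omega$ the map $t\mapsto\int_0^t\|\sigma_s\|_{L_{\text{HS}}(U,H)}^4\,ds$ is (absolutely, hence uniformly) continuous on $[0,T]$, so $\sup_t\int_t^{t+\delta}\|\sigma_s\|_{L_{\text{HS}}(U,H)}^4\,ds\to0$ as $\delta\to0$; the estimate $\|\sigma_s^{\mathcal S_n}\|\le(\sup_{r\in[0,T]}\|\mathcal S(r)\|_{\text{op}})\,\|\sigma_s\|$ removes the $n$-dependence, and dominated convergence (with dominating function $(\sup_r\|\mathcal S(r)\|_{\text{op}})^4\int_0^T\|\sigma_s\|_{L_{\text{HS}}(U,H)}^4\,ds\le CA$) gives $\sup_n\mathbb E[\sup_t\int_t^{t+2\delta}\|\sigma_s^{\mathcal S_n}\|^4\,ds]\to0$. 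Finally, the remainder $\xi_i^{\mathrm r}$ is negligible: by Bochner's inequality the drift-square contribution is $O(\Delta_n^{1/2}\int_0^T\|\alpha_s\|^2\,ds)$, and the cross terms are controlled by $O((\delta+\Delta_n)^{1/4})$ through Cauchy--Schwarz, H\"older and the increment bounds of Lemma \ref{L: Auxiliary Estimates for the increments}. Collecting the three contributions and choosing $\delta$ small and $n_0$ large so that the total is below $\eta^2\epsilon$ establishes Aldous' condition via Markov's inequality.
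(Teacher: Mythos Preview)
Your argument is correct and follows a genuinely different route from the paper's. The paper works with the \emph{first} moment: after Markov's inequality it applies the triangle inequality to split $\|\tilde Z^{n,2}_{\tau_n+\theta}-\tilde Z^{n,2}_{\tau_n}\|_{\mathcal H}$ into $\Delta_n^{-1/2}\sum_i\|\tilde\Delta_i^nY\|^2$ and $\Delta_n^{-1/2}\int_{\tau_n}^{\tau_n+\theta}\|\Sigma_s\|_{\mathcal H}\,ds$, then restricts to $\theta<\Delta_n$ so that the first sum contains at most two terms; each surviving term is shown to vanish as $n\to\infty$ via the uniform continuity of $t\mapsto\int_0^t\|\sigma_s\|_{L_{\text{HS}}(U,H)}^4\,ds$. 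You instead exploit the martingale-difference structure of $\xi_i^M$ and control the \emph{second} moment through orthogonality and optional stopping, obtaining the window bound $\mathbb E\|N_{\kappa'}-N_\kappa\|_{\mathcal H}^2\le C\,\mathbb E\int_{\kappa\Delta_n}^{\kappa'\Delta_n}\|\sigma_s^{\mathcal S_n}\|_{L_{\text{HS}}(U,H)}^4\,ds$ without any constraint on $\theta/\Delta_n$; the same uniform-continuity argument then closes the estimate. Your route is more robust in that it never needs to reduce the sum to a bounded number of terms, at the price of slightly more machinery. Two small technical points worth tightening: $\kappa=\lfloor\tau_n/\Delta_n\rfloor$ is not automatically a $(\mathcal G_k)$-stopping time, since $\{\kappa\le k\}=\{\tau_n<(k+1)\Delta_n\}\in\mathcal F_{((k+1)\Delta_n)^-}$ rather than $\mathcal G_k$; replacing $\kappa$ and $\kappa'$ by $\lceil\tau_n/\Delta_n\rceil$ and $\lceil(\tau_n+\theta)/\Delta_n\rceil$ fixes this at the cost of at most two extra boundary summands, each of which is a single $\xi_i$ and hence negligible. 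And since you bound $\xi_i^{\mathrm r}$ in first moment but $\xi_i^M$ in second, it is cleaner to split $\mathbb P[\|\sum\xi_i\|>\eta]\le\mathbb P[\|\sum\xi_i^M\|>\eta/2]+\mathbb P[\|\sum\xi_i^{\mathrm r}\|>\eta/2]$ and apply Markov with the appropriate moment on each piece, rather than aiming for a single threshold ``$\eta^2\epsilon$''.
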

\begin{proof} 
The Markov inequality yields
\begin{align}\label{Tightness for Feasible Estimator:Triangle Inequality applied}
  & \mathbb P\left[\left\|\tilde Z^{n,2}_{\tau_n}-Z^{n,2}_{\tau_n+\theta}\right\|_{\mathcal H}>\eta\right]\notag\\
  \leq & \frac 1{\eta}\mathbb E\left[\left\|\tilde Z^{n,2}_{\tau_n}-\tilde Z^{n,2}_{\tau_n+\theta}\right\|_{\mathcal H}\right]\notag\\
  \leq  & \frac {1}{\eta} \left(\Delta_n^{-\frac 12}\mathbb E\left[ \left\|\sum_{i=\lfloor \tau_n/\Delta_n\rfloor}^{\lfloor (\tau_n+\theta)/\Delta_n\rfloor} \tilde{\Delta}_{i}^nY^{\otimes 2}\right\|_{\mathcal H}+\Delta_n^{-\frac 12}\left\|\int_{ \tau_n}^{ \tau_n+\theta}\Sigma_s ds\right\|_{\mathcal H}\right]\right).
  \end{align}
  Now, set $\theta<\delta<\Delta_n$.
We can estimate further
  \begin{align*}
   & \mathbb E\left[ \left\|\sum_{i=\lfloor \tau_n/\Delta_n\rfloor}^{\lfloor (\tau_n+\theta)/\Delta_n\rfloor} \tilde{\Delta}_{i}^nY^{\otimes 2}\right\|_{\mathcal H}+\left\|\int_{ \tau_n}^{ \tau_n+\theta}\Sigma_s ds\right\|_{\mathcal H}\right]\\
  &\quad\leq \left( \mathbb E \left[\left\|\tilde{\Delta}_{\lfloor \tau_n+\theta/\Delta_n\rfloor}^n Y^{\otimes 2}\right\|_{\mathcal H}+\left\|\tilde{\Delta}_{\lfloor \tau_n/\Delta_n\rfloor}^n Y^{\otimes 2}\right\|_{\mathcal H}\right]\right)\\
  &\qquad\qquad+\mathbb E [\int_{ \tau_n}^{ \tau_n+\theta}\left\|\Sigma_s\right\|_{\mathcal H}ds]\\
  &\quad= \left( \mathbb E \left[\left\|\tilde{\Delta}_{\lfloor \tau_n+\theta/\Delta_n\rfloor}^n Y\right\|^2+\left\|\tilde{\Delta}_{\lfloor \tau_n/\Delta_n\rfloor}^n Y\right\|^2\right]\right)+\mathbb E [\int_{ \tau_n}^{ \tau_n+\theta}\|\Sigma_s\|_{\mathcal H}ds]\\
  &\quad= (1)_n+(2)_n.
\end{align*}
We obtain
\begin{align}\label{Martingale Difference Stopping inequality}
   &\Delta_n^{-\frac 12} \mathbb E \left[\left\|\tilde{\Delta}_{\lfloor \tau_n+\theta/\Delta_n\rfloor}^n Y\right\|^2\right]\notag\\
   &\leq \Delta_n^{-\frac 12} 2\left( \mathbb E\left[\left\|\int_{(\lfloor\tau_n+\theta/\Delta_n\rfloor)\Delta_n}^{(\lfloor\tau_n+\theta/\Delta_n\rfloor+1)\Delta_n}\alpha_s^{\mathcal S_n}  ds\right\|^2\right]\right.\notag\\
    &\qquad+ \left.\mathbb E\left[\left\|\int_{(\lfloor\tau_n+\theta/\Delta_n\rfloor)\Delta_n}^{(\lfloor\tau_n+\theta/\Delta_n\rfloor+1)\Delta_n}\sigma_s^{\mathcal S_n}  dW_s\right\|^2\right]\right)\notag\\
   &\leq  \Delta_n^{-\frac 12} 2\left( \mathbb E\left[\Delta_n^{-1}\int_{(\lfloor\tau_n+\theta/\Delta_n\rfloor)\Delta_n}^{(\lfloor\tau_n+\theta/\Delta_n\rfloor+1)\Delta_n}\left\|\alpha_s^{\mathcal S_n}\right\|^2  ds\right]\right.\notag\\
   &\left.\qquad + \mathbb E\left[\int_{(\lfloor\tau_n+\theta/\Delta_n\rfloor)\Delta_n}^{(\lfloor\tau_n+\theta/\Delta_n\rfloor+1)\Delta_n}\|\sigma_s \|_{L_{\text{HS}}(U,H)}^2 ds\right]\right)\notag\\
    &\leq 2 \Delta_n^{\frac 12} \sup_{r\in [0,T]}\|\mathcal S(r)\|_{\text{op}}^2\int_0^T \mathbb E\left[\|\alpha_s\|^2\right] ds  \notag\\
    &\qquad+2 
    \mathbb E\left[\left(\int_{(\lfloor\tau_n+\theta/\Delta_n\rfloor)\Delta_n}^{(\lfloor\tau_n+\theta/\Delta_n\rfloor+1)\Delta_n}\|\sigma_s \|_{L_{\text{HS}}(U,H)}^4 ds\right)^{\frac 12}\right],
\end{align}
which converges to $0$ as $n\to \infty$ since the function $t\mapsto\int_0^t \|\sigma_s\|_{L_{\text{HS}}(U,H)}^4 ds$ is uniformly continuous on $[0,T]$ and bounded.

Analogously we obtain $$\Delta_n^{-\frac 12}\mathbb E \left[\|\tilde{\Delta}_{\lfloor \tau_n/\Delta_n\rfloor}^n Y^{\otimes 2}\|_{\mathcal H}\right]\to 0,$$ as $n\to\infty$. This yields $\lim_{n\to \infty}\Delta_n^{-\frac 12}(1)_n=0$. 

It remains
 to show $\lim_{n\to \infty}\Delta_n^{-\frac 12}(2)_n= 0$. Observe that since $s\mapsto\Sigma_s$ is bounded by assumption,
 the following convergence holds almost surely as $n\to\infty$:
\begin{align*}
\Delta_n^{-\frac 12}\int_{ \tau_n}^{ \tau_n+\theta}\|\Sigma_s\|_{\mathcal H}ds
  \leq  \left(\int_{ \tau_n}^{ \tau_n+\theta}\|\Sigma_s\|_{\mathcal H}^2ds\right)^{\frac 12}
&\leq  \sup_{t\in[0,T-\theta]}\left(\int_{ t}^{ t+\theta}\|\Sigma_s\|_{\mathcal H}^2ds\right)^{\frac 12}\\
\leq &\sup_{t,s \in [0,T], t-s\leq \Delta_n} \left(\int_s^t\|\Sigma_s\|_{\mathcal H}^2ds\right)^{\frac 12}
\end{align*}
converges to zero, as $t\mapsto \int_0^{ t}\|\Sigma_s\|_{\mathcal H}^2ds$ is uniformly continuous on $[0,T]$ almost surely, as $\theta\leq \Delta_n$. 
Moreover, we have the integrable majorant $$\int_{\tau_n}^{\tau_n+\theta}\|\Sigma_s\|_{\mathcal H}ds\leq \int_0^T\|\Sigma_s\|_{\mathcal H}ds,$$
such that we obtain by dominated convergence $\Delta_n^{-\frac 12}(2)\to 0$ as $n\to \infty$. Hence, we conclude the Aldous condition in Theorem \ref{T: Aldous Tightness Theorem}.
\end{proof}

In order to show tightness of the sequences $(\tilde Z^{n,2})_{n\in\mathbb N}$ in $\mathcal D([0,T],\mathcal H)$ under the conditions of Theorem \ref{T: Tightness for the quadratic variation}, we have to verify the tightness of each $( \tilde Z^{n,2}_t)_{n\in\mathbb N}$ in $\mathcal H$ for each $t\in [0,T]$ separately. 
This is what we do in the remainder of this subsection.
We first argue that we can assume $\alpha\equiv 0$ in the proof of tightness of $\tilde{Z}_t^{n,2}$ in $\mathcal  H$. Observe that with the notation of Section \ref{Drift elimination section}, we have 
\begin{align}\label{Decomposition into semimartingale and residual term of SARCV}
&\Delta_n^{-\frac 12}\left(SARCV_t^n-\int_0^t \Sigma_s^{\mathcal S_n} ds\right)\notag\\
= & \Delta_n^{-\frac 12}\left(\sum_{i=1}^{\ul} \left(\tilde{\Delta}_i^n A+\tilde{\Delta}_i^n M\right)^{\otimes 2}-\int_0^t \Sigma_s^{\mathcal S_n}ds\right)\notag\\
   =&  \Delta_n^{-\frac 12}\left(\sum_{i=1}^{\ul}\tilde{\Delta}_i^n A^{\otimes 2}+\tilde{\Delta}_i^n A\otimes \tilde{\Delta}_i^n M+\tilde{\Delta}_i^n M\otimes \tilde{\Delta}_i^n A\right)\notag\\
   &+\Delta_n^{-\frac 12}\left(\sum_{i=1}^{\ul}\tilde{\Delta}_i^n M^{\otimes 2}-\int_0^t \Sigma_s^{\mathcal S_n}ds\right)\notag\\
   =: & (I)_t^n+(II)_t^n.
\end{align}
We obtain: 
\begin{theorem}[Elimination of the drift]\label{T: Drift elimination for tightness}
Suppose that Assumption \ref{In proof: Very very Weak localised integrability Assumption on the moments}(2m) holds.
In this case, the first summand in \eqref{Decomposition into semimartingale and residual term of SARCV}, that is $(I)_t^n$ is tight. In particular, in order to show Theorem \ref{T: Tightnes for realised covariation} we can assume $\alpha\equiv 0$.
\end{theorem}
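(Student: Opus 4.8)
The plan is to split the term $(I)_t^n$ from \eqref{Decomposition into semimartingale and residual term of SARCV} into the pure--drift part $D_t^n:=\Delta_n^{-\frac12}\sum_{i=1}^{\ul}(\tilde\Delta_i^n A)^{\otimes2}$ and the mutually adjoint cross terms gathered as $C_t^n:=\Delta_n^{-\frac12}\sum_{i=1}^{\ul}\bigl(\tilde\Delta_i^n A\otimes\tilde\Delta_i^n M+\tilde\Delta_i^n M\otimes\tilde\Delta_i^n A\bigr)$, and to treat each directly rather than as a sum of separately tight pieces. The drift part is immediate: by Bochner's and the Cauchy--Schwarz inequality $\|\tilde\Delta_i^n A\|^2\le\Delta_n\int_{(i-1)\Delta_n}^{i\Delta_n}\|\alpha_s^{\mathcal S_n}\|^2\,ds$, whence $\sup_{t\le T}\|D_t^n\|_{\mathcal H}\le\Delta_n^{\frac12}\sup_{r\le T}\|\mathcal S(r)\|_{\text{op}}^2\int_0^T\|\alpha_s\|^2\,ds\le C\Delta_n^{\frac12}A$ under Assumption \ref{In proof: Very very Weak localised integrability Assumption on the moments}(2m). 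Hence $D^n\stackrel{u.c.p.}{\longrightarrow}0$ and is a fortiori tight.

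For $C^n$ I would invoke the Joffe--M\'etivier criterion, Theorem \ref{T: Aldous Tightness Theorem}. Its spatial part, condition (i), requires tightness of $C_t^n$ in $\mathcal H$ for each fixed $t$, which I would deduce from the equismall--tails Corollary \ref{C: Tightness Criterion in Hilbert spaces} with the basis $(e_k\otimes e_l)_{k,l}$ of $\mathcal H$. Since $\langle\tilde\Delta_i^n A\otimes\tilde\Delta_i^n M,e_k\otimes e_l\rangle_{\mathcal H}=\langle\tilde\Delta_i^n A,e_k\rangle\langle\tilde\Delta_i^n M,e_l\rangle$, the tail over $\{l\ge N\}$ projects $p_N$ onto the volatility factor and the tail over $\{k\ge N\}$ onto the drift factor. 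The key manoeuvre is a Cauchy--Schwarz decoupling $\|\sum_i u_i\otimes v_i\|_{\mathcal H}^2\le(\sum_i\|u_i\|^2)(\sum_i\|v_i\|^2)$, which separates the two factors. In the first tail the drift sum is controlled pathwise by $\sum_i\|\tilde\Delta_i^n A\|^2\le C\Delta_n A$, so the whole expression is $\le CA\,\mathbb E[\sum_i\|p_N\tilde\Delta_i^n M\|^2]\le CA\,b_N(2)\to0$ uniformly in $n$ by Lemma \ref{L: Projection convergese uniformly on the range of volatility}. In the second tail the volatility factor is not projected, so Cauchy--Schwarz in expectation together with the uniform fourth--moment bound $\sup_n\mathbb E[(\sum_i\|\tilde\Delta_i^n M\|^2)^2]\le C$ — available precisely because (2m) furnishes $\int_0^T\|\sigma_s\|_{L_{\text{HS}}(U,H)}^{2m}\,ds\le A$ — and $a_N(2)\to0$ give a bound of order $(a_N(2))^{\frac12}$. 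Both tails vanish as $N\to\infty$ uniformly in $n$, establishing (i).

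Condition (ii), Aldous' condition, I would verify along the lines of Theorem \ref{T: Verfication of Aldous' condition}: for stopping times $\tau_n\le T-\delta$ and $\theta\le\delta<\Delta_n$ the increment $C_{\tau_n+\theta}^n-C_{\tau_n}^n$ contains at most one boundary summand, whose expected Hilbert--Schmidt norm is, by Cauchy--Schwarz and the BDG inequality \eqref{BDG inequality}, at most $C\bigl(\mathbb E\int_{(i-1)\Delta_n}^{i\Delta_n}\|\alpha_s\|^2\,ds\bigr)^{\frac12}\bigl(\mathbb E\int_{(i-1)\Delta_n}^{i\Delta_n}\|\sigma_s\|_{L_{\text{HS}}(U,H)}^2\,ds\bigr)^{\frac12}$, both factors tending to $0$ uniformly in $i$ by uniform continuity of $s\mapsto\int_0^s\|\alpha_r\|^2\,dr$ and $s\mapsto\int_0^s\|\sigma_r\|_{L_{\text{HS}}(U,H)}^2\,dr$; Markov's inequality then closes it. A routine estimate moreover shows that the maximal jump $\sup_i\Delta_n^{-\frac12}\|\tilde\Delta_i^n A\|\,\|\tilde\Delta_i^n M\|$ tends to $0$ in probability, so every weak limit point of $C^n$ (and of the u.c.p.-null $D^n$) is supported on continuous paths; thus $(I)^n$ is C--tight (tight with only continuous limit points). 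Since C--tightness is stable under finite sums, the tightness of $\tilde Z^{n,2}=(I)^n+(II)^n$ follows once the driftless term $(II)^n$ — which is exactly $\tilde Z^{n,2}$ evaluated at $\alpha\equiv0$, because then $\tilde\Delta_i^n Y=\tilde\Delta_i^n M$ — is shown to be tight, which is the announced reduction.

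I expect two obstacles. The first is that the cross terms are \emph{not} a martingale--difference array: $\tilde\Delta_i^n A$ is $\mathcal F_{i\Delta_n}$-- but not $\mathcal F_{(i-1)\Delta_n}$--measurable and is correlated with $\tilde\Delta_i^n M$ over the same subinterval, so the orthogonality that trivialises the second--moment bookkeeping for the genuinely martingale part $(II)^n$ is unavailable; the Cauchy--Schwarz decoupling above circumvents this by never appealing to conditional--expectation orthogonality, at the price of needing the fourth moment of $\sigma$ from (2m) when the volatility factor is left unprojected. The second, more delicate point is that addition is not continuous on the Skorokhod space, so tightness of the summands does not by itself transfer to the sum; this is resolved by upgrading the estimates to C--tightness (via the vanishing maximal jump), under which summation is well behaved.
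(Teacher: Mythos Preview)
Your argument is correct, but it targets a stronger conclusion than the theorem actually asserts and is therefore considerably more involved than necessary. The claim is only that $(I)_t^n$ is tight \emph{in $\mathcal H$ for each fixed $t$}, not in $\mathcal D([0,T],\mathcal H)$: the role of this result is to reduce the spatial tightness of $\tilde Z_t^{n,2}$ (Theorem \ref{T: Tightnes for realised covariation}) to the driftless case, while Aldous' condition is verified directly for the full $\tilde Z^{n,2}$ — drift included — in Theorem \ref{T: Verfication of Aldous' condition}. Since the split $\tilde Z_t^{n,2}=(I)_t^n+(II)_t^n$ takes place in the Hilbert space $\mathcal H$, where addition is continuous, tightness of each summand immediately yields tightness of the sum. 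Both of your anticipated obstacles are therefore self-imposed: the Skorokhod–addition issue never arises, and Aldous' condition for $C^n$ need not be checked.

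The paper's proof is correspondingly short. It bounds the \emph{first} moment $\mathbb E\bigl[\sup_{t\le T}\|P_N(I)_t^n\|_{\mathcal H}\bigr]$ by the triangle inequality over $i$, giving
\[
\Delta_n^{-\frac12}\sum_{i}\mathbb E\bigl[\|p_N\tilde\Delta_i^n A\|^2+2\|p_N\tilde\Delta_i^n A\|\,\|p_N\tilde\Delta_i^n M\|\bigr],
\]
and then applies Cauchy--Schwarz over the index $i$ (and the expectation) together with the ready-made increment estimates \eqref{drift increment estimate} and \eqref{vol increment estimate} of Lemma \ref{L: Auxiliary Estimates for the increments}, obtaining a bound of order $\Delta_n^{1/2}a_N(2)+(a_N(2)b_N(2))^{1/2}\to 0$ uniformly in $n$. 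No separate treatment of $D^n$ versus $C^n$, no second-moment decoupling $\|\sum u_i\otimes v_i\|^2\le(\sum\|u_i\|^2)(\sum\|v_i\|^2)$, and no uniform fourth-moment control of $\sum_i\|\tilde\Delta_i^n M\|^2$ are needed. Your route works, but what it buys is only the (unrequested) Skorokhod-space tightness of $(I)^n$; the paper's route buys brevity.
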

\begin{proof}
We show first that 
\begin{equation}\label{Tightness moment vanishing criterion for the drift elimination}
   \lim_{N\to\infty}\sup_{n\in\mathbb N}\mathbb E\left[\sup_{t\in [0,T]}\left\|P_N^2 (I)_t^n\right\|_{\mathcal H}\right]=0. 
\end{equation}
We can compute, using Hölder's inequality 
\begin{align*}
 &   \mathbb E\left[\sup_{t\in [0,T]}\left\|P_N^2 (I)_t^n\right\|_{\mathcal H}\right]\\
    \leq & \Delta_n^{-\frac 12}\sum_{i=1}^{\ulT} \mathbb E\left[\|P_N^2\tilde{\Delta}_i^n A\|^2+2\|P_N^2\tilde{\Delta}_i^n A\|\|P_N^2\tilde{\Delta}_i^n M\|\right]\\
    = & \Delta_n^{-\frac 12}\sum_{i=1}^{\ulT} \mathbb E\left[\|P_N^2\tilde{\Delta}_i^n A\|^2\right]\\
    &\qquad+2\Delta_n^{-\frac 12}\left(\sum_{i=1}^{\ulT}\mathbb E\left[\|P_N^2\tilde{\Delta}_i^n A\|^2\right]\right)^{\frac 12}\left(\sum_{i=1}^{\ulT}\mathbb E\left[\|P_N^2\tilde{\Delta}_i^n M\|\right]^2\right)^{\frac 12}.
\end{align*}
Now using \eqref{drift increment estimate} and \eqref{vol increment estimate} as well as the corresponding notation from Lemma \ref{L: Auxiliary Estimates for the increments} 
we find
\begin{align*}
    \mathbb E\left[\sup_{t\in [0,T]}\left\|P_N^2 (I)_t^n\right\|_{\mathcal H}\right]
    \leq  \Delta_n^{\frac 12}C \Delta_n a_N(2)
    +2\left(C a_N(2)\right)^{\frac 12}\left(C b_N(2)\right)^{\frac 12}
\end{align*}
The latter converges to $0$, as $N\to\infty$, uniformly in $n$. Thus \eqref{Tightness moment vanishing criterion for the drift elimination} holds.
 A straightforward application of Markov's inequality and Theorem \ref{T: Tightness Criterion in Hilbert spaces} yield the assertion.
\end{proof}

In the subsections below we make the remaining steps in order to prove Theorem \ref{T: Tightnes for realised covariation}, i.e., in view of Theorem \ref{T: Aldous Tightness Theorem} and Theorem \ref{T: Verfication of Aldous' condition} we have to prove that $(\tilde Z_t^{n,2})_{n\in\mathbb N}$ is tight in $\mathcal H$. In view of Theorem \ref{T: Drift elimination for tightness} we further assume that $\alpha\equiv 0$ throughout these subsections.

\subsubsection{Spatial tightness for quadratic variation}\label{sec: Spatial tightness for realised covariation}

Recalling that Assumption \ref{As: Weakened Assumption for the CLT for the SARCV} is satisfied under the assumptions of Theorem \ref{T: Tightnes for realised covariation}, the tightness of the sequence of laws corresponding to  $(\tilde{Z}^{n,2})_{n\in\mathbb N}$ is tight in $\mathcal D([0,T],\mathcal H)$ by 
\begin{theorem}\label{T: Tightnes for realised covariation}
Assume that
$$\int_0^T\mathbb E\left[\left\|\sigma_s^{\mathcal S_n}\right\|_{L_{\text{HS}}(U,H)}^4\right] ds<\infty,$$
which is in particular the case, if Assumption \ref{As: Weakened Assumption for the CLT for the SARCV} holds. We have
\begin{align}\label{slightly better than tightness for SARCV-centered by conditional expectation}
    & \lim_{N\to\infty}\sup_{t\in[0,T]}\sup_{n\in\mathbb N}\sum_{m,k\geq N}\mathbb E\left[ \langle \tilde Z_t^{2,n}, e_k\otimes e_m\rangle_{\mathcal H}^2\right]\notag\\
    = & \lim_{N\to\infty}\sup_{t\in[0,T]}\sup_{n\in\mathbb N}\mathbb E\left[\|p_N\tilde Z_t^{2,n}\|_{\mathcal H}^2\right]=0,
\end{align}
and thus, the sequence $(\tilde{Z}_t^{n,2})_{n\in\mathbb N}$ 
is tight in $\mathcal H$. Hence, the sequence $(\tilde{Z}^{n,2})_{n\in\mathbb N}$ is tight in $\mathcal D([0,T],\mathcal H)$.
\end{theorem}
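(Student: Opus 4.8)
The plan is to combine the already-established temporal tightness (Aldous' condition, Theorem \ref{T: Verfication of Aldous' condition}) with the spatial tightness asserted here, so that the Joffe--M\'etivier criterion (Theorem \ref{T: Aldous Tightness Theorem}) yields tightness in $\mathcal D([0,T],\mathcal H)$. Thus the whole task reduces to showing that, for each fixed $t$, the family $(\tilde Z_t^{n,2})_{n\in\mathbb N}$ satisfies the equi-small-tails condition of Corollary \ref{C: Tightness Criterion in Hilbert spaces} relative to the orthonormal basis $(e_k\otimes e_m)_{k,m}$ of $\mathcal H$. Since $\int_0^T\EE[\|\sigma^{\mathcal S_n}_s\|_{L_{\text{HS}}(U,H)}^4]\,ds<\infty$ (in particular under Assumption \ref{As: Weakened Assumption for the CLT for the SARCV}), all required second moments are finite, and by Theorem \ref{T: Drift elimination for tightness} the drift contribution is itself tight, so I may assume $\alpha\equiv0$ and work with $\tilde Z^{n,2}_t=\Delta_n^{-1/2}\sum_{i=1}^{\ul}((\tilde\Delta_i^nY)^{\otimes2}-\int_{(i-1)\Delta_n}^{i\Delta_n}\Sigma_s^{\mathcal S_n}\,ds)$.

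First I would isolate the doubly-projected block. Using $(p_Nh)^{\otimes2}=p_Nh^{\otimes2}p_N$ one gets $\sum_{m,k\geq N}\langle\tilde Z^{2,n}_t,e_k\otimes e_m\rangle_{\mathcal H}^2=\|p_N\tilde Z^{2,n}_tp_N\|_{\mathcal H}^2=\|\sum_{i=1}^{\ul}\tilde Z_n^N(i)\|_{\mathcal H}^2$, with $\tilde Z_n^N(i)$ as in the outline of Section \ref{sec: Outline of Proofs}. The decisive structural input is that, fixing $i$ and setting $\psi_r=\int_{(i-1)\Delta_n}^r p_N\mathcal S(i\Delta_n-s)\sigma_s\,dW_s$ for $r\in[(i-1)\Delta_n,i\Delta_n]$, the tensor process $\psi^{\otimes2}-\langle\langle\psi\rangle\rangle$ is a martingale by \cite[Theorem 8.2]{PZ2007}, where $\langle\langle\psi\rangle\rangle_r=\int_{(i-1)\Delta_n}^rp_N\Sigma_s^{\mathcal S_n}p_N\,ds$. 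With $\alpha\equiv0$ this makes each $\tilde Z_n^N(i)$ a martingale difference, hence $\EE[\tilde Z_n^N(i)\mid\mathcal F_{(i-1)\Delta_n}]=0$ and $\EE[\langle\tilde Z_n^N(i),\tilde Z_n^N(j)\rangle_{\mathcal H}]=0$ for $i\neq j$, yielding the Pythagorean identity $\EE\|\sum_i\tilde Z_n^N(i)\|_{\mathcal H}^2=\sum_i\EE\|\tilde Z_n^N(i)\|_{\mathcal H}^2$.

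The per-increment estimate is then routine. Bounding $\|\psi^{\otimes2}\|_{\mathcal H}=\|\psi\|^2$ and $\|\langle\langle\psi\rangle\rangle\|_{\mathcal H}\leq\int_{(i-1)\Delta_n}^{i\Delta_n}\|p_N\sigma_s^{\mathcal S_n}\|_{L_{\text{HS}}(U,H)}^2\,ds$, applying the BDG inequality \eqref{BDG inequality} with $m=4$ to $\psi$, and using Cauchy--Schwarz over the interval of length $\Delta_n$, I obtain $\EE\|\tilde Z_n^N(i)\|_{\mathcal H}^2\leq C\int_{(i-1)\Delta_n}^{i\Delta_n}\EE\|p_N\sigma_s^{\mathcal S_n}\|_{L_{\text{HS}}(U,H)}^4\,ds$. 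Summing gives $\sum_{m,k\geq N}\EE[\langle\tilde Z^{2,n}_t,e_k\otimes e_m\rangle^2]\leq C\int_0^T\EE\|p_N\sigma_s^{\mathcal S_n}\|_{L_{\text{HS}}(U,H)}^4\,ds=C\,b_N(4)$, uniformly in $t$ and $n$, which vanishes as $N\to\infty$ by Lemma \ref{L: Projection convergese uniformly on the range of volatility}; this is exactly \eqref{slightly better than tightness for SARCV-centered by conditional expectation}. To upgrade this doubly-projected control to the genuine tail of $(e_k\otimes e_m)_{k,m}$ demanded by Corollary \ref{C: Tightness Criterion in Hilbert spaces}, I would treat the off-diagonal blocks $p_N\tilde Z^{2,n}_t(I-p_N)$ and $(I-p_N)\tilde Z^{2,n}_tp_N$ by the same martingale-difference device, now applied to the cross-bracket of $\int p_N\mathcal S\sigma\,dW$ and $\int(I-p_N)\mathcal S\sigma\,dW$; Cauchy--Schwarz then bounds their expected squared norm by $C(\int_0^T\EE\|p_N\sigma_s^{\mathcal S_n}\|^4)^{1/2}(\int_0^T\EE\|\sigma_s^{\mathcal S_n}\|^4)^{1/2}$, which again tends to $0$ uniformly in $n$.

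The main obstacle is the second paragraph: extracting the martingale and orthogonality structure of the operator-valued increments cleanly, since $\langle\langle\psi\rangle\rangle$ need not be $\mathcal F_{(i-1)\Delta_n}$-measurable (the volatility is merely adapted), so one must argue through the martingale property of $\psi^{\otimes2}-\langle\langle\psi\rangle\rangle$ rather than through a naive conditional-variance identity. Everything else is driven by the single quantitative fact that the Hilbert--Schmidt property of $\sigma$ forces $b_N(4)=\sup_n\EE\int_0^T\|p_N\sigma_s^{\mathcal S_n}\|_{L_{\text{HS}}(U,H)}^4\,ds\to0$, which is precisely the content of Lemma \ref{L: Projection convergese uniformly on the range of volatility}; this is where infinite-dimensionality is genuinely used.
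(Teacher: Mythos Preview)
Your proposal is correct and tracks the paper's argument closely: reduce to $\alpha\equiv0$ via Theorem \ref{T: Drift elimination for tightness}, express the doubly-projected tail as $\sum_i\tilde Z_n^N(i)$, use the tensor-martingale identity from \cite[Theorem 8.2]{PZ2007} to obtain the martingale-difference structure and hence the Pythagorean bound, then control each summand by BDG and conclude via Lemma \ref{L: Projection convergese uniformly on the range of volatility}. This is exactly the paper's route.

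Where you go beyond the paper is in the final step. The paper establishes only the ``corner'' bound $\sum_{k,m\geq N}\mathbb E[\langle\tilde Z_t^{2,n},e_k\otimes e_m\rangle^2]\to0$ and then invokes Corollary \ref{C: Tightness Criterion in Hilbert spaces} directly. You correctly notice that this is not literally the tail condition of that corollary for any enumeration of $(e_k\otimes e_m)_{k,m}$: the complement of $\{(k,m):k,m\geq N\}$ is infinite-dimensional, and indeed a sequence like $e_j\otimes e_1$ shows that the corner bound alone does not force tightness in $\mathcal H$. Your proposed fix---treating the off-diagonal blocks $p_N\tilde Z_t^{2,n}(I-p_N)$ and $(I-p_N)\tilde Z_t^{2,n}p_N$ by the same martingale-difference device applied to the cross-bracket of $\int p_N\mathcal S\sigma\,dW$ and $\int(I-p_N)\mathcal S\sigma\,dW$, and bounding by $C\,b_N(4)^{1/2}\bigl(\int_0^T\mathbb E\|\sigma_s^{\mathcal S_n}\|_{L_{\text{HS}}(U,H)}^4ds\bigr)^{1/2}$---is a valid and clean completion that the paper glosses over. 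So your version is, if anything, more careful than the paper's.
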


\begin{proof}
We define
\begin{align*}
   \tilde Z_n^N(i) := & \Delta_n^{-\frac 12}\left((p_N\tilde{\Delta}_i^nY)^{\otimes 2}-\langle\langle p_N\tilde{\Delta}_i^nY\rangle\rangle\right)\\
   = & \Delta_n^{-\frac 12}\left((p_N\tilde{\Delta}_i^nY)^{\otimes 2}-\int_{t_{i-1}}^{t_i} p_N\mathcal S(t_i-s)\Sigma_s\mathcal S(t_i-s)^* p_N ds\right).
\end{align*}
First we show that $\sup_{t\in[0,T]}\Vert\sum_{i=1}^{\ul}\tilde Z_n^N(i)\Vert_{\text{HS}}$  
has finite second moment.
Note that, by the BDG inequality \eqref{BDG inequality}, we have
\begin{align}\label{Fourth Moment Increment inequality}
    \mathbb E\left[\left\|p_N\tilde{\Delta}_i^nY\right\|^4\right]\leq & \mathbb E\left[\left(\int_{(i-1)\Delta_n}^{i\Delta_n} \left\|\sigma_s^{\mathcal S_n}\right\|_{L_{\text{HS}}(U,H)}^2ds\right)^2\right]\\
   \leq & \Delta_n \mathbb E\left[\int_{(i-1)\Delta_n}^{i\Delta_n} \left\|\sigma_s^{\mathcal S_n}\right\|_{L_{\text{HS}}(U,H)}^4ds\right].
\end{align}
Therefore, by the triangle and Cauchy-Schwarz inequalities, we have
\begin{align*}
  \mathbb E&\left[ \left \Vert\sum_{i=1}^{\ul}\tilde Z^N_n(i)\right \Vert_{\mathcal H}^2\right]\\
  & \leq  \mathbb E\left[\left(\sum_{i=1}^{\ul}\left \Vert\tilde Z_n^N(i)\right \Vert_{\mathcal H}\right)^2\right]\\
 & \leq  \mathbb E\left[\sum_{i=1}^{\ul}\left \Vert\tilde Z_n^N(i)\right \Vert_{\mathcal H}^2\right] \ul\\
   &\leq \Delta_n^{-1} \sum_{i=1}^{\ul} \mathbb E\left[\left(\left\|p_N\tilde{\Delta}_i^nY\right\|^2+\int_{(i-1)\Delta_n}^{i\Delta_n}\left\|\Sigma_s^{\mathcal S_n}\right\|_{L_{\text{HS}}(U,H)} ds\right)^2\right] \ul\\
 &  \leq \Delta_n^{-1} \sum_{i=1}^{\ul} 2\left(\mathbb E\left[\left\|p_N\tilde{\Delta}_i^nY\right\|^4\right]+\int_{(i-1)\Delta_n}^{i\Delta_n}\mathbb E\left[\left\|\sigma_s^{\mathcal S_n}\right\|_{L_{\text{HS}}(U,H)}^4\right] ds\right) \ul\\
 &\leq \Delta_n^{-1}\sum_{i=1}^{\ul} 4\Delta_n\int_{(i-1)\Delta_n}^{i\Delta_n}\mathbb E\left[\left\|\sigma_s^{\mathcal S_n}\right\|_{L_{\text{HS}}(U,H)}^4\right] ds \ul\\
 = &\int_0^{\ul}\mathbb E\left[\left\|\sigma_s^{\mathcal S_n}\right\|_{L_{\text{HS}}(U,H)}^4\right] ds \ul<\infty,
\end{align*}
where the finiteness is due to the assumption. 

Now note that $t\mapsto \psi_t=\int_{(i-1)\Delta_n}^tp_N\mathcal S(t_i-s)\sigma_sdW_s$ is a martingale for $t\in[(i-1)\Delta_n,i\Delta_n]$. From \cite[Theorem 8.2, p.~109]{PZ2007} we deduce that 
the process $(\zeta_t)_{t\geq 0}$, with
\begin{align*}
    \zeta_t=  \left(\psi_t\right)^{\otimes 2}-\langle\langle \psi\rangle\rangle_t,
\end{align*}
is a martingale with respect to $(\mathcal{F}_t)_{t\geq 0}$, and hence, 
\begin{align*}
\mathbb E\left[(p_N\tilde{\Delta}_i^nY)^{\otimes 2}|\mathcal F_{(i-1)\Delta_n}\right]=&\mathbb E\left[(\psi_{i\Delta_n})^{\otimes 2}|\mathcal F_{(i-1)\Delta_n}\right]\\
= & \mathbb E\left[\langle\langle \psi\rangle\rangle_{i\Delta_n}|\mathcal F_{(i-1)\Delta_n}\right]\\
= &\int_{(i-1)\Delta_n}^{i\Delta_n}\mathbb E\left[p_N\mathcal S(t_i-s)\Sigma_s\mathcal S(t_i-s)^*p_N|\mathcal F_{t_{i-1}}\right]ds.
\end{align*}
So, $\mathbb E\left[\tilde Z_n^N(i)\right|\mathcal F_{t_{i-1}}]=0$. 
Moreover, for $j<i$, as each $\tilde Z_n^N(i)$ is $\mathcal F_{(i-1)\Delta_n}$ measurable and the conditional expectation commutes with bounded linear operators, we find by using the tower property of conditional expectation that
\begin{align*}
    \mathbb E\left[\langle \tilde Z_n^N(i), \tilde Z_n^N(j)\rangle_{\mathcal H} \right]=&\mathbb E\left[\mathbb E\left[\langle \tilde Z_n^N(i), \tilde Z_n^N(j)\rangle_{\mathcal H} |\mathcal F_{(i-1)\Delta_n}\right]\right]\\
=& \mathbb E\left[\langle \mathbb E\left[\tilde Z_n^N(i)|\mathcal F_{(i-1)\Delta_n}\right], \tilde Z_n^N(j)\rangle_{\mathcal H} \right]=0.
\end{align*}
Thus, we obtain 
 \begin{align*}
  \EE  \left[\left\Vert\sum_{i=1}^{\ul}\tilde Z_n^N(i) \right\Vert_{\mathcal H}^2 \right] \leq \sum_{i=1}^{\lfloor T/\Delta_n\rfloor} \EE\left[\Vert\tilde Z_n^N(i)\Vert_{\mathcal H}^2\right].
\end{align*}
Applying the  triangle and Bochner inequalities, the basic inequality $(a+b)^2\leq 2(a^2+b^2)$ and appealing to \eqref{Fourth Moment Increment inequality}, we find
\begin{align*}
    &\mathbb E\left[\Vert \tilde Z_n^N(i)\Vert_{\mathcal H}^2\right]\\
    &\leq 2\Delta_n^{-1}\EE\left[\Vert(p_N\tilde{\Delta}_i^nY)^{\otimes 2}\Vert_{\mathcal H}^2 + \left(\int_{(i-1)\Delta_n}^{i\Delta_n}\Vert p_N\mathcal S (i\Delta_n-s)\Sigma_s\mathcal S (i\Delta_n-s)^* p_N \Vert_{\mathcal H}ds \right)^2\right]\\
    &\leq 4 \int_{(i-1)\Delta_n}^{i\Delta_n}\mathbb E\left[\Vert p_N\sigma_s^{\mathcal S_n} \Vert_{L_{\text{HS}}(U,H)}^4\right]ds. 
\end{align*}
Summing up, we have
\begin{align*}
 \EE  \left[\sup_{t\in[0,T]}\left\Vert\sum_{i=1}^{\ul}\tilde Z_n^N(i) \right\Vert_{\mathcal H}^2 \right] 
   \leq 4 \sup_{n\in\mathbb N}\int_0^T\mathbb E\left[\Vert p_N\sigma_s^{\mathcal S_n} \Vert_{L_{\text{HS}}(U,H)}^4\right]ds,
\end{align*}
which converges to $0$ by Lemma \ref{L: Projection convergese uniformly on the range of volatility}.
Hence, as
$$\sup_{t\in[0,T]}\sup_{n\in \mathbb N}\sum_{m,k\geq N}\mathbb E\left[ \langle \tilde Z_t^{2,n}, e_k\otimes e_m\rangle_{\mathcal H}^2\right]= \|\sum_{i=1}^{\ul}\tilde Z_n^N(i)\|_{\mathcal H}^2,$$
the Theorem follows by Corollary \ref{C: Tightness Criterion in Hilbert spaces}.
\end{proof}

\subsection{Convergence of finite-dimensional distributions and remainders}

\subsubsection{Proof of the central limit theorem for realised covariation}

Before we can finally prove the central limit theorem for realised covariation, we need the following auxiliary Lemma:
\begin{lemma}\label{L: Adjusted Quadratic Variation is essentially Quadratic Variation after finite projection}
Let $(e_j)_{j\in\mathbb N}$ be an orthonormal basis of $H$ that is contained in $D(\mathcal A^*)$. Then for any $k,l\in\mathbb N$ we have
$$ \int_{(i-1)\Delta_n}^{i\Delta_n} \langle \Sigma_s^{\mathcal S_n} e_k,e_l\rangle ds= \int_{(i-1)\Delta_n}^{i\Delta_n} \langle \Sigma_s e_k,e_l\rangle ds+ \psi_n^{i,k,l},$$
where $\psi^{i,k,l}_n$ is a sequence of random variables such that,
$$
\mathbb E[|\psi^{i,k,l}_n|]\leq K \Delta_n^2,
$$
for a constant $K=K_2(k,l)>0$ independent of $i$.
\end{lemma}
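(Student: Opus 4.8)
The plan is to reduce the claimed identity to a pointwise-in-$s$ estimate on the integrand and then integrate. Writing $r=i\Delta_n-s\in[0,\Delta_n)$ for $s\in((i-1)\Delta_n,i\Delta_n]$ and recalling $\Sigma_s^{\mathcal S_n}=\mathcal S(r)\Sigma_s\mathcal S(r)^*$, I would first move the semigroup onto the test vectors, $\langle\Sigma_s^{\mathcal S_n}e_k,e_l\rangle=\langle\Sigma_s\,\mathcal S(r)^*e_k,\mathcal S(r)^*e_l\rangle$. Subtracting $\langle\Sigma_se_k,e_l\rangle$ and inserting the telescoping term $\langle\Sigma_se_k,\mathcal S(r)^*e_l\rangle$ gives
\begin{align*}
\langle(\Sigma_s^{\mathcal S_n}-\Sigma_s)e_k,e_l\rangle=\langle\Sigma_s(\mathcal S(r)^*-I)e_k,\mathcal S(r)^*e_l\rangle+\langle\Sigma_se_k,(\mathcal S(r)^*-I)e_l\rangle,
\end{align*}
so the whole defect is controlled by the two increments $(\mathcal S(r)^*-I)e_k$ and $(\mathcal S(r)^*-I)e_l$.

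The key input is the domain regularity of the basis. Since $(e_j)_{j\in\mathbb N}\subset D(\mathcal A^*)$ and $(\mathcal S(t)^*)_{t\ge 0}$ is a $C_0$-semigroup generated by $\mathcal A^*$, the fundamental identity $(\mathcal S(r)^*-I)e_k=\int_0^r\mathcal S(u)^*\mathcal A^*e_k\,du$ holds, whence $\|(\mathcal S(r)^*-I)e_k\|\le r\,\bar S\,\|\mathcal A^*e_k\|\le\Delta_n\,\bar S\,\|\mathcal A^*e_k\|$, where $\bar S:=\sup_{u\in[0,T]}\|\mathcal S(u)\|_{\text{op}}<\infty$ is the finite growth bound of the semigroup on the compact interval. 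Combining this with $\|\mathcal S(r)^*e_l\|\le\bar S$ and $\|\Sigma_s h\|\le\|\Sigma_s\|_{\text{op}}\|h\|\le\|\sigma_s\|_{L_{\text{HS}}(U,H)}^2\|h\|$, each bracket term is bounded by $\Delta_n\,\bar S^2\,(\|\mathcal A^*e_k\|+\|\mathcal A^*e_l\|)\,\|\sigma_s\|_{L_{\text{HS}}(U,H)}^2$. This yields the pointwise bound $|\langle(\Sigma_s^{\mathcal S_n}-\Sigma_s)e_k,e_l\rangle|\le C(k,l)\,\Delta_n\,\|\sigma_s\|_{L_{\text{HS}}(U,H)}^2$, with a constant depending only on $k,l$ (through $\|\mathcal A^*e_k\|,\|\mathcal A^*e_l\|$) and on the semigroup, and in particular not on $i$ or $n$.

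I would then set $\psi_n^{i,k,l}:=\int_{(i-1)\Delta_n}^{i\Delta_n}\langle(\Sigma_s^{\mathcal S_n}-\Sigma_s)e_k,e_l\rangle\,ds$, which is precisely the defect in the asserted identity. Integrating the pointwise bound over the interval and taking expectations gives $\EE[|\psi_n^{i,k,l}|]\le C(k,l)\,\Delta_n\int_{(i-1)\Delta_n}^{i\Delta_n}\EE[\|\sigma_s\|_{L_{\text{HS}}(U,H)}^2]\,ds$. Provided the volatility satisfies a uniform-in-$s$ second-moment bound $\sup_{s\in[0,T]}\EE[\|\sigma_s\|_{L_{\text{HS}}(U,H)}^2]<\infty$ (arranged by the localisation in force for the central limit theorems), the inner integral is $\le K'\Delta_n$ uniformly in $i$, which delivers the stated bound $\EE[|\psi_n^{i,k,l}|]\le K\Delta_n^2$ with $K=K_2(k,l)$.

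The only genuinely delicate point is producing the extra power of $\Delta_n$: it comes entirely from the $D(\mathcal A^*)$-smoothing estimate $\|(\mathcal S(r)^*-I)e_k\|=\mathcal O(r)=\mathcal O(\Delta_n)$, which is exactly what separates basis vectors lying in the domain of the generator from generic elements of $H$. Everything else is bookkeeping, with the one caveat of keeping the constant uniform in $i$. If only the integral control $\int_0^T\EE[\|\sigma_s\|_{L_{\text{HS}}(U,H)}^2]\,ds<\infty$ were available rather than a uniform-in-$s$ moment, the same computation still yields the aggregated estimate $\sum_i\EE[|\psi_n^{i,k,l}|]=\mathcal O(\Delta_n)$, which is in any case all the subsequent application needs, since there one only uses that $\Delta_n^{-1/2}\sum_i\psi_n^{i,k,l}\to0$.
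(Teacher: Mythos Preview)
Your argument is correct and follows essentially the same route as the paper: both move the semigroup onto the test vectors, use $e_j\in D(\mathcal A^*)$ to get $\|(\mathcal S(r)^*-I)e_j\|=\mathcal O(\Delta_n)$, and bound against $\|\Sigma_s\|_{\text{op}}$; the only cosmetic difference is that the paper fully expands both factors via the integral representation (yielding three remainder terms) whereas you telescope into two. Your caveat about the uniform-in-$s$ moment condition is well taken---the paper's own choice $K=\sup_{s,t}\mathbb E[\|\sigma_s\mathcal S(t)^*\mathcal A^*e_j\|^2]$ tacitly assumes the same, and as you correctly observe the aggregated bound $\sum_i\mathbb E[|\psi_n^{i,k,l}|]=\mathcal O(\Delta_n)$ (which follows from the integral localisation alone) is all that the downstream applications actually use.
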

\begin{proof}
Since 
$\mathcal S(i\Delta_n-s)^*e_k=\int_s^{i\Delta_n} \mathcal S(u-s)^*\mathcal A^* e_kdu+e_k$ for all $k\in\mathbb N$, we have that,
\begin{align*}
    \int_{(i-1)\Delta_n}^{i\Delta_n}& \langle \Sigma_s^{\mathcal S_n} e_k,e_l\rangle ds\\
   &= \int_{(i-1)\Delta_n}^{i\Delta_n} \langle \Sigma_s\int_s^{i\Delta_n} \mathcal S(u-s)^*\mathcal A^* e_kdu+\Sigma_se_k,\int_s^{i\Delta_n} \mathcal S(u-s)^*\mathcal A^* e_l du+e_l\rangle ds\\
   &= \int_{(i-1)\Delta_n}^{i\Delta_n} \langle \Sigma_s e_k,e_l\rangle + \langle \Sigma_s\int_s^{i\Delta_n} \mathcal S(u-s)^*\mathcal A^* e_kdu,e_l\rangle\\
   &\qquad\qquad\qquad+ \langle \Sigma_se_k,\int_s^{i\Delta_n} \mathcal S(u-s)^*\mathcal A^* e_l du\rangle\\
   &\qquad\qquad\qquad +  \langle \Sigma_s\int_s^{i\Delta_n} \mathcal S(u-s)^* \mathcal A^* e_kdu,\int_s^{i\Delta_n} \mathcal S(u-s)^*\mathcal A^* e_l du\rangle ds.
\end{align*}
It holds 
by Bochner's inequality
\begin{align*}
    \mathbb E\left[ \left|\langle \Sigma_se_k,\int_s^{i\Delta_n} \mathcal S(u-s)^*\mathcal A^* e_l du\rangle\right| \right]\leq & \int_s^{i\Delta_n} \mathbb E\left[ \| \Sigma_s\mathcal S(u-s)^* \mathcal A^* e_l \| \right]du\\
    \leq & \Delta_n \sup_{t,s\in[0,T]} \mathbb E\left[ \|\Sigma_s \mathcal S(t)^*\mathcal A^* e_l\|\right],
\end{align*}
and analogous by swapping $k$ and $l$
\begin{align*}
    \mathbb E\left[ \left|\langle \Sigma_se_l,\int_s^{i\Delta_n} \mathcal S(u-s)^* \mathcal A^* e_k du\rangle\right| \right]\leq & \int_s^{i\Delta_n} \mathbb E\left[ \| \Sigma_s\mathcal S(u-s)^*\mathcal A^* e_k \| \right]du\\
    \leq & \Delta_n \sup_{t,s\in[0,T]} \mathbb E\left[ \|\Sigma_s \mathcal S(t)^*\mathcal A^* e_k\|\right].
\end{align*}
Moreover 
\begin{align*}
    &\mathbb E\left[ \left|\langle \Sigma_s\int_s^{i\Delta_n} \mathcal S(u-s)^*\mathcal A^* e_k du,\int_s^{i\Delta_n} \mathcal S(u-s)^*\mathcal A^* e_l du\rangle\right| \right]\\
    &\qquad=  \int_s^{i\Delta_n}\int_s^{i\Delta_n}  \mathbb E\left[|\langle \Sigma_s \mathcal S(u-s)^*\mathcal A^* e_k , \mathcal S(v-s)^*\mathcal A^* e_l \rangle |\right]dudv\\
    &\qquad\leq \Delta_n^2 \sup_{t,s\in[0,T],j=k,l} \mathbb E\left[ \|\sigma_s \mathcal S(t)^*\mathcal A^* e_j\|^2\right].
\end{align*}
By the choice $K=\sup_{t,s\in[0,T],j=k,l} \mathbb E\left[ \|\sigma_s \mathcal S(t)^*\mathcal A^* e_j\|^2\right]$, the assertion follows. 
\end{proof}

We can now prove an auxiliary central limit theorem, which essentially does not rely on the spatial regularity condition in Assumption \ref{As: Spatial regularity}.
\begin{theorem}\label{T: Central limit theorem for covariation minus its quadratic variations}
Let Assumption \ref{As: Weakened Assumption for the CLT for the SARCV} hold. We have that $ \tilde Z^{n,2} \stackrel{\mathcal L-s}{\Rightarrow} (\mathcal N(0,\Gamma_t))_{t\in[0,T]}$.
\end{theorem}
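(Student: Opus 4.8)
The plan is to obtain the stable convergence in $\mathcal D([0,T],\mathcal H)$ by combining two ingredients that are already essentially in place: tightness of the sequence, and stable convergence of all finite-dimensional distributions, the latter being reducible to a finite-dimensional central limit theorem for semimartingales. Tightness of $(\tilde Z^{n,2})_{n\in\mathbb N}$ in $\mathcal D([0,T],\mathcal H)$ is precisely Theorem \ref{T: Tightnes for realised covariation} (temporal tightness via Theorem \ref{T: Verfication of Aldous' condition} together with the spatial tightness established there). By the tightness-plus-finite-dimensional-convergence criterion proved in Theorem \ref{T: stable convergence of finite-dimensional distributions yields stable convergence}, it then suffices to show that for every $d\in\mathbb N$ the $\mathbb R^{d^2}$-valued process $(\langle \tilde Z^{n,2}_t, e_k\otimes e_l\rangle_{\mathcal H})_{k,l=1,\dots,d}$ converges stably in law, as a process, to the corresponding projection of $\mathcal N(0,\Gamma_t)$, where $(e_j)_{j\in\mathbb N}\subset D(\mathcal A^*)$ is the fixed orthonormal basis.

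First I would reduce the finite-dimensional projections to genuine semimartingale realised covariations. Fixing $k,l\le d$ and applying Lemma \ref{L: Reduction to martingales is possible} in the case $m=2$, $k=1$, $m_1=2$, one gets, uniformly in $t$ and in $L^1$,
$$\langle SARCV^n_t, e_k\otimes e_l\rangle_{\mathcal H}=\sum_{i=1}^{\ul}\langle \Delta_i^n S, e_k\rangle\langle \Delta_i^n S, e_l\rangle+\mathcal O(\Delta_n),$$
with $S_t=\int_0^t\alpha_s\,ds+\int_0^t\sigma_s\,dW_s$, while Lemma \ref{L: Adjusted Quadratic Variation is essentially Quadratic Variation after finite projection} replaces the adjusted compensator by the unadjusted one, summing the per-summand bound $K\Delta_n^2$ over $\lfloor t/\Delta_n\rfloor$ terms:
$$\sum_{i=1}^{\ul}\int_{(i-1)\Delta_n}^{i\Delta_n}\langle \Sigma_s^{\mathcal S_n}e_k,e_l\rangle\,ds=\int_0^t\langle \Sigma_s e_k,e_l\rangle\,ds+\mathcal O(\Delta_n).$$
Both errors are of order $\Delta_n$ in $L^1$ uniformly in $t$, hence vanish u.c.p.\ after multiplication by $\Delta_n^{-1/2}$. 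Consequently $\langle \tilde Z^{n,2}_t, e_k\otimes e_l\rangle_{\mathcal H}$ shares its stable limit with the $(k,l)$-component of
$$\left(\Delta_n^{-\frac12}\sum_{i=1}^{\ul}\Big(\langle \Delta_i^n S, e_k\rangle\langle \Delta_i^n S, e_l\rangle-\int_{(i-1)\Delta_n}^{i\Delta_n}\langle \Sigma_s e_k,e_l\rangle\,ds\Big)\right)_{k,l=1,\dots,d}.$$
This is exactly the normalised difference of realised and predictable quadratic covariation of the $d$-dimensional continuous It\^o semimartingale $S^d_t=(\langle S_t,e_1\rangle,\dots,\langle S_t,e_d\rangle)$, whose spot covariance is $c_s^{kl}=\langle \Sigma_s e_k,e_l\rangle$. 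Under the localised Assumption \ref{As: Weakened Assumption for the CLT for the SARCV} the hypotheses of Theorem 5.4.2 in \cite{JacodProtter2012} hold, and that theorem provides stable convergence to a process that is, conditionally on $\mathcal F$, centred Gaussian with independent increments and conditional covariance $\int_0^t(c_s^{kk'}c_s^{ll'}+c_s^{kl'}c_s^{lk'})\,ds$ between the $(k,l)$- and $(k',l')$-components, realisable on a very good filtered extension.

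The step I expect to need the most care is matching this finite-dimensional limit with the abstract operator $\Gamma_t$ and verifying the identification is consistent across all $d$, so that the projected limits are the marginals of a single $\mathcal H$-valued mixed Gaussian field. For $B=e_k\otimes e_l$ one computes $\Sigma_s B\Sigma_s=(\Sigma_s e_k)\otimes(\Sigma_s e_l)$ and $\Sigma_s B^*\Sigma_s=(\Sigma_s e_l)\otimes(\Sigma_s e_k)$, so that, using $\langle h\otimes g,h'\otimes g'\rangle_{\mathcal H}=\langle h,h'\rangle\langle g,g'\rangle$,
$$\langle \Gamma_t(e_k\otimes e_l),e_{k'}\otimes e_{l'}\rangle_{\mathcal H}=\int_0^t\big(\langle \Sigma_s e_k,e_{k'}\rangle\langle \Sigma_s e_l,e_{l'}\rangle+\langle \Sigma_s e_k,e_{l'}\rangle\langle \Sigma_s e_l,e_{k'}\rangle\big)\,ds,$$
which coincides with the covariance delivered by Theorem 5.4.2. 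Since this computation does not depend on $d$, the finite-dimensional stable limits are restrictions of one Gaussian field with conditional covariance $\Gamma_t$; feeding this, together with the tightness, into Theorem \ref{T: stable convergence of finite-dimensional distributions yields stable convergence} yields $\tilde Z^{n,2}\stackrel{\mathcal L-s}{\Rightarrow}(\mathcal N(0,\Gamma_t))_{t\in[0,T]}$.
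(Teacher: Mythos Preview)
Your proposal is correct and follows essentially the same route as the paper: tightness from Theorem~\ref{T: Tightnes for realised covariation}, reduction of the projections $\langle\tilde Z^{n,2}_t,e_k\otimes e_l\rangle$ to semimartingale realised covariations via Lemmas~\ref{L: Reduction to martingales is possible} and~\ref{L: Adjusted Quadratic Variation is essentially Quadratic Variation after finite projection}, application of Theorem~5.4.2 in \cite{JacodProtter2012}, verification that the resulting conditional covariance matches $\langle\Gamma_t(e_k\otimes e_l),e_{k'}\otimes e_{l'}\rangle_{\mathcal H}$, and the conclusion via Theorem~\ref{T: stable convergence of finite-dimensional distributions yields stable convergence}.

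One point the paper addresses explicitly and you omit: after identifying the stable limit $Z$ in $\mathcal D([0,T],\mathcal H)$ via its basis projections, the paper separately verifies that $Z$ has \emph{continuous} paths in $\mathcal H$. This uses the uniform-in-$n$ tail estimate \eqref{slightly better than tightness for SARCV-centered by conditional expectation} from Theorem~\ref{T: Tightnes for realised covariation}, which passes to the limit and allows one to sandwich $\|Z_t-Z_s\|_{\mathcal H}$ between arbitrarily small $\|P_d Z_t\|_{\mathcal H}+\|P_d Z_s\|_{\mathcal H}$ and the continuous finite-dimensional part $\|Z_t(d)-Z_s(d)\|$. Continuity of each real-valued projection $\langle Z,e_k\otimes e_l\rangle$ alone does not immediately yield continuity in the Hilbert--Schmidt norm, so this step is needed to fully identify the limit with the continuous process $(\mathcal N(0,\Gamma_t))_{t\in[0,T]}$. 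It is a short addendum rather than a structural gap in your argument.
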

\begin{proof} 
Let $(e_i)_{i\in\mathbb N}$ be an orthonormal basis of $H$ that is contained in the domain $D(\mathcal A^*)$ of the adjoint of the generator $\mathcal A$. 
Since tightness of $(\mathbb P_{\tilde{Z}^{n,2}})_{n\in\mathbb N}$ in the Skorokhod topology is guaranteed by Theorems \ref{T: Verfication of Aldous' condition} and \ref{T: Tightnes for realised covariation} in combination with Theorem \ref{T: Aldous Tightness Theorem},
it is enough to show the stable convergence in law as a process of the finite-dimensional distributions $\tilde{Z}_t^{n,2}(d):=(\langle \tilde{Z}_t^{n,2},e_k\otimes e_l\rangle)_{k,l=1,\ldots,d}$.

Therefore, for $k,l=1,\ldots,d$ 
we find by Lemma \ref{L: Reduction to martingales is possible} and Lemma \ref{L: Adjusted Quadratic Variation is essentially Quadratic Variation after finite projection} (and using the same notation) that
\begin{align*}
   \langle\tilde{Z}_t^{n,2} , e_k\otimes e_l\rangle_{\mathcal H} = & \Delta_n^{-\frac 12} \sum_{i=1}^{\ul} \left(\langle\tilde{\Delta}_i^n Y, e_k\rangle\langle\tilde{\Delta}_i^n Y, e_l\rangle- \int_{(i-1)\Delta_n}^{i\Delta_n}\langle \Sigma_s^{\mathcal S_n} e_k,e_l\rangle ds\right)\\
    = &  \Delta_n^{-\frac 12} \sum_{i=1}^{\ul}  \left(\langle \Delta_i^n S, e_k\rangle\langle \Delta_i^n S, e_l\rangle- \int_{(i-1)\Delta_n}^{i\Delta_n}\langle \Sigma_s e_k,e_l\rangle ds\right)\\
    &\qquad\qquad+\Delta_n^{-\frac 12} \sum_{i=1}^{\ul} \left(\zeta_n^{i,k,l}+\psi_n^{i,k,l}\right).
\end{align*}
The second summand converges to $0$ in probability uniformly on compacts, which is why we have that the stable limit of $ \langle\tilde{Z}_t^{n,2}e_k,e_l\rangle$ 
is the same as the one of 
$$\Delta_n^{-\frac 12} \sum_{i=1}^{\ul}  \left(\langle \Delta_i^n S, e_k\rangle\langle \Delta_i^n S, e_l\rangle- \int_{(i-1)\Delta_n}^{i\Delta_n}\langle \Sigma_s e_k,e_l\rangle ds\right).$$
The latter is a component of the difference of realised quadratic covariation and the quadratic covariation of the $d$-dimensional continuous local martingale $S_t^d=(\langle S_t,e_1\rangle,...,\langle S_t, e_d\rangle)$. 
Therefore, 
$\tilde{Z}^{n,2}(d)=(\langle \tilde Z^{n},e_k,e_l\rangle)_{k,l=1,...,d}$
converges by Theorem 5.4.2 from \cite{JacodProtter2012} stably as a process to a process that is defined on a very good filtered extension $(\tilde{\Omega},\tilde{\mathcal F},\tilde{\mathcal F}_t,\tilde{\mathbb P})$ of $(\Omega,\mathcal F,\mathcal F_t,\mathbb P)$. This limiting process is conditionally on $\mathcal F$ a centered Gaussian which can be realised on the very good filtered extension as
\begin{align*}
  N_{k,l} = \frac 1{\sqrt 2}\sum_{c,b=1}^d\int_0^t \left(\hat{\sigma}_{kl,bc}(s)+\hat{\sigma}_{lk,bc}(s)\right)dB^{cb}_s.
\end{align*}
Here, $\hat{\sigma}(s)$ is a $d^2\times d^2$-matrix, being the  square-root of the matrix $\hat c(s)$ with entries $\hat c_{kl,k'l'}(s)= \langle \Sigma_s e_k,e_{k'}\rangle \langle \Sigma_s e_l,e_{l'}\rangle $. Furthermore, $B$ is a matrix of independent Brownian motions. 
This corresponds to the covariance $\Gamma_t$, as by the It{\^o} isometry we obtain
\begin{align*}
    \mathbb E[ N_{k,l} N_{k',l'}|\mathcal F]
    &=\frac 12\sum_{c,b=1}^d \int_0^t \left(\hat{\sigma}_{kl,bc}(s)+\hat{\sigma}_{lk,bc}(s)\right)\left(\hat{\sigma}_{k'l',bc}(s)+\hat{\sigma}_{l'k',bc}(s)\right)ds \\
     &=\frac 12\sum_{c,b=1}^d \int_0^t \left(\hat{\sigma}_{kl,bc}(s)\hat{\sigma}_{k'l',bc}(s)+\hat{\sigma}_{kl,bc}(s)\hat{\sigma}_{l'k',bc}(s) \right. \\
     &\qquad+\left.\hat{\sigma}_{lk,bc}(s)\hat{\sigma}_{k'l',bc}(s)+\hat{\sigma}_{lk,bc}(s)\hat{\sigma}_{l'k',bc}(s)\right)ds \\
       &=\frac 12 \int_0^t \hat c_{kl,k'l'}(s)+\hat c_{kl,l'k'}(s)+\hat c_{lk,k'l'}(s)+\hat c_{lk,l'k'}(s)ds \\
       &=\frac 12 \int_0^t \langle \Sigma_s e_k,e_{k'}\rangle\langle \Sigma_s e_l,e_{l'}\rangle+\langle \Sigma_s e_k,e_{l'}\rangle\langle \Sigma_s e_l,e_{k'}\rangle \\
       &\qquad+\langle \Sigma_s e_l,e_{k'}\rangle\langle \Sigma_s e_k,e_{l'}\rangle+\langle \Sigma_s e_l,e_{l'}\rangle\langle \Sigma_s e_k,e_{k'}\rangle ds \\
        &=\int_0^t \langle \Sigma_s e_k,e_{k'}\rangle\langle \Sigma_s e_l,e_{l'}\rangle+\langle \Sigma_s e_k,e_{l'}\rangle\langle \Sigma_s e_l,e_{k'}\rangle ds \\
     &=\int_0^t \left(\langle \Sigma_s (e_{k'}\otimes e_{l'})\Sigma_s,e_k\otimes e_l\rangle_{\mathcal H} +\langle \Sigma_s (e_{l'}\otimes e_{k'})\Sigma_s,e_k\otimes e_l\rangle_{\mathcal H}\right)ds\\
     &=\langle \Gamma_t e_{k'}\otimes e_{l'},e_k\otimes e_l\rangle_{\mathcal H}.
\end{align*}
As now all finite-dimensional distributions converge stably and the sequence of measures is tight, we obtain by Theorem \ref{T: stable convergence of finite-dimensional distributions yields stable convergence} that the convergence is indeed stable to a process $Z$ in the Skorokhod space, whose corresponding finite-dimensional components $Z(d):=(\langle Z,e_k,e_l\rangle)_{k,l=1,...,d}$ 
are conditionally on $\mathcal F$ a centred Gaussian process.
It is itself conditionally centred Gaussian. Moreover, the process is continuous as well, since for $Z$ in $\mathcal D([0,T],\mathcal H)$ we have
    \begin{align*}
       \| Z_t-Z_s\|_{\mathcal H}\leq \|P_d Z_t\|_{\mathcal H}+\|Z_t(d)-Z_s(d)\|_{\mathbb R^{d\times d}}+\|P_d Z_s\|_{\mathcal H}.
    \end{align*}
  The outer terms can be made arbitrarily small since by \eqref{slightly better than tightness for SARCV-centered by conditional expectation} it holds
    $$ \lim_{d\to\infty}\sup_{t\in[0,T]}\mathbb E\left[\|P_d\tilde Z_t^{2,n}\|_{\mathcal H}^2\right]=0.$$  
    The middle term converges for each fixed $d$ to $0$ as $|t-s|\to 0$, as $P_d Z$ is continuous as an It{\^o} integral. 
The proof is complete.
\end{proof}
 Now we are in the position to prove the central limit theorems \ref{T:Infeasible Central Limit Theorem} and  \ref{T: Central limit theorem for functionals of the quadratic covariation} for realised covariations.
 \begin{proof}[Proof of Theorem \ref{T:Infeasible Central Limit Theorem}]
 It is clear that $\Delta_n^{-\frac 12}\int_{\ul}^t\Sigma_s ds$ converges to 0 in the $u.c.p.$-sense, and
 since 
 $$
 \tilde X_t^n = \tilde Z_t^{n,2}+\Delta_n^{-\frac 12}\sum_{i=1}^{\ul}\int_{(i-1)\Delta_n}^{i\Delta_n} \left(\Sigma_s^{\mathcal S_n}-\Sigma_s \right)ds+\Delta_n^{-\frac 12}\int_{\ul}^t\Sigma_s ds,
 $$ 
 we have to show 
 \begin{equation*}
     \Delta_n^{-\frac 12}\sum_{i=1}^{\ul}\int_{(i-1)\Delta_n}^{i\Delta_n} \left(\Sigma_s^{\mathcal S_n}-\Sigma_s\right) ds\stackrel{u.c.p.}{\longrightarrow} 0.
 \end{equation*}
 Recall that $P_N$ denotes the projection onto $\overline{\{ e_k\otimes e_l: k,l\geq N\}}$, where again $(e_j)_{j\in\mathbb N}$ is an orthonormal basis of $H$ that is contained in $D(\mathcal A^*)$. Notice that for each $A\in \mathcal H$
\begin{align*}
    \|(I-P_N)A\|_{\mathcal H}&=\|\sum_{k,l\leq N-1} \langle A, e_k\otimes e_l\rangle_{\mathcal H} e_k\otimes e_l\|_{\mathcal H} \\
    &\leq \sum_{k,l\leq N-1} |\langle A, e_k\otimes e_l\rangle_{\mathcal H}| \|e_k\otimes e_l\|_{\mathcal H}\\
    &= \sum_{k,l\leq N-1} |\langle A, e_k\otimes e_l\rangle_{\mathcal H}|\\
    &= \sum_{k,l\leq N-1} |\langle A e_k,e_l\rangle|.
\end{align*}
Then, by Lemma \ref{L: Adjusted Quadratic Variation is essentially Quadratic Variation after finite projection}
 \begin{align}\label{finite dimensional projections of adjusted quadratic variations converge}
     & \mathbb E\left[\sup_{t\in [0,T]}\left\|(I-P_N)  \Delta_n^{-\frac 12}\sum_{i=1}^{\ul}\int_{(i-1)\Delta_n}^{i\Delta_n} \left(\Sigma_s^{\mathcal S_n}-\Sigma_s\right) ds\right\|_{\mathcal H}\right]\notag\\
     &\quad\leq  \mathbb E\left[\sup_{t\in [0,T]}\sum_{k,l=1}^{N-1}\left|\Delta_n^{-\frac 12}\sum_{i=1}^{\ul}\int_{(i-1)\Delta_n}^{i\Delta_n} \langle \left(\Sigma_s^{\mathcal S_n}-\Sigma_s\right)e_k,e_l\rangle ds\right|\right]\notag\\
     &\quad=   \mathbb E\left[\sup_{t\in [0,T]}\sum_{k,l=1}^{N-1}\left|\Delta_n^{-\frac 12}\sum_{i=1}^{\ul}\psi_{i,k,l}\right|\right]\notag\\
     &\quad\leq  \sum_{k,l=1}^{N-1}\Delta_n^{-\frac 12}\sum_{i=1}^{\ulT} K(k,l)\Delta_n^2\notag\\
     &\quad\leq  \left(\sum_{k,l=1}^{N-1}K(k,l)\right)T\Delta_n^{\frac 12},
 \end{align}
which converges to $0$ as $n\to \infty$. 
Thus, for all $N\in\mathbb N$,
 $$(I-P_N)  \Delta_n^{-\frac 12}\sum_{i=1}^{\ul}\int_{(i-1)\Delta_n}^{i\Delta_n} \left(\Sigma_s^{\mathcal S_n}-\Sigma_s\right) ds\stackrel{u.c.p.}{\longrightarrow} 0.$$
 Further, we have, by using   the triangle and Bochner inequalities,  
 \begin{align*}
   &  \mathbb E\left[\sup_{t\in [0,T]}\left\|P_N  \Delta_n^{-\frac 12}\sum_{i=1}^{\ul}\int_{(i-1)\Delta_n}^{i\Delta_n} \left(\Sigma_s^{\mathcal S_n}-\Sigma_s\right) ds\right\|_{\mathcal H}\right]\\
     \leq &  \Delta_n^{-\frac 12}\int_0^T\mathbb E\left[\left\|P_N \left(\Sigma_s^{\mathcal S_n}-\Sigma_s \right)\right\|_{\mathcal H}\right]ds\\
       \leq &  \Delta_n^{-\frac 12}\int_0^T\mathbb E\left[\left\|P_N (\mathcal S(\lfloor s/\Delta_n\rfloor \Delta_n-s)-I)\Sigma_s \mathcal S(\lfloor s/\Delta_n\rfloor \Delta_n-s)^*\right\|_{\mathcal H}\right.\\
       &\qquad\qquad\qquad\left.+\left\|P_N \Sigma_s (\mathcal S(\lfloor s/\Delta_n\rfloor \Delta_n-s)^*-I)\right\|_{\mathcal H}\right]ds.
        \end{align*}
Now, estimating further using the Cauchy-Schwarz inequality along with the fact that $\|AB\|_{\mathcal H}\leq \|A\|_{\text{op}}\|B\|_{L_{\text{HS}}(U,H)}$ for any Hilbert Schmidt operator $B:H\to U$ and continuous linear operator $A:U\to H$,
        \begin{align*}
        &  \mathbb E\left[\sup_{t\in [0,T]}\left\|P_N  \Delta_n^{-\frac 12}\sum_{i=1}^{\ul}\int_{(i-1)\Delta_n}^{i\Delta_n} \left(\Sigma_s^{\mathcal S_n}-\Sigma_s\right) ds\right\|_{\mathcal H}\right]\\
       \leq &  \Delta_n^{-\frac 12}\int_0^T\mathbb E\left[\|p_N (\mathcal S(\lfloor s/\Delta_n\rfloor\Delta_n-s)-I)\sigma_s \|_{\text{op}}\right.\\
       &\qquad\qquad\left.\left(\|p_N\sigma_s \|_{L_{\text{HS}}(U,H)}+\|p_N\mathcal S(\lfloor s/\Delta_n\rfloor\Delta_n-s)\sigma_s \|_{L_{\text{HS}}(U,H)}\right)\right]ds\\ 
       \leq &  \left(\int_0^T\mathbb E\left[\|\Delta_n^{-\frac 12}  (\mathcal S(\lfloor s/\Delta_n\rfloor\Delta_n-s)-I)\sigma_s \|_{\text{op}}^2 \right]ds\right)^{\frac 12} \\
       &\qquad\qquad\times\left(\int_0^T\sqrt 2\mathbb E\left[\|p_N\sigma_s \|_{L_{\text{HS}}(U,H)}^2+\|p_N\mathcal S(\lfloor s/\Delta_n\rfloor\Delta_n-s)\sigma_s \|_{L_{\text{HS}}(U,H)}^2 \right]ds\right)^{\frac 12}.
 \end{align*}
 The first factor is finite by Assumption \ref{As: Spatial regularity}, whereas the second one converges to 0 as $N\to\infty$ by Lemma \ref{L: Projection convergese uniformly on the range of volatility}. Observe that we used that the Lemma holds also in the special case $\mathcal S(t)=I$ on $H$ for all $t\geq 0$. Thus, we have
 $$\lim_{N\to\infty}\sup_{n\in\mathbb N}\mathbb E\left[\sup_{t\in [0,T]}\left\|P_N  \Delta_n^{-\frac 12}\sum_{i=1}^{\ul}\int_{(i-1)\Delta_n}^{i\Delta_n} \left(\Sigma_s^{\mathcal S_n}-\Sigma_s\right) ds\right\|_{\mathcal H}\right]=0.$$
 Therefore, we can find for each $\delta>0$ an $N_{\delta}\in\mathbb N$ such that  for all $N\geq N_{\delta}$ 
 \begin{align*}
  & \mathbb E\left[\sup_{t\in [0,T]}\left\|  \Delta_n^{-\frac 12}\sum_{i=1}^{\ul}\int_{(i-1)\Delta_n}^{i\Delta_n} \left(\Sigma_s^{\mathcal S_n}-\Sigma_s\right) ds\right\|_{\mathcal H}\right]\\
  \leq & \mathbb E\left[\sup_{t\in [0,T]}\left\|(I-P_N)  \Delta_n^{-\frac 12}\sum_{i=1}^{\ul}\int_{(i-1)\Delta_n}^{i\Delta_n} \left(\Sigma_s^{\mathcal S_n}-\Sigma_s\right) ds\right\|_{\mathcal H}\right]\\
  & \qquad\qquad+ \sup_{n\in\mathbb N}\mathbb E\left[\sup_{t\in [0,T]}\left\|P_N  \Delta_n^{-\frac 12}\sum_{i=1}^{\ul}\int_{(i-1)\Delta_n}^{i\Delta_n} \left(\Sigma_s^{\mathcal S_n}-\Sigma_s\right)ds\right\|_{\mathcal H}\right]\\
  \leq & \mathbb E\left[\sup_{t\in [0,T]}\left\|(I-P_N)  \Delta_n^{-\frac 12}\sum_{i=1}^{\ul}\int_{(i-1)\Delta_n}^{i\Delta_n} \left(\Sigma_s^{\mathcal S_n}-\Sigma_s\right) ds\right\|_{\mathcal H}\right]+\delta\\
  &\to \delta.
 \end{align*}
 As this holds for all $\delta >0$, we obtain that
 $$\mathbb E\left[\sup_{t\in [0,T]}\left\|  \Delta_n^{-\frac 12}\sum_{i=1}^{\ul}\int_{(i-1)\Delta_n}^{i\Delta_n} \left(\Sigma_s^{\mathcal S_n}-\Sigma_s\right) ds\right\|_{\mathcal H}\right]\to 0,$$
 as $n\to\infty$, and the assertion follows.
 \end{proof}

\begin{proof}[Proof of Theorem \ref{T: Central limit theorem for functionals of the quadratic covariation}]
For any $B\in\mathcal H$, $$\langle \tilde X_t^n ,B\rangle_{\mathcal H} = \langle \tilde Z_t^{n,2},B\rangle_{\mathcal H}+\langle \Delta_n^{-\frac 12}\sum_{i=1}^{\ul}\int_{(i-1)\Delta_n}^{i\Delta_n} \left(\Sigma_s^{\mathcal S_n}-\Sigma_s\right) ds , B\rangle_{\mathcal H}.$$ 
Since the stable convergence with respect to the Hilbert-Schmidt norm as proven in Theorem \ref{T: Central limit theorem for covariation minus its quadratic variations} implies the stable convergence in law with respect to the (analytically) 
weak topology,
we only have to show 
 \begin{equation*}
     \Delta_n^{-\frac 12}\sum_{i=1}^{\ul}\int_{(i-1)\Delta_n}^{i\Delta_n} \langle\left(\Sigma_s^{\mathcal S_n}-\Sigma_s\right),B\rangle_{\mathcal H} ds\stackrel{u.c.p.}{\longrightarrow} 0.
 \end{equation*}
 We can argue componentwise, which is why we assume without loss of generality that $B=h\otimes g$ for $h,g\in F^{\mathcal S^*}_{1/2}$. 
We split into two terms as follows:
\begin{align*}
    & \Delta_n^{-\frac 12}\sum_{i=1}^{\ul}\int_{(i-1)\Delta_n}^{i\Delta_n} \langle (\Sigma_s^{\mathcal S_n}-\Sigma_s) h,g\rangle ds\\
     &\qquad=\Delta_n^{-\frac 12}\sum_{i=1}^{\ul}\int_{(i-1)\Delta_n}^{i\Delta_n} \langle ((\mathcal S(i\Delta_n-s)-I)\Sigma_s\mathcal S(i\Delta_n-s)^*) h,g\rangle ds\\
     &\qquad\qquad +\Delta_n^{-\frac 12}\sum_{i=1}^{\ul}\int_{(i-1)\Delta_n}^{i\Delta_n}\langle (\Sigma_s (\mathcal S(i\Delta_n-s)-I)^*) h,g\rangle ds\\
     &\qquad=(1)_n+(2)_n.
\end{align*}
We only show the convergence for $(1)_n$ since the argument for $(2)_n$ is analogous. It holds
\begin{align*}
   (1)_n = & \Delta_n^{-\frac 12}\sum_{i=1}^{\ul}\int_{(i-1)\Delta_n}^{i\Delta_n} \langle (I-p_N)(\Sigma_s\mathcal S(i\Delta_n-s)^*) h,(\mathcal S(i\Delta_n-s)-I)^*g\rangle ds\\
   & +\Delta_n^{-\frac 12}\sum_{i=1}^{\ul}\int_{(i-1)\Delta_n}^{i\Delta_n} \langle p_N(\Sigma_s\mathcal S(i\Delta_n-s)^*) h,(\mathcal S(i\Delta_n-s)-I)^*g\rangle ds\\
   = & (1.1)_{n,N}+(1.2)_{n,N},
\end{align*}
where again we denoted by $p_N$ the projection onto $\overline{\{e_j: j\geq N\}}$ for an orthonormal basis $(e_j)_{j\in\mathbb N}$ of $H$
 that is contained in $D(\mathcal A)$. We have 
 \begin{align*}
    \mathcal S(t-s) e_i- e_i= \int_s^t \mathcal S(u-s)\mathcal A e_i du,
\end{align*}
and therefore it holds for the first summand that,
\begin{align}\label{Auxiliary result: functionals of finite dimensional projections of the remainder of the SARCV discretisation converge very fast to 0}
& \sup_{t\in[0,T]}|(1.1)_{n,N}|\notag\\
   &= \sup_{t\in[0,T]}|\Delta_n^{-\frac 12}\sum_{i=1}^{\ul}\int_{(i-1)\Delta_n}^{i\Delta_n} \langle (I-p_N)(\Sigma_s\mathcal S(i\Delta_n-s)^*) h,(\mathcal S(i\Delta_n-s)-I)^*g\rangle ds|\notag\\
     &= \sup_{t\in[0,T]}|\Delta_n^{-\frac 12}\sum_{i=1}^{\ul}\int_{(i-1)\Delta_n}^{i\Delta_n} \sum_{j=1}^{N-1}\langle (\Sigma_s\mathcal S(i\Delta_n-s)^*) h, e_j\rangle \langle e_j, (\mathcal S(i\Delta_n-s)-I)^*g\rangle ds|\notag\\
      &=  \sup_{t\in[0,T]}|\sum_{j=1}^{N-1}\Delta_n^{-\frac 12}\sum_{i=1}^{\ul}\int_{(i-1)\Delta_n}^{i\Delta_n} \langle (\Sigma_s\mathcal S(i\Delta_n-s)^*) h, e_j\rangle \langle \int_s^{i\Delta_n} \mathcal S(u-s)\mathcal A e_j ds, g\rangle ds \notag\\
      \leq & \sum_{j=1}^{N-1}\Delta_n^{-\frac 12}\int_0^T \|\Sigma_s\|_{\text{op}}\|h\| \Delta_n\|\mathcal A e_j\| \|g\| ds \sup_{t\in[0,T]}\|\mathcal S(t)\|_{\text{op}}^2\notag\\
      \leq & \Delta_n^{\frac 12}\sum_{j=1}^{N-1} \int_0^T\|\Sigma_s\|_{\text{op}}ds \|h\|\|g\|\sup_{t\in[0,T]}\|\mathcal S(t)\|_{\text{op}}^2.
\end{align}
The last expression converges to $0$ as $n\to\infty$ almost surely. 
In particular, we have as $n\to \infty$ that 
$$ |(1.1)_{n,N}|\stackrel{u.c.p.}{\to}0.$$
It follows from the fact that $g\in  F_{ 1/2}^{\mathcal S^*}$
that we can find a constant $K:=\sup_{t\leq T}\|t^{-\frac 12}(\mathcal S(t)-I)^* g\|<\infty$ 
such that
\begin{align}\label{Auxiliary result: functionals of tails of the remainder of the SARCV discretisation converge very fast to 0}
& \mathbb E\left[    \sup_{t\in[0,T]}|(1.2)_{n,N}|\right]\notag\\
\leq & \Delta_n^{-\frac 12}\sum_{i=1}^{\ulT}\int_{(i-1)\Delta_n}^{i\Delta_n} \mathbb E\left[|\langle p_N(\Sigma_s\mathcal S(i\Delta_n-s)^*) h,(\mathcal S(i\Delta_n-s)-I)^*g\rangle|\right] ds\notag\\
    \leq & \Delta_n^{-\frac 12}\sup_{t\leq \Delta_n}\|(\mathcal S(t)-I)^*g\|\int_0^T \sup_{t\leq \Delta_n}\mathbb E\left[\| p_N\sigma_s\|_{\text{op}}\|\sigma_s^*\mathcal S(t)^* \|_{\text{op}}\|h\|\right]ds\notag\\
    \leq & K \left(\int_0^T \mathbb E\left[\| p_N\sigma_s\|_{\text{op}}^2\right]ds\right)^{\frac 12}\left(\int_0^T\sup_{t\leq \Delta_n}\mathbb E\left[\|\sigma_s^*\mathcal S(t)^*) \|_{\text{op}}^2\right]\|h\|^2ds\right)^{\frac 12}\notag\\
    \leq & K \left(\int_0^T \mathbb E\left[\| p_N\sigma_s\|_{\text{op}}^2\right]ds\right)^{\frac 12}\left(\int_0^T\mathbb E\left[\|\sigma_s^* \|_{\text{op}}^2\right]\|h\|^2ds\right)^{\frac 12} \sup_{t\in[0,T]}\|\mathcal S(t)\|_{\text{op}}.
\end{align}
As the first factor converges to $0$ as $N\to\infty$ by Lemma \ref{L: Projection convergese uniformly on the range of volatility}, we obtain the convergence of $ \mathbb E [\sup_{t\in[0,T]}|(1.2)_{n,N}|]$ to $0$ as $N\to\infty$ uniformly in $n$. 
Therefore, we can find for each $\delta>0$ an $N\in\mathbb N$, such that by Markov's inequality
\begin{align*}
  & \lim_{n\to\infty} \mathbb P\left[\sup_{t\in[0,T]}|(1)_n|>\epsilon\right]\\
    \leq & \lim_{n\to\infty} \mathbb P\left[\sup_{t\in[0,T]}|(1.1)_{n,N}|>\epsilon\right]+\sup_{n\in \mathbb N} \mathbb P\left[\sup_{t\in[0,T]}|(1.2)_{n,N}|>\epsilon\right]\\
    = & 0+ \frac 1{\epsilon}\sup_{n\in \mathbb N} \mathbb E\left[\sup_{t\in[0,T]}|(1.2)_{n,N}|\right]\leq \delta.
\end{align*}
As this holds for all $\delta>0$ we obtain
$(1)_n\stackrel{u.c.p.}{\longrightarrow} 0$ as $n\to\infty$.
The assertion for $(2)_n\stackrel{u.c.p.}{\longrightarrow} 0$ follows analogously.
\end{proof}

\section{Remaining proofs}\label{sec: Remaining proofs}

We will now prove the remaining results, i.e.~Theorem \ref{T: Limit Theorems for nonadjusted RV}, Theorem \ref{T:Finite dimensional limit theorems for functionals of nonadjusted covariation}, Lemma \ref{L: Consistency of discretised truncated SARCV} and Lemma \ref{L: Evaluation functionals in Sobolev spaces have 1/2 regularity} as well as Examples \ref{Ex: counterexample for the Spatial regularity assumption} and \ref{Counterexample for RV}.
We start with the proof of Example \ref{Ex: counterexample for the Spatial regularity assumption}.

\begin{proof}[Proof of Example \ref{Ex: counterexample for the Spatial regularity assumption}]
We start with some general observations.
If the central limit theorem should be valid, i.e., if $\Delta_n^{-\frac 12}\left(SARCV_t^n-\int_0^t \Sigma_s ds\right)$ should converge in distribution, we must necessarily have that it is tight. As the sum of two tight sequences is tight itself and since we have
\begin{align*}
    &\Delta_n^{-\frac 12}\left(SARCV_t^n-\int_0^t \Sigma_s ds\right)-\Delta_n^{-\frac 12}\left(SARCV_t^n-\int_0^t \Sigma_s^{\mathcal S_n} ds\right)\\
    = &\left(\Delta_n^{-\frac 12}\sum_{i=1}^{\ul}\int_0^t \Sigma_s^{\mathcal S_n}-\Sigma_s ds\right),
\end{align*}
we find that $\left(\Delta_n^{-\frac 12}\sum_{i=1}^{\ul}\int_0^t \Sigma_s^{\mathcal S_n}-\Sigma_s ds\right)$ must be tight, due to Theorem 
\ref{T: Central limit theorem for covariation minus its quadratic variations}. I.e.
for all $\epsilon>0$ there is a compact set $K_{\epsilon}\subset \mathcal H$ such that $\sup_{n\in\mathbb N}\mathbb P\left[X_n\notin K_{\epsilon}\right]<\epsilon$. All compact sets $K\subset \mathcal H$ are bounded and hence contained in a ball with radius large enough. Hence, from tightness we obtain that for all $\epsilon>0$ there is an $M_{\epsilon}>0$
such that 
$$\sup_{n\in\mathbb N}\mathbb P\left[\left\|\left(\Delta_n^{-\frac 12}\sum_{i=1}^{\ul}\int_0^t \Sigma_s^{\mathcal S_n}-\Sigma_s ds\right)\right\|_{\mathcal H}>M_{\epsilon}\right]<\epsilon.$$
Thus, if for some specification of  $\sigma$, there is an $\epsilon_0>0$ such that
\begin{equation}\label{Criterion for invalidity of the CLT}
  \limsup_{M\uparrow\infty} \sup_{n\in\mathbb N}\mathbb P\left[\left\|\left(\Delta_n^{-\frac 12}\sum_{i=1}^{\ul}\int_0^t \Sigma_s^{\mathcal S_n}-\Sigma_s ds\right)\right\|_{\mathcal H}>M\right]\geq\epsilon_0
\end{equation}
we necessarily have that the central limit theorem cannot hold.
For some $f_1\in H$ let the volatility have the form
$$\sigma_s = e\otimes f_1,$$
where $e\in H$ is such that $\|e\|=1$. 
Moreover, we let $f_2 \in H$ be such that 
\begin{equation}\label{invalid central limit theorem: technical  Condition on the example III}
    |\langle (\mathcal S(x)-I)f_1,f_2\rangle| \leq C x^{\frac 14}, 
\end{equation}
for some constant $C>0$, which does not depend on $x\geq 0$ and
\begin{equation}\label{invalid central limit theorem: technical  Condition on the example IV}
  \limsup_{n\to\infty}\left|\Delta_n^{-\frac 12}\langle f_1,f_2\rangle \sum_{i=1}^n \int_{(i-1)\Delta_n}^{i\Delta_n} \langle (\mathcal S(i\Delta_n-s)-I) f_1,f_2\rangle ds\right|=\infty.  
\end{equation}
Moreover, we have
$\Sigma_s= f_1^{\otimes 2}$ and hence
\begin{align*}
& \Delta_n^{-\frac 12}\langle\sum_{i=1}^{\ul}\int_0^t \Sigma_s^{\mathcal S_n}-\Sigma_s ds, f_2^{\otimes 2}\rangle\\
= & \Delta_n^{-\frac 12}\sum_{i=1}^{\ul}\int_{(i-1)\Delta_n)}^{i\Delta_n}\langle \mathcal S(i\Delta_n-s)f_1,f_2\rangle^2 -\langle f_1,f_2\rangle^2ds\\
= & \Delta_n^{-\frac 12}\sum_{i=1}^{\ul}\int_{(i-1)\Delta_n)}^{i\Delta_n}\langle (\mathcal S(i\Delta_n-s)-I)f_1,f_2\rangle\langle (\mathcal S(i\Delta_n-s)+I)f_1,f_2\rangle ds\\
= & \Delta_n^{-\frac 12}\sum_{i=1}^{\ul}\int_{(i-1)\Delta_n)}^{i\Delta_n}\langle (\mathcal S(i\Delta_n-s)-I)f_1,f_2\rangle^2 ds\\
& +2\langle f_1,f_2\rangle\Delta_n^{-\frac 12}\sum_{i=1}^{\ul}\int_{(i-1)\Delta_n)}^{i\Delta_n}\langle (\mathcal S(i\Delta_n-s)-I)f_1,f_2\rangle ds.
\end{align*}
Due to \eqref{invalid central limit theorem: technical  Condition on the example III}, it is simple to see that the first term converges to $0$ as $n\to \infty$. 
Now we have that \eqref{Criterion for invalidity of the CLT} holds, since
\begin{align}
&\|f_2\|^2\limsup_{n\in\mathbb N}\left\|\left(\Delta_n^{-\frac 12}\sum_{i=1}^{\ul}\int_0^t \Sigma_s^{\mathcal S_n}-\Sigma_s ds\right)\right\|_{\mathcal H}\notag\\
\geq & \limsup_{n\to\infty}\Delta_n^{-\frac 12}\left|\langle\sum_{i=1}^{\ul}\int_{(i-1)\Delta_n)}^{i\Delta_n} (\mathcal S(i\Delta_n-s)-I)f_1 ds, f_2\rangle \right||\langle f_1, f_2\rangle|= \infty.\notag
\end{align}

In order to show that Example \ref{Ex: counterexample for the Spatial regularity assumption} is indeed a valid counterexample, it is, thus, left to show that for the choice $H=L^2[0,2]$,  $(\mathcal S(t))_{t\geq 0}$ the nilpotent semigroup of left-shifts and $f_1$ a path of a fractional Brownian motion, we can find an $f_2\in H$ such that 
\eqref{invalid central limit theorem: technical  Condition on the example III} and \eqref{invalid central limit theorem: technical  Condition on the example IV} hold. We do this as follows:
We define $(B_1(t),B_2(t)
)_{t\in \mathbb R}$ to be a multivariate fractional Brownian motion on some probability space $(\bar{\Omega},\bar{\mathcal F},\bar{\mathbb P})$, i.e.~a bivariate Gaussian stochastic process with stationary increments 
such that the multivariate self-similarity
$$(B_1(\lambda t),B_2(\lambda t))\sim(\lambda^{\mathfrak H}B_1( t),\lambda^{\epsilon}B_2( t)) \quad \forall \lambda >0,t\in\mathbb R$$
holds for $0<\mathfrak H<\frac 12$, $0<\epsilon< \frac 12 -\mathfrak H$ and $\max(\mathfrak H,\epsilon)>\frac 14 $. Moreover, we assume that $\mathbb E\left[B_1( t)B_2(t)\right]=:\rho>0$ for $t\in\mathbb R$. We also assume that $(B_1(t),B_2(t))_{t\in \mathbb R}$ is time-reversible, i.e. $(B_1(t),B_2(t))=B_1(-t),B_2(-t))$ for all $t\in\mathbb R$. In that case, the covariance structure of this process is given by
\begin{align*}
    \mathbb E\left[B_i(t)B_j(s)\right]=\frac{\rho_{i,j}}2 (|s|^{\mathfrak H_{i,j}}+|t|^{\mathfrak H_{i,j}}-|t-s|^{\mathfrak H_{i,j}}),
\end{align*}
where $$\rho_{i,j}=\begin{cases} 1,& i=j,\\
\rho, & i\neq j,
\end{cases}
$$ and $$\mathfrak H_{i,j}=\begin{cases} \mathfrak H,& i=j=1,\\
\epsilon, & i=j=2,\\
\mathfrak H+\epsilon, & i\neq j.
\end{cases}
$$
Observe that we can always find a $\rho>0$ that guarantees the existence of such a process (c.f.~Proposition 9 in \cite{Lavancier2013})
We want to find an $\omega\in \bar{\Omega}$, such that with the choice 
\begin{equation}\label{abstract omega specification for counterexample of CLT}
    (f_1(x),f_2(x))=(B_1(x)(\omega),B_2(x)(\omega)),\quad x\in [0,2],
\end{equation}
 we have \eqref{invalid central limit theorem: technical  Condition on the example III} and \eqref{invalid central limit theorem: technical  Condition on the example IV}. For that, observe that we have for $t<2$, as the fractional Brownian motion is globally Hölder on the compact interval $[0,2]$ that for almost all $\omega \in \bar{\Omega}$ there is a $\bar C_{\omega}>0$ such that
\begin{align*}
 \|(\mathcal S(t)-I)B^{\mathfrak H}(\omega)\|^2= &  \int_0^{2-t}(B_{t+x}^{\mathfrak H}(\omega)-B_x^{\mathfrak H}(\omega))^2dx+\int_{2-t}^2(B_x^{\mathfrak H}(\omega))^2dx\leq \bar C_{\omega} t^{2\mathfrak H}. 
\end{align*}
By analogous reasoning and since the adjoint semigroup $(\mathcal S(t)^*)_{t\geq 0}$ is given by the right-shift
$$\mathcal S(t)^*f(x)= f(x-t) \indicator_{[0,2]}(x-t),$$
we obtain for almost all $\omega \in \bar{\Omega}$ a $\bar C_{\omega}>0$ such that
\begin{align*}
     |\langle (\mathcal S(x)-I)f_1,f_2\rangle| \leq & \min(\|(\mathcal S(x)-I)B_1(\omega)\|\|B_2(\omega)\|,\|(\mathcal S(x)^*-I)B_2(\omega)\|\|B_1(\omega)\|)\\
     \leq & \bar C_{\omega} x^{\frac 14},
\end{align*}
and hence \eqref{invalid central limit theorem: technical  Condition on the example III} holds. 
It is now enough to prove that \eqref{invalid central limit theorem: technical  Condition on the example IV} holds for all $\omega\in A$ which are in a set $A\in\bar{\mathcal F}$  such that $\bar{\mathbb P}[A]>0$.
For that, it is enough to prove that there is a $c>0$ and an $N\in\mathbb N$, such that for all $n\geq N$ we have
\begin{equation}\label{Criterion: Expectation of divergent sequence for CLT is negative}
    \left|\mathbb E\left[\langle B_1,B_2\rangle\sum_{i=1}^n \int_{(i-1)\Delta_n}^{i\Delta_n} \Delta_n^{-(\mathfrak H+\epsilon)}\langle (\mathcal S(i\Delta_n-s)-I) B_1,B_2\rangle ds\right]\right|>c,
\end{equation}
since in this case, for $n\geq N$ we have $\bar{\mathbb P}\left[A\right]>0$ for the choice 
\begin{align*}
    A
=& \left\lbrace\left|\langle B_1,B_2\rangle\sum_{i=1}^n \int_{(i-1)\Delta_n}^{i\Delta_n} \Delta_n^{-\frac 12}\langle (\mathcal S(i\Delta_n-s)-I) B_1,B_2\rangle ds\right|>c\right\rbrace.
\end{align*}
If this would not be the case, there would be a subsequence $(n_k)_{k\in\mathbb N}$ such that on a full $\bar{\mathbb P}$-measure set $\bar{\Omega}_1$ we have
$$\left|\langle B_1,B_2\rangle\sum_{i=1}^{n_k} \int_{(i-1)\Delta_{n_k}}^{i\Delta_{n_k}} \Delta_{n_k}^{-\frac 12}\langle (\mathcal S(i\Delta_{n_k}-s)-I) B_1,B_2\rangle ds\right|\leq c.$$
Letting $\mathbb E_{\bar{\mathbb P}}$ denote the expectation with respect to the probability measure $\bar{\mathbb P}$,
we would have 
\begin{align*}
     &\left|\mathbb E_{\bar{\mathbb P}}\left[\langle B_1,B_2\rangle \sum_{i=1}^{n_k} \int_{(i-1)\Delta_{n_k}}^{i\Delta_{n_k}} \Delta_{n_k}^{-(\mathfrak H+\epsilon)}\langle (\mathcal S(i\Delta_{n_k}-s)-I) B_1,B_2\rangle ds\right]\right|\\
     = & \left|\int_{\bar{\Omega}_1}\langle B_1,B_2\rangle\sum_{i=1}^{n_k} \int_{(i-1)\Delta_{n_k}}^{i\Delta_{n_k}} \Delta_{n_k}^{-(\mathfrak H+\epsilon)}\langle (\mathcal S(i\Delta_{n_k}-s)-I) B_1,B_2\rangle ds(\omega)\bar{\mathbb P}[d\omega]\right|\\
     \leq & \int_{\bar{\Omega}_1}\left|\langle B_1,B_2\rangle\sum_{i=1}^{n_k} \int_{(i-1)\Delta_{n_k}}^{i\Delta_{n_k}} \Delta_{n_k}^{-\frac 12}\langle (\mathcal S(i\Delta_{n_k}-s)-I) B_1,B_2\rangle ds(\omega)\right|\bar{\mathbb P}[d\omega]\\
      \leq & \int_{\bar{\Omega}_1}c\bar{\mathbb P}[d\omega]\\
      = c.
\end{align*}
This would contradict \eqref{Criterion: Expectation of divergent sequence for CLT is negative}. That yields, in particular, that if \eqref{Criterion: Expectation of divergent sequence for CLT is negative} is valid we obtain that \eqref{invalid central limit theorem: technical  Condition on the example IV} holds for the choice $f_1:=B_1(\omega)$ and $f_2:=B_2(\omega)$ for any $\omega \in A$, as in this case
\begin{align*}
   & \limsup_{n\to\infty}\left|\Delta_n^{-\frac 12}\langle f_1,f_2\rangle \sum_{i=1}^n \int_{(i-1)\Delta_n}^{i\Delta_n} \langle (\mathcal S(i\Delta_n-s)-I) f_1,f_2\rangle ds\right|\\
    =& \limsup_{n\to\infty}\Delta_n^{-(\frac 12-(\mathfrak H+\epsilon))}\left|\Delta_n^{-(\mathfrak H+\epsilon)}\langle f_1,f_2\rangle \sum_{i=1}^n \int_{(i-1)\Delta_n}^{i\Delta_n} \langle (\mathcal S(i\Delta_n-s)-I) f_1,f_2\rangle ds\right|\\
    \geq & \limsup_{n\to\infty}\Delta_n^{-(\frac 12-(\mathfrak H+\epsilon))}c\\
    =&  \infty.
\end{align*}

Let us now prove that \eqref{Criterion: Expectation of divergent sequence for CLT is negative} holds to complete the proof.
By the Isserlis-Wick formula we obtain
\begin{align*}
    &\mathbb E_{\bar{\mathbb P}}\left[\langle B_1,B_2\rangle\sum_{i=1}^n \int_{(i-1)\Delta_n}^{i\Delta_n} \Delta_n^{-(\mathfrak H+\epsilon)}\langle (\mathcal S(i\Delta_n-s)-I) B_1,B_2\rangle ds\right]\\
    = & \sum_{i=1}^n \int_{(i-1)\Delta_n}^{i\Delta_n}\int_0^2\int_{2-i\Delta_n+s}^2 \Delta_n^{-(\mathfrak H+\epsilon)}\mathbb E_{\bar{\mathbb P}}\left[B_1(x)(B_1(y+i\Delta_n-s)\right.\\
    &\left.\qquad\qquad\qquad\qquad\qquad\qquad\qquad\qquad\qquad\qquad-B_1(y)) B_2(y)B_2(x)\right] dydxds \\
    & +\sum_{i=1}^n \int_{(i-1)\Delta_n}^{i\Delta_n}\int_0^2\int_0^{2-i\Delta_n+s} \Delta_n^{-(\mathfrak H+\epsilon)}\mathbb E_{\bar{\mathbb P}}\left[B_1(x)(B_1(y+i\Delta_n-s)-B_1(y))\right]\\
    &\qquad\qquad\qquad\qquad\qquad\qquad\qquad\qquad\qquad\qquad\quad\qquad\times\mathbb E_{\bar{\mathbb P}} \left[B_2(y)B_2(x)\right] dydxds \\
    & +\sum_{i=1}^n \int_{(i-1)\Delta_n}^{i\Delta_n}\int_0^2\int_0^{2-i\Delta_n+s} \Delta_n^{-(\mathfrak H+\epsilon)}\mathbb E_{\bar{\mathbb P}}\left[B_2(x)(B_1(y+i\Delta_n-s)-B_1(y))\right]\\
    &\qquad\qquad\qquad\qquad\qquad\qquad\qquad\qquad\qquad\qquad\qquad\quad\times\mathbb E \left[B_2(y)B_1(x)\right] dydxds \\
     & +\sum_{i=1}^n \int_{(i-1)\Delta_n}^{i\Delta_n}\int_0^2\int_0^{2-i\Delta_n+s}\Delta_n^{-(\mathfrak H+\epsilon)}\mathbb E_{\bar{\mathbb P}}\left[B_2(y)(B_1(y+i\Delta_n-s)-B_1(y))\right]\\
    &\qquad\qquad\qquad\qquad\qquad\qquad\qquad\qquad\qquad\qquad\qquad\quad\times\mathbb E_{\bar{\mathbb P}} \left[B_2(x)B_1(x)\right] dydxds\\
     =: & (1)_t^n+(2)_t^n+(3)_t^n+(4)_t^n.
    \end{align*}
    Clearly, since 
   \begin{equation}
     \sup_{r_{1,1},...,r_{1,d_1},r_{2,1},...,r_{2,d_2}\in [0,2]}\left|\mathbb E\left[\prod_{k=1}^{d_1}B_1(r_{i,1})\prod_{k=1}^{d_2}B_2(r_{2,k})\right]\right|<\infty,
   \end{equation}
     the first term goes to $0$ as $n\to \infty$, as 
     \begin{align*}
    &|(1)_t^n|\\
      =  & \left|\sum_{i=1}^n \int_{(i-1)\Delta_n}^{i\Delta_n}\int_0^2\int_{2-i\Delta_n+s}^2 \Delta_n^{-(\mathfrak H+\epsilon)}\mathbb E_{\bar{\mathbb P}}\left[B_1(x)(B_1(y+i\Delta_n-s)\right.\right.\\
      &\qquad\qquad\qquad\left.\left.-B_1(y)) B_2(y)B_2(x)\right] dydxds\right|\\
         \leq & \sum_{i=1}^n \int_{(i-1)\Delta_n}^{i\Delta_n}2(\Delta_n-s) \Delta_n^{-(\mathfrak H+\epsilon)} ds\\
         &\qquad\qquad\qquad\times\sup_{r_{1,1},r_{1,2},r_{2,1},r_{2,2}}|\mathbb E_{\bar{\mathbb P}}\left[B_1(r_{1,1})B_1(r_{1,2})B_2(r_{2,1})B_2(r_{2,1})\right]|\\
         \leq & 2 \Delta_n^{1-(\mathfrak H+\epsilon)} \sup_{r_{1,1},r_{1,2},r_{2,1},r_{2,2}}|\mathbb E_{\bar{\mathbb P}}\left[B_1(r_{1,1})B_1(r_{1,2})B_2(r_{2,1})B_2(r_{2,1})\right]|.
     \end{align*} 
    For the second term, observe that, by the mean value theorem, we have for all $x,y\in (0,2]$ such that $y,y-x,y+i\Delta_n-s-x\neq 0$ for $s\in [(i-1)\Delta_n,i\Delta_n]$ that
    \begin{align*}
       & \left||x-y|^{2\mathfrak H}+|y+i\Delta_n-s|^{2\mathfrak H}-|x-y-(i\Delta_n-s)|^{2\mathfrak H}-|y|^{2\mathfrak H}\right|\\
       \leq & \max(y^{2\mathfrak H-1},|x-y|^{2\mathfrak H-1},|x-y-(i\Delta_n-s)|^{2\mathfrak H-1} ) (i\Delta_n-s)\\
       \leq & (y^{2\mathfrak H-1}+|x-y|^{2\mathfrak H-1}+|x-y-(i\Delta_n-s)|^{2\mathfrak H-1})(i\Delta_n-s).
    \end{align*}
    Hence, 
    \begin{align*}
       & \left|\int_0^{2-i\Delta_n+s}\mathbb E_{\bar{\mathbb P}}\left[B_2(y)B_2(x)\right]\right.\\
       &\qquad\qquad\qquad\left.\times\left(|x-y|^{2\mathfrak H}+(y+i\Delta_n-s)^{2\mathfrak H}-|x-y-(\Delta_n-s)|^{2\mathfrak H}-y^{2\mathfrak H}\right)dy\right|\\
       \leq & \Delta_n \int_0^{2-i\Delta_n+s}(y^{2\mathfrak H-1}+|x-y|^{2\mathfrak H-1}+|x-y-(i\Delta_n-s)|^{2\mathfrak H-1})dy\\
       &\qquad\qquad\qquad\times\sup_{r,s\in [0,2]}|\mathbb E_{\bar{\mathbb P}} \left[B_2(r)B_2(s)\right]|.
    \end{align*}
    It is 
    \begin{align*}
     \int_0^{2-i\Delta_n+s}|x-y|^{2\mathfrak H-1}dy\leq   \int_0^2|x-y|^{2\mathfrak H-1}dy
     =& \int_0^x(x-y)^{2\mathfrak H-1}dy+\int_{x}^{2}(y-x)^{2\mathfrak H-1}dy\\
     = & \frac{x^{2\mathfrak H}+(2-x)^{2\mathfrak H}}{2\mathfrak H}\\
     \leq & \frac{2^{2\mathfrak H}}{\mathfrak H},
    \end{align*}
    and
    \begin{align*}
     \int_0^{2-i\Delta_n+s}|x-y-i\Delta_n+s|^{2\mathfrak H-1}dy=   \int_{i\Delta_n-s}^2|x-y|^{2\mathfrak H-1}dy
     \leq  \int_0^2|x-y|^{2\mathfrak H-1}dy
     \leq  \frac{2^{2\mathfrak H}}{\mathfrak H},
    \end{align*}
    as well as 
    $$ \int_0^{2-i\Delta_n+s} y^{2\mathfrak H-1}dy=\frac{(2-i\Delta_n+s)^{2\mathfrak H}}{\mathfrak H}\leq \frac{2^{2\mathfrak H}}{\mathfrak H}.$$
    Hence,
    \begin{align*}
      & \left|\int_0^{2-i\Delta_n+s}\mathbb E_{\bar{\mathbb P}}\left[B_2(y)B_2(x)\right]\left(|x-y|^{2\mathfrak H}+|y+i\Delta_n-s|^{2\mathfrak H}\right.\right.\\
      &\left.\left.\qquad\qquad\qquad\qquad\qquad-|x-y-(i\Delta_n-s)|^{2\mathfrak H}-|y|^{2\mathfrak H}\right)dy\right|\\
       \leq  & \Delta_n 3\frac{2^{2\mathfrak H}}{\mathfrak H}\sup_{r,s\in [0,2]}|\mathbb E_{\bar{\mathbb P}} \left[B_2(r)B_2(s)\right]|.
    \end{align*}
    This yields
    \begin{align*}
    &| (2)_t^n|\\
 = &\left|\sum_{i=1}^n \int_{(i-1)\Delta_n}^{i\Delta_n}\int_0^2\int_0^{2-i\Delta_n+s} \Delta_n^{-(\mathfrak H+\epsilon)}\mathbb E_{\bar{\mathbb P}}\left[B_1(x)(B_1(y+i\Delta_n-s)-B_1(y))\right]\right.\\
 &\left.\qquad\qquad\qquad\qquad\qquad\qquad\qquad\qquad\qquad\times\mathbb E_{\bar{\mathbb P}}\left[B_2(y)B_2(x)\right] dydxds \right| \\
       = &   \left|\sum_{i=1}^n \int_{(i-1)\Delta_n}^{i\Delta_n}\int_0^2\int_0^{2-i\Delta_n+s} \Delta_n^{-(\mathfrak H+\epsilon)}\frac 12\left(|x-y|^{2\mathfrak H}+|y+i\Delta_n-s|^{2\mathfrak H}\right.\right.\\
       &\left.\qquad\qquad\qquad\qquad\qquad\qquad\qquad\qquad-|x-y-(i\Delta_n-s)|^{2\mathfrak H}-|y|^{2\mathfrak H}\right)\\  &\left.\qquad\qquad\qquad\qquad\qquad\qquad\qquad\qquad\qquad\qquad\times\mathbb E_{\bar{\mathbb P}} \left[B_2(y)B_2(x)\right] dydxds \right|\\
        \leq &   \sum_{i=1}^n \int_{(i-1)\Delta_n}^{i\Delta_n}\int_0^2\Delta_n^{1-(\mathfrak H+\epsilon)} 3\frac{2^{2\mathfrak H}}{\mathfrak H}dxds\sup_{r,s\in [0,2]}| \mathbb E_{\bar{\mathbb P}}\left[B_2(r)B_2(s)\right]|\\
        = &    \Delta_n^{1-(\mathfrak H+\epsilon)} 6\frac{2^{2\mathfrak H}}{\mathfrak H}\sup_{r,s\in [0,2]}|\mathbb E \left[B_2(r)B_2(s)\right]|,
    \end{align*}
    which goes to $0$ as $n\to \infty$. By analogous reasoning we obtain that the third summand $(3)_t^n$ goes to $0$ as $n\to\infty$.
We now come to the fourth term. For that, we find 
\begin{align}\label{Technical decomposition of fourth summand}
&\sum_{i=1}^n \int_{(i-1)\Delta_n}^{i\Delta_n}\int_0^{2-i\Delta_n+s} \Delta_n^{-(\mathfrak H+\epsilon)}\mathbb E\left[B_2(y)(B_1(y+i\Delta_n-s)-B_1(y))\right] dyds\notag\\
    = & \sum_{i=1}^n \int_{(i-1)\Delta_n}^{i\Delta_n}\Delta_n^{-(\mathfrak H+\epsilon)} \int_0^{2-(i\Delta_n-s)} \mathbb E\left[\left(B_1(x+i\Delta_n-s)-B_1(x)\right)B_2(x)\right]dx ds\notag\\
    & -\sum_{i=1}^n \int_{(i-1)\Delta_n}^{i\Delta_n}\Delta_n^{-(\mathfrak H+\epsilon)} \int_{2-(i\Delta_n-s)}^2 \mathbb E\left[B_1(x)B_2(x)\right]dx ds\notag\\
    = & \sum_{i=1}^n \int_{(i-1)\Delta_n}^{i\Delta_n}\frac{\rho}{2\Delta_n^{\mathfrak H+\epsilon}} \int_0^{2-(i\Delta_n-s)} (x+i\Delta_n-s)^{\mathfrak H+\epsilon}-x^{\mathfrak H+\epsilon}-(i\Delta_n-s)^{\mathfrak H+\epsilon}dx ds\notag\\
    & -\sum_{i=1}^n \int_{(i-1)\Delta_n}^{i\Delta_n} \frac{\rho}{\Delta_n^{(\mathfrak H+\epsilon)}}\int_{2-(i\Delta_n-s)}^2 x^{\mathfrak H+\epsilon}dx ds\notag\\
    = & \frac{\rho}2\sum_{i=1}^n \int_{(i-1)\Delta_n}^{i\Delta_n} \int_0^{2-(i\Delta_n-s)} \frac{(x+i\Delta_n-s)^{\mathfrak H+\epsilon}-x^{\mathfrak H+\epsilon}}{\Delta_n^{\mathfrak H+\epsilon}}dx ds\notag\\
     & -\frac{\rho}2\sum_{i=1}^n \int_{(i-1)\Delta_n}^{i\Delta_n} \left(\frac{(i\Delta_n-s)}{\Delta_n}\right)^{\mathfrak H+\epsilon}(2-(i\Delta_n-s)) ds\notag\\
    & -\sum_{i=1}^n \int_{(i-1)\Delta_n}^{i\Delta_n} \frac{\rho}{\Delta_n^{(\mathfrak H+\epsilon)}}\int_{2-(i\Delta_n-s)}^2 x^{\mathfrak H+\epsilon}dx ds.
\end{align}
The first and the third summand converge to $0$ as $n\to \infty$, as by the mean value theorem
\begin{align*}
   & \left|\frac{\rho}2\sum_{i=1}^n \int_{(i-1)\Delta_n}^{i\Delta_n} \int_0^{2-(i\Delta_n-s)} \frac{(x+i\Delta_n-s)^{\mathfrak H+\epsilon}-x^{\mathfrak H+\epsilon}}{\Delta_n^{\mathfrak H+\epsilon}}dx ds\right|\\
   \leq & \frac{\rho}2\sum_{i=1}^n \int_{(i-1)\Delta_n}^{i\Delta_n} \int_0^{2-(i\Delta_n-s)} \Delta_n^{1-(\mathfrak H+\epsilon)}x^{\mathfrak H+\epsilon-1}dx ds\\
   \leq & \Delta_n^{1-(\mathfrak H+\epsilon)} \frac{\rho}2 \int_0^{2} x^{\mathfrak H+\epsilon-1}dx.
\end{align*}
and
\begin{align*}
   & \left|\sum_{i=1}^n \int_{(i-1)\Delta_n}^{i\Delta_n} \frac{\rho}{\Delta_n^{(\mathfrak H+\epsilon)}}\int_{2-(i\Delta_n-s)}^2 x^{\mathfrak H+\epsilon}dx ds\right|\\
    \leq & \rho \Delta_n^{1-(\mathfrak H+\epsilon)} 2^{\mathfrak H+\epsilon}.
\end{align*}
Summing up, we obtain that for any $\eta >0$ there is an $N\in\mathbb N$, such that for all $n\geq N$ we have as $(1)_t^n$, $(2)_t^n$, $(3)_t^n$ and the first and third term in \eqref{Technical decomposition of fourth summand} go to $0$ as $n\to \infty$
\begin{align*}
    &  \left|\mathbb E\left[\langle B_1,B_2\rangle\sum_{i=1}^n \int_{(i-1)\Delta_n}^{i\Delta_n} (i\Delta_n)^{-(\mathfrak H+\epsilon)}\langle (\mathcal S(i\Delta_n-s)-I) B_1,B_2\rangle ds\right]\right|\\
    \geq & \left|\int_0^2 \mathbb E \left[B_1(x)B_2(x)\right]dx\right|\frac{\rho}2\sum_{i=1}^n \int_{(i-1)\Delta_n}^{i\Delta_n} \left(\frac{(i\Delta_n-s)}{\Delta_n}\right)^{\mathfrak H+\epsilon}(2-(i\Delta_n-s)) ds-\eta\\
    \geq & \left|\int_0^2 \mathbb E \left[B_1(x)B_2(x)\right]dx\right|\frac{\rho}{2\Delta_n^{\mathfrak H+\epsilon}}\sum_{i=1}^n \frac{\Delta_n^{\mathfrak H+\epsilon+1}}{\mathfrak H+\epsilon+1}-\eta\\
    = &\left|\int_0^2 \mathbb E \left[B_1(x)B_2(x)\right]dx\right| \frac{\rho}{2(\mathfrak H+\epsilon+1)}-\eta.
\end{align*}
As this holds for all $\eta>0$, we obtain \eqref{Criterion: Expectation of divergent sequence for CLT is negative} and hence the proof.
\end{proof}


We continue by proving the remaining assertions of Example \ref{Counterexample for RV}

   \begin{proof}[Proof of the remaining assertions of Example \ref{Counterexample for RV}]
In order to verify the validity of the counterexample \ref{Counterexample for RV} we still have to show that if $X= B^{\mathfrak H}$,
\begin{itemize}
    \item[(i)]$(RV_t^n-\int_0^t\Sigma_sds-\sum_{i=1}^n ((\mathcal S(\Delta_n)-I)Y_{(i-1)\Delta_n})^{\otimes 2})$ converges in probability to $0$ and
    \item[(ii)] $\sum_{i=1}^{n}\|(\mathcal S(\Delta_n)-I)Y_{(i-1)})\|^4$ is uniformly integrable.
\end{itemize}
Moreover, in the second case, in which $X=\indicator_{[0,1]}$
we must show that
\begin{itemize}
    \item[(iii)] $\Delta_n^{-\frac 12}(SARCV_t^n-\int_0^t\Sigma_sds)$ is uniformly integrable and
    \item[(iv)]$\Delta_n^{-\frac 12}(RV_t^n-\int_0^t\Sigma_sds)$ is uniformly integrable.
\end{itemize}
Observe that \begin{align*}
    RV_t^n =\sum_{i=1}^{\ul}&\Delta_i^n Y^{\otimes 2}\\
    =\sum_{i=1}^{\ul}&\tilde{\Delta}_i^n Y^{\otimes 2}+\tilde{\Delta}_i^n Y\otimes(\mathcal S(\Delta_n)-I)Y_{(i-1)\Delta_n}\\
    &+(\mathcal S(\Delta_n)-I)Y_{(i-1)\Delta_n}\otimes\tilde{\Delta}_i^n Y+[(\mathcal S(\Delta_n)-I)Y_{(i-1)\Delta_n}]^{\otimes 2}.
\end{align*}
Thus, recalling that $\Sigma_s^{\mathcal S_n}=\mathcal S(i\Delta_n-s)\Sigma_s \mathcal S(i\Delta_n-s)^*$
for $s\in((i-1)\Delta_n,i\Delta_n]$
\begin{align*}
   & (RV_t^n-\int_0^t\Sigma_sds)\\ 
    = & (SARCV_t^n-\int_0^t\Sigma_s^{\mathcal S_n}ds)+\Delta_n^{-\frac 12}\int_0^t\Sigma_s^{\mathcal S_n}-\Sigma_s ds\\ 
    & +\sum_{i=1}^{\ul}\tilde{\Delta}_i^n Y\otimes(\mathcal S(\Delta_n)-I)Y_{(i-1)\Delta_n}\\
    &+\sum_{i=1}^{\ul}(\mathcal S(\Delta_n)-I)Y_{(i-1)\Delta_n}\otimes\tilde{\Delta}_i^n Y\\
    &+\sum_{i=1}^{\ul}[(\mathcal S(\Delta_n)-I)Y_{(i-1)\Delta_n}]^{\otimes 2}\\
= & (1)_t^n+(2)_t^n+(3)_t^n+(4)_t^n +(5)_t^n. 
\end{align*}
The law of large numbers \ref{T: LLN for the SARCV} guarantees that $(1)_t^n+(2)_t^n$ converges to $0$ in probability with respect to the Hilbert-Schmidt norm. Moreover,
for $(3)_t^n$ (and analogously $(4)_t^n$) we find in the first case that, with the notation $\Delta_i\mathcal S=\mathcal S(i\Delta_n)-\mathcal S((i-1)\Delta_n)$,
\begin{align*}
   & \mathbb E\left[\|(3)_t^n\|^2\right]\\
    =&\sum_{i,j=1}^n \mathbb E\left[\langle \tilde{\Delta}_i^n Y\otimes (\mathcal S(\Delta_n)-I)Y_{(i-1)\Delta_n},\tilde{\Delta}_j^n Y\otimes (\mathcal S(\Delta_n)-I)Y_{(j-1)\Delta_n}\rangle_{\mathcal H}\right]\\
    = & \sum_{i,j=1}^n \mathbb E\left[(\beta_{i\Delta_n}-\beta_{(i-1)\Delta_n})\beta_{(i-1)\Delta_n}(\beta_{j\Delta_n}-\beta_{(j-1)\Delta_n})\beta_{(j-1)\Delta_n}\right]\\
    &\qquad\times\mathbb E\left[\langle \mathcal S(i\Delta_n) B^{\mathfrak H}\otimes \Delta_i\mathcal S B^{\mathfrak H},\mathcal S(j\Delta_n) B^{\mathfrak H}\otimes \Delta_j\mathcal S B^{\mathfrak H}\rangle_{\mathcal H}\right]\\
     = & \sum_{i=1}^n (i-1)\Delta_n^2\mathbb E\left[\|\mathcal S(i\Delta_n) B^{\mathfrak H}\otimes \Delta_i\mathcal S B^{\mathfrak H}\|_{\mathcal H}^2\right]\\
     \leq & \sum_{i=1}^n (i-1)\Delta_n^2\mathbb E\left[\|\mathcal S(i\Delta_n) B^{\mathfrak H}\|_{\mathcal H}^4\right]^{\frac 12} \mathbb E\left[\|\Delta_i\mathcal S B^{\mathfrak H}\|_{\mathcal H}^4\right]^{\frac 12}\\
     = & \sup_{r\in [0,T]}\|\mathcal S(r)\|_{\text{op}}^4 \mathbb E\left[\| B^{\mathfrak H}\|_{\mathcal H}^4\right]^{\frac 12} \mathbb E\left[\|(\mathcal S(\Delta_n)-I)B^{\mathfrak H}\|_{\mathcal H}^4\right]^{\frac 12}\sum_{i=1}^n (i-1)\Delta_n^2.
\end{align*}
This converges to $0$, as $n\to \infty$ and thus $(3)_t^n\to 0$ (as well as $(4)_t^n\to 0$) as $n\to \infty$ in $L^2(\Omega)$. This shows the first point (i).

In order to prove (ii), observe that as by Jensen's inequality it holds (since the central eight's moment of a Gaussian random variable $Z\sim N(0,\rho^2)$ is $\mathbb E[Z^8]=105\rho^8$)
\begin{align*}
    \mathbb E\left[\|\Delta_j\mathcal SB^{\mathfrak H}\|_H^{8}\right]\leq  & \int_0^2 \mathbb E \left[\left(B^{\mathfrak H}_{x+i\Delta_n}-B^{\mathfrak H}_{x+(i-1)\Delta_n}\right)^8 \right]dx= 210 \Delta_n^{8H}=210 \Delta_n^2,
\end{align*}
we have
\begin{align*}
   & \mathbb E\left[\left(\sum_{i=1}^{n}\|(\mathcal S(\Delta_n)-I)Y_{(i-1)\Delta_n})\|^4\right)^2\right]\\
      = & \sum_{i,j=1}^{n}\mathbb E\left[\beta_{(i-1)\Delta_n}^4\beta_{(j-1)\Delta_n}^4\right]\mathbb E\left[\|\Delta_i\mathcal SB^{\mathfrak H}\|_H^4\|\Delta_j\mathcal S B^{\mathfrak H}\|_H^4\right]\\
     \leq  & \sum_{i,j=1}^{n}\mathbb E\left[\beta_{(i-1)\Delta_n}^{8}\right]^{\frac 12}\mathbb E\left[\beta_{(j-1)\Delta_n}^{8}\right]^{\frac 12}\mathbb E\left[\|\Delta_i\mathcal S B^{\mathfrak H}\|_H^{8}\right]^{\frac 12}\mathbb E\left[\|\Delta_j\mathcal SB^{\mathfrak H}\|_H^{8}\right]^{\frac 12}\\
     \leq
   & 210\times 105 \sum_{i,j=1}^n (i-1)^2(j-1)^2 \Delta_n^4 \Delta_n^2\\
   \leq & 210\times 105,
\end{align*}
which yields the $L^2(\Omega)$-boundedness of $\sum_{i=1}^{n}\|(\mathcal S(\Delta_n)-I)Y_{(i-1)})\|^4$ and, in particular, point (ii).

In order to show uniform integrability for the points (iii) and (iv) in the second case, in which $X=\indicator_{[0,1]}$, it is enough to show (after normalisation by $\sqrt{n}$) that all summands $\sqrt n(1)_t^n-\sqrt n(5)_t^n$ are bounded in $L^2(\Omega)$ uniformly in $n\in\mathbb N$.
The first summand $\sqrt n(1)_t^n$ is bounded in $L^2(\Omega)$, due to Theorem \ref{T: Tightnes for realised covariation}.

For the second term we observe
\begin{align*}
   &  \mathbb E\left[\left\| \sum_{i=1}^{\ul}\int_{(i-1)\Delta_n}^{i\Delta_n} \left(\Sigma_s^{\mathcal S_n}-\Sigma_s\right) ds\right\|_{\mathcal H}^2\right]^{\frac 12}\\
       \leq &  \int_0^T\mathbb E\left[\left\|(\mathcal S(\lfloor s/\Delta_n\rfloor \Delta_n-s)-I)\Sigma_s \mathcal S(\lfloor s/\Delta_n\rfloor \Delta_n-s)^*\right\|_{\mathcal H}^2\right]^{\frac 12}\\
&\qquad\qquad\qquad+\mathbb E\left[\left\| \Sigma_s (\mathcal S(\lfloor s/\Delta_n\rfloor \Delta_n-s)^*-I)\right\|_{\mathcal H}^2\right]^{\frac 12}ds\\
       \leq &  2 \sup_{r\in [0,T]}\|\mathcal S(r)\|_{\text{op}}\int_0^T \sup_{x\in [0,\Delta_n]}\mathbb E\left[\left\|(\mathcal S(x)-I)\sigma_s\|_{\text{op}}^2 \| \sigma_s\right\|_{\mathcal H}^2\right]^{\frac 12}.
        \end{align*}
 It holds for $x\geq 0$, $\|\sigma_s\|_{L_{\text{HS}}(U,H)}=1$ and 
        $$\|(\mathcal S(x)-I)\sigma_s\|_{_{L_{\text{HS}}(U,H)}}^2= \|(\mathcal S(x+s)-\mathcal S(s))\indicator_{[0,1]} \|_H^2 
        = 2x.$$
This yields        \begin{align*}
   &  \Delta_n^{-\frac 12}\mathbb E\left[\left\| \sum_{i=1}^{\ul}\int_{(i-1)\Delta_n}^{i\Delta_n} \left(\Sigma_s^{\mathcal S_n}-\Sigma_s\right) ds\right\|_{\mathcal H}^2\right]^{\frac 12}\\
       \leq &  \Delta_n^{-\frac 12}2 \sup_{r\in [0,T]}\|\mathcal S(r)\|_{\text{op}}\int_0^T \sup_{x\in [0,\Delta_n]}\|(\mathcal S(x)-I)\sigma_s\|_{\text{op}} \| \sigma_s\|_{\mathcal H}ds\\
      \leq & 4 \sup_{r\in [0,T]}\|\mathcal S(r)\|_{\text{op}}^2 T.
        \end{align*}
        Hence $\sqrt n (2)_t^n$ is bounded in $L^2(\Omega)$. We just proved point (iii). 

Now we turn to the $L^2(\Omega)$-boundedness of $\sqrt n(3)_t^n$ ($\sqrt n(4)_t^n$ is analogous). We have
\begin{align*}
&    \langle \left(\mathcal S(i\Delta_n)-\mathcal S(\Delta_n(i-1))\right) \indicator_{[0,1]},\left(\mathcal S(i\Delta_n)-\mathcal S(\Delta_n(i-1))\right) \indicator_{[0,1]}\rangle\\
    = &\int_{\mathbb R} \left(\indicator_{[-i\Delta_n,-(i-1)\Delta_n]}(y)-\indicator_{[1-i\Delta_n,1-(i-1)\Delta_n]}(y)\right)\\ &\qquad\qquad\times\left(\indicator_{[-j\Delta_n,-(j-1)\Delta_n]}(y)-\indicator_{[1-j\Delta_n,1-(j-1)\Delta_n]}(y)\right)dy\\
    = & \delta_{i,j} 2\Delta_n.
\end{align*}
This yields 
\begin{align*}
  &  \langle \left(\mathcal S(\Delta_n)-I\right) Y_{(i-1)\Delta_n},\left(\mathcal S(\Delta_n)-I\right) Y_{(j-1)\Delta_n}\rangle\\
    = & \beta_{(i-1)\Delta_n} \beta_{(j-1)\Delta_n}  \langle \left(\mathcal S(\Delta_ni)-\mathcal S(\Delta_n(i-1))\right) \indicator_{[0,1]},\left(\mathcal S(\Delta_nj)-\mathcal S(\Delta_n(j-1))\right) \indicator_{[0,1]}\rangle\\
    = &  \delta_{i,j} \beta_{(i-1)\Delta_n}^2 2\Delta_n.
\end{align*}
Thus,
\begin{align*}
    &\|\Delta_n^{-\frac 12}(3)_t\|_{\mathcal H}^2\\
    = &\Delta_n^{-1} \sum_{i,j=1}^n \langle\tilde{\Delta}_i^n Y\otimes(\mathcal S(\Delta_n)-I)Y_{(i-1)\Delta_n},\tilde{\Delta}_j^n Y\otimes(\mathcal S(\Delta_n)-I)Y_{(j-1)\Delta_n}\rangle_{\mathcal H} \\
    = & \Delta_n^{-1}\sum_{i,j=1}^n \langle\tilde{\Delta}_i^n Y,\tilde{\Delta}_j^n Y\rangle \langle(\mathcal S(\Delta_n)-I)Y_{(i-1)\Delta_n},(\mathcal S(\Delta_n)-I)Y_{(j-1)\Delta_n}\rangle \\
    = & 2\sum_{i=1}^n \|\tilde{\Delta}_i^n Y\|^2 \beta_{(i-1)\Delta_n}^2.
\end{align*}
This gives, by independence of $\beta_{(i-1)\Delta_n}$ and $\tilde{\Delta}_i^n Y$,
\begin{align*}
    \mathbb E\left[\|\Delta_n^{-\frac 12}(3)_t\|_{\mathcal H}^2\right]
    = & 2\Delta_n\sum_{i=1}^n (i-1)\int_{(i-1)\Delta_n}^{i\Delta_n}\mathbb E\left[\|\sigma_s^{\mathcal S_n}\|_{L_{\text{HS}}(U,H)}^2\right]ds 
    \leq  2\sup_{r\in [0,T]}\|\mathcal S(r)\|_{\text{op}}^2.
\end{align*}
Hence, the $L^2(\Omega)$-boundedness of $\sqrt n (3)_t^n$ (and $\sqrt n (4)_t^n$) follows. 
It remains to show the $L^2(\Omega)$-boundedness of $(5)_t^n$. We find
\begin{align*}
   \mathbb E\left[ \|(5)_t^n\|^2\right]= & \mathbb E\left[\|\Delta_n^{-\frac 12}\sum_{i=1}^{n}[(\mathcal S(\Delta_n)-I)Y_{(i-1)}]^{\otimes 2}\|^2\right]\\
    = & \mathbb E\left[\Delta_n^{-1}\sum_{i,j=1}^{n} \langle \mathcal S(\Delta_n)-I)Y_{(i-1)},\mathcal S(\Delta_n)-I)Y_{(j-1)}\rangle^2\right]\\
    = & \mathbb E\left[\Delta_n^{-1}\sum_{i=1}^n 2 \Delta_n^2 \beta_{(i-1)\Delta_n}^4\right]\\
    \leq  &  6.
\end{align*}
This yields the $L^2(\Omega)$-boundedness of $(5)_t^n$ and, thus, we proved (iv).
\end{proof}

Now we give the proof of Theorem \ref{T: Limit Theorems for nonadjusted RV}.
\begin{proof}[Proof of Theorem \ref{T: Limit Theorems for nonadjusted RV}]
Recall that by Theorem \ref{L: localisation}(b) and (d) we can assume that Assumption \ref{As: Decomposition of Q} holds. This yields for the proof of the law of the large numbers that by the dominated convergence theorem
$$\lim_{t\to 0}\mathbb E\left[\int_0^T\| t^{-\frac 12}(I-\mathcal S(t))\sigma_s\|_{L_{\text{HS}(U,H)}}^2ds\right]=0,$$
and for the proof of the central limit theorem
$$\lim_{t\to 0}\mathbb E\left[\int_0^T\| t^{-\frac 34}(I-\mathcal S(t))\sigma_s\|_{L_{\text{HS}(U,H)}}^2ds\right]=0.$$
Then observe that we have 
\begin{align*}
    RV_t^n =\sum_{i=1}^{\ul}&\Delta_i^n Y^{\otimes 2}\\
    =\sum_{i=1}^{\ul}&\tilde{\Delta}_i^n Y^{\otimes 2}+\tilde{\Delta}_i^n Y\otimes(\mathcal S(\Delta_n)-I)Y_{(i-1)\Delta_n}\\
    &+(\mathcal S(\Delta_n)-I)Y_{(i-1)\Delta_n}\otimes\tilde{\Delta}_i^n Y+[(\mathcal S(\Delta_n)-I)Y_{(i-1)\Delta_n}]^{\otimes 2} \\
    = (1)_t+&(2)_t+(3)_t+(4)_t.
\end{align*}
We know that the first summand converges in u.c.p. to the integrated volatility $\int_0^t \Sigma_s ds$. 
Under this assumption, we obtain for the second and third summand
\begin{align*}
    &\frac 12\sup_{t\in [0,T]} \mathbb E\left[\|(2)_t+(3)_t\|_{\mathcal H}\right]\leq \sum_{i=1}^{\ulT}\mathbb E\left[\|(\mathcal S(\Delta_n)-I)Y_{(i-1)\Delta_n}\| \|\tilde{\Delta}_i^n Y\|\right] \\
    & = \left(\sum_{i=1}^{\ulT}\mathbb E\left[\|\tilde{\Delta}_i^n Y\|^2\right]\right)^{\frac 12}\left(\sum_{i=1}^{\ulT}\mathbb E\left[\|(\mathcal S(\Delta_n)-I)Y_{(i-1)\Delta_n}\|^2\right]\right)^{\frac 12}\\
    & \leq \sup_{r\in[0,T]}\|\mathcal S(r)\|_{\text{op}} \left(\sum_{i=1}^{\ulT}\int_{(i-1)\Delta_n}^{i\Delta_n}\mathbb E\left[\|\sigma_s\|_{L_{\text{HS}(U,H)}}^2\right]ds\right)^{\frac 12}\\
    & \qquad\times\left(\sum_{i=1}^{\ulT}\int_0^T\mathbb E\left[\|(\mathcal S(\Delta_n)-I)\sigma_s\|_{L_{\text{HS}(U,H)}}^2\right]ds\right)^{\frac 12}\\
    &= o(1),
\end{align*}
where the last equality is by assumption.  
Moreover, the last summand fulfills immediately
\begin{align*}
   \sup_{t\in[0,T]} (4)_t\leq \sup_{r\in[0,T]}\|\mathcal S(r)\|_{\text{op}}\sum_{i=1}^{\ul}\int_0^T\mathbb E\left[\|(\mathcal S(\Delta_n)-I)\sigma_s\|_{L_{\text{HS}(U,H)}}^2\right]ds= o(1).
\end{align*}
This proves the claim for the law of large numbers. The central limit theorem follows by analogous reasoning, after normalising by $\Delta_n^{-\frac 12}$.
\end{proof}
Now we give the proof of Theorem \ref{T:Finite dimensional limit theorems for functionals of nonadjusted covariation}
\begin{proof}[Proof of Theorem \ref{T:Finite dimensional limit theorems for functionals of nonadjusted covariation}]
We can argue componentwise, which is why we assume without loss of generality that $B=h\otimes g$ for $h,g\in F^{\mathcal S^*}_{1/2}$ for (i) or $h,g\in F^{\mathcal S^*}_{1/4}$ for (ii) respectively. Again, we appeal to the decomposition
\begin{align*}
    \langle RV_t^n h,g\rangle=\sum_{i=1}^{\ul}&\langle \Delta_i^n Y^{\otimes 2}h ,g\rangle\\
    =\sum_{i=1}^{\ul}&\langle\tilde{\Delta}_i^n Y^{\otimes 2}h,g\rangle+\langle \tilde{\Delta}_i^n Y\otimes(\mathcal S(\Delta_n)-I)Y_{(i-1)\Delta_n}h,g\rangle\\
    &+\langle(\mathcal S(\Delta_n)-I)Y_{(i-1)\Delta_n}\otimes\tilde{\Delta}_i^n Y h,g\rangle+\langle[(\mathcal S(\Delta_n)-I)Y_{(i-1)}]^{\otimes 2}h,g\rangle \\
    = (1)_n^t+&(2)_n^t+(3)_n^t+(4)_n^t.
\end{align*}
The first summand converges to $\int_0^T \langle \Sigma_s h,g \rangle ds$, and after normalisation with $\Delta_n^{\frac 12}$ it is asymptotically centred normal with variance $\langle \Gamma_t h\otimes g,h\otimes g\rangle$. Therefore, it is for the law of large numbers enough to show
$$\left((2)_n^t+(3)_n^t+(4)_n^t\right)\stackrel{u.c.p.}{\longrightarrow} 0\quad \text{ as }n\to\infty.$$
and for the central limit theorem  to show
$$\Delta_n^{-\frac 12}\left((2)_n^t+(3)_n^t+(4)_n^t\right)\stackrel{u.c.p.}{\longrightarrow} 0\quad \text{ as }n\to\infty.$$
For the third (and analogously for the second) summand we have
\begin{align*}
   |(3)_n^t|= & \left| \sum_{i=1}^{\ul}\langle (\mathcal S(\Delta_n)-I)Y_{(i-1)\Delta_n}\otimes\tilde{\Delta}_i^n Y  h,g\rangle\right|\\
    \leq &  \left(\sum_{i=1}^{\ul}\langle (\mathcal S(\Delta_n)-I)Y_{(i-1)\Delta_n}, h\rangle^2\right)^{\frac 12}\left(\sum_{i=1}^{\ul}\langle \tilde{\Delta}_i^n Y  ,g\rangle^2\right)^{\frac 12}\\
    = & \left(\sum_{i=1}^{\ul}\langle (\mathcal S(\Delta_n)-I)Y_{(i-1)\Delta_n}, h\rangle^2\right)^{\frac 12}\left(\sum_{i=1}^{\ul}\langle \tilde{\Delta}_i^n Y^{\otimes 2}g  ,g\rangle\right)^{\frac 12},
\end{align*}
and the second factor converges to an asymptotically normal law. For the fourth summand we have
\begin{align*}
   |(4)_n^t|= &\left|\sum_{i=1}^{\ul}\langle(\mathcal S(\Delta_n)-I)Y_{(i-1)}^{\otimes 2}h,g\rangle\right|\\
    \leq &\left(\sum_{i=1}^{\ul}\langle(\mathcal S(\Delta_n)-I)Y_{(i-1)},h\rangle^2\right)^{\frac 12}\left(\sum_{i=1}^{\ul}\langle(\mathcal S(\Delta_n)-I)Y_{(i-1)},g\rangle^2\right)^{\frac 12},
\end{align*}
and, thus, for the law of large numbers it is enough to show for all $h\in F_{\frac 12}^{\mathcal S^*}$,
\begin{equation}\label{RV-LLN Auxiliary Equation}\sum_{i=1}^{\ul}\langle(\mathcal S(\Delta_n)-I)Y_{(i-1)},h\rangle^2\stackrel{u.c.p.}{\longrightarrow} 0, \quad \text{ as }n\to\infty
\end{equation}
and for the central limit theorem for all $h\in F_{\frac 34}^{\mathcal S^*}$,
\begin{equation}\label{RV-CLT Auxiliary Equation}
    \Delta_n^{-\frac 12}\sum_{i=1}^{\ul}\langle(\mathcal S(\Delta_n)-I)Y_{(i-1)},h\rangle^2\stackrel{u.c.p.}{\longrightarrow} 0, \quad \text{ as }n\to\infty.
\end{equation}
Due to Theorem \ref{L: localisation}(b) (respectively (a) for the central limit theorem) we can suppose that Assumption \ref{As: Decomposition of Q} (or Assumption \ref{As: Weakened Assumption for the CLT for the SARCV} for the central limit theorem) is valid. In that case, we have for the proof of the law of the large numbers by the dominated convergence theorem
$$\lim_{t\to 0}\mathbb E\left[\int_0^T\| t^{-\frac 12}(I-\mathcal S(t))\|_{\mathcal H}^2ds\right]=0$$
and for the proof of the central limit theorem
$$\lim_{t\to 0}\mathbb E\left[\int_0^T\| t^{-\frac 34}(I-\mathcal S(t))\sigma_s\|_{\mathcal H}^2ds\right]=0.$$
Moreover, we have 
\begin{align*}
   &  \sum_{i=1}^{\ulT}\mathbb E\left[\langle(\mathcal S(\Delta_n)-I)Y_{(i-1)},h\rangle^2\right]\\
    = & \sum_{i=1}^{\ulT}\int_0^{(i-1)\Delta_n} \mathbb E\left[\langle (\mathcal S(i\Delta_n)-\mathcal S((i-1)\Delta_n))\Sigma_s^{\mathcal S_n} (\mathcal S(i\Delta_n)-\mathcal S((i-1)\Delta_n))^* h,h\rangle\right] ds\\
    \leq &  T \int_0^T \sup_{t\in[0,T]} \mathbb E\left[ \Delta_n^{-1}\langle (\mathcal S(t+\Delta_n)-\mathcal S(t))\Sigma_s^{\mathcal S_n} (\mathcal S(t+\Delta_n)-\mathcal S(t))\Delta_n))^* h,h\rangle\right] ds.
\end{align*}

Henceforth, in order to show \eqref{RV-LLN Auxiliary Equation} and \eqref{RV-CLT Auxiliary Equation} it is enough to show that, for all $\gamma\in (0,1)$ and $h\in F_{\gamma}^{\mathcal S^*}$, we have, as $n\to\infty$,
\begin{align*}
    \int_0^T\sup_{t\in[0,T]}\mathbb E\left[\Delta^{-2\gamma}\langle (\mathcal S(t+\Delta)-\mathcal S(t))\Sigma_s (\mathcal S(t+\Delta)-\mathcal S(t))^* h,h\rangle\right]ds\stackrel{u.c.p.}{\longrightarrow}0.
\end{align*}
We note that 
\begin{align*}
&\int_0^T\sup_{t\in[0,T]}\mathbb E\left[\Delta^{-2\gamma}\langle (\mathcal S(t+\Delta)-\mathcal S(t))\Sigma_s (\mathcal S(t+\Delta)-\mathcal S(t))^* h,h\rangle\right]ds\\
\leq&\int_0^T\sup_{t\in[0,T]}\mathbb E\left[\Delta^{-2\gamma}\langle (I-p_N)\Sigma_s (\mathcal S(t+\Delta)-\mathcal S(t))^* h,(\mathcal S(t+\Delta)-\mathcal S(t))^*h\rangle\right]ds\\
    &+ \int_0^T \sup_{t\in[0,T]}\mathbb E\left[\Delta^{-2\gamma}\langle p_N\Sigma_s (\mathcal S(t+\Delta_n)-\mathcal S(t))^* h,\Delta_n^{-\gamma}(\mathcal S(t+\Delta)-\mathcal S(t))^*h\rangle^2\right]ds\\
   = & (1)_{n,N}+(2)_{n,N},
\end{align*}
where again we denoted by $p_N$ the projection onto $v^N:=\overline{\{e_j: j\geq N\}}$ for an orthonormal basis $(e_j)_{j\in\mathbb N}$ of $H$
 that is contained in $D(\mathcal A)$. We have 
 \begin{align*}
    \mathcal S(t+\Delta_n) e_i- \mathcal S(t)e_i= \int_t^{t+\Delta_n} \mathcal S(u)\mathcal A e_i du
\end{align*}
and therefore we find for the first summand that 
\begin{align*}
&(1)_{n,N}\\
   \leq  & \sum_{j=1}^{N-1}\left|\int_0^T \sup_{t\in[0,T]}\mathbb E\left[\Delta^{-2\gamma}\langle \Sigma_s (\mathcal S(t+\Delta_n)-\mathcal S(t))^* h, e_j\rangle\right.\right.\\
   &\left.\left.\qquad\qquad\qquad\qquad\qquad\qquad\qquad\qquad\times\langle e_j, \Delta_n^{-\gamma}(\mathcal S(t+\Delta_n)-\mathcal S(t))^*h\rangle\right] ds\right|\\
    =  & \sum_{j=1}^{N-1}\Delta^{-2\gamma}\left|\int_0^T \sup_{t\in[0,T]}\mathbb E\left[\langle \Sigma_s (\mathcal S(t+\Delta_n)-\mathcal S(t))^* h, e_j\rangle \langle \int_t^{t+\Delta_n} \mathcal S(u)\mathcal A e_j ds, h\rangle\right] ds\right| \\
      \leq & \sum_{j=1}^{N-1}\Delta_n^{1-\gamma}\left(\int_0^T \|\sigma_s\|_{\text{op}}^2 ds\right)^{\frac 12} \|h\|^2\|\mathcal A e_j\| \sup_{t\in[0,T]}\|\mathcal S(t)\|_{\text{op}}\\ &\qquad\qquad\qquad\qquad\qquad
      \times\sup_{t\in[0,T]}\|\Delta_n^{-\gamma}(\mathcal S(t+\Delta_n)-\mathcal S(t))^* h\|,\\
\end{align*}
which converges to $0$ as $n\to\infty$. 
Moreover, it follows 
that for the second summand we have
\begin{align*}
& (2)_{n,N}
    \leq \Delta_n^{-2\gamma}\|(\mathcal S(\Delta)-I)^*h\|^2\left(\int_0^T \sup_{t\in [0,T]}\mathbb E\left[\| p_N\mathcal S(t)\sigma_s\|_{\text{op}}^2\right]ds\right)^{\frac 12},
\end{align*}
where the first factor is bounded by Assumption on $h$ and
$\int_0^T \mathbb E\left[\| p_N\mathcal S(t)\sigma_s\|_{\text{op}}^2\right]ds$ converges to $0$ as $N\to\infty$, as it can be shown analogously to the proof of Lemma \ref{L: Projection convergese uniformly on the range of volatility}. We obtain that $ \mathbb E [\sup_{n\in\mathbb N}|(1.2)_{n,N}|]$ converges to $0$ as $N\to\infty$. 
Therefore, we can find for each $\delta>0$ an $N\in\mathbb N$, such that by Markov's inequality
\begin{align*}
  & \lim_{n\to\infty} \mathbb P\left[\int_0^T\sup_{t\in[0,T]}\mathbb E\left[\Delta^{-2\gamma}\langle (\mathcal S(t+\Delta)-\mathcal S(t))\Sigma_s (\mathcal S(t+\Delta)-\mathcal S(t))^* h,h\rangle\right]ds>\epsilon\right]\\
    \leq & \lim_{n\to\infty} \mathbb P\left[|(1)_{n,N}|>\epsilon\right]+\sup_{n\in \mathbb N} \mathbb P\left[|(2)_{n,N}|>\epsilon\right]\\
    = & 0+ \frac 1{\epsilon}\sup_{n\in \mathbb N} \mathbb E\left[|(2)_{n,N}|\right]\leq \delta.
\end{align*}
As this holds for all $\delta>0$, we obtain
the assertion.
\end{proof}

Next, we give the proof of Lemma \ref{L: Evaluation functionals in Sobolev spaces have 1/2 regularity}. 

\begin{proof}[Proof of Lemma \ref{L: Evaluation functionals in Sobolev spaces have 1/2 regularity}]
We recall 
that
$\delta_x(\cdot)=1+\min(x,\cdot)$. 
By the mean value theorem it holds for $x\in (0,1)$ and $t>0$ small enough, such that $t<x< 1-t$
\begin{align*}
    \|\mathcal S(t) \delta_x-\delta_x\|^2
    = & \int_0^1\left(\indicator_{[0,x]}\left((y+t)\wedge 1\right)-\indicator_{[0,x]}(y)\right)^2 dy\\
    = & \int_0^{1-t} \indicator_{[x-t,x]}(y)dy+\int_{1-t}^1 \indicator_{[0,x]}(y)dy\\
    =& (x\wedge(1-t))-(x-t)+x-(1-t)\\
    = &x+2t-1\\
    \in & (t,2t).
\end{align*}
If $x=1$ and $t$ small enough such that $t\leq x$, it is
\begin{align*}
    \|\mathcal S(t) \delta_x-\delta_x\|^2
    = & \int_0^1\left(\indicator_{[0,1]}\left((y+t)\wedge 1\right)-\indicator_{[0,1]}(y)\right)^2 dy\\
    = & \int_0^{1-t} \indicator_{[x-t,x]}(y)dy+t\\
    =& 2t.
\end{align*}
This shows that $x\in (0,1]$ $\delta_x\in F_{\frac 12}^{\mathcal S}$ but $\delta_x\notin \in F_{\gamma}^{\mathcal S}$ for any $\gamma>\frac 12$. Moreover, $\delta_0\in  F_{\gamma}^{\mathcal S}$ for all $\gamma \in [0,1]$ holds as well.

We show that this holds for the adjoint semigroup $(\mathcal S(t))_{t\geq 0}$ as well: For this purpose, we first derive an explicit representation of the adjoint operator $\mathcal S(t)^*$.
 Let $g\in H$ be arbitrary.
 Then for $x<1$, we have, as $\delta_x'(1)=0$,
 \begin{align*}
     S^*(t)g(x)=&\langle \mathcal S(t)\delta_x,g\rangle\\
     = & \delta_x(t)g(0)+\int_0^1 \delta_x'\left((y+t)\wedge 1\right)g'(y) dy\\
    = & \delta_x(t)g(0)+\int_0^{1-t} \delta_x'\left(y+t\right)g'(y) dx+\int_{1-t}^{1} \delta_x'\left(1\right)g'(y) dy\\
    = &\left(\int_0^t \delta_x'(y) dy +\delta_x(0)\right)g(0)+\int_t^{1} \delta_x'\left(y\right)g'(y-t) dx\\
     = &\left(\int_0^t \indicator_{[0,x]}(y) dx +1\right)g(0)+\int_t^{1} \indicator_{[0,x]}\left(y\right)g'(y-t) dx\\
    = &g(0)+ \int_0^1 \indicator_{[0,x]}(y)\left(\indicator_{[0,t]}(y)g(0)+\indicator_{[t,1]}(y) g'(y-t)\right) dy.
\end{align*}
This yields $(\mathcal S(t)^*g)(0)=g(0)$ and, for all $0<t<1$ and $x\in [0,1)$, 
\begin{align*}
    (\mathcal S(t)^*g)'(x)= \left(\indicator_{[0,t]}(x)g(0)+\indicator_{[t,1]}(x) g'(x-t)\right).
\end{align*}
In particular, $\mathcal S(t)^*\delta_{x}(0)=1$ and, for all $0<t<1$ and $y\in [0,1)$,
\begin{align*}
    (\mathcal S(t)^*\delta_x)'(y)= & \left(\indicator_{[0,t]}(y)+\indicator_{[t,1]}(y) \indicator_{[0,x]}(y-t)\right)\\
    = & \left(\indicator_{[0,t]}(y)+ \indicator_{[t,(x+t)\wedge 1]}(y)\right).
\end{align*}
Therefore, 
for $t$ small enough such that $0\leq x<1-t$ 
\begin{align*}
    \| \mathcal S(t)^* \delta_{x}-\delta_{x}\|^2=&  \int_0^1\left(\indicator_{[0,t]}(y)+ \indicator_{[t,x+t]}(y)-\indicator_{[0,x]}(y)\right)^2dy 
    =  \int_0^1 \indicator_{[x,x+t]}(y)dy 
    =  t.
\end{align*}
This shows that all $\delta_{x}$ with $0\leq x<1$ are contained in the Favard space $F_{\frac 12}^{\mathcal S^*}$, but not in $F_{\gamma}^{\mathcal S^*}$ for $\gamma >\frac 12$. Moreover, $\delta_1\in  F_{\gamma}^{\mathcal S^*} $ for all $\gamma \in [0,1]$ holds as well.
\end{proof}

\end{document}